\documentclass[11pt, oneside,reqno]{amsart}
\usepackage{amssymb}
\usepackage{amsmath}
\usepackage[normalem]{ulem}
\usepackage{multirow}
\usepackage{array}
\usepackage{graphicx}
\graphicspath{ {./images/} }
\usepackage{xcolor}
\usepackage{mathrsfs}
\usepackage[textwidth=14mm]{todonotes}
\usepackage{cancel}
\usepackage{enumitem}
\allowdisplaybreaks
\usepackage{geometry,mathtools}
\newtheorem{thm}{Theorem}[section]
\newtheorem{prop}[thm]{Proposition}
\newtheorem{cor}[thm]{Corollary}
\newtheorem{lem}[thm]{Lemma}
\theoremstyle{definition}

\newtheorem{rem}[thm]{Remark}
\newtheorem{dfn}[thm]{Definition}

\newtheorem{ex}[thm]{Example}

\reversemarginpar

\newcommand{\wt}{\mbox{\rm wt}\ }

\newcommand{\Der}{\mbox{\rm Der}}

\newcommand{\Ind}{\mbox{\rm Ind}}
\newcommand{\Ker}{\mbox{\rm Ker}}
\newcommand{\rad}{\mbox{\rm rad}}
\newcommand{\tr}{\mbox{\rm tr}}
\begin{document}
\title[Affine and Lattice Structures in Regular $\mathbb{N}$-Graded VOAs]{Affine and Lattice Structures in Regular $\mathbb{N}$-Graded Vertex Operator Algebras with Gorenstein $V_0$}

\author{Gaywalee Yamskulna}\address{Department of Mathematics, Illinois State University, Normal, IL 61790} \email{gyamsku@ilstu.edu } 
\thanks{The author is supported by the AMS-Simons Research Enhancement Grants for PUI Faculty (Award/Grant No. 142069), and AWM-NSF Travel Grant DMS-2015440.}

\subjclass{Primary 17B69}

\keywords{Regular vertex operator algebras, rational vertex operator algebras, and Gorenstein algebras}

\begin{abstract} 
We study regular $\mathbb{N}$-graded vertex operator algebras $V=\bigoplus_{n=0}^{\infty}V_n$ whose weight-zero algebra $V_0$ is a nontrivial finite-dimensional local Gorenstein algebra. We aim to determine which structural features of strongly rational vertex operator algebras persist when the CFT-type condition $V_0=\mathbb{C}{\bf 1}$ is removed. 

First, we consider semisimple Lie subalgebras of the left Leibniz algebra $V_1$. Under the condition $\Ker~L(-1)|_{V_0}=\mathbb{C}{\bf 1}$, the product $u_1v$ induces an invariant symmetric bilinear form on every semisimple Lie subalgebra of $V_1$. Under $C_2$-cofiniteness and a nondegeneracy hypothesis, each simple component with nonzero form generates an affine vertex operator algebra at positive integral level and acts integrably on $V$. 

We then study the case in which $V_1$ is solvable. Using the Frobenius structure of the Gorenstein algebra $V_0$, we construct a distinguished nondegenerate subspace $\mathcal{M}$ contained in $V_1$. Under a quasi-primary condition, $\mathcal{M}$ is abelian and generates a Heisenberg vertex operator algebra. Assuming in addition semisimplicity of the relevant Heisenberg zero-mode action, a full-rank integrality condition, and lattice cocycle compatibility, we prove that $V$ contains a conformally embedded lattice vertex operator algebra $V_{K}$, where $K$ is a positive-definite even lattice of rank $\dim_{\mathbb{C}}\mathcal{M}$ and minimum norm at least 4. 

Finally, we study conformally shifted lattice vertex operator algebras as explicit models. These examples distinguish weight-one Lie structure, Lie structure generated by modes, and lattice structure detected by $\mathcal{M}$. They also show that regularity alone does not force semisimplicity of arbitrary Heisenberg zero-mode actions, demonstrating that the additional hypotheses in the lattice theorem reflect genuine structural obstructions.

\end{abstract}
\maketitle

\section{Introduction} 

Much of the classical structure theory of vertex operator algebras concerns
vertex operator algebras $V=\bigoplus_{n\geq 0}V_n$ of CFT-type, for which $V_0=\mathbb{C}{\bf 1}$. In the strongly rational
setting, Dong and Mason proved that the weight-one Lie algebra $V_1$ is
reductive \cite{DM1} and established an integrability theorem showing that
simple Lie subalgebras of $V_1$ generate affine vertex operator algebras at
positive integral levels \cite{DM4}. Mason subsequently developed the
lattice-subalgebra theory for strongly regular vertex operator algebras,
obtaining positive-definite even lattice subalgebras under suitable
hypotheses \cite{M}. These results form an important part of the rigid
low-degree structure theory in the CFT-type setting. The purpose of this
paper is to investigate how much of this structure persists when $V_0$ is
no longer one-dimensional.

The study of $\mathbb{N}$-graded vertex algebras with nontrivial degree-zero
algebra has several origins. The theory of chiral differential operators and
the chiral de Rham complex led to vertex algebroids and systematic
constructions of $\mathbb{N}$-graded vertex algebras generated in degrees zero and
one \cite{GMS, MSV}. In this setting $V_0$ is a commutative associative
algebra, whereas $V_1$ naturally carries a Leibniz-algebra structure. Li
and the author subsequently studied vertex algebras associated with vertex
algebroids and their graded modules; twisted modules were treated in
subsequent work \cite{LY1,LY2}.

A further indication of the breadth of vertex-algebra methods comes from the
representation theory of toroidal Lie algebras, the multi-loop analogs of
affine Kac--Moody algebras. Vertex-operator constructions for toroidal Lie
algebras led to large classes of representations with finite-dimensional
weight spaces \cite{BB}. Berman, Billig, and Szmigielski subsequently gave
explicit vertex operator algebra constructions of irreducible toroidal
modules \cite{BBS}, and Billig showed that representations of toroidal
extended affine Lie algebras are controlled by tensor products involving
lattice, affine, and Virasoro vertex operator algebras \cite{Bi}. A
systematic theory of toroidal vertex algebras and their modules was later
developed by Li, Tan, and Wang, who associated toroidal vertex algebras
directly to toroidal Lie algebras \cite{LTW}. These results demonstrate that
graded vertex-algebra structures arise naturally in the representation
theory of higher-dimensional analogs of affine Lie algebras and provide
additional motivation for understanding vertex algebras beyond the
traditional CFT-type framework.

A second line of development directly relevant to the present paper was
initiated by Dong and Mason. Their theory of local and semilocal vertex
operator algebras relates indecomposability and locality to the algebraic
structure of $V_0$ \cite{DM5}. Their shifted vertex operator algebras
provide an especially important source of regular $\mathbb{N}$-graded examples
beyond the usual CFT-type framework: a conformal shift changes the grading
while leaving the underlying vertex algebra unchanged, and regularity may
persist even when standard CFT-type and self-contragredience properties do
not \cite{DM2}.

These ideas motivated further study of the interaction between $V_0$ and
$V_1$. Mason and the author showed, under suitable hypotheses, that Levi
factors of the Leibniz algebra $V_1$ generate affine Kac--Moody vertex
operator subalgebras and that shifted self-contragredient theories can endow
$V_0$ with a de Rham-type duality structure \cite{MY}. Jitjankarn and the
author studied indecomposable non-simple $\mathbb{N}$-graded vertex algebras and
$C_2$-cofinite constructions associated with vertex algebroids
\cite{JY1,JY2}, while Bui and the author developed related constructions
from simple Leibniz algebras \cite{BY}. More recently, Keene, Soltermann,
and the author studied $\mathbb{N}$-graded vertex algebras with Gorenstein
$V_0$, emphasizing the roles of Frobenius duality, the socle, invariant
bilinear forms, and the Leibniz structure of $V_1$ \cite{KSY}.

There are complementary motivations for working beyond the traditional
CFT-type setting. Barron, Batistelli, Orosz Hunziker, Pedi\'c Tomi\'c, and
the author studied rationality for $\mathbb{C}$-graded vertex algebras and
applications to Weyl vertex algebras under conformal flow \cite{BBOHTY}.
McRae's recent work demonstrates strong rigidity consequences of
$C_2$-cofiniteness, self-contragredience, and tensor-category hypotheses
for rationality \cite{McR}. From the viewpoint of axiomatic conformal field
theory, Carpi, Raymond, Tanimoto, and Tener established an equivalence
between suitably generated M\"obius vertex algebras and non-unitary
M\"obius-covariant Wightman conformal field theories without assuming
simplicity or finite-dimensional conformal-weight spaces \cite{CRTT}.
Together, these developments reinforce the importance of understanding
graded vertex algebras outside the classical strongly rational framework.

In the present paper we study regular $\mathbb{N}$-graded vertex operator
algebras $V=\bigoplus_{n=0}^{\infty}V_n$ for which $V_0$ is a nontrivial finite-dimensional local Gorenstein
algebra. The passage from $V_0=\mathbb{C}{\bf 1}$ to a nontrivial Gorenstein
algebra changes the low-degree structure substantially: $V_1$ is
generally a left Leibniz algebra rather than a Lie algebra, and $u_1v$ is
$V_0$-valued rather than scalar-valued. Nevertheless, the Gorenstein
structure, together with regularity and suitable conditions on the
translation operator, provides enough rigidity to recover significant
affine, Heisenberg, and lattice structure. Our first results concern semisimple Lie subalgebras of $V_1$. Under the
condition $\Ker~ L(-1)|_{V_0}=\mathbb{C}{\bf 1},$ the product $u_1v$ induces an invariant symmetric scalar-valued bilinear
form on every semisimple Lie subalgebra of $V_1$. Under
$C_2$-cofiniteness and a nondegeneracy condition, each simple component
with nonzero form generates a rational affine vertex operator algebra at
positive integral level and acts integrably on $V$. Thus an important
part of the strongly rational affine-subalgebra theory persists beyond
CFT-type.

We next consider the case in which $V_1$ is solvable. Using the Frobenius
structure of the Gorenstein algebra $V_0$, we construct a distinguished
nondegenerate subspace $\mathcal{M}\subseteq V_1$. Under a quasi-primary
condition, $\mathcal{M}$ is abelian and generates a Heisenberg vertex operator
algebra. Assuming in addition the semisimplicity condition
$(\Omega\text{-SS})$, a full-rank integrality condition $(\mathrm{IL})$,
and lattice cocycle compatibility $(\mathrm{LC})$, we prove that $V$ contains a conformally embedded lattice vertex operator algebra $V_K$,
where $K$ is a positive-definite even lattice satisfying $\operatorname{rank}K=\dim\mathcal{M}$ and $\min\{\langle\alpha,\alpha\rangle:0\neq\alpha\in K\}\geq 4$. The minimum-norm bound follows from solvability of $V_1$: a norm-two
vector would produce an $\mathfrak{sl}_2$-subalgebra in $V_1$.

Finally, conformally shifted lattice vertex operator algebras provide
explicit models showing that weight-one Lie structure, mode-theoretic Lie
structure, and the lattice structure detected by $\mathcal{M}$ need not
coincide. They also show that regularity alone does not force semisimplicity
of arbitrary Heisenberg zero-mode actions. Thus the additional hypotheses
in the lattice theorem represent genuine structural restrictions rather
than consequences of regularity.

The paper is organized as follows. Section~2 recalls the required
background. Section~3 studies semisimple Lie subalgebras of $V_1$ and the
resulting affine vertex operator algebras. Section~4 develops the
Gorenstein, Heisenberg, and lattice theory in the solvable case. Section~5
studies conformal shifts and examples, including hidden
$\mathfrak{sl}_2$-actions and nonsemisimple Heisenberg zero-mode actions.

\noindent\textbf{Acknowledgments}

The author gratefully acknowledges research support from the AMS–Simons Research Enhancement Grants for Primarily Undergraduate Institution (PUI) Faculty and travel support from the AWM–NSF Travel Grant Program.

The author thanks the organizers of the workshop Recent Developments in Logarithmic Conformal Field Theory at the Banff International Research Station (BIRS), Banff, Canada; the conference CFT: Algebraic, Topological and Probabilistic Approaches in Conformal Field Theory at Institut Pascal, Orsay, France; the Special Session on Geometry, Representation Theory and Noncommutative Algebra at the AMS Sectional Meeting in St. Louis; the Algebra Seminar at the University of California, Santa Cruz; the Math CU Seminar, Chulalongkorn University, Thailand and the department of Mathematics at Srinakarinwirot University, Thailand, for their hospitality and for opportunities to present and discuss portions of this work.

The author is especially grateful to Chongying Dong and Geoffrey Mason for many fruitful conversations during a visit to the University of California, Santa Cruz. Their insightful questions and comments motivated the development of several examples in Section 5 and helped clarify the phenomena those examples illustrate.

\section{Preliminaries}

We use the standard definition and notation for vertex algebra $V$ as in \cite{B, FLM1, LLi}. Thus for $v\in V$, $Y(v,x)=\sum_{n\in\mathbb{Z}}v_nx^{-n-1}$ and the vacuum is denoted by ${\bf 1}$. We write $D(v)=v_{-2}{\bf 1}$; then $Y(v,x){\bf 1}=e^{xD}v$ and $[D,v_n]=-nv_{n-1}$. Also, we use the standard notion of an ideal of a vertex algebra; see \cite{LLi}. 

Similarly, we use the standard definition of a vertex operator algebra $(V,Y, {\bf 1},\omega)$ as in \cite{FLM1, LLi} with $Y(\omega, x)=\sum_{n\in\mathbb{Z}}L(n)x^{-n-2}$ and $L(-1)=D$.

\begin{dfn}\cite{Z} Let $(V,Y(~,~),{\bf 1},\omega)$ be a vertex operator algebra. We define a second vertex operator algebra $(V, Y[~,~], {\bf 1}, \widetilde{\omega})$ associated with $V$ by defining the Virasoro vector $$\widetilde{\omega}=\omega-\frac{c}{24},\text{ and }$$ the vertex operators $Y[v,x]$ are defined for homogeneous $v\in (V,Y(~,~),{\bf 1},\omega)$ via the equality 
$$Y[v,x]=Y(v,e^x-1)e^{x\wt(v)}=\sum_{n\in\mathbb{Z}}v[n]x^{-n-1}.$$ In addition,  we write $$Y[\widetilde{\omega},x]=\sum_{n\in\mathbb{Z}}L[n]x^{-n-2},$$ and $V=\bigoplus_{n\in\mathbb{Z}}V_{[n]}\text{ where }V_{[n]}=\{v\in V~|~L[0]v=nv\}.$ For $v\in V_{[n]}$, we denote its conformal weight by $\wt[v]$.
\end{dfn}
\begin{rem}\cite{Z}
We will only use the following square-bracket Virasoro identities and the induced filtration comparison.
\begin{enumerate} 
\item Using the fact that $Y[\widetilde{\omega},x]=\sum_{n\in\mathbb{Z}}L[n]x^{-n-2}$, we have  
    \begin{eqnarray*}
        L[-2]&=&\omega[-1]-\frac{c}{24},\\
        L[-1]&=&L(-1)+L(0)\\
        L[0]&=&L(0)+\sum_{n=1}^{\infty}\frac{(-1)^{n-1}}{n(n+1)}L(n).
    \end{eqnarray*}
    \item  It is worth mentioning that in general, $V_n$ and $V_{[n]}$ are distinct, and for each $N$, we have $\bigoplus_{n\leq N}V_n=\bigoplus_{n\leq N}V_{[n]}.$
 \end{enumerate}   
\end{rem}

We use the standard notions of weak, admissible, and ordinary $V$-modules from \cite{DLM0}. An ordinary $V$-module is $L(0)$-graded with finite-dimensional homogeneous subspaces and bounded below. 

A vertex operator algebra is \emph{rational} if every admissible $V$-module is completely reducible, and it is \emph{regular} if every weak $V$-module is a direct sum of simple ordinary $V$-modules. \cite{DLM}

\begin{prop}\cite{DM} If $M$ is a simple ordinary $V$-module, then $M=\bigoplus_{n\in\mathbb{N}}M_{n+\lambda}$ for some $\lambda\in\mathbb{C}$ such that $M_{\lambda}\neq 0$. This number $\lambda$ is called the \emph{conformal weight} of $M$.
\end{prop}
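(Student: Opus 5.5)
The plan is to use that $L(0)$ commutes with the action of $V$ up to a shift of degree, so each generalized eigenspace of $L(0)$ on $M$ can only be reached from another by integer steps. Simplicity will then force all eigenvalues into a single coset $\lambda+\mathbb{Z}$, and the bounded-below condition will pin down the minimal element $\lambda$.

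Let $M$ be a simple ordinary $V$-module. By definition $M=\bigoplus_{\mu\in\mathbb{C}}M_\mu$, where $M_\mu$ is the $L(0)$-eigenspace of eigenvalue $\mu$. Each $M_\mu$ is finite-dimensional, and for each $\mu$ we have $M_{\mu-n}=0$ for all sufficiently large $n\in\mathbb{Z}$.

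\textbf{Step 1: modes shift weight by an integer.} The bracket $[L(0),v_n]=(\wt v-n-1)v_n$, valid for homogeneous $v\in V$, follows from the commutator formula with $\omega$ and the $L(-1)$-derivative property. It gives
$$v_nM_\mu\subseteq M_{\mu+\wt v-n-1}.$$
Since $V$ is $\mathbb{N}$-graded, $\wt v\in\mathbb{Z}$, so every mode moves $M_\mu$ into $M_{\mu+k}$ for some $k\in\mathbb{Z}$.

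\textbf{Step 2: the eigenvalues lie in one coset.} For a coset $\mu+\mathbb{Z}$ set
$$M^{(\mu)}=\bigoplus_{k\in\mathbb{Z}}M_{\mu+k}.$$
By Step 1 this is a $V$-submodule, and $M$ is the direct sum of the $M^{(\mu)}$ over cosets. Because $M$ is simple and nonzero, exactly one summand is nonzero, so all eigenvalues lie in a single coset $\mu_0+\mathbb{Z}$.

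\textbf{Step 3: choose the minimum.} By the bounded-below condition, the set $\{k\in\mathbb{Z} : M_{\mu_0+k}\neq 0\}$ is bounded below. It is nonempty since $M\neq0$, so it has a least element $k_0$. Put $\lambda=\mu_0+k_0$. Then $M_\lambda\neq0$ and $M_{\lambda+n}=0$ for all integers $n<0$. Together with Step 2 this gives $M=\bigoplus_{n\in\mathbb{N}}M_{\lambda+n}$.

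\textbf{Main obstacle.} The only delicate point is the meaning of ``bounded below'' for complex weights. If it is taken in the stronger form (real parts bounded below), the minimum in Step 3 exists trivially. If it is only the per-coset condition that $M_{\mu-n}=0$ for $n\gg0$, then Step 2 is essential: it reduces everything to a single coset before the minimum is taken.

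If $M$ is assumed only $L(0)$-graded via generalized eigenspaces, the same argument applies without change.
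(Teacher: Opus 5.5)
Your proposal is correct and complete. The paper gives no proof of its own here; it only cites \cite{DM}. Your argument is the standard one behind that citation: $[L(0),v_n]=(\wt v-n-1)v_n$ makes each coset sum $\bigoplus_{k\in\mathbb{Z}}M_{\mu+k}$ a submodule, simplicity leaves a single coset, and the per-coset bounded-below condition in the ordinary-module definition then supplies the minimal weight $\lambda$.
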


\begin{prop}\cite{DLM} Suppose that $V$ is a rational vertex operator algebra. Then the following statements hold:
\begin{enumerate}
    \item Every simple admissible $V$-module is an ordinary $V$-module.
    \item $V$ has only finitely many isomorphism classes of simple admissible $V$-modules.
\end{enumerate}
\end{prop}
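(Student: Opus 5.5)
The statement to prove is the cited proposition of \cite{DLM}: for a rational $V$, every simple admissible module is ordinary, and there are only finitely many simple admissible modules up to isomorphism. The proof runs through Zhu's associative algebra $A(V)=V/O(V)$. Its basic property is that $M\mapsto\Omega(M)$, the space of lowest-weight vectors, or equivalently the top degree $M(0)$ of an admissible module, gives a bijection. On one side are isomorphism classes of simple $\mathbb{N}$-gradable (admissible) $V$-modules; on the other are isomorphism classes of simple $A(V)$-modules. The inverse is obtained by inducing an $A(V)$-module $U$ to the generalized Verma-type module $\bar M(U)$ and passing to its unique simple quotient $L(U)$. I take these facts about $A(V)$, including the construction of $\bar M(U)$ as an admissible module for any $A(V)$-module $U$, as standard input from Zhu and \cite{DLM}.

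The first step shows that $A(V)$ is a finite-dimensional semisimple algebra. Rationality makes $\bar M(U)$ completely reducible for every $A(V)$-module $U$. Applying this to $\bar M(A(V))$ and taking top degrees, one finds that $A(V)$, viewed as a left module over itself, is a direct sum of simple $A(V)$-modules, because the top degree of a direct sum of simple admissible modules is a direct sum of simple tops. So $A(V)$ is semisimple. Since it is unital, this left-regular decomposition has only finitely many summands. Each summand is a cyclic module and therefore of countable dimension, so Schur's lemma holds in the countable-dimension form (endomorphism rings are $\mathbb{C}$). By Wedderburn's theorem, $A(V)$ is a finite direct sum of matrix algebras $M_{n_i}(\mathbb{C})$.

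Both claims then follow. For (2), simple admissible $V$-modules correspond to simple $A(V)$-modules, and a finite product of matrix algebras has only finitely many of these. For (1), let $M=\bigoplus_{n\ge 0}M(n)$ be simple admissible. Then $M(0)$ is a simple $A(V)$-module, hence finite-dimensional. I would next show that each $M(n)$ is finite-dimensional. The degree-$n$ piece $M(n)$ is spanned by vectors $v_m w$ with $w\in M(0)$, and it is a module for the higher Zhu algebra $A_n(V)$. Rationality gives semisimplicity of $A_n(V)$ by the same argument, so $M(n)$ is a finitely generated module over a finite-dimensional algebra and is therefore finite-dimensional. Simplicity of $M$ forces $L(0)$ to act on $M(0)$ by a scalar $\lambda$, since $o(\omega)$ is central in $A(V)$. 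It follows that $L(0)=\lambda+n$ on $M(n)$, so $M$ is an ordinary module with conformal weight $\lambda$.

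The main obstacle is the finite-dimensionality of the higher graded pieces $M(n)$. It requires either the $A_n(V)$ machinery or a direct argument that $M(n)$ is a quotient of a finite-dimensional space determined by $M(0)$. I would first try the direct route: note that $M(n)$ is the image of $\bar M(M(0))(n)$, so it suffices to show that the bilinear pairing defining $L(M(0))$ has finite-dimensional image in each degree. Failing that, I would fall back on the $A_n(V)$ argument above.
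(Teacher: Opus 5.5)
The paper does not prove this statement; it only quotes it from \cite{DLM}. Your treatment of $A(V)$ and of part (2) is the standard one and is fine, apart from one small repair. The decomposition $\bar M(U)=\bigoplus_i W_i$ given by rationality need not respect the grading, so ``taking top degrees'' is not well defined. Argue instead that $U=\bar M(U)(0)$ lies in $\Omega(\bar M(U))=\bigoplus_i\Omega(W_i)$, where each $\Omega(W_i)$ is the simple top of $W_i$.

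The genuine gap is in part (1), at exactly the step you flag: nothing in the proposal shows $\dim M(n)<\infty$ for $n>0$.

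\begin{itemize}
\item The ``direct route'' only restates the problem. $\bar M(M(0))(n)$ is not known to be finite-dimensional, and bounding the image of the defining pairing in degree $n$ is the same problem on the dual side.
\item The $A_n(V)$ route is not ``the same argument''. For an arbitrary $A_n(V)$-module $U$ it needs an admissible module containing $U$ equivariantly in its $\Omega_n$-space, i.e.\ a level-$n$ analogue of $\bar M(U)$, which is considerably harder to construct. It also needs each $M(k)$, $k\le n$, of a simple $M$ to be a simple $A_n(V)$-module.
\item Your reason for finite generation of $M(n)$ is invalid. The operators $v_m$ carrying $M(0)$ to $M(n)$ are not given by elements of $A_n(V)$.
\item Quoting the theorem that $A_n(V)$ is finite-dimensional semisimple for rational $V$ risks circularity. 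That theory was developed after, and on top of, the level-zero results that contain this proposition.
\end{itemize}

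The missing idea is to apply rationality once more, to the contragredient module $M'=\bigoplus_{n\ge0}M(n)^*$. It is again admissible, with $M'(n)=M(n)^*$.

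\begin{itemize}
\item Since $M$ is simple, $M(d)$ is spanned by vectors $v_mw$ with $w\in M(0)$ and $v_m$ raising degree by $d$.
\item By the adjoint relation, $\langle f,v_mw\rangle$ is a combination of terms $\langle u_kf,w\rangle$ in which $u_k$ lowers degree by $d$.
\item Hence any $f\in M'(d)$ with $d>0$ that is killed by all degree-lowering modes is zero. So $\Omega(M')=M(0)^*$, which is finite-dimensional.
\item By rationality, $M'=\bigoplus_{i\in I}W_i$ with each $W_i$ simple. Then $\Omega(M')=\bigoplus_i\Omega(W_i)$ with every $\Omega(W_i)\neq0$, so $I$ is finite.
\item A simple module is spanned by $\{v_mw\}$ for any nonzero $w$, so it has countable dimension. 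Therefore $M'$, and hence each $M(n)^*$, has countable dimension.
\item This forces $\dim M(n)<\infty$, because the dual of an infinite-dimensional space has uncountable dimension.
\end{itemize}

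Your Schur-lemma argument that $L(0)=\lambda+n$ on $M(n)$ then completes (1).
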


\begin{dfn}\cite{FHL, Z} Let $V$ be a vertex operator algebra. Then 
\begin{enumerate}
    \item $V$ is {\em finitely generated} if there exists $n\in\mathbb{Z}$ so that $U=\oplus_{m\leq n}V_m$ generates $V$ in the sense that $$ V=span\{u^1_{i_1} \cdots u^s_{i_s} u~| ~u^j,u\in U,s\in\{0,1,2,...\},i_j\in\mathbb{Z}\}.$$
\item For a $V$-module $M$, we define $C_2(M)=Span\{v_{-2}m~|~v\in V, m\in M\}.$ We say that $M$ satisfies condition $C_2$ in case $C_2(M)$ has finite co-dimension in $M$.
\end{enumerate}
\end{dfn}

\begin{prop}\label{Info about regularity} \cite{DLM1, Li1} \ \ 

\begin{enumerate} 
\item If $V$ is regular then $V$ satisfies the $C_2$-cofiniteness condition and it is finitely generated. In addition, $V$ is rational.
\item Suppose that $V$ is a rational vertex operator algebra which satisfies Condition $C_2$. Then the central charge of $V$ and the conformal weight of each simple $V$-module are rational numbers.
\end{enumerate}
\end{prop}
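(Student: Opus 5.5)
The proposition collects known facts, and I would prove the two parts by reducing to the standard arguments of \cite{DLM1, Li1}, checking at each step that nothing uses the CFT-type condition $V_0=\mathbb{C}{\bf 1}$.

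For part (1), the order would be:
\begin{enumerate}
\item \emph{Rationality.} This is immediate from the definitions. An admissible $V$-module is in particular a weak $V$-module, so regularity writes it as a direct sum of simple ordinary modules. Each such summand is admissible, so the module is completely reducible as an admissible module.
\item \emph{$C_2$-cofiniteness.} I would follow Li's argument \cite{Li1}. Suppose $V/C_2(V)$ were infinite-dimensional. From the restricted dual of $V$ one builds a weak $V$-module whose weight-zero part (in the sense of the Zhu-type algebra attached to $C_2(V)$) is the infinite-dimensional dual $(V/C_2(V))^*$. One then shows that such a weak module cannot be a direct sum of simple ordinary modules. The reason is that there are only finitely many simple ordinary modules, by the rationality proposition of \cite{DLM}, and each has finite-dimensional graded pieces, which bounds the relevant lowest-weight data. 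This contradicts regularity.
\item \emph{Finite generation.} This follows from $C_2$. Choose a finite-dimensional graded complement $W$ to $C_2(V)$ in $V$. By induction on weight, using $u_{-n-2}=\frac{1}{(n+1)!}(L(-1)^{n+1}u)_{-1}$ and the commutator and associativity formulas, $V$ is spanned by monomials $w^1_{-n_1}\cdots w^k_{-n_k}{\bf 1}$ with $w^j\in W$. Hence $\bigoplus_{m\le N}V_m$ generates $V$ for $N$ at least the maximal weight in $W$. None of this uses $V_0=\mathbb{C}{\bf 1}$; only the $\mathbb{N}$-grading bounded below and finite-dimensional $V_n$ are needed.
\end{enumerate}

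For part (2), I would invoke Zhu's modular invariance theorem in the form of \cite{DLM1}. For rational $C_2$-cofinite $V$, the characters $\mathrm{ch}_{M^i}(\tau)=\mathrm{tr}_{M^i}q^{L(0)-c/24}$ of the finitely many simple modules span a finite-dimensional $SL_2(\mathbb{Z})$-invariant space. The generator $T$ acts diagonally on this space, with eigenvalues $e^{2\pi i(\lambda_i-c/24)}$. I would then show that $T$ has finite order. I would first try to deduce this from the fact that the characters are holomorphic solutions of a modular linear differential equation with regular singularity at the cusp. Their exponents are then rational, by the Anderson–Moore type argument used in \cite{DLM1}. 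It follows that each $\lambda_i-c/24\in\mathbb{Q}$. Applying this to $M=V$ itself, whose conformal weight is $0$ because $V$ is $\mathbb{N}$-graded with $V_0\ne 0$, gives $c\in\mathbb{Q}$. Hence every $\lambda_i\in\mathbb{Q}$.

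The main obstacle is the finiteness of the order of $T$, equivalently the rationality of the exponents. This is where the genuine analytic input of \cite{DLM1} sits. The $C_2$ step in part (1) is the main algebraic difficulty, but it is routine once Li's dual-module construction is in hand.
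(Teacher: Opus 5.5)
The paper gives no proof of this proposition; both parts are quoted from \cite{Li1} and \cite{DLM1}, so your sketch has to stand on its own.

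Rationality is fine. Finite generation is also fine, and you need less than the monomial spanning set: $V_n=W_n+C_2(V)_n$, and $C_2(V)_n$ is spanned by $a_{-2}b$ with $\mathrm{wt}\,a,\mathrm{wt}\,b<n$, so induction on $n$ shows that $W$ generates $V$. Part (2) is at the same level as the paper, since you defer to \cite{DLM1}. Be aware, though, that a regular singularity at the cusp only makes the exponents roots of an indicial polynomial, and for a general modular linear differential equation these can be irrational. The arithmetic input in the Anderson--Moore argument is that the characters have integral coefficients $\dim M^i_{\lambda_i+n}$.

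\textbf{The gap is in the $C_2$ step.} The mechanism you describe does not yield a contradiction.
\begin{enumerate}
\item The restricted dual of $V$ is just the contragredient module $V'$, which is ordinary. The space $C_2(V)^{\perp}\cap V'=\bigoplus_n(V_n/C_2(V)_n)^*$ is the graded dual of $V/C_2(V)$, so its being infinite-dimensional merely restates your hypothesis. The fact that $V'$ is a direct sum of simple ordinary modules does not contradict this unless you already know those modules are $C_2$-cofinite, which is circular.
\item Finiteness of $\operatorname{Irr}(V)$ together with finite-dimensional graded pieces cannot bound the dimension of any subspace of a weak module. A weak module may be an infinite direct sum of copies of a single simple module.
\item There is also no ``weight-zero part'' attached to $V/C_2(V)$. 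That quotient is Zhu's commutative Poisson algebra, not the algebra controlling top levels.
\end{enumerate}

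\textbf{The missing idea is to work in the full dual.} Take $V^*=\prod_n V_n^*$ with the contragredient action
$\langle Y^*(v,x)f,w\rangle=\langle f,Y(e^{xL(1)}(-x^{-2})^{L(0)}v,x^{-1})w\rangle$.
The input from \cite{Li1} is that
$\mathcal{D}(V)=\{f\in V^*\mid Y^*(v,x)f\in V^*((x))\text{ for all }v\in V\}$
is a weak $V$-module. Every $f$ vanishing on $C_2(V)$ lies in $\mathcal{D}(V)$, because such an $f$ kills $v_mV$ for all $m\le -2$.

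Since $L(1)\omega=0$, $L(0)$ acts on $V^*$ by transpose. Its eigenvectors are therefore exactly the functionals supported on a single $V_n$.

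Now suppose $V/C_2(V)$ were infinite-dimensional. Then $C_2(V)_n\neq V_n$ for infinitely many $n$, and you can choose $f\in C_2(V)^{\perp}$ with infinitely many nonzero homogeneous components. Such an $f$ is not a finite sum of $L(0)$-eigenvectors. But regularity makes $\mathcal{D}(V)$ a direct sum of ordinary modules, and every vector in such a sum is a finite sum of $L(0)$-eigenvectors. This is the contradiction.

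Regularity enters only through this $L(0)$-local finiteness. Neither the finiteness of $\operatorname{Irr}(V)$ nor $V_0=\mathbb{C}\mathbf{1}$ is used.
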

\begin{dfn}\cite{Z} For any $V$-module, we define \emph{the 1-point function $Z_{M}(u,\tau)$ associated to $M$} as follows: for $u\in V_k$,
\begin{equation}\label{correl} Z_M(u,\tau)=tr_Mo(u)q^{L(0)-c/24}.\end{equation} 
When $u$ is the vacuum vector ${\bf 1}$, we have $Z_M(\tau)=Z_{M}({\bf 1},\tau)$. Here, $o(u)=u_{k-1}$, $\tau$ denotes an element in the complex upper half-plane $\mathbb{H}$ and $q=e^{2\pi i\tau}$. 
\end{dfn} 
\begin{prop}\cite{DLM1,Z} Let $V$ be a rational vertex operator algebra that satisfies the $C_2$-cofiniteness condition. Assume that $\{M^1,M^2,...,M^r\}$ is the set of all inequivalent simple $V$-modules. Then the functions (\ref{correl}) are holomorphic in $\mathbb{H}$ and the following holds: for any homogeneous vector $u\in V_{[k]}$ and for any matrix $\gamma=\left[\begin{array}{cc}a&b\\c&d\end{array}\right]$ in the modular group $SL(2,\mathbb{Z})$, there are scalars $\rho_{i,j}(\gamma)$, $1\leq i,j\leq r$ independent of $u$ and $\tau$, and an equality 
    \begin{equation}\label{gamma matrix} Z_{M^i}(u,\frac{a\tau+b}{c\tau+d})=(c\tau+d)^k\sum_{j=1}^r\rho_{i,j}(\gamma)Z_{M^j}(u,\tau).\end{equation}
    In addition, \begin{equation}\label{ouov}tr_{M^i}o(u)o(v)q^{L(0)-c/24}=Z_{M^i}(u[-1]v,\tau)-\sum_{k\geq 1}E_{2k}(\tau)Z_{M^i}(u[2k-1]v,\tau).\end{equation} Here, $E_{2k}=-\frac{B_{2k}}{2k!}+\frac{2}{(2k-1)!}\sum_{n=1}^{\infty}\sigma_{2k-1}(n)q^n$ where $\sigma_k(n)$ is the sum of the $k$-powers of the divisors of $n$ and $B_{2k}$ a Bernoulli number. Note that $E_{2k}(\tau)$ is a holomorphic modular form on $SL(2,\mathbb{Z})$ if $k>1$. However, $E_2(\tau)$ is not modular, and its transformation with respect to the $S$ matrix is as follows: \begin{equation}E_2(-1/\tau)=\tau^2 E_2(\tau)-\frac{\tau}{2\pi i}.\end{equation}
\end{prop}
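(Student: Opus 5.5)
The plan follows Zhu's original strategy for modular invariance, with the rational and $C_2$-cofinite hypotheses substituted where Zhu assumed them. Nothing in the argument uses $V_0=\mathbb{C}{\bf 1}$. I work throughout in the square-bracket vertex operator algebra $(V,Y[~,~],{\bf 1},\widetilde{\omega})$, so that "weight $k$" means $u\in V_{[k]}$.

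First I establish formula (\ref{ouov}), since the later steps rely on this kind of recursion.
\begin{enumerate}
\item For homogeneous $u$ and any $v$, I expand $\tr_{M}\, Y(e^{2\pi i z L(0)}u,e^{2\pi i z})\,o(v)\, q^{L(0)-c/24}$ in $z$.
\item I use the commutator formula $[u_m,v_n]=\sum_{j\ge 0}\binom{m}{j}(u_jv)_{m+n-j}$ together with cyclicity of the trace. Moving $u_m$ around the trace multiplies it by $q^{m-\wt u+1}$, which gives a geometric series in $q$ for each mode.
\item Summing these series produces the Weierstrass-type functions $P_k(z,q)$. Taking the constant term in $z$ then yields exactly
$$Z_M(u[-1]v,\tau)-\sum_{k\ge1}E_{2k}(\tau)Z_M(u[2k-1]v,\tau).$$
The translation from round-bracket modes to square-bracket modes uses the change of variables $x\mapsto e^x-1$.
\end{enumerate}
This step is formal. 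The only input needed is that the traces converge, which is handled in the next step.

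Second, I define the space $\mathcal{C}(V)$ of genus-one conformal blocks. These are maps $S:V\times\mathbb{H}\to\mathbb{C}$, holomorphic in $\tau$ and linear in $u$, satisfying:
\begin{enumerate}
\item $S(u[0]v,\tau)=0$;
\item $S(u[-2]v,\tau)+\sum_{k\ge2}(2k-1)E_{2k}(\tau)S(u[2k-2]v,\tau)=0$;
\item $S(\widetilde\omega,\tau)=\frac{1}{2\pi i}\frac{d}{d\tau}S({\bf 1},\tau)$, together with the corresponding identity for $L[-2]v$.
\end{enumerate}
Using $C_2$-cofiniteness, $V$ is finitely generated modulo $O_q(V)$, the subspace spanned by these relations, as a module over the ring $\mathbb{C}[E_4,E_6]$ of modular forms. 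Iterating the relation for $L[-2]^n u$ shows that each $Z_M(u,\tau)$ satisfies a linear ODE in $\tau$ with coefficients in $\mathbb{C}[E_4,E_6]$ and a regular singular point at $q=0$. This gives convergence and holomorphy on $\mathbb{H}$. It also shows that $\mathcal{C}(V)$ is finite-dimensional and that its elements have $q$-expansions of the form $\sum q^{\lambda_i}(\log q)^j f_{ij}(q)$.

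Third, $SL(2,\mathbb{Z})$ acts on $\mathcal{C}(V)$ by $(S|\gamma)(u,\tau)=(c\tau+d)^{-k}S(u,\gamma\tau)$ for $u\in V_{[k]}$. The relations defining $\mathcal{C}(V)$ are preserved because of the transformation laws of the $E_{2k}$. The anomaly $E_2(-1/\tau)=\tau^2E_2(\tau)-\frac{\tau}{2\pi i}$ is exactly compensated by the $L[-2]$ relation, which is the reason for working with $Y[~,~]$. Each $Z_{M^i}$ lies in $\mathcal{C}(V)$ by the trace computation of the first step.

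The main obstacle is showing that the $Z_{M^i}$ span $\mathcal{C}(V)$. From this, (\ref{gamma matrix}) follows with $\rho(\gamma)$ independent of $u$ and $\tau$. For the spanning argument I use the $q$-expansion from the second step.
\begin{enumerate}
\item Given $S\in\mathcal{C}(V)$, its leading coefficients define a symmetric linear functional on Zhu's algebra $A(V)$ (or on $A_n(V)$ if logarithmic terms occur).
\item Rationality makes $A(V)$ semisimple, so logarithms are excluded. Every such functional is then a combination of traces on the simple $A(V)$-modules, which are the top levels of the $M^i$.
\item Subtracting the corresponding combination of the $Z_{M^i}$ lowers the leading exponent of $S$. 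Induction on the finitely many exponents completes the argument.
\end{enumerate}
Here the hypotheses that $V$ is rational and $C_2$-cofinite, via the earlier proposition of \cite{DLM}, replace every use of CFT type in Zhu's argument.
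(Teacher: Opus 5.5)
The paper gives no proof of this proposition; it simply quotes Zhu and Dong--Li--Mason. Your conformal-block, modular-action and spanning steps are a fair outline of Zhu's argument. The step that fails as written is the first one.

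Because $o(v)$ and $q^{L(0)}$ preserve the $L(0)$-grading, only the degree-preserving mode $u_{\wt u-1}=o(u)$ in $Y(e^{2\pi izL(0)}u,e^{2\pi iz})=\sum_n u_n e^{2\pi iz(\wt u-n-1)}$ contributes to the trace, and it comes with coefficient $1$. Hence
\[
\tr_M Y(e^{2\pi izL(0)}u,e^{2\pi iz})\,o(v)\,q^{L(0)-c/24}=\tr_M o(u)o(v)q^{L(0)-c/24}
\]
for every $z$. Its constant term is just the left side of (\ref{ouov}), and nothing in it can produce $u[-1]v$ or an Eisenstein series. Moreover, for $m\neq\wt u-1$ one has $\tr_M u_m o(v)q^{L(0)}=0$, so there is nothing to move around the trace.

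What you need is Zhu's genuine two-point function
\[
F_M(z_1,z_2)=\tr_M Y(e^{z_1L(0)}u,e^{z_1})\,Y(e^{z_2L(0)}v,e^{z_2})\,q^{L(0)-c/24},
\]
computed in two ways.

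First, your commutator-plus-cyclicity computation, applied to the degree-preserving products $u_mv_n$, sums the geometric series into the functions $P_{m+1}$. It gives $\tr_M o(u)o(v)q^{L(0)-c/24}$ plus a sum over $m\geq1$ of $Z_M(u[m]v,\tau)$ weighted by $P_{m+1}(z_1-z_2,\tau)$.

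Second, associativity together with $x^{L(0)}Y(a,y)x^{-L(0)}=Y(x^{L(0)}a,xy)$ gives $F_M(z_1,z_2)=\tr_M o(Y[u,z_1-z_2]v)q^{L(0)-c/24}$. This is where the substitution $x\mapsto e^x-1$, and with it the square-bracket modes, actually enters. The constant term in $z_1-z_2$ of this expression is $Z_M(u[-1]v,\tau)$.

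The weight attached to $Z_M(u[m]v,\tau)$ has constant term $E_{2k}$ when $m=2k-1$ and $0$ when $m$ is even. Comparing constant terms therefore yields (\ref{ouov}). This is an identity of formal $q$-series, so no convergence input is needed at this stage. The same two-point function is also what produces the relations placing each $Z_{M^i}$ in $\mathcal{C}(V)$.

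A few smaller points.

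\begin{enumerate}
\item With the ordinary derivative $\frac{1}{2\pi i}\frac{d}{d\tau}$, the coefficients of your ODE lie in $\mathbb{C}[E_2,E_4,E_6]$, not $\mathbb{C}[E_4,E_6]$. This is because the $L[-2]$ relation for $a\in V_{[k]}$ involves $\frac{1}{2\pi i}\frac{d}{d\tau}+kE_2(\tau)$. The coefficients lie in $\mathbb{C}[E_4,E_6]$ only when the equation is written with Serre derivatives. This is harmless, since only holomorphy at $q=0$ is used.
\item In the spanning step, subtracting $\sum_i\lambda_iZ_{M^i}$ removes the leading term, so the leading exponent goes up, not down.
\item For that subtraction to cancel the leading term you need more than semisimplicity. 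If $S(a,\tau)=q^r(C(a)+O(q))$, the leading-order form of the $L[-2]$ relation gives $C([\omega]*a)=(r+\frac{c}{24})C(a)$. This forces $C$ to be a combination of top-level traces only of modules with conformal weight $h_i=r+\frac{c}{24}$. Termination then follows because all leading exponents lie in the finite set $\{h_i-\frac{c}{24}\}$.
\item The parenthetical about $A_n(V)$ belongs to Miyamoto's pseudo-trace theory for irrational $C_2$-cofinite $V$ and is not needed here.
\item As you say, Zhu's argument does not rely on $V_0=\mathbb{C}{\bf 1}$. So what the Dong--Li--Mason results replace is not CFT type but Zhu's more restrictive definition of rationality. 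They supply finitely many simple modules, all ordinary, and semisimplicity of $A(V)$.
\end{enumerate}
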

\subsection{Examples of vertex operator algebras and their modules}
\begin{ex} \textbf{Vertex operator algebras and their modules associated to Heisenberg algebras}

Let $\mathfrak{h}$ be a $d$-dimensional vector space equipped with a nondegenerate symmetric bilinear form $\langle\cdot,\cdot\rangle$. We consider $\mathfrak{h}$ as an abelian Lie algebra with $\langle\cdot,\cdot\rangle$ as an invariant symmetric bilinear form. We have the affine Lie algebra $\hat{\mathfrak{h}}=\mathfrak{h}\otimes \mathbb{C}[t,t^{-1}]\oplus \mathbb{C}{\bf k},$ with the Lie bracket relations $$[{\bf k},\hat{\mathfrak{h}}]=0,~{[a\otimes t^m,b\otimes t^n]}=\langle a,b\rangle m\delta_{m+n,0}{\bf k}$$
for $a,b\in\mathfrak{h}$, and $m,n\in\mathbb{Z}$. The affine Lie algebra $\hat{\mathfrak{h}}$ is a $\mathbb{Z}$-graded Lie algebra with $\hat{\mathfrak{h}}=\coprod_{n\in\mathbb{Z}}\hat{\mathfrak{h}}_{(n)}$, where $\hat{\mathfrak{h}}_{(0)}=\mathfrak{h}\oplus\mathbb{C}{\bf k}$ and $\hat{\mathfrak{h}}_{(n)}=\mathfrak{h}\otimes t^{-n}$ for $n\neq 0$. For convenience, we set $h(m)$ for $h\otimes t^m$.

We set $\hat{\mathfrak{h}}^+=\mathfrak{h}\otimes t\mathbb{C}[t]$ and $\hat{\mathfrak{h}}^-=\mathfrak{h}\otimes t^{-1}\mathbb{C}[t^{-1}]$. The subalgebra $\hat{\mathfrak{h}}_{\mathbb{Z}}=\hat{\mathfrak{h}}^+\oplus\hat{\mathfrak{h}}^-\oplus\mathbb{C}{\bf k}$ of $\hat{\mathfrak{h}}$ is a Heisenberg algebra. 

Let $l$ be a nonzero complex number. Now, we consider the induced irreducible $\hat{\mathfrak{h}}$-module, irreducible even under $\hat{\mathfrak{h}}_{\mathbb{Z}}$, $$M(l)=U(\hat{\mathfrak{h}})\otimes_{U(\mathfrak{h}\otimes \mathbb{C}[t]\oplus\mathbb{C}{\bf k}}\mathbb{C}\cong S(\hat{\mathfrak{h}}^-)~\text{(linearly), }$$ 
$\mathfrak{h}\otimes \mathbb{C}[t]$ acting trivially on $\mathbb{C}$ and ${\bf k}$ acting as $l$. An $\hat{\mathfrak{h}}$-module on which ${\bf k}$ acts as $l$ is called a {\em level} $l$ module. Then $M(l)$ is of level $l$. 

It is worth mentioning that a level $l$ module for the affine Lie algebra $\hat{\mathfrak{h}}$ with respect to $\langle\cdot,\cdot\rangle$ amounts to a level 1 module for the affine Lie algebra $\hat{\mathfrak{h}}$ with respect to $l\langle\cdot,\cdot\rangle$.

\begin{prop}\cite{FLM1, LLi} Let $l$ be any nonzero complex number. Then $M(l)=\bigoplus_{n=0}^{\infty} M(1)_n$ is a vertex operator algebra of central charge $d=\dim\mathfrak{h}$ with the Virasoro vector given by $$\omega_{\mathfrak{h}}=\frac{1}{2l}\sum_{i=1}^d\beta_i(-1)\beta_i(-1){\bf 1}\in M(l)_2.$$ Here, let $\{\beta_1,...,\beta_d\}$ be an orthonormal basis of $\mathfrak{h}$ with respect to the bilinear form $\langle\cdot,\cdot\rangle$. The weight grading on $M(l)$ is given by $L_{\mathfrak{h}}(0)$-eigenvalues, $\mathfrak{h}=M(l)_{1}$ which generates $M(l)$ as a vertex algebra. Here $Y(\omega_{\mathfrak{h}},x)=\sum_{n\in\mathbb{Z}}L_{\mathfrak{h}}(n)x^{-n-2}$. 
\end{prop}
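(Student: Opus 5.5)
The plan is to build the vertex operator algebra structure on $M(l)\cong S(\hat{\mathfrak{h}}^-)$ by the standard generating-field construction, reducing first to $l=1$ by the remark preceding the proposition: a level $l$ module for $\hat{\mathfrak{h}}$ with respect to $\langle\cdot,\cdot\rangle$ is a level $1$ module with respect to $l\langle\cdot,\cdot\rangle$. After this rescaling the formula $\omega_{\mathfrak{h}}=\frac{1}{2l}\sum_i\beta_i(-1)\beta_i(-1){\bf 1}$ becomes the usual $\frac12\sum_i\beta_i'(-1)^2{\bf 1}$ for an orthonormal basis $\beta_i'$ of the rescaled form.

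\emph{Fields and locality.} For $h\in\mathfrak{h}$, set $h(x)=\sum_n h(n)x^{-n-1}$ acting on $M(l)$. Since $M(l)$ is induced from the trivial module on $\mathfrak{h}\otimes\mathbb{C}[t]$, each $h(x)$ is a field: $h(n)w=0$ for $n\gg0$. The bracket gives
\[
[h(x_1),h'(x_2)]=l\langle h,h'\rangle\,\partial_{x_2}x_1^{-1}\delta(x_2/x_1),
\]
so $(x_1-x_2)^2[h(x_1),h'(x_2)]=0$ and the fields are mutually local. The vacuum is $1\otimes 1$, and $D$ is defined by $D{\bf 1}=0$ and $[D,h(n)]=-nh(n-1)$. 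I would then invoke the existence theorem for vertex algebras generated by local fields \cite{LLi}. The generating set is $\{h(x)\}$, and the vacuum vector is cyclic because $M(l)$ is spanned by monomials $h_1(-n_1)\cdots h_k(-n_k){\bf 1}$. This theorem yields a unique vertex algebra structure with $Y(h(-1){\bf 1},x)=h(x)$. Consequently $\mathfrak{h}=M(l)_1$ generates $M(l)$.

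\emph{Conformal vector.} Put $\omega_{\mathfrak{h}}$ as above, so $L_{\mathfrak{h}}(n)=\frac{1}{2l}\sum_i\sum_{m}{:}\beta_i(m)\beta_i(n-m){:}$. Using the Heisenberg commutator and normal ordering, I would compute
\[
[L_{\mathfrak{h}}(n),h(m)]=-m\,h(n+m).
\]
The factor $\frac1l$ cancels the $l$ from the bracket, which is the only point where $l\neq0$ is used. From this relation it follows that $L_{\mathfrak{h}}(-1)=D$ on generators, hence everywhere. It also follows that $L_{\mathfrak{h}}(0)$ acts on the monomial $h_1(-n_1)\cdots h_k(-n_k){\bf 1}$ by $\sum n_j$, which gives the $\mathbb{N}$-grading. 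The graded pieces are finite-dimensional, since they are counted by $d$-colored partitions, and $M(l)_1=\mathfrak{h}(-1){\bf 1}$.

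\emph{Main obstacle: the Virasoro relation.} The hardest step is verifying
\[
[L(m),L(n)]=(m-n)L(m+n)+\tfrac{d}{12}(m^3-m)\delta_{m+n,0}
\]
with central charge exactly $d$. I would do this by computing $[L(m),L(n)]$ from the previous commutator applied to the quadratic expression in the $\beta_i$. The only delicate part is re-normal-ordering the term $\beta_i(n-k)\beta_i(m+k)$ when $m+n=0$. The resulting central term is
\[
\frac{1}{2}\sum_i\sum_{0<k<m}k(m-k)=\frac{d}{12}(m^3-m),
\]
with one contribution of $\frac1{12}(m^3-m)$ from each orthonormal direction, so the total is $d$.

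An alternative route uses the OPE: compute $\omega_{\mathfrak{h}\,n}\omega_{\mathfrak{h}}$ for $n=0,1,2,3$ via Wick's theorem. This gives $\omega_3\omega=\frac d2{\bf 1}$, $\omega_2\omega=0$, $\omega_1\omega=2\omega$, and $\omega_0\omega=D\omega$, which together yield the Virasoro relation with $c=d$.
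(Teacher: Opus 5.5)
Your proposal is correct. It is the standard construction: the existence theorem for vertex algebras generated by mutually local fields $h(x)$, followed by the computation $[L_{\mathfrak h}(n),h(m)]=-m\,h(m+n)$ to get $L_{\mathfrak h}(-1)=D$ and the $L_{\mathfrak h}(0)$-grading, and the central-term count $\frac12\sum_{0<k<m}k(m-k)=\frac{1}{12}(m^3-m)$ per orthonormal direction, giving $c=d$. The paper gives no proof of its own and simply cites \cite{FLM1, LLi}, where essentially this argument appears, so your route matches the one the paper relies on.
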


Let $l\in\mathbb{C}^{\times}$ and let $W$ be a weak $M(1)$-module or equivalently, a level $l$ affine $\hat{\mathfrak{h}}$-module such that for every $w\in W$, $(\mathfrak{h}\otimes t^n)w=0$ for $n$ sufficiently large. Let $a\in W$ be a highest weight vector of $W$ of weight $\alpha\in\mathfrak{h}$, i.e., $h(n)a=\langle h,\alpha\rangle\delta_{n,0}a$ for $h\in\mathfrak{h}$, $n\geq 0$. Then $L_{\mathfrak{h}}(0)a=\frac{\langle\alpha,\alpha\rangle}{2l}a$. $L_{\mathfrak{h}}(-1)a=\frac{1}{l}\alpha(-1)a.$ 

Now, using the fact that the form $\langle\cdot,\cdot\rangle$ is nondegenerate, we may identify $\mathfrak{h}$ with its dual space $\mathfrak{h}^*$. Let $\alpha\in\mathfrak{h}^*$. We let $\mathbb{C}_{\alpha}$ be the one-dimensional $\mathfrak{h}$-module with $h\in\mathfrak{h}$ acting as the scalar $\langle h,\alpha\rangle$. We set $M(l,\alpha)=\Ind_{\mathfrak{h}}^{\hat{\mathfrak{h}}}\mathbb{C}_{\alpha}=U(\hat{\mathfrak{h}}_+)\otimes_{\mathbb{C}}\mathbb{C}_{\alpha}=S(\hat{\mathfrak{h}}_+)\otimes_{\mathbb{C}}\mathbb{C}_{\alpha},$ an $\hat{\mathfrak{h}}$-module of level $l$. 
\begin{prop}\cite{FLM1, LLi} 
For any $\alpha\in\mathfrak{h}(=\mathfrak{h}^*)$, $M(l,\alpha)=\bigoplus_{n=0}^{\infty}M(l,\alpha)_{n+\frac{1}{2l}\langle\alpha,\alpha\rangle}$, with $M(l,\alpha)_{\frac{1}{2l}\langle\alpha,\alpha\rangle}=\mathbb{C}\alpha$ is an irreducible $M(l,0)$-module with lowest weight $\frac{1}{2l}\langle\alpha,\alpha\rangle$. Moreover, the modules $M(l,\alpha)$ for $\alpha\in\mathfrak{h}$ exhaust the irreducible $M(l,0)$-modules up to equivalence.  
    
\end{prop}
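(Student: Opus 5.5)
The plan is to prove the three assertions in order: the $\hat{\mathfrak{h}}$-module and grading structure, the irreducibility of $M(l,\alpha)$ over $M(l,0)$, and the classification of irreducible modules. Throughout we use the standard dictionary, already recalled above, between weak $M(l,0)$-modules and restricted level $l$ $\hat{\mathfrak{h}}$-modules.

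\emph{Structure and grading.} By construction $M(l,\alpha)$ is a level $l$ $\hat{\mathfrak{h}}$-module. Every vector is annihilated by $h(n)$ for $n$ sufficiently large, because $\hat{\mathfrak{h}}^+$ acts on $S(\hat{\mathfrak{h}}^-)\otimes\mathbb{C}_\alpha$ by derivations that lower degree. Hence $M(l,\alpha)$ is a weak $M(l,0)$-module.

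On the highest weight vector $e_\alpha$, the formula $\omega_{\mathfrak{h}}=\frac{1}{2l}\sum_i\beta_i(-1)^2{\bf 1}$ gives
$$L_{\mathfrak{h}}(0)=\frac1{2l}\sum_i\Big(\beta_i(0)^2+2\sum_{n>0}\beta_i(-n)\beta_i(n)\Big).$$
Therefore $L_{\mathfrak{h}}(0)e_\alpha=\frac{\langle\alpha,\alpha\rangle}{2l}e_\alpha$. Since $[L_{\mathfrak{h}}(0),h(-n)]=nh(-n)$, each monomial $h_1(-n_1)\cdots h_k(-n_k)e_\alpha$ has weight $\frac{\langle\alpha,\alpha\rangle}{2l}+\sum n_j$. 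This yields the $\mathbb{N}$-shifted grading. The lowest weight space is $\mathbb{C}e_\alpha$, which the statement identifies with $\mathbb{C}\alpha$.

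\emph{Irreducibility.} Let $W\subseteq M(l,\alpha)$ be a nonzero submodule and take $0\neq w\in W$. Apply suitable $h(n)$ with $n>0$: on $S(\hat{\mathfrak{h}}^-)$, $h(n)$ acts as $ln\,\partial/\partial h(-n)$ after identifying $\mathfrak{h}$ with $\mathfrak{h}^*$. Because $l\neq0$ and the form is nondegenerate, these are nonzero multiples of the partial derivatives. So we can lower degree until reaching a nonzero multiple of $e_\alpha$. Then $W\ni e_\alpha$, which generates everything, and $W=M(l,\alpha)$. Distinct $\alpha$ give inequivalent modules, since $h(0)$ acts on the lowest weight space by $\langle h,\alpha\rangle$.

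\emph{Exhaustion.} This is the main step. Let $W$ be an irreducible (weak) $M(l,0)$-module. Because $W$ is restricted, there is a nonzero $w$ with $h(n)w=0$ for all $n>0$: start from any vector and apply $h(n)$'s until the result is killed by all positive modes. Restrictedness, together with the fact that $\hat{\mathfrak{h}}^+$ is spanned by finitely many generators per degree, makes this process terminate.

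The span $\Omega$ of such vacuum-like vectors is preserved by the commuting operators $h(0)$, which are central in $\hat{\mathfrak{h}}$. For irreducible modules with finite-dimensional weight spaces (ordinary modules), $\Omega$ meets some $L_{\mathfrak{h}}(0)$-eigenspace in a finite-dimensional subspace. In that subspace the $h(0)$ have a common eigenvector $e$ of some weight $\alpha$.

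By the universal property of induction there is a nonzero map $M(l,\alpha)\to W$ sending $e_\alpha\mapsto e$. It is surjective because $W$ is irreducible, and injective because $M(l,\alpha)$ is irreducible. Hence $W\cong M(l,\alpha)$.

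The obstacle is ensuring that common $h(0)$-eigenvectors exist. For ordinary modules this follows from finite-dimensionality. For general weak modules one would instead invoke the Stone–von Neumann-type fact that $W\cong M(l,0)\otimes\Omega$ as $\hat{\mathfrak{h}}$-modules, and then use irreducibility to force $\Omega$ to be one-dimensional; this is the standard argument of \cite{FLM1, LLi}.
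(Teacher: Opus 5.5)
Your grading and irreducibility arguments are standard and fine, but the exhaustion step has a genuine gap at its first move. Restrictedness does \emph{not} produce a nonzero vector annihilated by $\hat{\mathfrak{h}}^+$, and the procedure ``apply $h(n)$'s until the result is killed by all positive modes'' need not terminate.

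Here is a counterexample. Take $\mathfrak{h}=\mathbb{C}h$ with $\langle h,h\rangle=1$ and $W=\mathbb{C}[x_1,x_2,\ldots]$. Fix $\lambda\neq0$ and let the operators act as follows:
\begin{itemize}
\item $h(-n)$ acts as multiplication by $x_n$ for $n\geq1$;
\item $h(n)$ acts as $ln\,\partial/\partial x_n$ for $n\geq2$;
\item $h(1)$ acts as $\lambda+l\,\partial/\partial x_1$;
\item $h(0)$ acts as $0$, and ${\bf k}$ acts as $l$.
\end{itemize}
The Heisenberg relations hold, and every polynomial is killed by $h(n)$ for $n$ large. So $W$ is a restricted level $l$ module, i.e.\ a weak $M(l,0)$-module (a Whittaker module).

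\begin{itemize}
\item It is irreducible: $h(1)-\lambda$ and $h(n)$, $n\geq2$, differentiate any nonzero polynomial down to a nonzero constant, which generates $W$.
\item It has no vacuum vector: $h(1)p=0$ means $\lambda p=-l\,\partial p/\partial x_1$, which is impossible for $p\neq0$ by comparing top degree in $x_1$.
\end{itemize}
So $W$ is isomorphic to no $M(l,\alpha)$. Starting your iteration at $w=1$, $h(1)$ just returns $\lambda\cdot 1$ forever.

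Note that $h(0)$ acts semisimply here. So the obstacle you single out, common $h(0)$-eigenvectors, is not the real one: $h(0)$ is central and acts by a scalar on any irreducible module by Schur/Dixmier. For the same reason your closing remark is false. The Stone--von Neumann-type decomposition $W\cong M(l,0)\otimes\Omega_W$ of \cite{FLM1} requires $\hat{\mathfrak{h}}^+$ to act locally nilpotently, and the exhaustion claim genuinely fails for arbitrary irreducible weak modules.

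The missing input is the lower-bounded $L_{\mathfrak{h}}(0)$-grading. The statement therefore has to be read for ordinary modules, the convention recalled from \cite{DLM0}.

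Let $W$ be an irreducible ordinary $M(l,0)$-module, and write $W=\bigoplus_{n\geq0}W_{\mu+n}$ with $W_\mu\neq0$.
\begin{itemize}
\item Since $h(-1){\bf 1}$ has weight one, $[L_{\mathfrak{h}}(0),h(n)]=-nh(n)$ on every weak module.
\item Hence $h(n)W_\mu\subseteq W_{\mu-n}=0$ for $n>0$, so every vector of $W_\mu$ is a vacuum vector, with no iteration needed.
\item The commuting operators $h(0)$ preserve the finite-dimensional space $W_\mu$, giving a common eigenvector $e$ of some weight $\alpha$.
\end{itemize}
From there your universal-property argument goes through unchanged: the map $M(l,\alpha)\to W$ is surjective by irreducibility of $W$ and injective by irreducibility of $M(l,\alpha)$. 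The same argument works for irreducible $\mathbb{N}$-gradable modules, using the degree-zero piece and Schur's lemma for $h(0)$.
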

\end{ex} 
\begin{ex}\textbf{Vertex operator algebras and their modules associated to even lattices} 

Fix a finitely generated free abelian group $L$. We are interested in groups $\hat{L}$ which are central extensions of $L$ by $\mathbb{Z}_2$. So, there is a short exact sequence of group $1\rightarrow\{\pm 1\}\rightarrow\hat{L}\stackrel{-}{\rightarrow}L\rightarrow 0,$ and $\hat{L}$ can be identified with $L\times\{\pm 1\}$ as a set, with multiplication $(\alpha, e)(\beta,f)=(\alpha+\beta,\varepsilon(\alpha,\beta)ef)$ $(\alpha,\beta\in L, e,f\in\{\pm 1\})$, where $\varepsilon:L\times L\rightarrow\{\pm 1\}.$ We may and shall take $\varepsilon$ to be bimultiplicative, i.e., \begin{eqnarray*}\varepsilon(\alpha+\beta,\gamma)&=&\varepsilon(\alpha,\gamma)\varepsilon(\beta,\gamma),\\ \varepsilon(\alpha,\beta+\gamma)&=&\varepsilon(\alpha,\beta)\varepsilon(\alpha,\gamma).\end{eqnarray*} This ensures that $\varepsilon\in Z^2(L,\{\pm 1\})$ is a 2-cocycle and that multiplication in $L$ is associative. In particular we note that $\varepsilon(\alpha,0)=\varepsilon(0,\alpha)=1$ and $\varepsilon(\alpha,\beta)=\varepsilon(\alpha,-\beta)=\varepsilon(-\alpha,\beta)$. If $L$ is a positive definite integral lattice with bilinear form $(~,~)$ then we may further choose $\varepsilon$ so that it satisfies \begin{eqnarray*}
\varepsilon(\alpha,\beta)\varepsilon(\beta,\alpha)&=&(-1)^{(\alpha,\beta)+(\alpha,\alpha)(\beta,\beta)},\\ \varepsilon(\alpha,\alpha)&=&(-1)^{((\alpha,\alpha)+(\alpha,\alpha)^2)/2}. 
\end{eqnarray*}

Now assume $L$ is a positive-definite even lattice equipped with a bilinear form $(~,~)$, with $\varepsilon$ as defined above. The twisted group algebra $\mathbb{C}^{\varepsilon}[L]$ has a basis $\{e^{\alpha}~|~\alpha\in L\}$ and multiplication $e^{\alpha}e^{\beta}=\varepsilon(\alpha,\beta)e^{\alpha+\beta}$ $(\alpha,\beta\in L)$. It is $\mathbb{Z}$-graded by $\wt(e^{\alpha})=\frac{1}{2}(\alpha,\alpha)$. 

There are Lie algebras $\mathfrak{h}:=\mathbb{C}\otimes_{\mathbb{Z}}L$, $\hat{\mathfrak{h}}:=\mathfrak{h}\otimes \mathbb{C}[t,t^{-1}]\oplus\mathbb{C}c$, $\hat{\mathfrak{h}}^+:=\mathfrak{h}\otimes t\mathbb{C}[t]$, $\hat{\mathfrak{h}}^-=\mathfrak{h}\otimes t^{-1}\mathbb{C}[t^{-1}]$, with brackets $[x\otimes t^m,y\otimes t^n]=(x,y)m\delta_{m+n,0}c$, $[c,\hat{\mathfrak{h}}]=0$, and an induced $\hat{\mathfrak{h}}$-module $M(1)=U(\hat{\mathfrak{h}})\otimes_{U(\mathfrak{h}\otimes\mathbb{C}[t]\oplus\mathbb{C}c)}\mathbb{C}\cong S(\hat{\mathfrak{h}}^-)$ (linearly), $\mathfrak{h}\otimes\mathbb{C}[t]$ acting trivially on $\mathbb{C}$ and $c$ acting as 1. Fock space for the lattice theory is $$V_L=M(1)\otimes \mathbb{C}^{\varepsilon}[L] \text{ (linearly)}$$ with the usual tensor product grading. The Virasoro vector is $\omega_L=\frac{1}{2}\sum_i (h_i(-1))^2{\bf 1},$ the sum ranging over any orthonormal basis $\{h_i~|~i\in I\}$ of $\mathfrak{h}$. 

For $\alpha\in L$, we write $\alpha(n):=\alpha\otimes t^n$, $\alpha(z):=\sum_{n\in\mathbb{Z}}\alpha(n)z^{-n-1}$, $z^{\alpha}:e^{\beta}\mapsto z^{(\alpha,\beta)}e^{\beta}$, and set 
$$Y(e^{\alpha},z):=\exp\left(\sum_{m=1}^{\infty}\alpha(-m)\frac{z^m}{m}\right)\exp\left(-\sum_{m=1}^{\infty}\alpha(m)\frac{z^{-m}}{m}\right)e^{\alpha}z^{\alpha},$$ and for $v=\alpha_1(-n_1)\cdots \alpha_k(-n_k)\otimes e^{\alpha}\in V_L$ $(n_i\geq 1)$ set 
$$Y(v,z):=\left(\frac{1}{(n_1-1)!}\left(\frac{d}{dz}\right)^{n_1-1}\alpha_1(z)\right)....\left(\frac{1}{(n_k-1)!}\left(\frac{d}{dz}\right)^{n_k-1}\alpha_k(z)\right)Y(e^{\alpha},z):,$$ with the usual normal ordering conventions. 

For $\gamma,\rho\in L$, we have 
$$e^{\gamma}(n)e^{\rho}=\begin{cases}0&\text{if }(\gamma,\rho)\geq -n\\
\varepsilon(\gamma,\rho)e^{\gamma+\rho}&\text{if }(\gamma,\rho)=-n-1\\
\varepsilon(\gamma,\rho)\gamma(-1)e^{\gamma+\rho}&\text{if }(\gamma,\rho)=-n-2\\\frac{1}{2}\varepsilon(\gamma,\rho)(\gamma(-2)e^{\gamma+\rho}+\gamma(-1)^2e^{\gamma+\rho})&\text{if }(\gamma,\rho)=-n-3.\end{cases}$$
\end{ex}

The dual lattice $L^{\circ}$ of $L$ is defined to be $L^{\circ}=\{\beta\in\mathfrak{h}~|~\langle \beta, L\rangle\subseteq \mathbb{Z}\}.$ Then $L^{\circ}$ is a rational lattice whose rank is equal to the rank of $L$. In addition, $L^{\circ}=\cup_{i\in L^{\circ}/L}(L+\lambda_i)$ is a coset decomposition such that $\lambda_0=0$.

\begin{prop}\label{VVLiso}\cite{B, D, DL, DLM0, DM1, FLM1} 

\begin{enumerate}
\item Let $L$ be a positive-definite even lattice. Then $V_L$ is a vertex operator algebra with a Virasoro vector $\omega_L=\frac{1}{2}\sum_{i=1}^d\beta_i(-1)\beta_i(-1){\bf 1}\in M(1).$ Here, let $\{\beta_1,...,\beta_d\}$ be an orthonormal basis of $\mathfrak{h}$ with respect to the bilinear form $\langle\cdot,\cdot\rangle$. 
\item Any weak $V_L$-module is completely reducible and any simple weak $V_L$-module is isomorphic to $V_{L+\beta}$ for some $\beta$ in the dual lattice of $L$. In particular, $V_L$ is regular. 
\item Let $V$ be a simple vertex operator algebra that has a subalgebra isomorphic to $M(1)$. Assume that $V$ is isomorphic to $V_L$ as $M(1)$-modules for some even positive definite lattice $L$. Then $V$ and $V_L$ are isomorphic vertex operator algebras.
    \end{enumerate}
\end{prop}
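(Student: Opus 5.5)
The three parts of Proposition~\ref{VVLiso} are of different natures, and I would treat them separately, taking the construction of $V_L$ from \cite{FLM1} as the model.

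For (1), I would first check that the operators $\alpha(z)$ ($\alpha\in\mathfrak{h}$) and $Y(e^{\alpha},z)$ ($\alpha\in L$) are mutually local on $V_L=M(1)\otimes\mathbb{C}^{\varepsilon}[L]$.

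\begin{itemize}
\item The relation $[\alpha(m),Y(e^{\beta},z)]=(\alpha,\beta)z^mY(e^{\beta},z)$ is immediate from the vertex-operator formula.
\item For two exponentials, commuting the exponentials past each other produces the factors $(z_1-z_2)^{(\alpha,\beta)}$ and $(z_2-z_1)^{(\beta,\alpha)}$, together with $e^{\alpha}e^{\beta}=\varepsilon(\alpha,\beta)\varepsilon(\beta,\alpha)e^{\beta}e^{\alpha}$.
\item The condition $\varepsilon(\alpha,\beta)\varepsilon(\beta,\alpha)=(-1)^{(\alpha,\beta)}$, which holds since $L$ is even, exactly cancels the sign $(-1)^{(\alpha,\beta)}$. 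This gives locality of order $\max(0,-(\alpha,\beta))$.
\end{itemize}

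The general existence theorem for vertex algebras generated by mutually local fields (as in \cite{LLi}) then makes $V_L$ a vertex algebra. Its normal-ordered fields are exactly the displayed $Y(v,z)$. The vector $\omega_L$ is the Heisenberg Sugawara vector. Since $e^{\alpha}$ is an $L(0)$-eigenvector of weight $\frac12(\alpha,\alpha)\in\mathbb{Z}_{\ge0}$ and positive definiteness makes the graded pieces finite-dimensional, $V_L$ is a vertex operator algebra.

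For (2), I would follow Dong and Dong--Li--Mason. Let $W$ be a weak $V_L$-module. Restricting to $M(1)$, I would show that $W$ is a direct sum of modules $M(1,\lambda)\otimes\Omega_\lambda$, where $\Omega_\lambda$ is the space of $\hat{\mathfrak{h}}^+$-vacuum vectors on which $h(0)$ acts by $(h,\lambda)$.

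The step I expect to be the main obstacle is proving that $h(0)$ acts semisimply on the vacuum space. My plan is to use the operators $Y(e^{\alpha},z)$ restricted to vacuum vectors. They reduce to $e_\alpha z^{\alpha}$ and shift $h(0)$-eigenvalues by $\alpha$. The lattice operators $e_\alpha$ are invertible because $Y(e^{-\alpha},z)Y(e^{\alpha},z)$ has an identity leading term. Using these, one shows that any generalized eigenspace of $h(0)$ is an honest eigenspace. Locality of the integer powers $z^{(\alpha,\lambda)}$ then forces $(\alpha,\lambda)\in\mathbb{Z}$, i.e.\ $\lambda\in L^{\circ}$. The $\Omega_\lambda$ then assemble into modules for the twisted group algebra $\mathbb{C}^{\varepsilon}[L]$. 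That algebra is semisimple with simple modules indexed by $L^{\circ}/L$, which yields complete reducibility and the identification of the simple modules with the $V_{L+\beta}$. Regularity follows, since each $V_{L+\beta}$ is an ordinary module.

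For (3), I would decompose $V\cong V_L=\bigoplus_{\alpha\in L}M(1,\alpha)$ as $M(1)$-modules, with each $M(1,\alpha)$ occurring with multiplicity one. Choose lowest-weight vectors $v^{\alpha}\in V$ with $v^0={\bf 1}$.

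\begin{itemize}
\item Intertwining operators of type $\binom{M(1,\alpha+\beta)}{M(1,\alpha)\,M(1,\beta)}$ are unique up to scalar. Hence $Y(v^{\alpha},z)v^{\beta}=c(\alpha,\beta)z^{(\alpha,\beta)}\bigl(v^{\alpha+\beta}+\cdots\bigr)$, where Heisenberg weights force the target to be $M(1,\alpha+\beta)$.
\item Simplicity of $V$ gives $Y(u,z)w\neq0$ for nonzero $u,w$ (the Dong--Mason lemma), so $c(\alpha,\beta)\neq0$.
\item Associativity makes $c$ a $2$-cocycle $L\times L\to\mathbb{C}^{\times}$.
\item Skew symmetry gives $c(\alpha,\beta)/c(\beta,\alpha)=(-1)^{(\alpha,\beta)}=\varepsilon(\alpha,\beta)/\varepsilon(\beta,\alpha)$.
\end{itemize}

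Central extensions of $L$ by $\mathbb{C}^{\times}$ are determined up to coboundary by their commutator map. So $c=\varepsilon\cdot\delta f$ for some $f:L\to\mathbb{C}^{\times}$. Rescaling $v^{\alpha}\mapsto f(\alpha)^{-1}v^{\alpha}$ makes $e^{\alpha}\mapsto v^{\alpha}$, extended $M(1)$-linearly, preserve the vertex operators on generators, and hence a vertex operator algebra isomorphism $V_L\to V$.
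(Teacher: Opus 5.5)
The paper gives no proof of this proposition; it only cites the literature. Your outlines of (1) (locality of the generating fields plus the reconstruction theorem) and of (3) (uniqueness of Heisenberg intertwining operators, a $\mathbb{C}^\times$-valued cocycle with commutator $(-1)^{(\alpha,\beta)}$, then rescaling) are the standard ones.

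\textbf{The gap is the first step of (2).} You assert that restricting a weak $V_L$-module $W$ to $M(1)$ gives $W=\bigoplus_\lambda M(1,\lambda)\otimes\Omega_\lambda$, and you place the only difficulty in the semisimplicity of $h(0)$ on the vacuum space. But that decomposition needs $\hat{\mathfrak h}^+$ to act locally nilpotently, and this fails for weak $M(1)$-modules, which are just restricted level-one $\hat{\mathfrak h}$-modules. For a counterexample, induce from the character of $\mathfrak h\otimes\mathbb{C}[t]\oplus\mathbb{C}c$ with $c\mapsto 1$, $h(1)\mapsto\lambda(h)$ for some nonzero $\lambda\in\mathfrak h^*$, and $h(0)$ and $h(n)$ for $n\geq 2$ sent to $0$. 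The result is a restricted module on which $h(1)$ acts as $\lambda(h)$ plus a locally nilpotent operator, so its vacuum space is $0$.

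Everything you do afterwards presupposes the decomposition: reducing $Y(e^\alpha,z)$ to $e_\alpha z^{\alpha}$ on vacuum vectors, invertibility of $e_\alpha$, and integrality of the $h(0)$-eigenvalues. So for a general weak module the argument has nothing to start from. Local nilpotency has to come from the lattice part of $V_L$, not from $M(1)$. For instance, $V_L$ is $C_2$-cofinite by a direct computation. The spanning-set method for weak modules over $C_2$-cofinite algebras (Lemma 2.4 of \cite{Mi}, as developed by Buhl and by Abe--Buhl--Dong) then shows that every vector of a weak $V_L$-module generates an $\mathbb{N}$-gradable submodule, on which $h(n)$ with $n>0$ lowers degree. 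This is exactly the input that separates regularity from complete reducibility of $\mathbb{N}$-gradable modules. Your argument as written does cover the $\mathbb{N}$-gradable case, where local nilpotency is automatic, and once this input is added your semisimplicity and integrality argument goes through.

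\textbf{Two smaller corrections.}

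First, the twisted group algebra $\mathbb{C}^{\varepsilon}[L]$ is not semisimple, and its simple modules are not indexed by $L^\circ/L$. The span of the $e^{2\alpha}$ is a central Laurent polynomial subalgebra, so there is a continuum of simple modules and there are nonsplit extensions. What is true, and what you actually use, is the statement for $\mathfrak h$-graded modules $\Omega=\bigoplus_\lambda\Omega_\lambda$ with $e_\alpha\Omega_\lambda\subseteq\Omega_{\lambda+\alpha}$. Each $e_\alpha$ maps $\Omega_\lambda$ bijectively onto $\Omega_{\lambda+\alpha}$, so fixing $\lambda$ in a coset identifies $\bigoplus_{\alpha\in L}\Omega_{\lambda+\alpha}$ with $\mathbb{C}^{\varepsilon}[L]\otimes\Omega_\lambda$. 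This gives $\dim\Omega_\lambda$ copies of $V_{L+\lambda}$.

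Second, in (3) your map sends $\omega_L$ to $\omega_{\mathfrak h}$, so it is an isomorphism of vertex operator algebras only if $\omega_V=\omega_{\mathfrak h}$. This has to be part of the hypothesis. The shifted algebras $V_{L,h}$ with $0\neq h\in L^\circ$ from Section 5 are simple, contain $M(1)$, and are isomorphic to $V_L$ as $M(1)$-modules. Yet their central charge is $\operatorname{rank}L-12(h,h)$, so they are not isomorphic to $(V_L,\omega_L)$.
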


\begin{prop}\cite{DM2} Let $h\in L^{\circ}$. We set $\omega_h=\omega_L+h(-2){\bf 1}$. Then 
\begin{enumerate} 
    \item $V_{L,h}=(V_L,Y,{\bf 1},\omega_h)$ is a vertex operator algebra.
    \item The categories of weak, admissible, and ordinary $V_L$-modules are equivalent to the categories of weak, admissible, and ordinary $V_{L,h}$-modules, respectively. In particular, $V_{L,h}$ is a regular vertex operator algebra. 
    \item The shifted vertex operator algebra $V_{L,h}$ is self-dual if and only if $2h\in L$.
\end{enumerate}
    
\end{prop}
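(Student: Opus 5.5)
The statement is the Dong--Mason result on shifted lattice vertex operator algebras \cite{DM2}, with three parts. I would prove them in order.

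For part (1), I would check that $\omega_h=\omega_L+h(-2){\bf 1}$ is a conformal vector for the vertex algebra $V_L$. First compute the modes: $Y(h(-2){\bf 1},z)=\partial_z h(z)$, so $L_h(n)=L(n)-(n+1)h(n)$. Next verify the Virasoro relations. The cross terms $[L(m),h(n)]=-nh(m+n)$ and $[h(m),h(n)]=m\delta_{m+n,0}$ combine to give the Virasoro algebra with central charge $c=d-12\langle h,h\rangle$. The only real check is the cocycle term $(m+1)(n+1)m\langle h,h\rangle\delta_{m+n,0}$, which must be absorbed into $\frac{m^3-m}{12}(c_h-c)$. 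Then observe that $L_h(-1)=L(-1)$, so the translation axiom is unchanged.

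The grading operator is $L_h(0)=L(0)-h(0)$. On $M(1)\otimes e^\beta$ it acts by $\frac12\langle\beta,\beta\rangle-\langle h,\beta\rangle+n$. Because $h\in L^{\circ}$ and $L$ is even, these eigenvalues are integers. I expect the main obstacle to be the remaining axioms, namely finite-dimensional graded pieces and a grading bounded below. For these I would complete the square: $\frac12\langle\beta-h,\beta-h\rangle-\frac12\langle h,h\rangle$. Positive-definiteness then shows that only finitely many $\beta$ give weight at most $N$, and that all weights are at least $-\frac12\langle h,h\rangle$.

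For part (2), I would use the fact that weak modules are defined purely through the vertex algebra structure $Y$, with no reference to $\omega$. So the weak module categories for $V_L$ and $V_{L,h}$ coincide. Admissible and ordinary modules differ from weak modules only in the required grading. A simple $V_L$-module $V_{L+\lambda}$ is graded by $L_h(0)=L(0)-h(0)$, and $h(0)$ acts semisimply with eigenvalues in a coset of $\mathbb{Q}$. The same completing-the-square estimate then shows that $V_{L+\lambda}$ is ordinary for $V_{L,h}$, and the converse holds by symmetry. Regularity of $V_{L,h}$ follows from Proposition \ref{VVLiso}(2).

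For part (3), I would compute the contragredient of $V_{L,h}$ using the grading $L_h(0)$ and the adjoint $L_h(n)^*=L_h(-n)$. The lowest weight space of $V_{L,h}$ is spanned by the $e^\beta$ with $\beta$ closest to $h$. The dual module is $V_{L+\lambda}$ for some $\lambda$, and the shifted dual is $V_L$ twisted by $\beta\mapsto 2h-\beta$. Concretely, the contragredient form pairs $M(1)\otimes e^\beta$ with $M(1)\otimes e^{2h-\beta}$, because $h(n)^*$ picks up a sign and a shift from the $-(n+1)h(n)$ term. Hence $V_{L,h}'\cong V_{L+2h}$ as a $V_L$-module. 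This is isomorphic to $V_L$ if and only if $2h\in L$, by the classification in Proposition \ref{VVLiso}(2).
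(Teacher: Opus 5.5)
The paper does not prove this proposition; it is quoted from \cite{DM2}, so there is no internal argument to compare against. Your reconstruction is correct and is the natural direct proof.
\begin{itemize}
\item The modes are $L_h(n)=L(n)-(n+1)h(n)$. These give the Virasoro relations with $c_h=\operatorname{rank}L-12\langle h,h\rangle$, and $L_h(-1)=L(-1)$.
\item Completing the square, the weights are $\frac12\langle\beta-h,\beta-h\rangle-\frac12\langle h,h\rangle+n$. This gives integrality, finite-dimensional weight spaces and a lower bound.
\item Weak modules are literally unchanged. So complete reducibility of weak $V_L$-modules, together with each $V_{L+\lambda}$ being ordinary for the $L_h(0)$-grading, gives regularity.
\item The contragredient computation yields $V_{L,h}'\cong V_{L+2h}$.
\end{itemize}

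A few points should be stated more carefully.
\begin{itemize}
\item On $V_{L+\lambda}$, what you need is that the $L_h(0)$-eigenvalues lie in a single coset of $\mathbb{Z}$. Their differences $\langle\mu,\beta\rangle+\frac12\langle\beta,\beta\rangle-\langle h,\beta\rangle$ ($\mu\in L+\lambda$, $\beta\in L$) are integers. ``A coset of $\mathbb{Q}$'' is not the right statement.
\item Passing from simple modules to the full weak, admissible and ordinary categories uses complete reducibility of weak $V_L$-modules. For ordinary modules you also need that only finite direct sums of the $V_{L+\lambda}$ have finite-dimensional weight spaces.
\item In part (3), attribute the shift precisely. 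For $v=h'(-1)\mathbf{1}$ with $h'\in\mathfrak{h}$ one has $L_h(1)v=-2\langle h,h'\rangle\mathbf{1}$. Hence $e^{zL_h(1)}(-z^{-2})^{L_h(0)}v=-z^{-2}v+2\langle h,h'\rangle z^{-1}\mathbf{1}$. So the zero mode of $v$ on $V_{L,h}'$ is the transpose of $-h'(0)+2\langle h,h'\rangle$, and the $\mathfrak{h}$-weights of $V_{L,h}'$ are $2h-L=2h+L$.
\item Your phrase ``$h(n)^*$'' conflates the shift vector $h$ with a general $h'\in\mathfrak{h}$. The identity $L_h(n)^*=L_h(-n)$ is true but is not what drives the computation. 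The remark about the lowest weight space is not needed.
\item The fact that $V_{L+\mu}\cong V_{L+\nu}$ if and only if $\mu-\nu\in L$ is read off from the $h'(0)$-weights. This is slightly more than Proposition \ref{VVLiso}(2) states.
\item Self-duality is an isomorphism of $V_{L,h}$-modules. This coincides with isomorphism of weak $V_L$-modules because module maps involve only $Y$.
\end{itemize}
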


\begin{ex}\textbf{Affine vertex operator algebras and their modules}

Let $\mathfrak{g}$ be a finite-dimensional simple Lie algebra, $\mathfrak{h}\subseteq\mathfrak{g}$ a Cartan subalgebra, $\Phi$ the associated root system with simple roots $\Delta$ and $\langle\cdot,\cdot\rangle$ the nondegenerate symmetric invariant bilinear form on $\mathfrak{g}$ normalized so that the longest positive root $\theta\in\Phi$ satisfies $\langle\theta,\theta\rangle=2$. The corresponding affine Kac-Moody algebra $\hat{\mathfrak{g}}$ is defined to be $$\hat{\mathfrak{g}}=\mathfrak{g}\otimes\mathbb{C}[t,t^{-1}]\oplus \mathbb{C}K$$ where $K$ is central and $$[a(m),b(n)]=[a,b](m+n)+m\delta_{m+n,0}\langle a,b\rangle K,~(a,b\in\mathfrak{g}, m,n\in\mathbb{Z}).$$ Here $a(m)=a\otimes t^m$.

Highest weight irreducible $\hat{\mathfrak{g}}$-modules are parametrized by linear forms on $\mathfrak{h}\oplus\mathbb{C}K$. These in turn correspond to pairs $(\lambda,k)$ where $\lambda\in\mathfrak{h}^*$ is a weight and $K\mapsto k$. The corresponding $\hat{\mathfrak{g}}$-module is denoted by $L(k,\lambda)$. Note that $K$ acts on $L(k,\lambda)$ as multiplication by $k$. One knows that $L(k,\lambda)$ is integrable if and only if $k$ is a nonnegative integer and $\lambda$ is a dominant integral weight.
\begin{prop}\label{rationalaffine}\cite{DL, DLM0, FZ, LLi, Li0}
 \begin{enumerate}\item For any scalar $k$ that does not equal minus the dual Coxeter number, $L(k,0)$ carries the structure of a vertex operator algebra of strong CFT-type. It is generated by the weight 1 subspace $L(k,0)_1$, which is isomorphic (as a Lie algebra) to $\mathfrak{g}$. 
 \item If $k$ is a positive integer, $L(k,0)$ is a rational vertex operator algebra whose irreducible modules are precisely the integrable highest weight modules $L(k,\lambda)$ satisfying $\langle \lambda,\theta\rangle\leq k$, where $\theta$ is the longest positive root in $\Delta$. In addition, $L(k,0)$ is regular.   
 \end{enumerate}
\end{prop}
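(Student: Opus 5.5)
The statement just above is Proposition~\ref{rationalaffine}, a collection of standard facts about affine vertex operator algebras that the paper quotes from the literature. The natural approach is to assemble it from the cited sources rather than reprove it from scratch. I would organize the argument in three stages.

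\emph{Stage one: the vertex operator algebra structure.} Start from the generalized Verma module
$$V(k,0)=U(\hat{\mathfrak{g}})\otimes_{U(\mathfrak{g}\otimes\mathbb{C}[t]\oplus\mathbb{C}K)}\mathbb{C}_k .$$
\begin{enumerate}
\item Show that $V(k,0)$ is a vertex algebra generated by the fields $a(z)=\sum_n a(n)z^{-n-1}$ for $a\in\mathfrak{g}$. These fields are mutually local, so the existence theorem for vertex algebras generated by local fields (as in \cite{LLi}) applies.
\item For $k\neq -h^\vee$, define the Sugawara vector
$$\omega=\frac{1}{2(k+h^\vee)}\sum_i u_i(-1)u^i(-1){\bf 1},$$
where $\{u_i\}$ and $\{u^i\}$ are dual bases of $\mathfrak{g}$.
\item Verify the Virasoro relations and the identity $[L(m),a(n)]=-n\,a(m+n)$. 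The key computation is the normal-ordered commutator, in which the Casimir eigenvalue $2h^\vee$ on the adjoint representation appears. This is exactly why the level $k=-h^\vee$ must be excluded.
\item The grading by $L(0)$ is then the PBW degree. Hence $V(k,0)_0=\mathbb{C}{\bf 1}$ and $V(k,0)_1=\mathfrak{g}(-1){\bf 1}\cong\mathfrak{g}$, with $a_0b=[a,b]$ and $a_1b=k\langle a,b\rangle{\bf 1}$.
\item Pass to the quotient $L(k,0)$ of $V(k,0)$ by its maximal proper ideal (equivalently, its maximal proper graded submodule). This ideal meets neither degree $0$ nor degree $1$ nontrivially, because $\mathfrak{g}$ is simple and $k\neq 0$. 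At $k=0$ the quotient is just $\mathbb{C}{\bf 1}$, and that degenerate case is handled separately.
\item Conclude that $L(k,0)$ is of strong CFT type and is generated by $L(k,0)_1\cong\mathfrak{g}$.
\end{enumerate}

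\emph{Stage two: the module classification at positive integer level (following \cite{FZ}).} Take $k\in\mathbb{Z}_{>0}$.
\begin{enumerate}
\item The maximal ideal of $V(k,0)$ is generated by the singular vector $e_\theta(-1)^{k+1}{\bf 1}$.
\item Compute Zhu's algebra:
$$A(L(k,0))\cong U(\mathfrak{g})/\langle e_\theta^{k+1}\rangle .$$
\item A highest weight module $L(\lambda)$ for $\mathfrak{g}$ is a module for this quotient exactly when $e_\theta^{k+1}$ annihilates it. By $\mathfrak{sl}_2$-theory applied along the $\theta$-string, this holds if and only if $\lambda$ is dominant integral with $\langle\lambda,\theta\rangle\le k$.
\item The quotient algebra is finite-dimensional and semisimple.
\end{enumerate}

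\emph{Stage three: rationality and regularity.}
\begin{enumerate}
\item Every weak module is integrable as a $\hat{\mathfrak{g}}$-module. The reason is that $Y(e_\theta(-1)^{k+1}{\bf 1},z)=e_\theta(z)^{k+1}$ acts as zero, which makes $e_\theta(n)$ locally nilpotent; the Weyl group action then propagates this to all real root vectors.
\item Complete reducibility of integrable highest-weight $\hat{\mathfrak{g}}$-modules in category $\mathcal{O}$ (Kac) gives rationality \cite{DLM0}.
\item Regularity requires more: complete reducibility of arbitrary weak modules, including those that are not bounded below. Here I would follow \cite{DLM0}. Show that any weak module is a sum of ordinary submodules by generating from vectors annihilated by $\mathfrak{g}\otimes t\mathbb{C}[t]$; local integrability guarantees such vectors exist. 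Then apply complete reducibility.
\item Alternatively, \cite{Li0} derives regularity from $C_2$-cofiniteness together with rationality. $C_2$-cofiniteness holds here because $e_\theta(-1)^{k+1}{\bf 1}$ together with its $\mathfrak{g}$-orbit forces nilpotency in $L(k,0)/C_2$.
\end{enumerate}

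\emph{Main obstacle.} I expect the regularity step to be hardest, specifically handling weak modules that are not a priori graded. I would cite \cite{DLM0} at this point rather than rederive it.
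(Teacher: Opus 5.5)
The paper gives no proof of this proposition; it is quoted from \cite{DL, DLM0, FZ, LLi, Li0}. Your plan of assembling it from Frenkel--Zhu, Dong--Li--Mason and Kac is therefore in line with the paper, and Stage one (for $k\neq 0$) and your $C_2$-cofiniteness argument are fine. As written, however, three steps do not go through.

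First, $k=0$ cannot be ``handled separately''. Here $L(0,0)=\mathbb{C}{\bf 1}$, so $L(0,0)_1=0\not\cong\mathfrak{g}$, and part (1) is simply false at $k=0$. The hypothesis must be $k\notin\{0,-h^\vee\}$; this is harmless for the paper, which only uses $k\in\mathbb{Z}_{>0}$.

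Second, in Stage two, $\mathfrak{sl}_2$-theory for the triple $e_\theta,f_\theta,h_\theta$ gives $\langle\lambda,\theta\rangle\in\{0,1,\dots,k\}$. It does not give $\langle\lambda,\alpha_i^\vee\rangle\in\mathbb{Z}_{\geq 0}$ for the individual simple roots $\alpha_i$, which is the ``only if'' half of your claim. Nor does it show that an arbitrary irreducible $A(L(k,0))$-module is a highest-weight module. An extra input is needed, and either of two routes works.
\begin{enumerate}
\item Use that the two-sided ideal $J=\langle e_\theta^{k+1}\rangle$ is $\operatorname{ad}\mathfrak{g}$-stable, hence stable under the adjoint group. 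Then $e_\beta^{k+1}\in J$ for every long root $\beta$ of either sign. This forces $|\langle\mu,\beta^\vee\rangle|\leq k$ for every weight $\mu$ of $L(\lambda)$. Since the long roots span $\mathfrak{h}^*$, $L(\lambda)$ is finite-dimensional, so $\lambda$ is dominant integral.
\item Prove $C_2$-cofiniteness first (your Stage three, step 4), so that $A(L(k,0))$ is finite-dimensional. Then its irreducible modules are finite-dimensional $L(\lambda)$'s, and the $\theta$-string argument finishes the classification.
\end{enumerate}
The second route also justifies your unproved assertion that the quotient algebra is finite-dimensional and semisimple: a finite-dimensional quotient of $U(\mathfrak{g})$ is semisimple by Weyl's theorem.

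Third, in Stage three the Weyl group cannot carry local nilpotence from $\theta$ to \emph{all} real root vectors. Weyl groups preserve root length, so from $\theta$ you reach only the long real roots. For $\mathfrak{g}$ of type $B_l$, $C_l$, $F_4$ or $G_2$ the short root vectors, including the short simple $f_i$, are never reached; the subalgebra generated by long root vectors is proper in these types. Moreover, the conjugating automorphisms can only be realized on $L(k,0)$, where integrability is known, not on the weak module under study.

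The repair is to use integrability of $L(k,0)$ root by root. For each root $\alpha$ of $\mathfrak{g}$, $e_\alpha(-1)$ is a real root vector, so it is locally nilpotent on $L(k,0)$. $\mathfrak{sl}_2$-theory at ${\bf 1}$ then gives $e_\alpha(-1)^{N_\alpha}{\bf 1}=0$ with $N_\alpha=\frac{2k}{\langle\alpha,\alpha\rangle}+1$. Hence $e_\alpha(z)^{N_\alpha}=0$ on every weak module, and your nilpotence argument for $\theta$ applies verbatim to every $e_\alpha(n)$.

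Finally, your ``alternative'' deduction of regularity from rationality plus $C_2$-cofiniteness is the Abe--Buhl--Dong theorem for CFT-type vertex operator algebras. That is the result you would have to invoke at that point.
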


\end{ex}

\subsection{On Centralizers and the Center of $\mathbb{N}$-graded vertex operator algebras}

Now, we recall the notions of the centralizer and the center of vertex algebras from \cite{LLi}. Let $V$ be a vertex algebra. Given a subset $S$ of $V$, we define $$C_V(S)=\{v\in V~|~[Y(v,x_1),Y(s,x_2)]=0\text{ for all }s\in S\}.$$ We call $C_V(S)$ the centralizer or commutant of $S$ in $V$. In the case $S=V$, we call $C_V(V)$, the center of $V$, and write $C(V)=C_V(V)$. Notice that we have 
\begin{eqnarray*}
C_V(S)&=&\{v\in V~|~v_ns=0\text{ for all }s\in S,~n\geq 0\}\\
&=&\{v\in V~|~s_nv=0\text{ for all }s\in S,~n\geq 0\}.
\end{eqnarray*}
$C_V(S)$ is a vertex subalgebra. As in the classical theories, we have $C_V(S)=C_V(\langle S\rangle)$
\begin{dfn}\cite{LLi} Let $(V,Y,{\bf 1},\omega)$ be a vertex operator algebra. A vertex operator subalgebra with a possibly different conformal vector is a vertex subalgebra $(U,Y,{\bf 1})$ of $V$ together with an element $\omega'$ of $U$ such that $(U,Y,{\bf 1},\omega')$ is a vertex operator algebra (with a grading possibly different from that of $V$). By abuse of terminology, we shall sometimes call $(U,Y,{\bf 1},\omega')$ a vertex operator subalgebra of $V$ if the context makes the situation clear.
\end{dfn}
\begin{rem}\cite{LLi} Let $(U,Y,{\bf 1},\omega')$ be a vertex operator subalgebra of a vertex operator algebra $(V,Y,{\bf 1},\omega)$, with a possibly different conformal vector. We shall write $Y(\omega',x)=\sum_{n\in\mathbb{Z}}L'(n)x^{-n-2}$, where we view the operators $L'(n)$ as acting on $V$, not just on $U$. Since $(U,Y,{\bf 1},\omega')$ is a vertex operator algebra, the operators $L'(n)$ acting on $U$ give rise to a representation of the Virasoro algebra on $U$ with the central element acting as the scalar $c_U$ on $U$, and the $L'(-1)$-derivative and $L'(-1)$-bracket formulas hold on $U$. \end{rem}
\begin{prop}\cite{LLi} Let $(U,Y, {\bf 1},\omega')$ be a vertex operator subalgebra of a vertex operator algebra $(V,Y,{\bf 1},\omega)$. Then for $u\in U$, $[L'(-1),Y(u,x)]=Y(L'(-1)u,x)=\frac{d}{dx}Y(u,x)$ acting on $V$ where $Y(\omega',x)=\sum_{n\in\mathbb{Z}}L'(n)x^{-n-2}$. In addition, $$[L'(m),L'(n)]=(m-n)L'(m+n)+\frac{1}{12}(m^3-m)\delta_{m+n,0}c_U$$ acting on $V$, and $C_V(U)=Ker_VL'(-1)$, the kernel of $L'(-1)$ acting on $V$.\end{prop}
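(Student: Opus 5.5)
The statement has three parts. First, the Virasoro relations for the $L'(n)$ hold on all of $V$, not only on $U$. Second, $L'(-1)$ acts as a derivation-type operator on $V$, meaning $[L'(-1),Y(u,x)]=Y(L'(-1)u,x)=\frac{d}{dx}Y(u,x)$ for $u\in U$. Third, $C_V(U)=\Ker_V L'(-1)$. My plan is to prove all three from the commutator formula and the skew-symmetry of the vertex algebra $V$, using only properties of $\omega'$ that are visible inside $U$.

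\textbf{Step 1: the $L'(-1)$ relations.} Since $\omega'\in U$ and $U$ is a vertex operator algebra, we have $L'(-1)u=\omega'_0u=D u=u_{-2}{\bf 1}$ for every $u\in U$. For $u\in U$, the commutator formula in $V$ gives
\[
[\omega'_0,Y(u,x)]=\sum_{j\ge0}\binom{0}{j}x^{-j}Y(\omega'_j u,x)=Y(\omega'_0u,x)=Y(Du,x).
\]
The right-hand side equals $\frac{d}{dx}Y(u,x)$ by the $D$-derivative property of $V$. This settles the second part.

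\textbf{Step 2: the Virasoro relations on $V$.} The commutator formula for $[\omega'_{m+1},\omega'_{n+1}]$ involves only the products $\omega'_j\omega'$ with $j\ge0$. These products lie in $U$, and there they are the standard ones: $\omega'_0\omega'=D\omega'$, $\omega'_1\omega'=2\omega'$, $\omega'_2\omega'=0$, $\omega'_3\omega'=\frac{c_U}{2}{\bf 1}$, and $\omega'_j\omega'=0$ for $j\ge4$. Because the commutator formula is an identity of operators on all of $V$, expanding it yields the Virasoro bracket with central charge $c_U$ acting on $V$. This step is a routine computation.

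\textbf{Step 3: $C_V(U)\subseteq\Ker_V L'(-1)$.} If $v\in C_V(U)$, then $\omega'_0v=0$ because $\omega'\in U$. Hence $L'(-1)v=0$.

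\textbf{Step 4: $\Ker_V L'(-1)\subseteq C_V(U)$.} This is the main step. Suppose $L'(-1)v=0$. Skew-symmetry gives $Y(v,x)u=e^{xD}Y(u,-x)v$, but this expresses the action through $D$ on $V$, not $L'(-1)$, so it does not apply directly. Instead I will use Step 1 in the form $[L'(-1),u_n]=-nu_{n-1}$ for $u\in U$. Applying this to $v$ and using $L'(-1)v=0$ gives
\[
L'(-1)u_nv=-n\,u_{n-1}v .
\]
To conclude that $u_nv=0$ for $n\ge0$, I will argue through $v_n$ rather than $u_n$, using the centralizer characterization $C_V(S)=\{v\mid v_n s=0\ \forall s\in S,\ n\ge0\}$. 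Apply skew-symmetry in the form
\[
v_n u=\sum_{i\ge0}(-1)^{n+i+1}\frac{D^i}{i!}\,u_{n+i}v .
\]
The plan is to show that $Y(u,x)v$ involves no negative powers of $x$, i.e. that $Y(u,x)v\in V[[x]]$. From $[L'(-1),Y(u,x)]=\frac{d}{dx}Y(u,x)$ and $L'(-1)v=0$ we get $L'(-1)Y(u,x)v=\frac{d}{dx}Y(u,x)v$. Iterating this and using the truncation $u_nv=0$ for $n\gg0$, the top nonvanishing mode $u_Nv$ with $N\ge0$ would satisfy $L'(-1)u_Nv=-Nu_{N-1}v$. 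On its own this gives no contradiction. The cleaner route is the standard argument of Li's Lemma, which I would invoke here: $e^{xL'(-1)}Y(u,z)v=Y(u,z+x)v$. The left side is a power series in $x$ applied to an element with finitely many negative powers of $z$, while the right side expands $(z+x)^{-n-1}$. Setting $z\to0$, which is legitimate because the left side is polynomial-controlled, forces regularity at $z=0$. In other words, $Y(u,z)v\in V[[z]]$, so $u_nv=0$ for all $n\ge0$, and therefore $v\in C_V(U)$.

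I expect the main obstacle to be making this $z\to0$ regularity argument rigorous. It is the place where the proof must use $L'(-1)$ rather than $D$, and where exponential translation has to be justified on a formal-series level.
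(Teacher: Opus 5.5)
Your Steps 1--3 are correct. The $m=0$ commutator formula, together with $L'(-1)u=\omega'_0u=u_{-2}{\bf 1}$ for $u\in U$, gives the derivative property on all of $V$. The Virasoro relations follow from the products $\omega'_j\omega'$ ($j\ge0$), which can be computed inside $U$. The inclusion $C_V(U)\subseteq\Ker_V L'(-1)$ is immediate from $\omega'\in U$.

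\textbf{The gap is in Step 4}, the only nontrivial inclusion, and it comes from one misstep. You wrote $L'(-1)u_nv=-n\,u_{n-1}v$ at index $n=N$ and correctly observed that this gives nothing. The relation that closes the argument is the same identity one index higher. Let $N\ge0$ be the largest index with $u_Nv\neq0$; it exists by truncation. Then $u_{N+1}v=0$, so
\[
0=L'(-1)\,u_{N+1}v=-(N+1)\,u_Nv .
\]
Since $N+1\neq0$, this forces $u_Nv=0$, a contradiction. Equivalently, downward induction from the truncation index gives $u_nv=0$ for all $n\ge0$ and all $u\in U$. By the characterization $C_V(S)=\{v\mid s_nv=0,\ s\in S,\ n\ge0\}$ recorded in the paper, this is exactly $v\in C_V(U)$. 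The detour through skew-symmetry and the modes $v_n$ is unnecessary.

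The replacement argument you propose is not a proof as written. "Setting $z\to0$" in $e^{xL'(-1)}Y(u,z)v=Y(u,z+x)v$ is not a legitimate operation on series with negative powers of $z$, and "polynomial-controlled" has no precise meaning. The rigorous version is a coefficient comparison:
\begin{itemize}
\item On the left, the $z$-powers are bounded below by $z^{-N-1}$ in every $x$-degree, so the coefficient of $x\,z^{-N-2}$ is $L'(-1)u_{N+1}v=0$.
\item On the right, the only contribution to $x\,z^{-N-2}$ comes from $u_Nv\,(z+x)^{-N-1}$, and it equals $-(N+1)\,u_Nv$.
\end{itemize}
Equating the two gives precisely the displayed relation above. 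So your instinct about translation invariance was sound, but the decisive step is the one-line index shift you dismissed; with it inserted, the proof is complete. (The paper gives no proof of this statement; it quotes it from Lepowsky--Li.)
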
 

We now prove the following theorems. 
\begin{thm}Let $(V,Y,{\bf 1},\omega)$ be a nonzero vertex operator algebra such that $V_n=0$ for $n<0$. Let $(U,Y,{\bf 1},\omega')$ be a vertex operator subalgebra of $V$ and assume that $\omega'\in U\cap V_{(2)}$ and that $L(1)\omega'=0$, $L(2)\omega'\in\mathbb{C}{\bf 1}$. We set $Y(\omega',x)=\sum_{n\in\mathbb{Z}}L'(n)x^{-n-2}$ acting on $V$. Then the gradings of $V$ and $U$ are compatible (i.e., $L(0)=L'(0)$ on $U$), and more generally, $L(n)=L'(n)$ on $U$ for all $n\geq -1$. Now, we set $\omega''=\omega-\omega'$. Then $\omega''\in C_V(U)$, and $(C_V(U),Y,{\bf 1},\omega'')$ is a vertex operator subalgebra of $V$ of central charge equal to $c_V-c_U$, and we have $\omega''\in C_V(U)\cap V_{(2)}$ and $L(1)\omega''=0$.\end{thm}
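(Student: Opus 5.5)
This is the standard coset argument (Frenkel--Zhu, Lepowsky--Li), adapted to the hypotheses at hand. The plan has three stages: first compare the $L(n)$ and $L'(n)$ on $U$, then show $\omega''$ commutes with $U$, and finally verify that $\omega''$ is a conformal vector for $C_V(U)$.

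\textbf{Step 1: $L(n)=L'(n)$ on $U$ for $n\geq -1$.} I would begin with the commutator formula
$$[L(m),Y(\omega',x)]=\sum_{i\ge0}\binom{m+1}{i}x^{m+1-i}Y(L(i-1)\omega',x).$$
Since $\omega'\in V_{(2)}$, we have $L(-1)\omega'=L'(-1)\omega'$; this holds because both operators act as the translation $D$ of the vertex algebra on $U$. We also have $L(0)\omega'=2\omega'$, $L(1)\omega'=0$, $L(2)\omega'=\tfrac{c'}{2}{\bf 1}$ for some scalar $c'$, and $L(i)\omega'=0$ for $i\ge3$ because $V_n=0$ for $n<0$. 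Substituting these gives
$$[L(m),L'(n)]=(m-n)L'(m+n)+\tfrac{c'}{12}(m^3-m)\delta_{m+n,0}.$$

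Now set $\Delta(n)=L(n)-L'(n)$. For $n=-1$, $\Delta(-1)$ vanishes on $U$ by the translation argument above. For the other $n$, I would use that $\Delta(n)$ kills ${\bf 1}$ (for $n\ge -1$) and compute its commutator with $Y(u,x)$ for $u\in U$:
$$[\Delta(n),Y(u,x)]=\sum_{i}\binom{n+1}{i}x^{n+1-i}Y\big((L(i-1)-L'(i-1))u,x\big).$$
This sets up an induction on weights. For $n=0$, the formula gives $[\Delta(0),Y(u,x)]=Y(\Delta(0)u,x)$. Since $L'(0)\omega'=2\omega'=L(0)\omega'$, $\Delta(0)$ kills $\omega'$. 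Because $\Delta(0)$ acts as a derivation of $U$-products and commutes appropriately with the $L'(n)$, I expect that the derivation $\Delta(0)|_U$ commutes with $L'(0)$ and preserves the $L'(-1)$-structure. The natural route from here is to show that $\Delta(0)$ is a derivation of $U$ vanishing on the generating field $\omega'$ and satisfying $[\Delta(0),L'(n)]=0$.

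\textbf{Main obstacle.} The honest difficulty in Step 1 is that a derivation killing $\omega'$ need not kill all of $U$. What I would actually use is the following. Since $L(0)$ and $L'(0)$ commute (from the commutator relation with $m=n=0$), and $L(0)$ acts semisimply with $L(0)\ge0$, I would compare the two gradings on $U$ via the relation $[L(0),L'(n)]=-nL'(n)$. That relation says $L'(n)$ lowers $L(0)$-weight by $n$, and an $\mathfrak{sl}_2$/lowest-weight argument combined with $V_n=0$ for $n<0$ should then force $L(0)=L'(0)$ on $U$. I expect this comparison to be the crux, and following Lepowsky--Li closely is the fallback. Once $L(0)=L'(0)$ on $U$ is established, the $m=1$ and $m=2$ cases follow by the same commutator formula, using $\Delta(n){\bf 1}=0$ together with the iterate formula.

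\textbf{Step 2: $\omega''\in C_V(U)$.} By the last proposition above, $C_V(U)=\Ker_V L'(-1)$. Then
$$L'(-1)\omega''=L'(-1)\omega-L'(-1)\omega'.$$
By skew symmetry, $\omega'_0\omega=\sum_{i\ge0}(-1)^{i+1}D^{(i)}\omega_i\omega'$. Here $\omega_i\omega'=L(i-1)\omega'$, and Step 1 gives $L(i-1)=L'(i-1)$ on $\omega'$, so this sum equals the corresponding expression with $\omega'$ in place of $\omega$, which is $\omega'_0\omega'$. Hence $L'(-1)\omega''=0$.

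\textbf{Step 3: $C_V(U)$ is a vertex operator algebra with conformal vector $\omega''$.} Since $\omega''$ commutes with $U$, we get $L''(n)=L(n)-L'(n)$, and these satisfy the Virasoro relations with central charge $c_V-c_U$, because the cross terms cancel by the commutator formula of Step 1. On $C_V(U)$ we have $L'(-1)=0$ and $L'(0)=\omega'_1=0$ (skew symmetry again), so $L''(0)=L(0)$ and $L''(-1)=L(-1)$ there. These give the grading and derivative axioms. Finally, $L(1)\omega''=L(1)\omega-L(1)\omega'=0$, and $\omega''\in V_{(2)}$ is immediate.
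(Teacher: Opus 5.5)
\textbf{Gap: Step 1 is unproved, and Steps 2 and 3 are built on it.} Neither route you suggest can close it.

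\begin{itemize}
\item A derivation vanishing on $\omega'$ need not vanish on $U$, since $U$ is generally not generated by $\omega'$. Moreover, $\Delta(0)=L(0)-L'(0)$ is not known to preserve $U$, so there is no induction inside $U$ to run.
\item The $\mathfrak{sl}_2$/lowest-weight idea fails for a structural reason. By your own commutator formula (the central term vanishes at $m=0$), $L''(0)=L(0)-L'(0)$ commutes with $L(0)$ and with every $L'(n)$. So weight considerations with respect to these operators, together with $L(0)\geq 0$ and $V_n=0$ for $n<0$, say nothing about the eigenvalues of $L''(0)$ on $U$.
\item ``Comparing the two gradings on $U$'' presupposes that $U$ is $L(0)$-stable, which is part of the conclusion.
\end{itemize}
What actually forces $L''(n)|_U=0$ for $n\geq -1$ is that $\omega''$ commutes with all of $U$, not merely with $\omega'$.

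\textbf{The fix: your Step 2 does not need Step 1, so reverse the order.} This is what the paper does. In $\omega'_0\omega=\sum_{i\geq 0}(-1)^{i+1}D^{(i)}L(i-1)\omega'$, only $L(-1)\omega'=D\omega'$, $L(0)\omega'=2\omega'$ and $L(1)\omega'=0$ contribute. Writing $L(2)\omega'=\beta{\bf 1}$, the $i=3$ term is $\beta D^{(3)}{\bf 1}=0$. For $i\geq 4$, $L(i-1)\omega'\in V_{(3-i)}=0$.

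Hence $\omega'_0\omega=D\omega'=\omega'_0\omega'$, so $L'(-1)\omega''=0$ and $\omega''\in C_V(U)=\mathrm{Ker}_V L'(-1)$, with no input from Step 1. The paper gets the same from the commutator formula you already have: $[L''(-2),L'(-1)]=0$, hence $L'(-1)\omega''=L''(-2)L'(-1){\bf 1}=0$.

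Step 1 then follows in one line: $L''(n)u=\omega''_{n+1}u=0$ for $u\in U$ and $n\geq -1$. In particular $L(2)\omega'=L'(2)\omega'=\frac{c_U}{2}{\bf 1}$, i.e.\ $c'=c_U$; the paper writes this as $L''(2)\omega'=0$.

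Your Step 3 silently needs $c'=c_U$. Without it, the cross terms in $[L''(m),L''(n)]$ give central charge $c_V-2c'+c_U$ rather than $c_V-c_U$. With the reordering, the rest of Steps 2 and 3 goes through.
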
\begin{proof} We set $Y(\omega'',x)=\sum_{n\in\mathbb{Z}}L''(n)x^{-n-2}$ acting on $V$ so that $L''(n)=L(n)-L'(n)$ for $n\in\mathbb{Z}$. For simplicity we set $L(2)\omega'=\beta {\bf 1}$ where $\beta\in\mathbb{C}$.
Observe that for $m,n\in\mathbb{Z}$,
\begin{eqnarray*}
{[L(m),L'(n)]}&=&[\omega_{m+1},\omega'_{n+1}]\\
&=&\sum_{i=0}^{\infty}\binom{m+1}{i}(\omega_i\omega')_{m+n+2-i}\\
&=&(L(-1)\omega')_{m+n+2}+ (m+1)(L(0)\omega')_{m+n+1}\\
&&+\sum_{i=3}^{\infty}\binom{m+1}{i}(L(i-1)\omega')_{m+n+2-i}\\
&=&-(m+n+2)\omega'_{m+n+1}+2(m+1)\omega'_{m+n+1}\\
&&+\delta_{m+n,0}\binom{m+1}{3}\beta {\bf 1}_{m+n-1}\\
&=&(m-n)L'(m+n)+\delta_{m+n,0}\frac{m^3-m}{6}\beta {\bf 1}_{-1}\\
&=&(m-n)L'(m+n)+\delta_{m+n,0}\frac{m^3-m}{12}2\beta {\bf 1}_{-1}.
\end{eqnarray*}
This implies that 
\begin{eqnarray*}
    {[L^{''}(m),L'(n)]}&=&[L(m)-L'(m),L'(n)]\\
    &=&[L(m),L'(n)]-[L'(m),L'(n)]\\
    &=&(m-n)L'(m+n)+\delta_{m+n,0}\frac{m^3-m}{12}2\beta {\bf 1}_{-1}\\
    &&-((m-n)L'(m+n)+\delta_{m+n,0}\frac{m^3-m}{12}c_U)\\
    &=&\delta_{m+n,0}\frac{m^3-m}{12}(2\beta {\bf 1}_{-1}-c_U).\\
\end{eqnarray*}
Therefore, $[L''(m),L'(n)]=0$ when $m\neq -n$ and 
$L'(-1)\omega''=L'(-1)L''(-2){\bf 1}=L''(-2)L'(-1){\bf 1}=0$. We have that $\omega''\in Ker_VL'(-1)=C_V(U)$. This implies that $L''(n)u=\omega_{n+1}u=0$ for all $u\in U$, $n\geq -1$. Consequently, for $n\geq 1$, we have that $L(n)=L'(n)$ on $U$.
In addition, we have $L'(n)\omega''=0$ and $L''(n)\omega'=0$ for $n\geq -1$. Moreover, we have 
$$0=L''(2)\omega'=L''(2)L'(-2){\bf 1}=\frac{1}{2}(2\beta{\bf 1}_{-1}-c_U).$$ Hence $\beta=\frac{1}{2}c_U$.

Since $[L'(m),L'(n)]=(m-n)L'(m+n)+\frac{1}{12}(m^3-m)\delta_{m+n,0}c_U$, we have
\begin{eqnarray*}
&&[L''(m),L''(n)]\\
&=&[L(m)-L'(m),L(n)-L'(n)]\\
&=&(m-n)L(m+n)+\frac{1}{12}(m^3-m)\delta_{m+n,0}c_V+(m-n)L'(m+n)\\
&&+\frac{1}{12}(m^3-m)\delta_{m+n,0}c_U-[L(m),L'(n)]-[L'(m),L(n)]\\
&=&(m-n)(L(m+n)-L'(m+n))+\frac{1}{12}(m^3-m)\delta_{m+n,0}(c_V+c_U)\\
&&-\delta_{m+n,0}\frac{m^3-m}{12}2\beta {\bf 1}_{-1}+\delta_{m+n,0}\frac{n^3-n}{12}2\beta{\bf 1}_{-1}\\
&=&(m-n)L''(m+n)+\frac{1}{12}(m^3-m)\delta_{m+n,0}(c_V-c_U).
\end{eqnarray*}

For $v\in C_V(U)$, and $n\geq -1$, $L''(n)v=L(n)v-L'(n)v=L(n)v$. In particular, we have $L''(-1)v=L(-1)v$ and $L''(0)v=L(0)v$. Consequently, $$Y(L''(-1)v,x)=\frac{d}{dx}Y(v,x)$$ for $v\in C_V(U)$ and $C_V(U)$ is $L(0)$-stable and hence graded (by the grading of $V$), with the grading given by the $L''(0)$-eigenvalues. We can conclude that $(C_V(U),Y,{\bf 1},\omega'')$ is a vertex operator algebra of central charge equal to $c_V-c_U$. 
\end{proof}
\begin{thm}Let $(V,Y,{\bf 1},\omega)$ be a nonzero vertex operator algebra such that $V_n=0$ for $n<0$. Let $(U,Y,{\bf 1},\omega')$ be a vertex operator subalgebra of $V$ and assume that $\omega'\in U\cap V_{(2)}$ and that $L(1)\omega'=0$, $L(2)\omega'\in\mathbb{C}{\bf 1}$. We set $Y(\omega',x)=\sum_{n\in\mathbb{Z}}L'(n)x^{-n-2}$ acting on all of $V$. Then $C_V(C_V(U))=U$ if and only if $(U,Y,{\bf 1},\omega')$ is a maximal in the set that $(T,Y,{\bf 1},\omega')$ is a vertex operator subalgebra of $V$, then $T\subset U$. \end{thm}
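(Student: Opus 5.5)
The plan is to use the preceding theorem twice: once for $U$ with conformal vector $\omega'$, and once for $C_V(U)$ with conformal vector $\omega''=\omega-\omega'$. Two facts carry the whole argument. The first is that $C_V(C_V(U))\supseteq U$; this is immediate, because the centralizer condition $[Y(v,x_1),Y(s,x_2)]=0$ is symmetric in $v$ and $s$. The second is the characterization $C_V(W)=\Ker_V L_W(-1)$ for a vertex operator subalgebra $W$ with conformal vector $\omega_W$, which was recalled from \cite{LLi}.

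\textbf{Setting up the double commutant.} By the previous theorem, $(C_V(U),Y,{\bf 1},\omega'')$ is a vertex operator subalgebra. It satisfies $\omega''\in V_{(2)}$ and $L(1)\omega''=0$, and the proof there shows $L(2)\omega'=\tfrac12 c_U{\bf 1}$. Hence $L(2)\omega''=\tfrac12(c_V-c_U){\bf 1}\in\mathbb{C}{\bf 1}$. The theorem therefore applies to $C_V(U)$ as well. It gives that $C_V(C_V(U))$ is a vertex operator subalgebra with conformal vector $\omega-\omega''=\omega'$, and that $C_V(C_V(U))=\Ker_V L''(-1)$. Note that $U\subseteq C_V(C_V(U))$, and that $\omega'$ is the conformal vector of both.

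\textbf{($\Rightarrow$).} Assume $C_V(C_V(U))=U$, and let $(T,Y,{\bf 1},\omega')$ be a vertex operator subalgebra of $V$ with $U\subseteq T$ (this is how I read the maximality condition). I want to show $T\subseteq\Ker L''(-1)=C_V(C_V(U))=U$. Since $\omega'$ is the conformal vector of $T$, the cited proposition gives $[L'(-1),Y(t,x)]=\frac{d}{dx}Y(t,x)$ on $V$ for every $t\in T$. Also $L(-1)=L'(-1)+L''(-1)$, and $L(-1)$ satisfies the same derivative property. So $L''(-1)t=L(-1)t-L'(-1)t$. Here $L(-1)t=t_{-2}{\bf 1}$, and $L'(-1)t=t_{-2}{\bf 1}$ also holds, because $L'(-1)$ is the translation operator of $T$ applied to $t$, computed as $[L'(-1),Y(t,x)]{\bf 1}$ at $x=0$ together with $L'(-1){\bf 1}=0$. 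Hence $L''(-1)t=0$, so $T\subseteq C_V(C_V(U))=U$, and $U$ is maximal.

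\textbf{($\Leftarrow$).} Assume $U$ is maximal. Then $T:=C_V(C_V(U))$ is a vertex operator subalgebra with conformal vector $\omega'$ that contains $U$, so maximality gives $T\subseteq U$. Combined with $U\subseteq T$, this yields equality.

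\textbf{Main obstacle.} The delicate step is the identity $L'(-1)t=L(-1)t$ for all $t\in T$, i.e., that the translation operators of $V$ and of any subalgebra sharing the conformal vector $\omega'$ agree on that subalgebra. I would derive it from the proposition of \cite{LLi} by applying $[L'(-1),Y(t,x)]=Y(L'(-1)t,x)$ to ${\bf 1}$, using $L'(-1){\bf 1}=\omega'_0{\bf 1}=0$, and comparing constant terms. A secondary check is that $L(2)\omega''\in\mathbb{C}{\bf 1}$ and $\omega''\in V_{(2)}$, so that the previous theorem may legitimately be reapplied to $C_V(U)$.
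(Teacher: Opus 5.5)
Your proposal is correct and follows essentially the same route as the paper. Both arguments identify $C_V(C_V(U))=\Ker_V L''(-1)$, obtain $U\subseteq C_V(C_V(U))$ from the symmetry of the centralizer condition, and place any subalgebra $(T,Y,{\bf 1},\omega')$ inside $\Ker_V L''(-1)$ via $L'(-1)t=t_{-2}{\bf 1}=L(-1)t$. Your explicit check that $L(2)\omega''=\tfrac{1}{2}(c_V-c_U){\bf 1}$, so that the preceding theorem legitimately reapplies to $C_V(U)$, fills in a step the paper only asserts. Your forward direction never uses $U\subseteq T$, so it also covers the stronger reading of maximality in which every such $T$ lies in $U$.
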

\begin{proof}
We know that $(C_V(C_V(U)),Y,{\bf 1},\omega')$ is a vertex operator subalgebra of $V$ and 
\begin{eqnarray*}
C_V(C_V(U))&=&Ker_VL''(-1)=Ker_V(L(-1)-L'(-1))\\
&=&\{v\in V~|~L'(-1)v=L(-1)v=v_{-2}{\bf 1}\}.
\end{eqnarray*} 
Observe that 
\begin{eqnarray*}
C_V(U)&=&\{w\in V~|~[Y(w,x_1),Y(u,x_2)]=0\text{ for all }u\in U\},\\
C_V(C_V(U))&=&\{v\in V~|~[Y(v,x_1), Y(s,x_2)]=0\text{ for all }s\in C_V(U)\}.
\end{eqnarray*}
Hence, $U\subset C_V(C_V(U))$.
Notice that if $U$ is maximal then $C_V(C_V(U))=U$. Now, let us assume the converse, that is $C_V(C_V(U))=U$. Now let $(T,Y,{\bf 1},\omega')$ be a vertex operator subalgebra of $V$. So, when $v\in T$, we have that $L'(-1)v=v_{-2}{\bf 1}$. Hence $T\subseteq C_V(C_V(U))=U$.
\end{proof}
\subsection{Abelian coset vertex operator algebra $\Omega_V$}

Let $(V=\bigoplus_{n\in\mathbb{Z}}V_n,Y(~,~),{\bf 1},\omega)$ be a vertex operator algebra with a vertex operator subalgebra $(M(l),Y(~,~),{\bf 1},\omega_{\mathfrak{h}})$ associated with an affine Lie algebra $\hat{\mathfrak{h}}$ of level $l$, where $l\in\mathbb{C}^{\times}$ and $\mathfrak{h}\subseteq V_1$ is a $d$-dimensional vector space equipped with a nondegenerate symmetric bilinear form $\langle \cdot,\cdot\rangle$. Note that $\langle u,v\rangle=u_1v$ for $u,v\in\mathfrak{h}$.

We assume that $V$ is a semisimple $\mathfrak{h}$-module with $h\in\mathfrak{h}$ being represented by $h_0$ and $L(n)\mathfrak{h}=0$ for $n\geq 1$. We define $\mathcal{C}$ to be the category of weak $V$-modules $W$ on which $\mathfrak{h}$ acts semisimply and $\hat{\mathfrak{h}}^+$ acts locally nilpotently. Then for each $W\in\mathcal{C}$, we have $W=U(\hat{\mathfrak{h}}^-)\otimes \Omega_W$ where $\Omega_W=\{w\in W~|~h_nw=0\text{ for }h\in\mathfrak{h},~n\geq 1\}.$ For $\alpha\in\mathfrak{h}$, we set 
\begin{eqnarray*}
    W^{\alpha}&=&\{w\in W~|~h_0w=\langle \alpha,h\rangle w,~h\in\mathfrak{h}\},\\
    \Omega_W^{\alpha}&=&W^{\alpha}\cap \Omega_W.
\end{eqnarray*} 
We define $P_V$ to be a group generated by $\alpha\in\mathfrak{h}$ such that $V^{\alpha}\neq 0$. $P_V$ is, in fact, a subgroup of the additive group $\mathfrak{h}$. Then $V=\bigoplus_{\alpha\in P_V}V^{\alpha}$. \begin{prop}\label{PV}\cite{LX}
    If $V$ is a simple vertex operator algebra such that $V$ is a semisimple $\mathfrak{h}$-module and $L(n)\mathfrak{h}=0$ for $n\geq 1$, then $P_V=\{\alpha\in \mathfrak{h}~|~V^{\alpha}\neq 0\}$.
\end{prop}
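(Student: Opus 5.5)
The plan is to prove the two inclusions separately, with the inclusion $P_V\subseteq\{\alpha\in\mathfrak{h}\mid V^{\alpha}\neq 0\}$ carrying all the content. Set $S=\{\alpha\in\mathfrak{h}\mid V^{\alpha}\neq 0\}$. By definition $P_V$ is the subgroup generated by $S$, so $S\subseteq P_V$ is immediate. It therefore suffices to show that $S$ is itself a subgroup of $(\mathfrak{h},+)$. Since $V=\bigoplus_{\alpha}V^{\alpha}$ by semisimplicity of the $h_0$-action, this reduces to three checks: $0\in S$, $S$ is closed under addition, and $S$ is closed under negation.

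The first check is immediate, since ${\bf 1}\in V^0$.

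For closure under addition, I would take nonzero $u\in V^{\alpha}$ and $v\in V^{\beta}$. The commutator formula $[h_0,u_n]=(h_0u)_n$ gives $u_nV^{\beta}\subseteq V^{\alpha+\beta}$ for all $n$, so it is enough to find some $n$ with $u_nv\neq 0$. This is supplied by the standard lemma for simple vertex algebras (Dong--Mason, or Li): if $u,v\neq 0$ then $Y(u,x)v\neq 0$. I would recall its proof. The set $\{w\in V\mid Y(w,x)v=0\}$ is an ideal; this uses skew symmetry and associativity, or equivalently the fact that the annihilator of a vector in a module over a simple vertex algebra is an ideal. This ideal does not contain ${\bf 1}$ (since $v\neq 0$), so by simplicity it is zero, and hence $u$ is not in it. Therefore $\alpha+\beta\in S$.

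For closure under negation, I need $V^{-\alpha}\neq 0$ whenever $V^{\alpha}\neq 0$. Here I would use the contragredient module. By simplicity together with the hypothesis $L(1)\mathfrak{h}=0$ (and, I would also need, $V_0$ finite-dimensional or $L(1)V_1=0$ to secure an invariant form), I would try to obtain a nonzero invariant bilinear form $(\cdot,\cdot)$ on $V$. For $h\in\mathfrak{h}$ quasi-primary of weight $1$, invariance gives $(h_0u,w)=-(u,h_0w)$, so the form pairs $V^{\alpha}$ with $V^{-\alpha}$. If $V\cong V'$, then nondegeneracy forces $V^{-\alpha}\neq 0$.

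The main obstacle is precisely this last step, because $V$ need not be self-dual. Without a nondegenerate invariant form I would instead argue with $V'$ directly. The module $V'$ is a simple $V$-module, and its $\mathfrak{h}$-weights are the negatives of those of $V$, because $h_0$ acts on $V'$ by $-h_0^{*}$ using $L(1)h=0$. Taking $w'\in (V')^{-\alpha}$ and a nonzero $v\in V$, the same non-vanishing lemma applied to the simple module $V'$ (or the simple intertwining setting) gives $Y(v,x)w'\neq 0$. This produces weights $-\alpha+S\subseteq$ weights of $V'$, that is, $\alpha-S\subseteq S$. Choosing $v\in V^{\alpha}$ yields only $0$, which is unhelpful. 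So the better approach is a grading argument. The weights of $V$ occurring in each finite-dimensional $V_n$ form a finite set, and $S$ is a submonoid of the finitely generated abelian group $P_V$. I would then show that $S$ is a group as follows. Pick $\alpha\in S$ and consider $u\in V^{\alpha}$, $w\in V^{-\alpha}$ in $V'$ paired against $u$, giving a nonzero matrix coefficient $\langle w',u_n{\bf 1}\rangle$. Then use skew symmetry $Y(u,x){\bf 1}=e^{xL(-1)}u$ to transfer this back to a nonzero vector of $V$ of weight $-\alpha$ via the identification $V'\cong V$. I expect this last identification to be the delicate point, and it is where the hypotheses on $\mathfrak{h}$ being quasi-primary must enter.
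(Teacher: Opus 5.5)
Your reduction to showing that $S=\{\alpha\in\mathfrak h\mid V^{\alpha}\neq0\}$ is a subgroup is correct. So are $0\in S$ and $S+S\subseteq S$: the annihilator $\{w\mid Y(w,x)v=0\}$ is a left ideal stable under $L(-1)$, hence an ideal not containing ${\bf 1}$, and $u_nV^{\beta}\subseteq V^{\alpha+\beta}$ for $u\in V^{\alpha}$.

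The gap is closure under negation, which the proposal never establishes.

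\emph{The invariant-form route.} It needs a nonzero invariant form, equivalently (Li's criterion) $V_0\neq L(1)V_1$, and the hypotheses do not give this. The simple shifted lattice VOA of Example~4 in Section~5 has $V_0=\mathbb{C}{\bf 1}=L(1)V_1$, since $L_q(1)\alpha(-1){\bf 1}=-2{\bf 1}$. So it has no nonzero invariant form, yet $\mathfrak h=\mathbb{C}\gamma(-1){\bf 1}$ satisfies every hypothesis of the statement.

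\emph{The closing ``grading argument''.} It rests on an identification $V'\cong V$, which is exactly what is unavailable. The finiteness and submonoid observations cannot replace it. The non-simple VOA $\bigoplus_{n\geq0}M(1)\otimes e^{n\alpha}\subset V_{\mathbb{Z}\alpha}$ has finitely many $\mathfrak h$-weights in each degree, and its weight set is $S=\mathbb{Z}_{\geq0}\alpha$, a submonoid that is not a group. So simplicity must enter the negation step through an ideal, and as written it does not.

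\emph{The repair is already in your contragredient computation.} You showed the following for $\alpha\in S$, $0\neq w'\in(V')^{-\alpha}$ and $0\neq v\in V^{\beta}$. The vector $Y_{V'}(v,x)w'$ is nonzero of weight $\beta-\alpha$. The weights of $V'$ are exactly $-S$, because $h_0'=-h_0^{*}$ (this is where $L(1)h=0$ enters). Hence $\alpha-S\subseteq S$ for every $\alpha\in S$. You specialized to $\beta=\alpha$; specialize instead to $\alpha=0$. This is allowed because ${\bf 1}\in V^0$ gives $(V')^{0}\neq0$, and it yields $-S\subseteq S$. The non-vanishing of $Y_{V'}(v,x)w'$ only needs the annihilator of $w'$ in $V$ to be a proper ideal of the simple $V$, so you do not need $V'$ to be simple.

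\emph{A direct alternative without $V'$.} For $\alpha\in S$ put $J=\bigoplus_{\gamma\in\alpha+S}V^{\gamma}$.
\begin{enumerate}
\item Since $S+S\subseteq S$, $J$ is closed under all $a_n$ with $a\in V$.
\item Since $[L(-1),h_0]=0$, $J$ is $L(-1)$-stable, so by skew symmetry it is an ideal.
\item $J$ contains $V^{\alpha}\neq0$, so simplicity gives $J=V\ni{\bf 1}$. This forces $0\in\alpha+S$, i.e.\ $-\alpha\in S$.
\end{enumerate}
Equivalently, $V=\mathrm{span}\{a_nu\mid a\in V,\,n\in\mathbb{Z}\}$ for any $0\neq u\in V^{\alpha}$, and projecting ${\bf 1}$ onto $V^{0}$ produces a nonzero element of $V^{-\alpha}$. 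This version does not use $L(n)\mathfrak h=0$ at all, so no quasi-primarity or self-duality input is needed for negation.
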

Now, we set $P_W=\cup_{\lambda\in\mathfrak{h}, W^{\lambda}\neq 0}P_V+\lambda$. Then we have the following: $$W^{\alpha}=M(1)\otimes \Omega_W^{\alpha},~
    \Omega_W=\oplus_{\alpha\in P_W}\Omega_W^{\alpha},~
    W=\oplus_{\alpha\in P_W}W^{\alpha}.$$
\begin{prop}\label{PW}\cite{DM, DM1, Li3} Let $V$ be a simple vertex operator algebra such that $V$ is a semisimple $\mathfrak{h}$-module and $L(n)\mathfrak{h}=0$ for $n\geq 1$. Let $W\in \mathcal{C}$ such that $W$ is a simple $V$-module. Then we have the following:
\begin{enumerate}
    \item $P_W=P_V+\lambda$ for any $\lambda\in\mathfrak{h}$ with $W^{\lambda}\neq 0$.
    \item $\Omega_V^0$ is a simple vertex operator subalgebra of $V$ with a Virasoro element $\omega_{\Omega}=\omega-\omega_{\mathfrak{h}}$. In addition, each $\Omega_V^{\alpha}$ is a simple $\Omega_V^0$-module. 
    \item $V^0=M(1)\otimes \Omega_V^0$ is a simple, conformal vertex operator subalgebra of $V$. Moreover, as $V^0$-module, $V^{\alpha}=M(1,\alpha)\otimes \Omega_V^{\alpha}$ is simple. 
    \item If $\alpha\in P_W$, then $W^{\alpha}=M(1)\otimes \Omega_W^{\alpha}$ is a simple $V^0$-module. Moreover, $W^{\alpha}\not\cong W^{\beta}$ if $\alpha\neq \beta$.
\end{enumerate}
\end{prop}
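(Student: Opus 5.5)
This is a cited structural result (Dong–Mason, Li), so the plan is to reassemble it from the Heisenberg commutant decomposition already recorded above. The standing input is that $W\in\mathcal{C}$. Then $W=M(1)\otimes\Omega_W$ as $\hat{\mathfrak{h}}$-modules, with $\Omega_W$ the vacuum space of $\hat{\mathfrak{h}}^+$. Also $W=\bigoplus_\alpha W^\alpha$ and $W^\alpha=M(1,\alpha)\otimes\Omega_W^\alpha$, because on $W^\alpha$ the zero modes act by $\alpha$ and $\hat{\mathfrak{h}}$-modules in this category are completely reducible.

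\textbf{Step 1 (the commutant).} Since $L(n)\mathfrak{h}=0$ for $n\ge1$, the Heisenberg conformal vector $\omega_{\mathfrak{h}}$ satisfies $L(1)\omega_{\mathfrak{h}}=0$ and $L(2)\omega_{\mathfrak{h}}\in\mathbb{C}{\bf 1}$. By the first theorem above, $\omega_\Omega=\omega-\omega_{\mathfrak{h}}$ is then a conformal vector for $C_V(M(1))$. One checks directly that $C_V(M(1))=\{v\mid h_nv=0,\ n\ge0\}=\Omega_V^0$. Hence $V^0=M(1)\otimes\Omega_V^0$ is a conformal subalgebra with $\omega=\omega_{\mathfrak{h}}+\omega_\Omega$.

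\textbf{Step 2 (simplicity, parts (2) and (3)).} Use the standard argument that for simple $V$ and $u,v\in V$ nonzero, $\langle u_n v\rangle\ne0$ for some $n$; equivalently, $V$ is generated by any nonzero vector. Take $w\in V^\alpha$ nonzero. The vectors $a_nw$ with $a\in V^\beta$ span $V^{\alpha+\beta}$, so $V^{\alpha}$ and $V^{\alpha+\beta}$ are linked. Now apply the $\mathfrak{h}$-grading. Any nonzero $V^0$-submodule $N\subseteq V^\alpha$ generates a $V$-submodule whose $\alpha$-component is $N$: the $\alpha$-component of $V\cdot N$ is $V^0\cdot N=N$, using associativity and the fact that the products $V^{-\beta}\cdot V^\beta$ land in $V^0$. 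This gives simplicity of $V^\alpha$ as a $V^0$-module. Tensor factorization against the simple $M(1)$-modules $M(1,\alpha)$ then gives simplicity of $\Omega_V^\alpha$ over $\Omega_V^0$, and in particular simplicity of $\Omega_V^0$.

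\textbf{Step 3 (parts (1) and (4)).} The same component argument applied to a simple $W$ shows that the support of $W$ is closed under adding $P_V$, since $Y(v,x)$ for $v\in V^\beta$ shifts the weight by $\beta$. To get that the support is exactly one coset $P_V+\lambda$, take a nonzero $w\in W^\lambda$. The submodule generated by $w$ is all of $W$, and its support is contained in $\lambda+P_V$. Simplicity of $W^\alpha$ over $V^0$ follows as in Step 2. The modules $W^\alpha$ and $W^\beta$ are non-isomorphic for $\alpha\ne\beta$ because $h_0$ acts on them by different scalars $\langle h,\alpha\rangle$ and $\langle h,\beta\rangle$.

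The main obstacle is in Step 2: showing that $V^{-\beta}\cdot V^{\beta}$ generates all of $V^0$, so that a proper $V^0$-submodule of $V^\alpha$ cannot generate a proper $V$-submodule. I would handle this with the annihilator/ideal argument of Dong–Mason. The ideal generated by $N$ is $\operatorname{span}\{v_nN\}$, by the associativity formula. Its $\alpha$-component is $V^0_{\ast}N$, which is contained in $N$. Simplicity of $V$ then forces $N=V^\alpha$.
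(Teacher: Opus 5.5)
The paper does not prove this proposition; it only quotes it from Dong--Mason and Li. Your reconstruction is correct and is essentially the standard argument behind those citations. The commutant theorem applied to $U=M(1)$ gives the conformal splitting $\omega=\omega_{\mathfrak h}+\omega_\Omega$ and identifies $C_V(M(1))$ with $\Omega_V^0$. Every simplicity statement then follows from the span lemma, that the submodule generated by $N$ is $\mathrm{span}\{v_nw\mid v\in V,\ n\in\mathbb Z,\ w\in N\}$, combined with $v_nV^{\alpha}\subseteq V^{\alpha+\gamma}$ for $v\in V^{\gamma}$.

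A few points need tightening.

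\textbf{Nonvanishing in Step~3.} Your reason for ``the support of $W$ is closed under adding $P_V$'' is that $Y(v,x)$ shifts weights by $\beta$. That only gives containments. Part (4) needs $W^{\alpha}\neq 0$ for every $\alpha\in P_W$, since a zero space is not simple. For this you must invoke the nonvanishing lemma: $Y(v,x)w\neq 0$ for nonzero $v\in V$, $w\in W$, because the annihilator of $w$ in $V$ is an ideal not containing ${\bf 1}$. Combine it with Proposition~\ref{PV}, which says $P_V$ is exactly the set of $\mathfrak h$-weights of $V$, so closure under subtraction need not be argued separately. With the paper's definition $P_W=\bigcup_{W^{\lambda}\neq 0}(P_V+\lambda)$, part (1) itself needs only the containment of the support in $\lambda+P_V$, which you do prove.

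\textbf{The ``equivalently'' in Step~2.} It conflates two different statements. That every nonzero vector generates $V$ is the definition of simplicity. Nonvanishing of $Y(u,x)v$ is a consequence of simplicity, not a reformulation of it.

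\textbf{The ``main obstacle''.} It is not an obstacle. You never need $V^{-\beta}\cdot V^{\beta}$ to generate $V^0$. Your last three sentences already show the following. The $\alpha$-component of the submodule generated by a nonzero $V^0$-submodule $N\subseteq V^{\alpha}$ is $V^0\cdot N\subseteq N$. Simplicity of $V$ then forces $N=V^{\alpha}$.
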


We continue with an assumption that $V$ is a vertex operator algebra such that $V$ is a semisimple $\mathfrak{h}$-module and $L(n)\mathfrak{h}=0$ for $n\geq 1$. We define $$P_V^{\circ}=\{\beta\in\mathfrak{h}~|~\langle \alpha,\beta\rangle\in\mathbb{Z}\text{ for }\alpha\in P_V\}.$$ Now, we let $W$ be a weak $V$-module and $\alpha\in P_V^{\circ}$. We define a new action of $V$ on $W$ by $$Y_W^{\Delta_{\alpha}(x)}(v,x)=Y_W(\Delta(\alpha,x)v,x)$$ where $\Delta(\alpha,x)=x^{\alpha_0}\exp\left(\sum_{k=1}^{\infty}\frac{\alpha_k}{-k}(-x)^{-k}\right)$. \begin{prop}\label{Keven}\cite{Li3} Assume that $V$ is a vertex operator algebra such that $V$ is a semisimple $\mathfrak{h}$-module and $L(n)\mathfrak{h}=0$ for $n\geq 1$. 
\begin{enumerate} \item For a weak $V$-module $W$, $W^{(\alpha)}=(W,Y_W^{\Delta_{\alpha}(x)}(~,~))$ is a weak $V$-module. In addition, $W$ is irreducible if and only if $W^{(\alpha)}$ is irreducible and $(W^{(\alpha)})^{(\beta)}\cong W^{(\alpha+\beta)}$ for $\alpha,\beta\in P_V^{\circ}$.
\item Assume that $V$ is simple and $P_V$ equipped with $\langle~,~\rangle$ is a nondegenerate rational lattice of finite rank and $l$ is a nonzero rational number. Then the group $K=\{\alpha\in P_V^{\circ}~|~V^{(\alpha)}\cong V\text{ as a $V$-module}\}$ equipped with the $\mathbb{Z}$-bilinear form $l\langle~,~\rangle$ is an even lattice. In addition, $lK\subseteq P_V$ and for $W\in\mathcal{C}$, and $\alpha\in K$, we have $\langle\alpha,s\rangle\in\mathbb{Z}$ for $s\in P_W$.
    \end{enumerate}
\end{prop}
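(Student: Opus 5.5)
The plan is to follow Li's $\Delta$-operator method. Part (1) is checked directly from the defining properties of $\Delta(\alpha,x)$. Part (2) is then read off by comparing the $\mathfrak{h}$-weights and $L(0)$-weights of $V^{(\alpha)}$ with those of $V$.

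\textbf{Step 1: $Y_W^{\Delta_\alpha(x)}$ is a module structure.} I will verify the three conditions in Li's theorem on $\Delta$-operators:
\begin{itemize}
\item $\Delta(\alpha,x){\bf 1}={\bf 1}$;
\item $[L(-1),\Delta(\alpha,x)]=-\frac{d}{dx}\Delta(\alpha,x)$;
\item $Y(\Delta(\alpha,x_2+x_0)v,x_0)\Delta(\alpha,x_2)=\Delta(\alpha,x_2)Y(v,x_0)$.
\end{itemize}
These follow from the commutator formula $[\alpha_m,v_n]=\sum_{i\ge0}\binom{m}{i}(\alpha_iv)_{m+n-i}$ together with $L(n)\alpha=0$ for $n\ge1$, exactly as in the Heisenberg case. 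The one genuinely new input is that $x^{\alpha_0}$ must only produce integral powers of $x$ on $V$. This holds because $V=\bigoplus_{\beta\in P_V}V^\beta$ (semisimplicity of $\mathfrak{h}$), so $\alpha_0$ acts on $V^\beta$ by $\langle\alpha,\beta\rangle\in\mathbb{Z}$ when $\alpha\in P_V^\circ$.

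Irreducibility of $W$ and of $W^{(\alpha)}$ are equivalent. The reason is that $\Delta(\alpha,x)$ is invertible, with inverse $\Delta(-\alpha,x)$, and is built from modes of $V$. Hence a subspace is stable under all $Y_W(v,x)$ if and only if it is stable under all $Y_W^{\Delta_\alpha(x)}(v,x)$.

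For the composition law, note that $[\alpha_k,\beta_j]=k\,l\langle\alpha,\beta\rangle\delta_{k+j,0}=0$ for $k,j\ge0$. Therefore $\Delta(\alpha,x)\Delta(\beta,x)=\Delta(\alpha+\beta,x)$, which gives $(W^{(\alpha)})^{(\beta)}=W^{(\alpha+\beta)}$ on the nose.

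\textbf{Step 2: how the twist changes weights.} Using $\alpha_1h=l\langle\alpha,h\rangle{\bf 1}$ and $\alpha_kh=0$ for $k\ge2$, I get
\[
Y^{\Delta_\alpha}(h,x)=Y(h,x)+l\langle\alpha,h\rangle x^{-1}.
\]
So in $W^{(\alpha)}$ the zero mode $h_0$ is shifted by $l\langle\alpha,h\rangle$. A similar computation with $\omega$ gives
\[
L^{(\alpha)}(0)=L(0)+\alpha_0+\tfrac{l}{2}\langle\alpha,\alpha\rangle .
\]

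\textbf{Step 3: $K$ is an even lattice and $lK\subseteq P_V$.} By Step 1, $K$ is a subgroup of $P_V^\circ$. Suppose $V^{(\alpha)}\cong V$. Then the $\mathfrak{h}$-weights of $V^{(\alpha)}$, namely $P_V+l\alpha$, must coincide with $P_V$, so $l\alpha\in P_V$. This gives $lK\subseteq P_V$.

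For the bilinear form, take $\alpha,\beta\in K$. Since $l\beta\in P_V$ and $\alpha\in P_V^\circ$, we get $l\langle\alpha,\beta\rangle\in\mathbb{Z}$.

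For evenness, the isomorphism sends the vacuum of $V$ to a vector $v$ of $V^{(\alpha)}$ with $\mathfrak{h}$-weight $0$ and $L^{(\alpha)}(0)$-weight $0$. As a vector of $V$, $v$ lies in $V^{-l\alpha}$, so $\alpha_0v=-l\langle\alpha,\alpha\rangle v$. Step 2 then gives $\wt v=\tfrac{l}{2}\langle\alpha,\alpha\rangle$. Since $V$ is $\mathbb{Z}$-graded, $l\langle\alpha,\alpha\rangle\in2\mathbb{Z}$. Finite rank and rationality of $K$ come from $K\subseteq P_V^\circ$, which is a rational lattice because $P_V$ is nondegenerate of finite rank and $l\in\mathbb{Q}^\times$.

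\textbf{Step 4: the claim for $W\in\mathcal{C}$ (expected main obstacle).} The difficulty is that $P_W$ is only a coset $P_V+\lambda$, so $\alpha_0$ need not act integrally on $W$, and one cannot just twist $W$. I would first try to use the vector $e:=v\in V^{-l\alpha}$ from Step 3, which represents the image of the vacuum. Because $Y^{\Delta_\alpha}(e,x)$ agrees with the vacuum field, unwinding the definition should identify $Y(e,x)$, up to integral powers, with
\[
x^{-\alpha_0}\exp\Bigl(\sum_{k\ge1}\frac{\alpha_k}{k}(-x)^{-k}\Bigr)
\]
composed with the corresponding $\hat{\mathfrak{h}}^-$ exponential, roughly a vertex operator $E^{-}(-\alpha,x)\,x^{-\alpha_0}\,E^{+}(-\alpha,x)$ times a constant operator. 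On a weak module $W$ we have $Y_W(e,x)w\in W((x))$, and $e_n$ shifts the $\mathfrak{h}$-weight by $-l\alpha$. The $L(0)$-bookkeeping, $[L(0),e_n]=(\wt e-n-1)e_n$ together with the Heisenberg weight formula on $M(1)\otimes\Omega_W^s$, should force the exponent of $x$ in $Y_W(e,x)w$, for $w\in W^s$, to be congruent to $-\langle\alpha,s\rangle$ modulo $\mathbb{Z}$. Integrality of these exponents then yields $\langle\alpha,s\rangle\in\mathbb{Z}$. Making this identification of $Y(e,x)$ rigorous, and ensuring that $e$ acts nontrivially on the relevant component of $W$ (via simplicity of $V$ and Proposition \ref{PW}), is where I expect the real work to lie.
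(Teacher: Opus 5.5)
The paper gives no proof of this proposition; it imports it from \cite{Li3}. Your Steps 1--3 are correct and are essentially Li's $\Delta$-operator argument. One small repair: finiteness of rank of $K$ should be deduced from $lK\subseteq P_V$. The reason you give, that $P_V^{\circ}$ is a lattice, fails when $P_V$ does not span $\mathfrak{h}$, since then $P_V^{\circ}\supseteq P_V^{\perp}$.

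The genuine gap is Step 4, which is still a heuristic, and part of its diagnosis is mistaken. Nothing prevents twisting $W$: by part (1), $W^{(\alpha)}$ is a weak module for every weak $W$, because $\Delta(\alpha,x)$ acts on $V$ and only integrality of $\alpha_0$ on $V$ is needed. More seriously, bookkeeping with the full $L(0)$ cannot work. $W\in\mathcal{C}$ is only a weak module, so $L(0)$ need not be semisimple. Even for graded $W$, the relation $[L(0),v_n]=(\mathrm{wt}\,v-n-1)v_n$ does not connect $L(0)$-eigenvalues to $\mathfrak{h}$-weights, because the coset part $L(0)-L_{\mathfrak{h}}(0)$ is uncontrolled.

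The missing input is the value of $L(-1)v$. Apply $\psi$ to $L(-1){\bf 1}=0$ and use $L^{(\alpha)}(-1)=L(-1)+\alpha_{-1}$, which is the $x^{-1}$-coefficient of $Y(\omega,x)+x^{-1}Y(\alpha,x)+\frac{l}{2}\langle\alpha,\alpha\rangle x^{-2}$. This gives $L(-1)v=-\alpha_{-1}v$. Since also $h_nv=h^{(\alpha)}_nv=\psi(h_n{\bf 1})=0$ for $n\geq1$ and $v\in V^{-l\alpha}$, this says exactly $L(-1)v=L_{\mathfrak{h}}(-1)v$.

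It follows that on any weak module
$[L_{\mathfrak{h}}(0),v_n]=(L_{\mathfrak{h}}(-1)v)_{n+1}+(L_{\mathfrak{h}}(0)v)_n=\bigl(\frac{l}{2}\langle\alpha,\alpha\rangle-n-1\bigr)v_n$.
So the bookkeeping should be done with $L_{\mathfrak{h}}(0)$. For $W\in\mathcal{C}$, this operator is semisimple on $W^{s}=M(1)\otimes\Omega_W^{s}$ with spectrum $\frac{1}{2l}\langle s,s\rangle+\mathbb{N}$.

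Take $0\neq w\in\Omega_W^{s}$. A nonzero $v_nw\in W^{s-l\alpha}$ has eigenvalue $\frac{1}{2l}\langle s,s\rangle+\frac{l}{2}\langle\alpha,\alpha\rangle-n-1$. This must lie in $\frac{1}{2l}\langle s-l\alpha,s-l\alpha\rangle+\mathbb{N}$, which forces $\langle\alpha,s\rangle=n+1+k$ for some $k\in\mathbb{N}$.

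Some $v_nw$ is nonzero because $\{u\in V\mid Y_W(u,x)w=0\}$ is an ideal of $V$ not containing ${\bf 1}$, hence zero by simplicity. This annihilator lemma, not Proposition~\ref{PW}, is where simplicity enters. Every element of $P_W$ has the form $\beta+s$ with $\beta\in P_V$ and $W^{s}\neq0$, and $\alpha\in P_V^{\circ}$, so the claim follows.

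This also makes your exponent heuristic precise. Set $E^{-}(x)=\exp\bigl(-\sum_{n\geq1}\frac{\alpha_{-n}}{n}x^{n}\bigr)$ and $E^{+}(x)=\exp\bigl(\sum_{n\geq1}\frac{\alpha_{n}}{n}x^{-n}\bigr)$. Then $L(-1)v=-\alpha_{-1}v$ and the normal-ordered product formula show that $Z(x)=E^{-}(x)^{-1}Y_W(v,x)E^{+}(x)^{-1}$ commutes with $h_m$ for $m\neq0$. They also give $x\frac{d}{dx}Z(x)=-Z(x)\alpha_0$, which pins the exponent on $W^{s}$ to $-\langle\alpha,s\rangle$.
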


Under the assumption of Proposition \ref{Keven} (2), there exists a $\langle\pm 1\rangle$-valued function $\varepsilon_K(\cdot,\cdot)$ on $K\times K$ such that 
\begin{eqnarray*}
    &&\varepsilon_K(\alpha+\beta,\gamma)=\varepsilon_K(\alpha, \gamma)\varepsilon_K(\beta,\gamma),~\varepsilon_K(\gamma,\alpha+\beta)=\varepsilon_K(\gamma,\alpha)\varepsilon_K(\gamma,\beta),\\
    &&\varepsilon_K(\alpha,\beta)\varepsilon_K(\beta,\alpha)^{-1}=(-1)^{l\langle\alpha,\beta\rangle}\text{ for }\alpha,\beta,\gamma\in K.
\end{eqnarray*} 
For $\alpha\in K$, we let $\psi_{\alpha}:V^{(\alpha)}\rightarrow V$ be a $V$-isomorphism from $V^{(\alpha)}$ to $V$. Now, we assume that 
\begin{equation}\label{psiepsilon} \psi_{\alpha}\psi_{\beta}=\varepsilon_K(\alpha,\beta)\psi_{\alpha+\beta}\text{ for }\alpha,\beta\in K.\end{equation}
\begin{prop}\label{lattice construction}\cite{Li3}
 Assume that $V$ is simple, $P_V$ equipped with $\langle~,~\rangle$ is a nondegenerate rational lattice of finite rank, $l$ is a nonzero rational number, and $K$ satisfies the equation (\ref{psiepsilon}). Then $V_{lK}=C_V(\Omega_V^0),\text{ and }C_V(V_{lK})=C_V(C_V(\Omega_V))=\Omega_V.$
\end{prop}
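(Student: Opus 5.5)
The plan is to follow Li's construction in \cite{Li3}, proving the two equalities one after the other.

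\textbf{Step 1: the lattice vertex operator algebra inside $V$.} First I would build a copy of $V_{lK}$ inside $V$. Fix $\alpha\in K$ and the $V$-isomorphism $\psi_\alpha:V^{(\alpha)}\to V$. Then $e_\alpha:=\psi_\alpha({\bf 1})$ should play the role of $e^{l\alpha}$.

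\begin{itemize}
\item The operator $\Delta(\alpha,x)$ shifts $h_0$-eigenvalues by $\langle \alpha,h\rangle$ (up to the level $l$). Hence $\psi_\alpha({\bf 1})$ lies in $\Omega_V^{l\alpha}$, since ${\bf 1}$ is annihilated by all $h_n$, $n\geq 1$.
\item By Proposition \ref{PW}(3), $V^{l\alpha}=M(1,l\alpha)\otimes\Omega_V^{l\alpha}$, so $e_\alpha$ is a highest weight vector for $\hat{\mathfrak h}$.
\item I would show that $Y(e_\alpha,x)$ has the standard lattice form $E^-(-l\alpha,x)E^+(-l\alpha,x)\,\psi_\alpha\, x^{l\alpha_0}$-type. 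The idea is to compare $Y(e_\alpha,x)$ with the intertwining property $\psi_\alpha Y(v,x)=Y(\Delta(\alpha,x)v,x)\psi_\alpha$, which fixes its commutation relations with $h(x)$ up to a normalization.
\item The normalization (\ref{psiepsilon}) then gives $e_\alpha{}_{(-l\langle\alpha,\beta\rangle-1)}e_\beta=\varepsilon_K(\alpha,\beta)e_{\alpha+\beta}$. This is exactly the twisted group algebra relation.
\end{itemize}

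Consequently the vertex subalgebra generated by $\mathfrak h$ and $\{e_\alpha\}$ is a quotient of $V_{lK}$. Since $lK$ is even (Proposition \ref{Keven}(2)) and $V_{lK}$ is simple, it is isomorphic to $V_{lK}$. I can also use Proposition \ref{VVLiso}(3) to identify it, after decomposing it as an $M(1)$-module.

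\textbf{Step 2: $V_{lK}\subseteq C_V(\Omega_V^0)$.} Next I would check this containment. By definition $\Omega_V^0$ commutes with $\mathfrak h$. For $u\in\Omega_V^0$, the vertex $\Delta(\alpha,x)u=u$ because $\alpha_k u=0$ for $k\geq0$. So $\psi_\alpha$ commutes with $Y(u,x)$, which gives $u_n e_\alpha=\psi_\alpha(u_n{\bf 1})=0$ for $n\geq0$.

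\textbf{Step 3: $C_V(\Omega_V^0)\subseteq V_{lK}$.} This is the main obstacle. Let $w\in C_V(\Omega_V^0)$. Since $\Omega_V^0$ commutes with $\mathfrak h$, the centralizer is an $\hat{\mathfrak h}$-submodule, so I may take $w\in\Omega_V^{\beta}$. Because $w$ commutes with $\Omega_V^0$ and is a Heisenberg highest weight vector, $L(-1)w=\frac1l\beta(-1)w$: the $\Omega$-part $L_\Omega(-1)=\omega_\Omega{}_0$ kills $w$.

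\begin{itemize}
\item First I would show that $\mathbb{C}w$ generates, together with $\mathfrak h$, a $V$-submodule structure making $V\cong V^{(\beta/l)}$.
\item Concretely, the map $v\mapsto v_{(n)}w$ suitably normalized should give a nonzero homomorphism $V^{(\beta/l)}\to V$. The operator $Y(w,x)x^{-\beta_0/l}E^{\pm}$ then intertwines. This mirrors Li's argument that a vector $w$ with $L(-1)w=\frac1l\beta(-1)w$ yields a $\Delta$-twisted isomorphism.
\item Simplicity of $V$ together with Proposition \ref{Keven}(1) (irreducibility is preserved) makes this homomorphism an isomorphism. Hence $\beta/l\in K$, and by uniqueness of the highest weight vector in $\Omega_V^{\beta}\cap C_V(\Omega_V^0)$ we get $w\in\mathbb{C}e_{\beta/l}$.
\end{itemize}

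This uniqueness step is where I expect the real difficulty: I must show that the lowest-weight space of $C_V(\Omega_V^0)\cap V^\beta$ is one-dimensional. I would first try Schur's lemma on the simple $V^0$-module $V^\beta$ from Proposition \ref{PW}(3), combined with $\Omega_V^\beta$ being a simple $\Omega_V^0$-module. The commutant vectors should correspond to $\Omega^0_V$-module maps $\Omega_V^0\to\Omega_V^\beta$, which are at most one-dimensional.

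\textbf{Step 4: the second equality.} Since the Heisenberg vertex operator algebra $M(1)\subseteq V_{lK}$, we get $C_V(V_{lK})\subseteq C_V(\mathfrak h)\cap V^0=\Omega_V^0$. Conversely, $\Omega_V^0$ commutes with $V_{lK}$ by Step 2. Hence $C_V(V_{lK})=\Omega_V^0=C_V(C_V(\Omega_V^0))$, using Step 3 for the final identification.
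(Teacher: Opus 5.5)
Your outline is correct and follows the $\Delta$-operator/simple-current strategy of \cite{Li3}, which the paper quotes without proof; you are also right to read the final $\Omega_V$ in the statement as $\Omega_V^0$. In Step~3 your Schur idea does give uniqueness, because $w\mapsto(u\mapsto w_{-1}u)$ is a bijection from $\Omega_V^{\beta}\cap C_V(\Omega_V^0)$ onto $\mathrm{Hom}_{\Omega_V^0}(\Omega_V^0,\Omega_V^{\beta})$ with inverse $f\mapsto f({\bf 1})$; the homomorphism is most cleanly obtained by noting that $L(-1)w=\frac{1}{l}\beta(-1)w$ makes $w$ a vacuum-like vector of $V^{(-\beta/l)}$ (whose $L(-1)$ is $L(-1)-\frac{1}{l}\beta(-1)$), once you have checked $\beta/l\in P_V^\circ$, which holds because the lattice-type factorization of $Y(w,x)$ forces $Y(w,x)v=0$ for $v\in V^{\delta}$ unless $\langle\beta/l,\delta\rangle\in\mathbb{Z}$, whereas $Y(w,x)v\neq0$ in the simple $V$.
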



\section{Semisimple Lie subalgebras of $V_1$ and affine vertex operator algebras}

We first consider the case when the Leibniz algebra $V_1$ contains a semisimple Lie algebra. Under the assumption $\ker D=\mathbb {C}{\mathbf 1}$, the vertex-algebra pairing $u_1v$ induces an invariant symmetric bilinear form on a Levi factor $S$ of $V_1$. We show that, under $C_2$-cofiniteness and an appropriate nondegeneracy condition, each simple component generates a positive-integral-level affine vertex algebra and acts integrably on $V$.

We let $V=\bigoplus_{n=0}^{\infty}V_n$ be an $\mathbb{N}$-graded \textbf{vertex algebra} such that $1<\dim V_0<\infty$, and $\dim V_i<\infty$ for all $i\geq 1$. It is known that $V_1$ is a left Leibniz algebra with a Leibniz bracket defined by $[b,b']=b_0b'$ for $b,b'\in V_1$. By the Levi decomposition for Leibniz algebras, there exists a Levi semisimple subalgebra $S$ of $V_1$ such that $V_1=S\dot{+}\mathrm{Rad}(V_1)$. Here, $\mathrm{Rad}(V_1)$ is the solvable radical of the Leibniz algebra $V_1$. 

For the rest of this section, we assume that ${\Ker}(D|_{V_0})=\mathbb{C}{\bf 1}$. Here $D$ is the canonical translation operator of the vertex algebra $V$. By skew-symmetry of the vertex algebra,
$u_0v=-v_0u+D(u_1v)$ for $u,v\in S$. On the other hand, since $S$ is a Lie algebra under the bracket $[u,v]=u_0v$, $u_0v=-v_0u$. Hence $D(u_1v)=0$. Because $u_1v\in V_0$ and ${\Ker}(D|_{V_0})=\mathbb{C}{\bf 1}$, we have that  $u_1v\in\mathbb{C}{\bf 1}$. 

We define a bilinear map $(~,~):S\times S\rightarrow\mathbb{C}$ in the following way: for $u,v\in S$, $(u,v){\bf 1}=u_1v$. Because $u_1v=v_1u$ for all $u,v\in V_1$, we can conclude that $(~,~)$ is a symmetric bilinear map on $S$. Since $u_0v=-v_0u$, $u_1v\in\mathbb{C}{\bf 1}$ and $u_1(v_0w)=v_0u_1v-(v_0u)_1w$ for all $u,v,w\in S$, it implies that $u_1(v_0w)=(u_0v)_1w$ for all $u,v,w\in S$. Equivalently, $([u,v],w)=(u,[v,w])$ for all $u,v,w\in S$ and $(~,~)$ is an invariant symmetric bilinear form on $S$. In summary, we have the following Proposition.
\begin{prop}
    Let $V=\bigoplus_{n=0}^{\infty}V_n$ be an $\mathbb{N}$-graded vertex algebra such that $1<\dim V_0<\infty$, $\dim V_i<\infty$ for all $i\geq 1$ and ${\Ker}(D|_{V_0})=\mathbb{C}{\bf 1}$. Also, let $S$ be a Levi semisimple Lie subalgebra of $V_1$. We define a bilinear map $(~,~):S\times S\rightarrow\mathbb{C}$ in the following way: for $u,v\in S$, $(u,v){\bf 1}=u_1v$. Then $(~,~)$ is an invariant symmetric bilinear form on $S$.
\end{prop}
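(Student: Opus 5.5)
The plan follows the computation given just before the statement, organized in three steps: first show that $(\cdot,\cdot)$ is well defined and scalar-valued, then that it is symmetric, and finally that it is invariant.

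\emph{Step 1: scalar values.} For $u,v\in S$, the element $u_1v$ lies in $V_0$ by weight counting, since $\wt(u_1v)=1+1-1-1=0$. Skew-symmetry of the vertex algebra, $Y(u,x)v=e^{xD}Y(v,-x)u$, gives upon comparing coefficients
\[
u_0v=-v_0u+D(v_1u)-\tfrac12 D^2(v_2u)+\cdots .
\]
Here $v_2u\in V_{-1}=0$, so only $u_0v=-v_0u+D(v_1u)$ survives. Because $S$ is a Lie subalgebra of the Leibniz algebra $V_1$, its bracket is antisymmetric, so $u_0v=-v_0u$. Hence $D(v_1u)=0$. The hypothesis $\Ker(D|_{V_0})=\mathbb{C}{\bf 1}$ then gives $v_1u\in\mathbb{C}{\bf 1}$, and therefore $(\cdot,\cdot)$ is a well-defined bilinear map $S\times S\to\mathbb{C}$.

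\emph{Step 2: symmetry.} Comparing the coefficient of $x^{-2}$ in the same skew-symmetry identity gives $u_1v=v_1u-D(v_2u)+\cdots$. All the correction terms involve $v_ju$ with $j\ge 2$, which have negative weight and so vanish. Thus $u_1v=v_1u$ for all $u,v\in V_1$, and the form is symmetric.

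\emph{Step 3: invariance.} I will use the commutator formula
\[
[v_0,u_1]=\sum_{i\ge0}\binom{0}{i}(v_iu)_{1-i}=(v_0u)_1 .
\]
Applied to $w$, this gives $v_0(u_1w)-u_1(v_0w)=(v_0u)_1w$. Since $u_1w=(u,w){\bf 1}$ and $v_0{\bf 1}=0$, the first term vanishes, so $u_1(v_0w)=-(v_0u)_1w=(u_0v)_1w$, where the last equality uses antisymmetry on $S$. Here $v_0w\in S$ and $u_0v\in S$, since $S$ is closed under the bracket. So both sides are scalars, and the identity reads $(u,[v,w])=([u,v],w)$, which is invariance.

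The only delicate point is Step 1. One must check that the higher skew-symmetry terms vanish by grading, and that it is legitimate to invoke antisymmetry of the bracket. The latter holds only on the Lie subalgebra $S$, not on all of $V_1$; this is exactly why the statement is restricted to a Levi factor, and why the kernel hypothesis on $D$ is needed to pass from $D(u_1v)=0$ to a scalar.
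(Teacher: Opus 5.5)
Your proof is correct and follows the paper's own argument essentially step for step. Skew-symmetry together with antisymmetry on $S$ gives $D(u_1v)=0$, so $u_1v\in\mathbb{C}{\bf 1}$; grading gives $u_1v=v_1u$; and the commutator formula $[v_0,u_1]=(v_0u)_1$ with $v_0{\bf 1}=0$ gives invariance. One small remark: nothing specific to a Levi factor is used, only that $S$ is a Lie subalgebra of $V_1$, which is why the paper's corollary immediately extends the statement to arbitrary semisimple Lie subalgebras.
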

The same argument applies to any semisimple Lie subalgebra of \(V_1\), yielding the following corollary.
\begin{cor}
    Let $V=\bigoplus_{n=0}^{\infty}V_n$ be an $\mathbb{N}$-graded vertex algebra such that $1<\dim V_0<\infty$, $\dim V_i<\infty$ for all $i\geq 1$ and $\mathrm{Ker}(D|_{V_0})=\mathbb{C}{\bf 1}$. Also, let $\mathfrak{g}$ be a semisimple Lie subalgebra of $V_1$. We define a bilinear map $(~,~):\mathfrak{g}\times \mathfrak{g}\rightarrow\mathbb{C}$ in the following way: for $u,v\in \mathfrak{g}$, $(u,v){\bf 1}=u_1v$. Then $(~,~)$ is an invariant symmetric bilinear form on $\mathfrak{g}$.
\end{cor}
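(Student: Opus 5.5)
The plan is to rerun, for an arbitrary semisimple Lie subalgebra $\mathfrak{g}\subseteq V_1$, the argument given just before the Proposition for the Levi factor $S$. That argument never used maximality of $S$ or the decomposition $V_1=S\dot{+}\mathrm{Rad}(V_1)$. It used only two facts: $\mathfrak{g}$ is closed under the bracket $[u,v]=u_0v$, and this bracket is skew-symmetric on $\mathfrak{g}$. Both hold because $\mathfrak{g}$ is a Lie subalgebra of the Leibniz algebra $V_1$.

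\textbf{Step 1 (scalar-valuedness).} For $u,v\in\mathfrak{g}$, vertex-algebra skew-symmetry gives $u_0v=-v_0u+D(u_1v)$. Skew-symmetry of the Lie bracket on $\mathfrak{g}$ gives $u_0v=-v_0u$. Hence $D(u_1v)=0$. Since $u_1v\in V_0$ and $\Ker(D|_{V_0})=\mathbb{C}{\bf 1}$, we get $u_1v\in\mathbb{C}{\bf 1}$. So $(u,v)$ is a well-defined scalar, and it is clearly bilinear.

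\textbf{Step 2 (symmetry).} Skew-symmetry also gives $v_1u=u_1v+\sum_{j\ge1}(-1)^{j+1}D^{j}(u_{1+j}v)/j!$. Each $u_{1+j}v$ with $j\geq1$ has weight $1-j<0$, so it vanishes because $V$ is $\mathbb{N}$-graded. Therefore $u_1v=v_1u$ and $(~,~)$ is symmetric.

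\textbf{Step 3 (invariance).} Apply the commutator formula $[v_0,u_1]=(v_0u)_1$ to $w\in\mathfrak{g}$:
$$v_0(u_1w)-u_1(v_0w)=(v_0u)_1w.$$
By Step 1, $u_1w$ is a multiple of ${\bf 1}$, so $v_0(u_1w)=0$. Using $v_0u=-u_0v$, this yields $u_1(v_0w)=(u_0v)_1w$. In the notation of the form, this is $(u,[v,w])=([u,v],w)$.

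The only point that needs care is Step 1. Scalar-valuedness is exactly what makes the term $v_0(u_1w)$ in Step 3 vanish, and it rests entirely on the hypothesis $\Ker(D|_{V_0})=\mathbb{C}{\bf 1}$ together with the Lie skew-symmetry inside $\mathfrak{g}$. Once Step 1 is established, Steps 2 and 3 are routine. The corollary therefore follows verbatim from the preceding Proposition's argument with $S$ replaced by $\mathfrak{g}$.
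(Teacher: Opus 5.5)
Your proposal is correct and is the paper's own argument: the paper obtains the corollary by noting that its reasoning for the Levi factor $S$ applies verbatim to any semisimple Lie subalgebra. That reasoning uses skew-symmetry plus $\Ker(D|_{V_0})=\mathbb{C}{\bf 1}$ to get $u_1v\in\mathbb{C}{\bf 1}$, then $[v_0,u_1]=(v_0u)_1$ with $v_0{\bf 1}=0$ for invariance, and needs only that the bracket is skew on a subalgebra, exactly as you say. One small slip in Step 2: for $u,v\in V_1$ the vector $u_{1+j}v$ has weight $-j$, not $1-j$, and the coefficient in the expansion is $(-1)^j$; so $u_2v$ already lies in $V_{-1}=0$, and your conclusion $u_1v=v_1u$ stands.
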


\begin{lem}\label{Chevalley} Let $V=\bigoplus_{n=0}^{\infty}V_n$ be an $\mathbb{N}$-graded vertex algebra such that $1<\dim V_0<\infty$, $\dim V_i<\infty$ for all $i\geq 1$ and $\mathrm{Ker}(D|_{V_0})=\mathbb{C}{\bf 1}$. Let $\mathfrak{g}$ be a simple Lie subalgebra of $V_1$ such that $(~,~)|_{\mathfrak{g}}\neq 0$. Then $(~,~)$ is nondegenerate on $\mathfrak{g}$ and there is $k\in\mathbb{C}^{\times}$ such that $(~,~)=k \langle \cdot,\cdot\rangle$ on $\mathfrak{g}$. Here  $\langle \cdot,\cdot\rangle$ is the normalized invariant bilinear form of $\mathfrak{g}$ such that $\langle\alpha,\alpha\rangle=2$ for long roots $\alpha$.
\end{lem}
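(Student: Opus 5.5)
The plan is to reduce everything to the standard fact that a finite-dimensional simple Lie algebra over $\mathbb{C}$ has, up to scalar, a unique invariant symmetric bilinear form. By the preceding Corollary, $(~,~)$ restricted to $\mathfrak{g}$ is an invariant symmetric bilinear form on $\mathfrak{g}$. So the argument takes place purely inside $\mathfrak{g}$ and requires no further vertex-algebra input.

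For nondegeneracy, I will consider the radical $R=\{u\in\mathfrak{g}~|~(u,v)=0\text{ for all }v\in\mathfrak{g}\}$. If $u\in R$ and $v,w\in\mathfrak{g}$, invariance gives $([v,u],w)=-([u,v],w)=-(u,[v,w])=0$, using antisymmetry of the bracket on the Lie subalgebra $\mathfrak{g}$. Hence $R$ is an ideal of $\mathfrak{g}$. Since $\mathfrak{g}$ is simple, $R=0$ or $R=\mathfrak{g}$. The hypothesis $(~,~)|_{\mathfrak{g}}\neq 0$ excludes $R=\mathfrak{g}$, so $(~,~)$ is nondegenerate on $\mathfrak{g}$.

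For proportionality, both $(~,~)$ and the normalized form $\langle\cdot,\cdot\rangle$ are nondegenerate invariant forms on $\mathfrak{g}$. Each therefore gives a $\mathfrak{g}$-module isomorphism $\mathfrak{g}\to\mathfrak{g}^*$, namely $u\mapsto(u,\cdot)$ and $u\mapsto\langle u,\cdot\rangle$; these are module maps precisely because of invariance. Composing one with the inverse of the other gives a $\mathfrak{g}$-module endomorphism $T$ of the adjoint module with $(u,v)=\langle Tu,v\rangle$. The adjoint module is irreducible because $\mathfrak{g}$ is simple, so Schur's lemma over $\mathbb{C}$ gives $T=k\,\mathrm{id}$. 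Thus $(~,~)=k\langle\cdot,\cdot\rangle$, and $k\neq 0$ because $(~,~)\neq 0$.

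The only point needing care is the invariance convention. The Corollary gives $([u,v],w)=(u,[v,w])$, and I must check that this makes $u\mapsto(u,\cdot)$ a $\mathfrak{g}$-module map for the coadjoint action $(x\cdot f)(w)=-f([x,w])$. Using antisymmetry in $\mathfrak{g}$, we have $([x,u],w)=-([u,x],w)=-(u,[x,w])$, which is exactly the required identity. This is the step I would verify explicitly; everything else is standard.
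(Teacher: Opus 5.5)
Your proposal is correct and follows essentially the same route as the paper: show the radical is an ideal of the simple algebra $\mathfrak{g}$, hence zero, then use uniqueness up to scalar of invariant forms on $\mathfrak{g}$. The only difference is that the paper cites this uniqueness as a standard fact, while you derive it via the module maps $\mathfrak{g}\to\mathfrak{g}^*$ and Schur's lemma, and your check of the invariance convention is correct.
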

\begin{proof} We define $\mathrm{Rad}((~,~)|_{\mathfrak{g}})=\{u\in\mathfrak{g}~|~(u,v)=0\text{ for all }v\in\mathfrak{g}\}$. Because $(~,~)$ is $\mathfrak{g}$-invariant, and $\mathrm{Rad}((~,~)|_{\mathfrak{g}})$ is an ideal of the simple Lie algebra $\mathfrak{g}$, we can conclude that $\mathrm{Rad}((~,~)|_{\mathfrak{g}})$ is either $\{0\}$ or $\mathfrak{g}$. However, because $(~,~)|_{\mathfrak{g}}\neq 0$, we then have that $\mathrm{Rad}((~,~)|_{\mathfrak{g}})=\{0\}$. Therefore, $(~,~)$ is nondegenerate on $\mathfrak{g}$. 

Since $\mathfrak{g}$ is a finite-dimensional complex simple Lie algebra, every invariant symmetric bilinear form on $\mathfrak{g}$ is a scalar multiple of the normalized invariant bilinear form. Thus there exists $k\in\mathbb{C}$ such that $(u,v)=k\langle u,v\rangle$ for all $u,v\in\mathfrak g.$ Since $(~,~)|_{\mathfrak{g}}\neq0$, we have $k\neq 0$ and hence $k\in\mathbb{C}^{\times}$. 
\end{proof}

\begin{rem}In particular, the hypothesis of Lemma \ref{Chevalley} holds if $(e^i,f^i)\neq 0$ for some pair of Chevalley generators $e^i,f^i$.
\end{rem}

\begin{prop}\label{semisimple-orthogonal-decomposition}
Let $V=\bigoplus_{n=0}^{\infty}V_n$ be an $\mathbb{N}$-graded vertex algebra such that $1<\dim V_0<\infty$, $\dim V_i<\infty$ for all $i\geq 1$ and $\mathrm{Ker}(D|_{V_0})=\mathbb{C}{\bf 1}$. Let $\mathfrak{g}$ be a semisimple Lie subalgebra of $V_1$ and suppose that $\mathfrak{g}
=
\mathfrak{g}_1\oplus\cdots\oplus\mathfrak{g}_r$, where each $\mathfrak{g}_i$ is a simple Lie algebra. Then
\begin{enumerate}
\item $(\mathfrak{g}_i,\mathfrak{g}_j)=0$ for all $i\neq j$. Consequently,
$(~,~)|_{\mathfrak{g}}
=\perp_{i=1}^{r}(~,~)|_{\mathfrak{g}_i}$. 
\item Moreover, for each $i$, either $(~,~)|_{\mathfrak{g}_i}=0$, or there exists $k_i\in\mathbb{C}^{\times}$ such that $(~,~)|_{\mathfrak{g}_i}
= k_i\langle\cdot,\cdot\rangle_i$, where $\langle\cdot,\cdot\rangle_i$ is the normalized invariant
bilinear form on $\mathfrak{g}_i$ such that $\langle\alpha,\alpha\rangle_i=2$ for every long root $\alpha$ of $\mathfrak{g}_i$. In the latter case,
$(~,~)|_{\mathfrak{g}_i}$ is nondegenerate.
\item In particular, $(~,~)|_{\mathfrak{g}}$ is nondegenerate if and only if $(~,~)|_{\mathfrak{g}_i}\neq 0$ for every $1\leq i\leq r$. 
\end{enumerate}
\end{prop}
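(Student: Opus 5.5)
The plan is to prove (1) from the invariance of $(~,~)$ on $\mathfrak{g}$ established in the preceding Corollary, and then to obtain (2) and (3) from Lemma \ref{Chevalley} together with (1).

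\emph{Part (1).} Fix $i\neq j$, and take $u\in\mathfrak{g}_i$ and $v\in\mathfrak{g}_j$. Since $\mathfrak{g}_i$ is simple, it is perfect, so $\mathfrak{g}_i=[\mathfrak{g}_i,\mathfrak{g}_i]$. Hence $u$ is a finite sum of brackets $[a,b]$ with $a,b\in\mathfrak{g}_i$. For each such bracket, invariance gives
\[
([a,b],v)=(a,[b,v]).
\]
In the direct sum $\mathfrak{g}=\mathfrak{g}_1\oplus\cdots\oplus\mathfrak{g}_r$ we have $[\mathfrak{g}_i,\mathfrak{g}_j]=0$, so $[b,v]=0$ and therefore $([a,b],v)=0$. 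Summing over the brackets gives $(u,v)=0$. Symmetry of the form gives $(\mathfrak{g}_j,\mathfrak{g}_i)=0$ as well. Now write elements $x=\sum x_i$ and $y=\sum y_i$ with $x_i,y_i\in\mathfrak{g}_i$. Expanding bilinearly, the cross terms vanish and $(x,y)=\sum_i (x_i,y_i)$. This is exactly the orthogonal decomposition $(~,~)|_{\mathfrak{g}}=\perp_{i=1}^r(~,~)|_{\mathfrak{g}_i}$.

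\emph{Part (2).} Each $\mathfrak{g}_i$ is a simple Lie subalgebra of $V_1$, so the hypotheses of Lemma \ref{Chevalley} hold for it verbatim. If $(~,~)|_{\mathfrak{g}_i}\neq 0$, that lemma gives nondegeneracy of $(~,~)|_{\mathfrak{g}_i}$ and a scalar $k_i\in\mathbb{C}^\times$ with $(~,~)|_{\mathfrak{g}_i}=k_i\langle\cdot,\cdot\rangle_i$. Otherwise the restriction is zero. These two cases are exactly the dichotomy stated in (2).

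\emph{Part (3).} By (1), the radical of $(~,~)|_{\mathfrak{g}}$ equals the direct sum of the radicals of the restrictions $(~,~)|_{\mathfrak{g}_i}$. To see this, take $x=\sum x_i$ in the radical and pair it against $y\in\mathfrak{g}_j$: this gives $(x_j,y)=0$ for all $y\in\mathfrak{g}_j$, so each $x_j$ lies in the radical of the $j$-th restriction; the converse inclusion is immediate from (1). By (2), the radical of $(~,~)|_{\mathfrak{g}_i}$ is $0$ when $(~,~)|_{\mathfrak{g}_i}\neq0$ and is all of $\mathfrak{g}_i$ when $(~,~)|_{\mathfrak{g}_i}=0$. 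So the total radical vanishes exactly when every restriction is nonzero, which is the claimed equivalence.

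There is no serious obstacle here. The only point needing care is part (1), which requires perfectness of the simple summands; without it, invariance alone would not force the cross terms $(\mathfrak{g}_i,\mathfrak{g}_j)$ to vanish.
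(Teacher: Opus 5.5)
Your proposal is correct and follows the paper's proof essentially step for step. You use perfectness of the simple ideals together with invariance for (1), Lemma~\ref{Chevalley} for (2), and the decomposition of the radical of $(~,~)|_{\mathfrak{g}}$ as the direct sum of the radicals of the restrictions $(~,~)|_{\mathfrak{g}_i}$ for (3).
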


\begin{proof}
Since $\mathfrak{g}
=
\mathfrak{g}_1\oplus\cdots\oplus\mathfrak{g}_r$ is a direct sum of simple ideals, we have $[\mathfrak{g}_i,\mathfrak{g}_j]=0$ for $i\neq j$. Moreover, each $\mathfrak{g}_i$ is perfect: $\mathfrak{g}_i=[\mathfrak{g}_i,\mathfrak{g}_i]$. Now, we let $i\neq j$, $u\in\mathfrak{g}_i$, and
$v\in\mathfrak{g}_j$. Since $\mathfrak{g}_i$ is perfect, $u$ is a
finite sum of elements of the form $[a,b]$ with
$a,b\in\mathfrak{g}_i$. By the invariance of $(~,~)$,
$([a,b],v)=(a,[b,v])$. Since $[\mathfrak{g}_i,\mathfrak{g}_j]=0$, we have $[b,v]=0$. Hence $([a,b],v)=0$. It follows that $(\mathfrak{g}_i,\mathfrak{g}_j)=0$ for $i\neq j$. Therefore, $(~,~)|_{\mathfrak{g}}
=
\perp_{i=1}^{r}(~,~)|_{\mathfrak{g}_i}$. This proves (1).

For each $i$, the restriction $(~,~)|_{\mathfrak{g}_i}$ is an
invariant symmetric bilinear form on the simple Lie algebra
$\mathfrak{g}_i$. If $(~,~)|_{\mathfrak{g}_i}=0$, there is nothing to prove. Suppose instead that $(~,~)|_{\mathfrak{g}_i}\neq 0$. By Lemma \ref{Chevalley}, the restriction $(~,~)|_{\mathfrak{g}_i}$
is nondegenerate, and there exists $k_i\in\mathbb{C}^{\times}$ such
that $(~,~)|_{\mathfrak{g}_i}
=
k_i\langle\cdot,\cdot\rangle_i$. This proves (2).

Finally, since the simple ideals $\mathfrak{g}_i$ are mutually
orthogonal, the radical of $(~,~)|_{\mathfrak{g}}$ is
\[
\operatorname{Rad}((~,~)|_{\mathfrak{g}})
=
\bigoplus_{i=1}^{r}
\operatorname{Rad}((~,~)|_{\mathfrak{g}_i}).
\]
Hence $(~,~)|_{\mathfrak{g}}$ is nondegenerate if and only if its
restriction to every simple ideal $\mathfrak{g}_i$ is nondegenerate.
By Lemma \ref{Chevalley}, this is equivalent to $(~,~)|_{\mathfrak{g}_i}\neq0$ for every $i$. This completes the proof for (3).
\end{proof}
Theorems \ref{affine-integrability} and \ref{main for section 3} state that, assuming Lemma \ref{Chevalley} and that $V$ is $C_2$-cofinite, $V$ contains a vertex subalgebra isomorphic to a rational affine vertex algebra. However, the proof of Theorem \ref{affine-integrability} relies on the following extension of Lemma 2.4 of \cite{Mi}, stated here as Proposition \ref{Miyamoto-spanning}. Although Lemma 2.4 of \cite{Mi} is formulated for $\mathbb{N}$-graded vertex operator algebras, its proof extends to the more general setting of $\mathbb{N}$-graded vertex algebras with finite-dimensional homogeneous subspaces and $\dim V_0>1$.

\begin{prop}\label{Miyamoto-spanning}
Let $V=\bigoplus_{n=0}^{\infty}V_n$ be a $C_2$-cofinite $\mathbb{N}$-graded vertex algebra, and let
$A=\{v^1,\ldots,v^d\}$ be a finite set of homogeneous representatives
whose images span $V/C_2(V)$. Then $V$ is spanned by ${\bf 1}$ together
with vectors of the form
$$v^{i_1}_{-n_1}v^{i_2}_{-n_2}\cdots v^{i_s}_{-n_s}{\bf 1},$$
where $n_1>n_2>\cdots>n_s>0$ and $v^{i_j}\in A$ for all $j$.

Consequently, if $v^{i_1}_{-n_1}\cdots v^{i_s}_{-n_s}{\bf 1}\in V_N$, then $N\geq \frac{s(s-1)}{2}$. In particular, for every integer $t\geq0$, $\bigoplus_{N\leq \frac{t(t+1)}2}V_N$ is spanned by vectors of the above form with $s\leq t+1$.
\end{prop}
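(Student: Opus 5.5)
The plan follows Miyamoto's Lemma~2.4 in \cite{Mi}. The proof uses only the vertex-algebra structure and the $\mathbb{N}$-grading with finite-dimensional homogeneous pieces; nowhere does it need a conformal vector or $V_0=\mathbb{C}{\bf 1}$.

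\emph{Step 1: reduction.} Take $A$ to be homogeneous representatives spanning $V/C_2(V)$, and let $U$ be the span of ${\bf 1}$ together with all monomials $v^{i_1}_{-n_1}\cdots v^{i_s}_{-n_s}{\bf 1}$ with $n_1>\cdots>n_s>0$. We will show $V_N\subseteq U$ by induction on the weight $N$. For this we filter $V$ by weight and use the fact that, for homogeneous $a$, the element $a_{-n}b$ has weight $\operatorname{wt}a+n-1+\operatorname{wt}b$. Since $V=\mathrm{span}(A)+C_2(V)$, and $C_2(V)$ is spanned by $a_{-2}b$ with $\operatorname{wt}b<\operatorname{wt}(a_{-2}b)$ (as $\operatorname{wt}a\ge 0$; if $\operatorname{wt}a=0$ we still gain one from $n=2$), an induction on weight shows $V$ is spanned by ${\bf 1}$ and arbitrary monomials $v^{i_1}_{-n_1}\cdots v^{i_s}_{-n_s}{\bf 1}$ with $n_j\ge1$. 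Here we use $a_{-n}=\frac{1}{(n-1)!}(D^{n-1}a)_{-1}$, and we expand $D^{n-1}a$ modulo $C_2$ in the finite-dimensional lower-weight pieces.

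\emph{Step 2: strict ordering.} This is the key step and the main obstacle. We sort an arbitrary monomial into one with strictly decreasing indices, modulo monomials of lower weight or of lower \emph{length}/lexicographic order. Three moves are available:
\begin{itemize}
\item Commutators $[a_m,b_n]=\sum_i\binom{m}{i}(a_ib)_{m+n-i}$ allow reordering into nonincreasing order; the correction terms have fewer factors at the same weight.
\item An equal index pair $a_{-n}b_{-n}$ is handled by Miyamoto's trick. Using the Borcherds identity, $(a_{-n}b)_{-n}$-type expansions, and $a_{-n-1}=\frac1n[D,a_{-n}]$, one rewrites $a_{-n}b_{-n}w$ as a combination of $a_{-n-1}b_{-n+1}w$, $(a_jb)_{k}w$, and terms where $D$ acts on $w$ and pushes indices apart. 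After re-expanding $a_jb$ in $A$ modulo $C_2$, every resulting term is smaller in the order (weight, length, then the index vector in reverse-lex).
\item The same argument applies when $a=b$ or when the two elements are distinct.
\end{itemize}
I would set up a well-founded order on index tuples of fixed weight and length, and check that each rewriting step strictly decreases it. Finiteness of $\dim V_N$ and $\mathbb{N}$-grading (weights bounded below) guarantee termination. Care is needed to verify that weight-zero elements in $A$ (from $V_0\ne\mathbb{C}{\bf 1}$) do not break termination. They do not: the reordering moves preserve weight and reduce the order without needing any weight to be positive.

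\emph{Step 3: the bound.} If $n_1>\cdots>n_s>0$, then $n_j\ge s-j+1$. Since each $v^{i_j}$ has weight $\ge0$, we get $N=\sum_j(\operatorname{wt}v^{i_j}+n_j-1)\ge\sum_{j}(s-j)=\frac{s(s-1)}2$. Hence if $N\le \frac{t(t+1)}2$, then $\frac{s(s-1)}{2}\le\frac{t(t+1)}2$, which forces $s\le t+1$. This gives the final assertion.
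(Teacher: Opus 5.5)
Your overall route is the paper's. The paper does not reprove the spanning statement: it specializes Miyamoto's Lemma~2.4 to $W=V$, $w=\mathbf 1$, notes that $v_m\mathbf 1=0$ for $m\ge 0$ forces every index to be negative, and then does exactly your Step~3 count $n_j\ge s-j+1$, which is correct.

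The trouble is in Step~2, which you chose to re-derive. The order you propose, (total weight, length, index vector), does not decrease under your moves.

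\emph{Commutator corrections and $C_2$-remainders.} The corrections $\sum_i\binom{m}{i}(a_ib)_{m+n-i}$ and the $C_2(V)$-remainders are modes of vectors that are not in $A$. To return to $A$-monomials you must expand those vectors, and an element of $C_2(V)$ can have more factors than the product whose swap produced it. Nothing prevents, for example, $u\in V_1$, $v\in V_2$ with $u_0v=x_{-2}y_{-2}z$ for $x,y,z\in V_0$. Then swapping the two-factor product $u_mv_n$ yields three-factor terms. So ``fewer factors at the same weight'' is false. Moreover, with weight-zero elements in $A$ there is no bound on the number of factors at fixed total weight, so finiteness of $\dim V_N$ cannot secure termination. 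This is exactly where your claim that weight-zero generators are harmless fails.

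\emph{The measure that does drop.} What decreases is the generator weight $\sum_j\operatorname{wt}v^{i_j}$, equivalently the increase of $\sum_j(n_j-1)$ at fixed total weight (essentially Li's filtration). Three facts give this:
\begin{enumerate}
\item $\operatorname{wt}(a_ib)\le\operatorname{wt}a+\operatorname{wt}b-1$ for $i\ge 0$;
\item $\operatorname{wt}x+\operatorname{wt}y=\operatorname{wt}(x_{-2}y)-1$;
\item the iterate formula, followed by moving nonnegative modes to the right, never raises generator weight.
\end{enumerate}
Hence the order must put generator weight first, then length, then index data. This is well founded even when $A\cap V_0\neq 0$.

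\emph{Step~1.} The same issue affects Step~1 as written. Since $D^{n-1}a\in C_2(V)$ for $n\ge 2$, expanding it ``modulo $C_2$'' gives nothing. What is needed is the iterate formula together with an induction on the length of the monomial, because $u_{-1}$ with $\operatorname{wt}u=0$ does not raise weight.

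\emph{The equal-index reduction.} Your mechanism for an equal pair also produces no reduction. $D$ is a derivation, so expanding $D(a_{-n}b_{-n+1}w)$ just returns the monomials you started with, and $D$ of a strictly ordered monomial can itself create equal indices. Rewriting $(a_{-n}b)_{-n}$ by the iterate formula simply reproduces the identity you began with. The reduction must use $C_2$-cofiniteness through the product $a_{-1}b$:
\[
(a_{-1}b)_{-2n+1}w=\sum_{i\ge 0}\bigl(a_{-1-i}b_{-2n+1+i}+b_{-2n-i}a_i\bigr)w ,
\]
whose $i=n-1$ term is $a_{-n}b_{-n}w$. Write $a_{-1}b\in\operatorname{span}A+C_2(V)$. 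Then:
\begin{enumerate}
\item the $\operatorname{span}A$ part gives one factor fewer at the same generator weight;
\item the $C_2(V)$ part, and every term containing a nonnegative mode, has strictly lower generator weight;
\item the remaining terms carry distinct indices $\{1+i,\,2n-1-i\}$, so after sorting, for example, $\sum_j n_j^2$ strictly increases.
\end{enumerate}
With this identity and the filtered order your argument closes. Alternatively, cite Miyamoto's lemma as the paper does.
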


\begin{proof}
The first assertion is the specialization of Lemma 2.4 of \cite{Mi}
to the weak module $W=V$ generated by the vacuum vector ${\bf 1}$.
Indeed, Lemma 2.4 of \cite{Mi} states that $V$ is spanned by vectors
$$v^{i_1}_{m_1}\cdots v^{i_s}_{m_s}{\bf 1},$$
where $m_1<\cdots<m_s$, with $v^{i_j}\in A$. Since $v_n{\bf 1}=0$ for $n\geq 0$, a nonzero vector of this form must satisfy $m_s<0$, and hence $m_1<\cdots<m_s<0$. One may write $m_j=-n_j$, and obtain $n_1>n_2>\cdots>n_s>0$.

Now we let $u=v^{i_1}_{-n_1}\cdots v^{i_s}_{-n_s}{\bf 1}$ be homogeneous of degree $N$. Since every $v^{i_j}$ has
nonnegative weight, $\wt\big(v^{i_j}_{-n_j}v\big)= \wt(v^{i_j})+n_j-1+\wt(v)\geq n_j-1+\wt(v)$. Therefore
$$N
=\sum_{j=1}^s
\left(\wt(v^{i_j})+n_j-1\right)
\geq
\sum_{j=1}^s(n_j-1).$$
Since $n_1>n_2>\cdots>n_s>0$, we have $n_j\geq s-j+1$, and hence $$\sum_{j=1}^s(n_j-1)
\geq
(s-1)+(s-2)+\cdots+1+0
=
\frac{s(s-1)}2.$$
Thus $N\geq \frac{s(s-1)}2$. If $N\leq\frac{t(t+1)}2$, then $\frac{s(s-1)}2
\leq
\frac{t(t+1)}2$, which implies
$s\leq t+1$. This proves the proposition.
\end{proof}
We can now use Proposition \ref{Miyamoto-spanning} to prove the
integrability theorem.
\begin{thm}\label{affine-integrability}
Let $V=\bigoplus_{n=0}^{\infty}V_n$ be an $\mathbb{N}$-graded vertex algebra such that $1<\dim V_0<\infty$, $\dim V_n<\infty$ for all $n\geq 1$, ${\Ker}(D|_{V_0})=\mathbb{C}{\bf 1}$, and suppose that $V$ is $C_2$-cofinite. Let $\mathfrak{g}$ be a simple
Lie subalgebra of $V_1$ such that $(~,~)|_{\mathfrak{g}}\neq0$. Let $U$ be the vertex subalgebra of $V$ generated by $\mathfrak{g}$.
Then there exists a positive integer $k$ such that $(~,~)|_{\mathfrak{g}}
=
k\langle\cdot,\cdot\rangle$, 
and $U\cong L_{\widehat{\mathfrak{g}}}(k,0)$. Moreover, $V$ is an integrable
$\widehat{\mathfrak{g}}$-module.
\end{thm}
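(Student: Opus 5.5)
The plan follows the Dong--Mason integrability argument, with the $C_2$-cofiniteness input supplied by Proposition~\ref{Miyamoto-spanning}.

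\emph{Step 1 (affine action).} By Lemma~\ref{Chevalley}, $(~,~)|_{\mathfrak g}=k\langle\cdot,\cdot\rangle$ for some $k\in\mathbb{C}^\times$. For $a,b\in\mathfrak g\subseteq V_1$ we have $a_0b=[a,b]$ and $a_1b=k\langle a,b\rangle{\bf 1}$. Moreover $a_jb\in V_{1-j}=0$ for $j\geq2$. The commutator formula then gives
\[
[a_m,b_n]=[a,b]_{m+n}+m k\langle a,b\rangle\delta_{m+n,0}.
\]
Hence $a\otimes t^m\mapsto a_m$, $K\mapsto k$ makes $V$ a level $k$ $\widehat{\mathfrak g}$-module. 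Since $a_n{\bf 1}=0$ for $n\geq0$, the module $U=U(\widehat{\mathfrak g}){\bf 1}$ is a quotient of the vacuum module $V_{\widehat{\mathfrak g}}(k,0)$.

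\emph{Step 2 ($k$ is a positive integer).} Fix a long root $\alpha$ and an $\mathfrak{sl}_2$-triple $e,f,h=\alpha^\vee$ in $\mathfrak g$ with $\langle e,f\rangle=1$. The modes $e_{-1},f_{1},h_0-k$ span an $\mathfrak{sl}_2$ acting on $V$. For this triple ${\bf 1}$ is a lowest-weight-type vector: $f_1{\bf 1}=0$ and $h_0{\bf 1}=0$, so its weight relative to $h_0-k$ is $-k$. Using $C_2$-cofiniteness, I claim $e_{-1}^N{\bf 1}=0$ for some $N$; this step is the main obstacle. First, $e_{-1}^N{\bf 1}$ lies in $V_N$, since $e$ has weight one. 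Next, $e_{-1}e_{-1}$ commutes with itself, and $e_{-1}^N{\bf 1}=(e_{-1})^{N}{\bf 1}$ is a ``square-type'' element. The plan is to show it lies in $C_2(V)$-type spans of increasing filtration, and to compare with Proposition~\ref{Miyamoto-spanning}. That proposition says $V_N$ is spanned by monomials $v^{i_1}_{-n_1}\cdots v^{i_s}_{-n_s}{\bf 1}$ with distinct $n_j$. I would use the $h_0$-grading: $e_{-1}^N{\bf 1}$ has $h_0$-eigenvalue $2N$, while a spanning monomial in $V_N$ has $s\leq O(\sqrt N)$ factors. Each factor's $h_0$-weight is bounded by $\max_i|h_0|$ on the finite set $A$, which can be taken $h_0$-homogeneous because $h_0$ preserves each $V_n$ and $C_2(V)$. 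The monomial's $h_0$-weight is thus $O(\sqrt N)<2N$ for large $N$, forcing $e_{-1}^N{\bf 1}=0$. (The subtle point is that $h_0$ is diagonalizable on each finite-dimensional $V_n$; I would obtain this from $\mathfrak g$ acting on the finite-dimensional $V_n$ through zero modes with $h$ semisimple, by Weyl's theorem.) Finite-dimensional $\mathfrak{sl}_2$ theory applied to the $\mathfrak{sl}_2$-string through ${\bf 1}$ then forces $-k\in\mathbb{Z}_{\leq0}$, and $k\neq0$, so $k\in\mathbb{Z}_{>0}$.

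\emph{Step 3 ($U\cong L(k,0)$ and integrability).} Since $e_\theta(-1)^{N}{\bf 1}=0$ in $U$, the same holds for the image of the singular vector $e_\theta(-1)^{k+1}{\bf 1}$. I would argue directly: the $\mathfrak{sl}_2$-string of length $k+1$ forces $e_{-1}^{k+1}{\bf 1}=0$. The quotient of $V_{\widehat{\mathfrak g}}(k,0)$ by the submodule generated by this vector is $L(k,0)$, which is irreducible (Kac). Hence $U$ is a nonzero quotient of $L(k,0)$, so $U\cong L(k,0)$. For integrability of $V$, note that $V$ is a weak $U$-module, since $U$ is a vertex subalgebra acting by modes. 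By Proposition~\ref{rationalaffine}, $L(k,0)$ is regular, so $V$ is a direct sum of integrable modules $L(k,\lambda)$. Therefore $V$ is an integrable $\widehat{\mathfrak g}$-module.
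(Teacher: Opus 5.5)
Your proposal is correct and follows the paper's proof. The key step is the same: you show $e_{-1}^N\mathbf 1=0$ by choosing $h_0$-eigenvector representatives in the Miyamoto spanning set and comparing the eigenvalue $2N$ with the $O(\sqrt N)$ bound on the number of factors. You also get integrability of $V$ from regularity of $L(k,0)$, as the paper does. The only difference is cosmetic: you obtain $k\in\mathbb Z_{>0}$ from the $\mathfrak{sl}_2$ spanned by $e_{-1},f_1,h_0-k$ acting on the vacuum string, and then identify $U$ through the singular vector $e_\theta(-1)^{k+1}\mathbf 1$, whereas the paper appeals directly to Kac's theorem on integrable highest-weight modules.
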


\begin{proof}
By Lemma \ref{Chevalley}, the restriction of $(~,~)$ to
$\mathfrak{g}$ is nondegenerate, and there exists
$k\in\mathbb{C}^{\times}$ such that $(u,v)=k\langle u,v\rangle$ for all $u,v\in\mathfrak{g}$, where $\langle\cdot,\cdot\rangle$ is the normalized invariant
bilinear form on $\mathfrak{g}$.

For $u,v\in\mathfrak{g}$ and $m,n\in\mathbb{Z}$, the commutator
formula gives
\[
[u_m,v_n]
=
(u_0v)_{m+n}
+
m(u_1v)\delta_{m+n,0}.
\]
Since $u_0v=[u,v]$ and $u_1v=(u,v){\bf 1}
=
k\langle u,v\rangle{\bf 1}$, we obtain
$$[u_m,v_n]
=
[u,v]_{m+n}
+
mk\langle u,v\rangle
\delta_{m+n,0}.
$$
Hence $u(m)\mapsto u_m,~
K\mapsto k$ defines a representation of the affine Kac--Moody algebra
$\widehat{\mathfrak{g}}$ on $V$ of level $k$.

We first consider the case $\mathfrak{g}\cong\mathfrak{sl}_2$. Let
$\{\alpha,e^\alpha,e^{-\alpha}\}$
be the standard basis, normalized so that
$\langle\alpha,\alpha\rangle=2$, $[\alpha,e^{\alpha}]=2e^{\alpha}$, and $[\alpha,e^{-\alpha}]=-2e^{-\alpha}$. Since each $V_n$ is finite-dimensional and invariant under the
zero-mode action of $\mathfrak{g}$, every $V_n$ is a finite-dimensional
completely reducible $\mathfrak{sl}_2$-module. Consequently, $V$ is a
direct sum of finite-dimensional $\mathfrak{sl}_2$-modules.

We call a nonzero vector $v\in V$ an $\mathfrak{sl}_2$-weight vector
of weight $\lambda$ if $\alpha_0v=\langle\alpha,\lambda\rangle v$. The possible weights belong to
$\frac{1}{2}\mathbb{Z}\alpha$. Since $C_2(V)$ is invariant under the zero-mode action of
$\mathfrak{g}$, the quotient $V/C_2(V)$ is a finite-dimensional $\mathfrak{g}$-module. We may therefore choose the finite homogeneous set $A=\{v^1,\ldots,v^d\}$ in Proposition \ref{Miyamoto-spanning} so that every $v^i$ is an
$\mathfrak{sl}_2$-weight vector. Because $A$ is finite, there exists
$
\lambda_0=m\alpha\in\frac12\mathbb{Z}_{\geq0}\alpha$ such that the $\mathfrak{sl}_2$-weight of every element of $A$ is less
than or equal to $\lambda_0$.

Let $u=
v^{i_1}_{-n_1}\cdots v^{i_s}_{-n_s}{\bf 1}$ be one of the spanning vectors in Proposition
\ref{Miyamoto-spanning}. If $v^{i_j}$ has
$\mathfrak{sl}_2$-weight $\lambda_j$, then
$[\alpha_0,(v^{i_j})_{-n_j}]
=
(\alpha_0v^{i_j})_{-n_j}=\lambda_j v^{i_j}_{-n_j}$. It follows inductively that $u$ is an $\mathfrak{sl}_2$-weight vector
of weight $\lambda_1+\cdots+\lambda_s$. Therefore, the $\mathfrak{sl}_2$-weight of $u$ is less than or equal to $s\lambda_0$.

By Proposition \ref{Miyamoto-spanning}, if $N\leq\frac{t(t+1)}{2}$, then every vector of $V_N$ is a linear combination of spanning
vectors containing at most $t+1$ factors. Hence every
$\mathfrak{sl}_2$-weight occurring in $V_N$ is bounded above by
$(t+1)\lambda_0=(t+1)m\alpha.$ Thus every $\mathfrak{sl}_2$-weight in $V_N$ is at most $
(t+1)m\alpha$.

Choose an integer $t$ sufficiently large so that $\frac{t}{2}>m$. Now, set $N=\frac{t(t+1)}{2}$
and consider $w=(e^\alpha)_{-1}^{N}{\bf 1}$.
Since $e^\alpha\in V_1$, we have $w\in V_N$. Moreover,
$[\alpha_0,(e^\alpha)_{-1}]
=
2(e^\alpha)_{-1}$. Thus, if $w\neq0$, it is an $\mathfrak{sl}_2$-weight vector of
weight $N\alpha
=
\frac{t(t+1)}2\alpha$. On the other hand, every
$\mathfrak{sl}_2$-weight occurring in $V_N$ is at most $(t+1)m\alpha$. However, $\frac{t(t+1)}{2}>(t+1)m$ because $t/2>m$. This is a contradiction. Hence $(e^{\alpha})_{-1}^{N}{\bf 1}=0$. Therefore the highest-weight
$\widehat{\mathfrak{sl}}_2$-module $U$ generated by ${\bf 1}$ is
integrable. By Theorem 10.7 of \cite{K}, it follows that $U\cong L_{\widehat{\mathfrak{sl}}_2}(k,0)$ for some nonnegative integer $k$. Since $(~,~)|_{\mathfrak{g}}\neq0$, we have $k\neq0$. Hence $k$ is a positive integer. Since $V$ is a weak
$U$-module and $U$ is regular, by Proposition \ref{rationalaffine}, $V$ is therefore integrable as a
$\widehat{\mathfrak{sl}}_2$-module.

We now consider an arbitrary finite-dimensional complex simple Lie
algebra $\mathfrak{g}$. Let $\Phi$ be its root system. Let $\theta$ be the longest positive root in $\Phi$. We choose root vectors $e^{\theta}\in \mathfrak{g}_{\theta}$, $e^{-\theta}\in\mathfrak{g}_{-\theta}$ and let $h_{\theta}$ be the corresponding coroot. Then $\mathfrak{s}_{\theta}
=
\mathbb{C}e^{\theta}
\oplus
\mathbb{C}h_{\theta}
\oplus
\mathbb{C}e^{-\theta}$ is a Lie subalgebra isomorphic to $\mathfrak{sl}_2$. Applying the preceding argument to $\mathfrak{s}_{\theta}$, we obtain $(e^{\theta})_{-1}^{N_{\theta}}{\bf 1}=0$ for some positive integer $N_{\theta}$. Hence the highest-weight
$\widehat{\mathfrak{g}}$-module generated by ${\bf 1}$ is integrable.
In addition, $U\cong L_{\widehat{\mathfrak{g}}}(k,0)$ for some nonnegative integer $k$. Again, $k\neq0$ because $(~,~)|_{\mathfrak{g}}\neq 0$. Consequently, $k\in\mathbb{Z}_{>0}$.

Finally, $V$ is a weak module for $U\cong L_{\widehat{\mathfrak{g}}}(k,0)$. Since $k$ is a positive integer, every weak module for this affine
vertex operator algebra is a direct sum of integrable
$\widehat{\mathfrak{g}}$-modules. Hence $V$ is an integrable
$\widehat{\mathfrak{g}}$-module. This completes the proof.
\end{proof}

\begin{thm}\label{main for section 3} Let $V=\bigoplus_{n=0}^{\infty}V_n$ be an $\mathbb{N}$-graded vertex algebra such that $1<\dim V_0<\infty$, and $\dim V_i<\infty$ for all $i\geq 1$, ${\Ker}(D|_{V_0})=\mathbb{C}{\bf 1}$ and $V$ is $C_2$-cofinite. Let $S$ be a Levi-subalgebra of the Leibniz algebra $V_1$ such that $$S=\mathfrak{g}_1\oplus\dots\oplus \mathfrak{g}_n$$ where each $\mathfrak{g}_j$ is a simple Lie algebra. Assume that for each $j$, $\mathfrak{g}_j$ satisfies the assumption in Lemma \ref{Chevalley}. Then 
\begin{enumerate}\item $(~,~)$ is nondegenerate on $S$.
\item The vertex subalgebra of $V$ that is generated by $S$ is isomorphic to
$$L_{\hat{\mathfrak{g}}_1}(k_1,0)\otimes \dots\otimes L_{\hat{\mathfrak{g}}_n}(k_n,0)$$ for some positive integers $k_1,...,k_n$.
\end{enumerate}
\end{thm}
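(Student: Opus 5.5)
Part (1) is immediate from Proposition \ref{semisimple-orthogonal-decomposition}. Since each $\mathfrak{g}_j$ satisfies the hypothesis of Lemma \ref{Chevalley}, we have $(~,~)|_{\mathfrak{g}_j}\neq 0$ for every $j$. Part (3) of that proposition then gives nondegeneracy of $(~,~)$ on $S$. In addition, part (1) of the same proposition gives $(\mathfrak{g}_i,\mathfrak{g}_j)=0$ for $i\neq j$; this orthogonality will be used again below.

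For part (2), I first apply Theorem \ref{affine-integrability} to each $\mathfrak{g}_j$ separately. This yields positive integers $k_j$ with $(~,~)|_{\mathfrak{g}_j}=k_j\langle\cdot,\cdot\rangle_j$, and shows that the vertex subalgebra $U_j$ generated by $\mathfrak{g}_j$ is isomorphic to $L_{\hat{\mathfrak{g}}_j}(k_j,0)$.

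Next I show that the $U_j$ mutually commute. For $u\in\mathfrak{g}_i$ and $v\in\mathfrak{g}_j$ with $i\neq j$, we have $u_0v=[u,v]=0$ because $S$ is a direct sum of ideals. We also have $u_1v=(u,v){\bf 1}=0$ by orthogonality, and $u_nv\in V_{1-n}=0$ for $n\geq 2$ since $V$ is $\mathbb{N}$-graded. Hence $u_nv=0$ for all $n\geq 0$, so $u\in C_V(\mathfrak{g}_j)=C_V(U_j)$. Because centralizers are vertex subalgebras and $C_V(S)=C_V(\langle S\rangle)$, it follows that $U_i\subseteq C_V(U_j)$, i.e. $[Y(a,x_1),Y(b,x_2)]=0$ for $a\in U_i$, $b\in U_j$. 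Consequently the vertex subalgebra $U$ generated by $S$ is the image of the natural vertex algebra homomorphism
\[
\Phi: U_1\otimes\cdots\otimes U_n\to V,\qquad a^1\otimes\cdots\otimes a^n\mapsto a^1_{-1}\cdots a^n_{-1}{\bf 1}.
\]
This map is a homomorphism by the standard criterion for mutually commuting subalgebras (see \cite{LLi}), and it is surjective onto $U$ because $U$ is generated by the $\mathfrak{g}_j$.

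It remains to prove that $\Phi$ is injective, and I expect this to be the main obstacle. I would argue via simplicity. Each $L_{\hat{\mathfrak{g}}_j}(k_j,0)$ is a simple vertex algebra, and a tensor product of simple vertex operator algebras with finite-dimensional weight spaces (of CFT type) is again simple. Therefore $\ker\Phi$, being an ideal, is either $0$ or everything, and it is not everything because $\Phi({\bf 1})={\bf 1}\neq 0$. As an alternative that avoids citing simplicity of tensor products, one can note that $U$ is a highest-weight module for $\hat{\mathfrak{g}}_1\oplus\cdots\oplus\hat{\mathfrak{g}}_n$ generated by ${\bf 1}$ with levels $(k_1,\dots,k_n)$. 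By Theorem \ref{affine-integrability} it is integrable for each factor, so it is integrable for the sum. An integrable highest-weight module over a direct sum of affine algebras is irreducible and equal to $\bigotimes_j L_{\hat{\mathfrak{g}}_j}(k_j,0)$ by \cite{K}. Either route gives $U\cong L_{\hat{\mathfrak{g}}_1}(k_1,0)\otimes\cdots\otimes L_{\hat{\mathfrak{g}}_n}(k_n,0)$.
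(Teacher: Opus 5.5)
Your proof is correct and follows the paper's route: part (1) comes from Proposition \ref{semisimple-orthogonal-decomposition}(3), and part (2) comes from applying Theorem \ref{affine-integrability} to each $\mathfrak{g}_j$. The paper only says the result ``follows immediately''; your argument supplies the details it leaves implicit, namely that the $U_j$ mutually commute (via $u_0v=0$, $(\mathfrak{g}_i,\mathfrak{g}_j)=0$, and the grading) and that the map from $U_1\otimes\cdots\otimes U_n$ is injective because the tensor product of the simple affine VOAs is simple.
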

\begin{proof} This follows immediately from Proposition \ref{semisimple-orthogonal-decomposition}, and Theorem \ref{affine-integrability}.\end{proof}
\section{Regular $\mathbb{N}$-graded vertex operator algebras with solvable $V_1$} 

We now turn to the complementary case in which $V_1$ is solvable. When $V_0$ is a local graded Gorenstein algebra, the interaction between the Frobenius structure of $V_0$, the invariant bilinear form, and the weight-one space $V_1$ produces a different source of rigidity. 

The first goal of this section is to identify a distinguished nondegenerate subspace $\mathcal{M}\subseteq V_1$ associated with the Gorenstein structure of $V_0$. Under the quasi-primary condition $L(1)\mathcal{M}=0$, we show that $\mathcal{M}$ is abelian and the vertex operator subalgebra generated by $\mathcal{M}$ is a Heisenberg vertex operator algebra. We then study the action of the corresponding Heisenberg zero modes on $V$ and the decomposition of $V$ into Heisenberg generalized eigenspaces. 

Passing from Heisenberg structure to lattice structure requires additional information. We isolate three hypotheses that control the relevant obstructions: the condition ($\Omega$\text{-SS}), which rules out the Jordan obstruction for the zero modes of $\mathcal{M}$ on the vacuum space with respect to the Heisenberg algebra that is generated by $\mathcal{M}$; the full-rank integrality condition ($\mathrm{IL}$), which produces the required integral lattice; and the lattice cocycle compatibility condition ($\mathrm{LC}$), which permits reconstruction of a lattice vertex operator algebra from the corresponding simple-current translations. These assumptions play distinct roles and will be introduced separately below. 

The main result of the section shows that under these hypotheses, $V$ contains a conformal vertex operator subalgebra isomorphic to $V_K$, where $K$ is a positive-definite even lattice of rank $\dim\mathcal{M}$. Moreover, $\min\{\langle\alpha,\alpha\rangle_{\mathcal{M}}~|~0\neq \alpha\in K\}\geq 4$. 

The minimum-norm bound reflects the solvability of $V_1$: a norm-two vector in $K$ would give rise to a copy of $\mathfrak{sl}_2$ in the weight-one space of the conformally embedded lattice vertex operator algebra, and hence in $V_1$, contradicting solvability.

Thus, in the solvable case, we replace affine vertex operator algebras in Section~3 with Heisenberg and lattice vertex operator algebras. This provides a mechanism through which familiar structures from strongly rational vertex operator algebras persist when $V_0$ is allowed to be a nontrivial Gorenstein algebra.

Let $(V=\bigoplus_{n=0}^{\infty}V_n,Y(~,~),{\bf 1},\omega)$ be an $\mathbb{N}$-graded vertex operator algebra such that $V_0=\bigoplus_{d=0}^sV_0^d$ is a graded-Gorenstein algebra with the unique maximal ideal $\mathfrak{m}$.

As $V_0$ is a graded Gorenstein algebra, the socle $soc(V_0)$ is a simple $V_0$-module and $soc(V_0)\cong V_0/\mathfrak{m}$. In particular, one has $$V_0^s=soc(V_0)=\mathbb{C}t=Ann_{V_0}(\mathfrak{m})$$ for some $t\in\mathfrak{m}$. Note that $V_0^0=\mathbb{C}{\bf 1}$ and $\mathbb{C}t$ is the unique minimal ideal of $V_0$. 

\begin{prop}\label{V_0M}\cite{KSY}
    Let $V=\bigoplus_{n=0}^{\infty}V_n$ be an $\mathbb{N}$-graded vertex operator algebra such that $V_0=\bigoplus_{d=0}^sV_0^d$ is a graded Gorenstein algebra with the unique maximal ideal $\mathfrak{m}$ and $1<\dim_{\mathbb{C}}V_0$. Now we set $\mathfrak{a}=Span\{v_0a~|~v\in V_1,~a\in\mathfrak{m}\}$. Then the following statements hold
    \begin{enumerate}
        \item $\mathfrak{a}$ is an ideal of $V_0$.
        \item For $a\in\mathfrak{m}$, $v\in V_1$, $v_0a\in\mathfrak{m}$ if and only if $L(1)(a_{-1}v)\in\mathfrak{m}$. In addition, $v_0t\in\mathbb{C}t$ if and only if $L(1)(t_{-1}v)\in\mathbb{C}t$.
        \item If $\mathfrak{a}\neq V_0$ then $v_0t\in \mathbb{C}t$ for all $v\in V_1$.
    \end{enumerate}
\end{prop}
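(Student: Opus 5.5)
The plan is to prove all three statements directly from the commutator and iterate formulas, using only weight bookkeeping and two facts: $V_0$ is a commutative associative algebra under $a\cdot b=a_{-1}b$, and the socle $\mathbb{C}t=\mathrm{Ann}_{V_0}(\mathfrak{m})$ is the unique minimal ideal.

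For (1), I will show $b\cdot(v_0a)\in\mathfrak{a}$ for $b\in V_0$, $v\in V_1$, $a\in\mathfrak{m}$. The key observation is that $b_{-1}v\in V_1$. The iterate formula gives
\[(b_{-1}v)_0a=\sum_{i\geq0}\bigl(b_{-1-i}v_ia+v_{-1-i}b_ia\bigr).\]
For $i\geq1$ the vector $v_ia$ has negative weight, so $v_ia=0$. For $i\geq0$ the vector $b_ia$ has negative weight, so $b_ia=0$. Hence $(b_{-1}v)_0a=b_{-1}(v_0a)$, which lies in $\mathfrak{a}$ by definition of $\mathfrak{a}$. Since $\mathfrak{a}$ is a subspace, this shows it is an ideal.

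For (2), the main step is the identity
\[L(1)(a_{-1}v)=v_0a+(L(1)v)_{-1}a\qquad(a\in V_0,\ v\in V_1).\]
First, skew-symmetry gives $a_{-1}v=v_{-1}a-D(v_0a)$, because the terms $v_ja$ with $j\geq1$ vanish for weight reasons. Next, $L(1)D(v_0a)=DL(1)(v_0a)+2L(0)(v_0a)=0$, since $v_0a\in V_0$ and $V_{-1}=0$. Finally, using $[\omega_2,v_{-1}]=\sum_i\binom{2}{i}(\omega_iv)_{1-i}$ and $(Dv)_1=-v_0$, I get $[L(1),v_{-1}]=-v_0+2v_0+(L(1)v)_{-1}$. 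Applying this to $a$ and using $L(1)a=0$ gives the identity. Since $L(1)v\in V_0$, the term $(L(1)v)_{-1}a$ lies in $\mathfrak{m}$ when $a\in\mathfrak{m}$, because $\mathfrak{m}$ is an ideal. When $a=t$ this term lies in $V_0t=\mathbb{C}t$. Both equivalences in (2) follow immediately.

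For (3), suppose $\mathfrak{a}\neq V_0$. Since $V_0$ is local, the proper ideal $\mathfrak{a}$ is contained in $\mathfrak{m}$. I will use that $v_0$ acts as a derivation of $V_0$, which follows from $[v_0,b_{-1}]=(v_0b)_{-1}$. For $b\in\mathfrak{m}$ this gives
\[b\cdot v_0t=v_0(bt)-(v_0b)t.\]
Here $bt=0$, and $v_0b\in\mathfrak{a}\subseteq\mathfrak{m}$, so $(v_0b)t=0$. Hence $v_0t\in\mathrm{Ann}_{V_0}(\mathfrak{m})=\mathbb{C}t$.

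The only delicate point is getting the index and sign bookkeeping right in the commutator $[L(1),v_{-1}]$ in part (2). Everything else is vanishing of negative-weight terms.
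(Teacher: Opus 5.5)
Your proof is correct and complete. The paper does not prove this proposition itself. It is quoted from \cite{KSY}, where, per the remark that follows it, it is stated for $\mathbb{N}$-graded quasi vertex operator algebras. So there is no in-paper argument to compare against. Your proposal supplies a self-contained verification from the iterate, skew-symmetry and commutator formulas together with weight truncation, and every step checks:
\begin{enumerate}
\item The identity $(b_{-1}v)_0a=b_{-1}(v_0a)$, with $b_{-1}v\in V_1$, gives (1).
\item The relations $a_{-1}v=v_{-1}a-D(v_0a)$, $L(1)D(v_0a)=0$ and $[L(1),v_{-1}]=v_0+(L(1)v)_{-1}$ give $L(1)(a_{-1}v)=v_0a+(L(1)v)_{-1}a$. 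This yields (2), because $\mathfrak{m}$ and $\mathbb{C}t=V_0t$ are ideals.
\item The derivation property of $v_0$ on $V_0$, together with $\mathfrak{a}\subseteq\mathfrak{m}$ from (1), gives $\mathfrak{m}\cdot v_0t=0$, hence (3).
\end{enumerate}

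Your route buys two things.
\begin{enumerate}
\item Your identity is sharper than (2): when $L(1)v=0$ it gives $v_0a=L(1)(a_{-1}v)$ exactly.
\item $L(1)$ enters only through $[L(1),L(-1)]=2L(0)$ and $[L(1),v_n]=-n\,v_{n+1}+(L(1)v)_n$ for $v\in V_1$. The latter is the $\mathfrak{sl}_2$-covariance formula and does not require $\omega\in V$. So the same argument works verbatim in the quasi vertex operator algebra setting of \cite{KSY}.
\end{enumerate}
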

\begin{rem}
    The original statement in Proposition \ref{V_0M} assumes an $\mathbb{N}$-graded quasi vertex operator algebra $\bigoplus_{n=0}^{\infty}V_n$ such that $V_0$ is a graded Gorenstein algebra with a unique maximal ideal. 
\end{rem}
We will use $B(~,~)$ to denote the nondegenerate symmetric invariant bilinear form associated with the Poincaré duality of $V_0$. For simplicity, we may and shall set $B({\bf 1},t)=1$.

Following \cite{MY, KSY}, we define a bilinear map $((~,~)):V_1\times V_1\rightarrow\mathbb{C}$ by $((u,v))=B(u_1v,t)$ for $u,v\in V_1$. Also, we define $$\rad((~,~))=\{u\in V_1~|~((u,v))=0\text{ for all }v\in V_1\}.$$ 

We next study relations between $L(-1)V_0$, $\mathfrak{m}$ and $\rad((~,~))$.
\begin{lem}\label{degenerate of (( , ))} Assume that $(V=\bigoplus_{n=0}^{\infty}V_n,Y(~,~),{\bf 1},\omega)$ is an $\mathbb{N}$-graded vertex operator algebra such that $\dim V_0>1$, $V_0=\bigoplus_{d=0}^sV_0^d$ is a graded-Gorenstein algebra with the unique maximal ideal $\mathfrak{m}$. We define a bilinear map $((~,~)):V_1\times V_1\rightarrow\mathbb{C}$ by $((u,v))=B(u_1v,t)$ for $u,v\in V_1$. Here, $B(~,~)$ is the nondegenerate symmetric invariant bilinear form associated with the Poincaré duality of $V_0$. Also, we define $$\rad((~,~))=\{u\in V_1~|~((u,v))=0\text{ for all }v\in V_1\}.$$  The following properties hold
\begin{enumerate} \item $L(-1)V_0\subseteq \rad((~,~))$ if and only if $\mathfrak{m}$ is a $V_1$-module. Moreover, if $V_0\neq \mathfrak{a}$ then $L(-1)V_0\subseteq \rad((~,~))$
\item If $\Ker L(-1)|_{V_0}=\mathbb{C}{\bf 1}$, and $\mathfrak{m}$ is a $V_1$-module then $((~,~))$ is degenerate on $V_1$.
    \end{enumerate}
\end{lem}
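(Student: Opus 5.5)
The proof will rest on one computation: for $a\in V_0$ and $v\in V_1$, express $((L(-1)a,v))$ through $v_0a$. Skew-symmetry gives
\[
(L(-1)a)_1v=v_1L(-1)a \quad\text{up to } L(-1)\text{-correction terms}.
\]
A more direct route uses $(L(-1)a)_n=-na_{n-1}$. This gives $(L(-1)a)_1v=-a_0v$. Skew-symmetry for $a\in V_0$ and $v\in V_1$ then gives $a_0v=-v_0a+L(-1)(v_1a)$, and $v_1a\in V_{-1}=0$, so $a_0v=-v_0a$. Hence
\[
((L(-1)a,v))=B\big((L(-1)a)_1v,t\big)=B(v_0a,t).
\]
Thus $L(-1)V_0\subseteq\rad((~,~))$ is equivalent to $B(v_0a,t)=0$ for all $a\in V_0$ and $v\in V_1$.

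For (1), I would use that $B(x,t)$ is the coefficient of $\mathbf 1$ in $x$ under the grading, since $B(\mathbf 1,t)=1$ and $B(\mathfrak m,t)=B(\mathbf 1,\mathfrak m t)=0$. This gives $\mathfrak m=\{x\in V_0: B(x,t)=0\}$, i.e. $\mathfrak m=\ker B(\cdot,t)$. Note that $v_0\mathbf 1=0$, so the condition only depends on $a\in\mathfrak m$. The condition $B(v_0a,t)=0$ for all $a\in\mathfrak m$ then says exactly $v_0\mathfrak m\subseteq\mathfrak m$, i.e. $\mathfrak m$ is a $V_1$-module under the zero modes. That proves the equivalence.

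For the "moreover" part, suppose $V_0\neq\mathfrak a$. By Proposition \ref{V_0M}(1), $\mathfrak a$ is a proper ideal of the local ring $V_0$, so $\mathfrak a\subseteq\mathfrak m$. This means $v_0a\in\mathfrak m$ for all $v\in V_1$ and $a\in\mathfrak m$, so $\mathfrak m$ is a $V_1$-module and the first part applies.

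For (2), assume $\mathfrak m$ is a $V_1$-module, so $L(-1)V_0\subseteq\rad((~,~))$ by (1). Since $\Ker L(-1)|_{V_0}=\mathbb C\mathbf 1$ and $\dim V_0>1$, we have $\dim L(-1)V_0=\dim V_0-1\geq1$. So $\rad((~,~))\neq0$ and $((~,~))$ is degenerate.

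The main point needing care is the identity $(L(-1)a)_1v=v_0a$ used in the first step. This requires checking the translation formula $(L(-1)a)_n=-na_{n-1}$ and the skew-symmetry expansion, where all higher correction terms vanish for degree reasons. The identification $\mathfrak m=\ker B(\cdot,t)$ also uses invariance of $B$ together with $\mathfrak m t=0$.
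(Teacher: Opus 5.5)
Your proposal is correct and follows essentially the same route as the paper: the identity $((L(-1)a,v))=B(v_0a,t)$ combined with $B(\mathfrak{m},t)=0$ and $B(\mathbf{1},t)=1$ gives (1), the containment $\mathfrak{a}\subseteq\mathfrak{m}$ gives the ``moreover'' clause, and $0\neq L(-1)V_0\subseteq\rad((~,~))$ gives (2). Two minor differences: you derive $(L(-1)a)_1v=v_0a$ explicitly from $(L(-1)a)_n=-na_{n-1}$ and skew-symmetry, which the paper uses without comment, and you handle both directions of (1) at once via $\mathfrak{m}=\ker B(\cdot,t)$, where the paper argues one direction by contradiction.
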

\begin{proof}
    \begin{enumerate}
        \item We assume that $L(-1)V_0\subseteq \rad ((~,~))$. We will show that $\mathfrak{m}$ is $V_1$-module. If $\mathfrak{m}$ is not a $V_1$-module, then there exist $u\in V_1$, $a\in\mathfrak{m}$ such that $u_0a=\lambda {\bf 1}+\beta$ where $\lambda\neq 0$ and $\beta\in \mathfrak{m}$. This implies that 
        \begin{eqnarray*}
            ((u,L(-1)a))&=&B((L(-1)a)_1u,t)\\
            &=&B(u_0a,t)\\
            &=&B(\lambda {\bf 1}+\beta,t)\\
            &=&\lambda B({\bf 1},t)\neq 0.
        \end{eqnarray*} This is impossible. So, $\mathfrak{m}$ is a $V_1$-module. 

       Conversely, assume that $\mathfrak{m}$ is a $V_1$-module. Since for $u\in V_1$, $a\in V_0$, we have $((u,L(-1)a))=B(u_0a,t)=0$. Next, we assume that $V_0\neq \mathfrak{a}$. By Proposition \ref{V_0M}, we then have that $\mathfrak{a}$ is a proper ideal of $V_0$. In fact, $\mathfrak{a}\subseteq\mathfrak{m}$ because $\mathfrak{m}$ is a unique maximal ideal of $V_0$. This implies that for $u\in V_1$, $a\in V_0$, we have $((u,L(-1)a))=B(u_0a,t)=0$ and $L(-1)V_0\subseteq \rad ((~,~))$.

        \item Since $Ker ~L(-1)|_{V_0}=\mathbb{C}{\bf 1}$, $\dim V_0>1$, we then have that $L(-1)V_0\neq 0$. Assume that $\mathfrak{m}$ is a $V_1$-module. By $(1)$, we then have that $0\neq L(-1)V_0\subseteq \rad((~,~))$. Therefore, $((~,~))$ is degenerate.
    \end{enumerate} 
\end{proof}

For now we assume that $V_0\neq \mathfrak{a}$. By Proposition \ref{V_0M}, $\mathbb{C}t$ is a $V_1$-module, we define a linear map $\Psi:V_1\rightarrow\mathbb{C}t;v\mapsto v_0t$, and $$M=\{u\in V_1~|~u_0t=0\}.$$  Notice that for $b\in V_1$, $u\in M$ we have $(b_0u)_0t=b_0u_0t-u_0b_0t=0$. Hence, $b_0u\in M$ and $M$ is an ideal of $V_1$. In summary, we have the following statement.
\begin{prop}\label{solvable condition}
    Let $V=\bigoplus_{n=0}^{\infty}V_n$ be an $\mathbb{N}$-graded vertex operator algebra such that $V_0=\bigoplus_{d=0}^sV_0^d$ is a graded Gorenstein algebra with the unique maximal ideal $\mathfrak{m}$, $Soc(V_0)=\mathbb{C}t$ and $V_1$ is a solvable Leibniz algebra. We set $\mathfrak{a}=Span\{v_0a~|~v\in V_1,~a\in\mathfrak{m}\}$ and we define $M=\{u\in V_1~|~u_0t=0\}.$ If $\mathfrak{a}\neq V_0$, then $M$ is a solvable ideal of $V_1$. 
\end{prop}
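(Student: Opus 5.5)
The argument combines Proposition~\ref{V_0M}(3) with the invariance of $t$ under the derived action of $V_1$, and then uses the standard fact that a subalgebra of a solvable Leibniz algebra is solvable.

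\emph{$M$ is well defined and a subspace.} Since $\mathfrak{a}\neq V_0$, Proposition~\ref{V_0M}(3) gives $v_0t\in\mathbb{C}t$ for every $v\in V_1$. Hence $\Psi:V_1\to\mathbb{C}t$, $v\mapsto v_0t$, is a well-defined linear map. Its kernel is $M$, so $M$ is a subspace of $V_1$ of codimension at most one.

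\emph{$M$ is a two-sided ideal.} The left Leibniz bracket is $[b,u]=b_0u$, and the commutator formula gives $[b_0,u_0]=(b_0u)_0$. Let $b\in V_1$ and $u\in M$, and write $b_0t=\mu_b t$. Then
$(b_0u)_0t=b_0u_0t-u_0b_0t=0-\mu_b u_0t=0$, so $[V_1,M]\subseteq M$.

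For the other side, I would check $u_0b\in M$ for $u\in M$ and $b\in V_1$. Write $u_0t=\mu_u t$ with $\mu_u=0$. Then
$(u_0b)_0t=u_0b_0t-b_0u_0t=\mu_b\,u_0t-\mu_u\,b_0t=0$.
So $M$ is an ideal in both senses. This two-sided check is the only step needing care, since Leibniz ideals are one-sided in general. The computation only uses that $\mathbb{C}t$ is stable under every zero mode, so that the scalars $\mu_b,\mu_u$ make sense.

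\emph{$M$ is solvable.} The derived series of $M$ is contained termwise in that of $V_1$: if $M^{(k)}\subseteq V_1^{(k)}$, then $M^{(k+1)}=[M^{(k)},M^{(k)}]\subseteq[V_1^{(k)},V_1^{(k)}]=V_1^{(k+1)}$. Since $V_1$ is solvable, $V_1^{(n)}=0$ for some $n$, hence $M^{(n)}=0$, and $M$ is a solvable ideal of $V_1$.
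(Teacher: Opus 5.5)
Your proposal is correct and follows the paper's argument. Proposition~\ref{V_0M}(3) makes $\mathbb{C}t$ stable under all zero modes, and then $(b_0u)_0t=b_0u_0t-u_0b_0t=0$ shows that $M$ is closed under the left action of $V_1$. Your right-sided check and the derived-series argument for solvability spell out what the paper leaves implicit. The right-sided check is in fact automatic, since $u_0b+b_0u=L(-1)(u_1b)$ has vanishing zero mode.
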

\begin{rem} In Proposition \ref{solvable condition}, we assume that $V_1$ is solvable. However, we note that this assumption is natural. For instance, it was shown in Theorem 5.3 of \cite{KSY} that for an $\mathbb{N}$-graded vertex algebra $V=\bigoplus_{n=0}^{\infty}V_n$ such that $V_0$ is a Gorenstein algebra with the unique maximal ideal $\mathfrak{m}$ and $\Ker(D|_{V_0} ) = \mathbb{C}{\bf 1}$, if every subalgebra of the left-Leibniz algebra $V_1$ is closed under $\mathfrak{m}$, and $Leib(V_1) = D(V_0)$, then $V_1$ is a solvable Leibniz algebra.
\end{rem}

For the rest of this section, we assume that
\[
 (V=\bigoplus_{n=0}^{\infty}V_n,Y(\,\cdot\,,\,\cdot\,),{\bf1},\omega)
\]
is a regular $\mathbb N$-graded vertex operator algebra such that
\begin{enumerate}
\item $V_1$ is a solvable Leibniz algebra;
\item $V_0=\bigoplus_{d=0}^sV_0^d$ is a graded-Gorenstein local algebra
with unique maximal ideal $\mathfrak m$;
\item $\Ker ~ L(-1)|_{V_0}=\mathbb C{\bf1}$; and
\item $V_0\neq\mathfrak a$, where
$\mathfrak a=\operatorname{Span}\{v_0a\mid v\in V_1,\ a\in\mathfrak m\}$.
\end{enumerate}
By Proposition \ref{degenerate of (( , ))} and the preceding discussion,
the symmetric form $((\, ,\,))$ on $V_1$ is degenerate. We set
$$R=\rad((\, ,\,)).$$ Then there is a vector-space complement $\mathbb U$ such that $$V_1=R\oplus\mathbb U
 \text{ and }
 ((\, ,\,))|_{\mathbb U}\text{ is nondegenerate}.$$

We shall not require $\mathbb U$ to be anisotropic.  
Let $\mathfrak M$
denote the collection of Leibniz subalgebras $\mathcal W\subseteq\mathbb U$
for which $((\, ,\,))|_{\mathcal W}$ is nondegenerate.  All results below
are stated for a chosen nonzero $\mathcal M\in\mathfrak M$.  This
formulation avoids any use of an anisotropic complement over
$\mathbb C$.

\begin{lem}\label{Solvable Lie} Let $V=\bigoplus_{n=0}^{\infty}V_n$ be an $\mathbb{N}$-graded vertex operator algebra such that $V_1$ is solvable and $\Ker~L(-1)|_{V_0}=\mathbb{C}{\bf 1}$ and $V_0\neq \mathfrak{a}$. Let $\mathcal{M}$ be a Leibniz subalgebra of $V_1$ that is contained in $\mathfrak{M}$. Then $\mathcal{M}$ is actually a solvable Lie algebra. In addition, $u_1v\in\mathbb{C}{\bf 1}$ for all $u,v\in\mathcal{M}$.  
\end{lem}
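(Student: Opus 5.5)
The plan is to show that the symmetric part of the Leibniz bracket on $\mathcal{M}$ vanishes, by trapping it inside $\mathcal{M}\cap\rad((~,~))$, which is zero. Everything reduces to skew-symmetry together with Lemma \ref{degenerate of (( , ))}.

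First, for $u,v\in V_1$, skew-symmetry gives
\[
v_0u=-u_0v+D(u_1v)-\tfrac12 D^2(u_2v)+\cdots .
\]
Since $u_2v\in V_{-1}=0$, this reduces to $u_0v+v_0u=D(u_1v)$, with $u_1v\in V_0$. In particular $u_0v+v_0u\in L(-1)V_0$.

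Because $V_0\neq\mathfrak{a}$, part (1) of Lemma \ref{degenerate of (( , ))} gives $L(-1)V_0\subseteq R=\rad((~,~))$. Now let $u,v\in\mathcal{M}$. Since $\mathcal{M}$ is a Leibniz subalgebra, both $u_0v$ and $v_0u$ lie in $\mathcal{M}$, so $x:=u_0v+v_0u$ lies in $\mathcal{M}\cap R$. An element of $R$ pairs to zero with all of $V_1$, hence in particular with all of $\mathcal{M}$. So $x$ lies in the radical of $((~,~))|_{\mathcal{M}}$, and this radical is $0$ because $\mathcal{M}\in\mathfrak{M}$. Therefore $D(u_1v)=0$, and the hypothesis $\Ker L(-1)|_{V_0}=\mathbb{C}{\bf 1}$ gives $u_1v\in\mathbb{C}{\bf 1}$. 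This proves the second assertion.

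From $x=0$ we get $u_0v=-v_0u$ on $\mathcal{M}$, so the bracket is antisymmetric. The left Leibniz identity $u_0(v_0w)=(u_0v)_0w+v_0(u_0w)$ then becomes the Jacobi identity, so $\mathcal{M}$ is a Lie algebra. Solvability is inherited: the derived series of $\mathcal{M}$ sits inside that of the solvable Leibniz algebra $V_1$.

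The only step that needs care is the containment $L(-1)V_0\subseteq R$, which is exactly where $V_0\neq\mathfrak{a}$ is used through Lemma \ref{degenerate of (( , ))}. The remaining steps are formal.
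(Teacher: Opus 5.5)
Your proof is correct and follows the paper's argument. Skew-symmetry gives $u_0v+v_0u=L(-1)(u_1v)\in\mathcal{M}\cap\rad((~,~))$; this intersection is zero, and $\Ker L(-1)|_{V_0}=\mathbb{C}{\bf 1}$ then forces $u_1v\in\mathbb{C}{\bf 1}$. The only cosmetic difference is how the intersection is shown to vanish: the paper uses $\rad((~,~))\cap\mathbb{U}=\{0\}$ together with $\mathcal{M}\subseteq\mathbb{U}$, whereas you use nondegeneracy of $((~,~))|_{\mathcal{M}}$ directly, which is equally valid.
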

\begin{proof}
    Let $u,v\in\mathcal{M}$. Since $L(-1)(u_1v)=u_0v+v_0u$ and $u_0v,v_0u\in\mathcal{M}$, we can conclude that $L(-1)u_1v\in rad((~,~))\cap\mathcal{M}\subseteq rad((~,~))\cap \mathbb U=\{0\}$. Hence, $L(-1)u_1v=0$. Using the fact that $\Ker ~L(-1)|_{V_0}=\mathbb{C}{\bf 1}$, we can conclude further that $u_1v\in\mathbb{C}{\bf 1}$, and $\mathcal{M}$ is a Lie algebra. Since $V_1$ is solvable and $\mathcal{M}$ is a Leibniz-subalgebra of $V_1$, we then have that $\mathcal{M}$ is solvable as well.
\end{proof}
\begin{rem} For this Lemma, the statements are true without the assumption that $V$ is regular. In fact, the statement still holds when we replace an $\mathbb{N}$-graded vertex operator algebra by an $\mathbb{N}$-graded vertex algebra and replace $L(-1)$ by $D$.
\end{rem}

\begin{lem}\label{regular-local-simple}
If $V=\bigoplus_{n=0}^{\infty}V_n$ is a regular $\mathbb{N}$-graded vertex operator algebra such that $V_0$ is local, then $V$ is simple.
\end{lem}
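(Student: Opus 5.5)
Since $V$ is regular, it is in particular a weak $V$-module and hence a direct sum of simple ordinary $V$-modules, $V=\bigoplus_{i\in I}M^i$. The plan is to exploit the locality of $V_0$ to see that only one summand can occur. Any such decomposition is a decomposition of $V$ into left ideals of the vertex algebra, since a $V$-submodule of $V$ under the adjoint action is exactly a left ideal. By skew-symmetry, $Y(u,x)v=e^{xL(-1)}Y(v,-x)u$, and because each summand is $L(-1)$-stable ($L(-1)=\omega_0$), every left ideal is automatically a two-sided ideal. So $V=\bigoplus_i M^i$ is a decomposition into two-sided ideals.

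Next we pass to degree zero. Each $M^i$ is $L(0)$-stable, so $V_0=\bigoplus_i (M^i\cap V_0)$. For $a\in M^i\cap V_0$ and $b\in M^j\cap V_0$ with $i\neq j$, the product $a_{-1}b$ lies in $M^i\cap M^j=0$. Hence $V_0$ decomposes as a direct product of the commutative associative algebras $M^i\cap V_0$ under the product $a\cdot b=a_{-1}b$. Write ${\bf 1}=\sum_i e_i$ with $e_i\in M^i\cap V_0$. Then the $e_i$ are orthogonal idempotents of $V_0$, and they are nonzero precisely for those $i$ with $M^i\cap V_0\ne 0$.

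We now use that a local ring has no nontrivial idempotents. This forces ${\bf 1}=e_{i_0}$ for a single index $i_0$, so ${\bf 1}\in M^{i_0}$. But ${\bf 1}$ generates $V$ as a module, since $v_{-1}{\bf 1}=v$. Therefore $V=M^{i_0}$, which is a simple $V$-module. Finally, any ideal of $V$ is a $V$-submodule, so the only ideals are $0$ and $V$, and $V$ is simple.

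The main point needing care is the passage from a module decomposition to a decomposition into ideals. Two facts are required here: that each summand is closed under $L(-1)$, which holds because $\omega\in V$ acts, and that the summands intersect trivially, so the cross products vanish. I expect this step to be the only real obstacle; the rest is the standard idempotent argument.
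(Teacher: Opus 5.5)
Your proposal is correct and follows the paper's argument. Both proofs decompose the adjoint module into simple summands, note that these are ideals, and split ${\bf 1}$ into orthogonal idempotents of $V_0$, so that locality leaves only one summand. Your version is slightly more careful than the paper's on two points: you justify via skew-symmetry and $L(-1)=\omega_0$ why module summands are two-sided ideals, and you conclude ${\bf 1}\in M^{i_0}$ directly, which settles the nontriviality of the idempotents that the paper leaves implicit.
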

\begin{proof}
Because $V$ is regular, the adjoint weak $V$-module is a direct sum of
simple ordinary $V$-modules.  A $V$-submodule of the adjoint module is an
ideal of $V$.  Hence, if $V$ were not simple, we could write $ V=I_1\oplus\cdots\oplus I_s$, $s>1$, as a direct sum of nonzero ideals.  Writing
${\bf1}=e_1+\cdots+e_s$, $e_i\in I_i\cap V_0$, the ideal decomposition implies
$(e_i)_{-1}e_j=0\quad(i\neq j)$, $(e_i)_{-1}e_i=e_i$. Thus $V_0$ would contain a nontrivial idempotent.  This is impossible
because $V_0$ is local.  Therefore $s=1$ and $V$ is simple.
\end{proof}

Since $V$ is regular, it is rational and $C_2$-cofinite and has only
finitely many inequivalent simple ordinary modules (cf. Proposition \ref{Info about regularity}).  Let
$\{M^j\mid1\leq j\leq r\}$ be representatives of their isomorphism classes, with $M^1=V$.  We write
\[
 Y^j(u,x)=\sum_{n\in\mathbb Z}u^j_nx^{-n-1},
 \qquad
 Y^j(\omega,x)=\sum_{n\in\mathbb Z}L^j(n)x^{-n-2}.
\]
Each $M^j$ has a decomposition into finite-dimensional $L^j(0)$-eigenspaces $M^j=\bigoplus_{n=0}^{\infty}M^j_{n+\lambda_j}$, where $\lambda_j\in\mathbb Q$ is the conformal weight of $M^j$.

\begin{thm}\label{M-reductive}
Let $V=\bigoplus_{n=0}^{\infty}V_n$ be a regular $\mathbb{N}$-graded vertex operator algebra such that
$V_1$ is a solvable Leibniz algebra and $V_0$ is a graded-Gorenstein
local algebra. Assume $\Ker L(-1)|_{V_0}=\mathbb C\bf 1$ and $V_0\neq \mathfrak{a}$.
Let $\mathcal{M}\in\mathfrak{M}$, and assume $L(1)\mathcal{M}=0$. Then $\mathcal{M}$ is reductive. Since $\mathcal{M}$ is solvable, it is abelian.
Moreover, the vertex subalgebra $\langle \mathcal{M}\rangle$ generated by $\mathcal{M}$
is isomorphic to a Heisenberg vertex operator algebra of rank
$\dim \mathcal{M}$.
\end{thm}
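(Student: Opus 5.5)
By Lemma \ref{Solvable Lie}, $\mathcal{M}$ is a solvable Lie algebra and $u_1v=(u,v){\bf 1}$ for $u,v\in\mathcal{M}$. Set $(u,v)=B(u_1v,t)=((u,v))$, which is nondegenerate on $\mathcal{M}$ because $\mathcal{M}\in\mathfrak{M}$. The same skew-symmetry argument used in Section 3 shows that this form is invariant on $\mathcal{M}$. The strategy is therefore to show that a solvable Lie algebra carrying a nondegenerate invariant form, realized inside a regular VOA by quasi-primary weight-one vectors, must be abelian. The Heisenberg statement then follows from the commutator formula.

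\textbf{Reductivity.} I would follow Dong--Mason \cite{DM1} and use modular invariance of trace functions. For $u\in\mathcal{M}$ with $L(1)u=0$ we have $u\in V_{[1]}$, because $L[0]u=L(0)u+\sum_{n\ge1}\frac{(-1)^{n-1}}{n(n+1)}L(n)u=u$. Take $u\in[\mathcal{M},\mathcal{M}]$. On every finite-dimensional homogeneous space of every module $M^j$, the zero mode $o(u)=u_0$ lies in the derived algebra of the solvable Lie algebra spanned by the $o(v)$, $v\in\mathcal{M}$. By Lie's theorem $o(u)$ is nilpotent there, so $Z_{M^j}(u,\tau)=0$ for all $j$. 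Next apply (\ref{ouov}) to $u,v\in\mathcal{M}$. We have $u[0]v=u_0v$ up to higher terms that vanish because $L(1)$ kills $\mathcal{M}$, and $u[1]v=(u,v){\bf 1}$. Moreover $u[2k-1]v=0$ for $k\ge2$ by weight. This gives
\[
\tr_{M^j}o(u)o(v)q^{L(0)-c/24}=Z_{M^j}(u[-1]v,\tau)-E_2(\tau)(u,v)Z_{M^j}(\tau).
\]
For $u\in[\mathcal{M},\mathcal{M}]$ the left side vanishes, by the same nilpotency argument applied to $o(u)o(v)$ (Lie's theorem gives simultaneous triangularization). Hence $Z_{M^j}(u[-1]v,\tau)=(u,v)E_2(\tau)Z_{M^j}(\tau)$ for all $j$. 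The left side is a vector-valued modular form of weight $2$, since $u[-1]v\in V_{[2]}$ modulo lower square-bracket weights. Comparing the $S$-transformation using the anomaly $E_2(-1/\tau)=\tau^2E_2(\tau)-\frac{\tau}{2\pi i}$ forces $(u,v)\sum_j\rho_{1j}(S)Z_{M^j}(\tau)=0$, that is, $(u,v)Z_V(-1/\tau)=0$. Since $Z_V\neq0$, we get $(u,v)=0$ for all $v\in\mathcal{M}$, and nondegeneracy gives $u=0$. So $[\mathcal{M},\mathcal{M}]=0$. (Equivalently, the radical meets the derived algebra trivially, so $\mathcal{M}$ is reductive; being solvable, it is abelian.)

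\textbf{Heisenberg structure.} With $\mathcal{M}$ abelian, the commutator formula gives $[u_m,v_n]=m(u,v)\delta_{m+n,0}$. So $\mathcal{M}$ with the nondegenerate form $(~,~)$ yields a level-one $\hat{\mathcal{M}}$-action on $V$. The submodule generated by ${\bf 1}$ is a quotient of $M(1)$, because $u_n{\bf 1}=0$ for $n\ge0$. Since $M(1)$ is irreducible when the form is nondegenerate, $\langle\mathcal{M}\rangle\cong M(1)$ has rank $\dim\mathcal{M}$. Its conformal vector is $\omega_{\mathcal{M}}=\frac12\sum\beta_i(-1)^2{\bf 1}$ for an orthonormal basis $\{\beta_i\}$.

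\textbf{Main obstacle.} The hardest step is the modular argument. It needs nilpotency of $o(u)o(v)$ traces on all modules, which relies on Lie's theorem for the zero-mode action of $\mathcal{M}$ on graded pieces of each $M^j$. It also needs careful handling of the square-bracket weights of $u[-1]v$ when it is not homogeneous. I would handle the latter by decomposing $u[-1]v$ into $L[0]$-eigencomponents of weights $\le2$, and comparing only the weight-$2$ transformation law against the anomalous $E_2$ term.
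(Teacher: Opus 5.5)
Your proposal is correct and follows the paper's proof essentially step for step. Lie's theorem kills $\tr_{M^j}o(u)o(v)q^{L(0)-c/24}$ for $u$ in the derived algebra. Zhu's two-point identity then gives $Z_{M^j}(u[-1]v,\tau)=((u,v))E_2(\tau)Z_{M^j}(\tau)$. The $E_2$ anomaly under $S$ forces $((u,v))=0$; the paper phrases this as a contradiction after normalizing $((u,v))=1$. The Heisenberg identification is the same surjection from the simple $M(1)$.

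The inhomogeneity issue you flag as the main obstacle does not arise. Since $L(1)u=L(1)v=0$ puts $u,v\in V_{[1]}$, the vector $u[-1]v$ lies exactly in $V_{[2]}$, which is what the paper uses.
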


\begin{proof}
By Lemma \ref{Solvable Lie}, $\mathcal{M}$ is a solvable Lie algebra and $u_1v=((u,v)){\bf 1}$ for $u,v\in \mathcal{M}$.

Suppose that $\mathcal{M}$ is not reductive. Since $\mathcal{M}$ is solvable, this implies that $\mathcal{M}$ is not abelian. Hence, there exists a nonzero vector $u\in Span\{b_0v~|~b,v\in \mathcal{M}\}\neq \{0\}$.

Let $W$ be any finite-dimensional $\mathcal{M}$-module. Since $\mathcal{M}$ is solvable,
Lie's theorem implies that there is a basis of $W$ with respect to which every element of $\mathcal{M}$ acts by an upper triangular matrix.
Consequently, every element of $Span\{b_0v~|~b,v\in \mathcal{M}\}\neq \{0\}$ acts by a strictly upper
triangular matrix. Hence
\begin{equation}\label{abeliansolvable}\tr_W\bigl(\rho(u)\rho(v)\bigr)=0
\qquad\text{for all }v\in\mathcal M.\end{equation}

Let $M^1,\ldots,M^r$ be the inequivalent simple $V$-modules.
Each homogeneous subspace of $M^j$ is finite-dimensional and is
stable under the zero-mode action of $\mathcal M$. Therefore the equation (\ref{abeliansolvable})
implies
\begin{equation}\label{abeliansolvable2}\tr_{M^j}o(u)o(v)q^{L(0)-c/24}=0
\qquad
(v\in\mathcal M,\ 1\leq j\leq r).
\end{equation}
Since $((\, ,\,))|_{\mathcal{M}}$ is nondegenerate and $u\neq0$, there exists
$v\in \mathcal{M}$ such that $((u,v))=1$. Thus $u_1v=\bf 1$. Since $L(1)u=L(1)v=0$, both $u$ and $v$ have square-bracket weight
one, and $u[1]v=\mathbf1$, $u[2k-1]v=0$ for $k\geq2$.

Zhu's two-point trace identity (cf. (\ref{ouov})) therefore gives $$\tr_{M^j}o(u)o(v)q^{L(0)-c/24}
=
Z_{M^j}(u[-1]v,\tau)-E_2(\tau)Z_{M^j}(\tau).$$
By (\ref{abeliansolvable2}), we have
\begin{equation}\label{abeliansolvable3}Z_{M^j}(u[-1]v,\tau)
=
E_2(\tau)Z_{M^j}(\tau).\end{equation}
Moreover, $L[0](u[-1]v)=2u[-1]v$.

Applying modular invariance under $S:\tau\longmapsto-\frac1\tau$ to (\ref{abeliansolvable3}), we obtain
$$
\tau^2\sum_{i=1}^r\rho_{ji}
 Z_{M^i}(u[-1]v,\tau)
=
E_2(-1/\tau)
\sum_{i=1}^r\rho_{ji}Z_{M^i}(\tau).
$$
Using (\ref{abeliansolvable3}) again and
$E_2(-1/\tau)
=
\tau^2E_2(\tau)-\frac{\tau}{2\pi i},
$
we obtain
$$
\frac{\tau}{2\pi i}
\sum_{i=1}^r\rho_{ji}Z_{M^i}(\tau)=0.
$$
But modular invariance of the characters gives
$
\sum_{i=1}^r\rho_{ji}Z_{M^i}(\tau)
=
Z_{M^j}(-1/\tau),
$
and the latter is nonzero. This is a contradiction. Therefore $\mathcal{M}$ is reductive. Since $\mathcal{M}$ is also solvable, $\mathcal{M}$ is
abelian.

Finally, let $\{u^1,\ldots,u^\ell\}$ be an orthonormal basis of $\mathcal M$
with respect to $((\, ,\,))$. Then
$$[u^i_m,u^j_n]
=\sum_{s=0}^{\infty}\binom{m}{s}(u^i_su^j)_{m+n-s}=m(u^i_1u^j)_{m+n-1}=
m\delta_{ij}\delta_{m+n,0},
$$
and the vacuum is annihilated by all nonnegative modes. Hence there
is a surjective homomorphism from the rank-$\ell$ Heisenberg vertex
operator algebra $M(1)$ onto $\langle \mathcal{M}\rangle$. Since $M(1)$ is
simple, this homomorphism is injective. Thus $\langle \mathcal{M}\rangle\cong M(1)$. \end{proof}


\subsection{Semisimplicity of the Heisenberg zero modes}

By Theorem \ref{M-reductive}, the Lie algebra $\mathcal{M}$ is abelian and
the vertex operator subalgebra generated by $\mathcal{M}$ is the rank
$\dim\mathcal{M}$ Heisenberg vertex operator algebra $M(1)$.  The existence
of this Heisenberg subalgebra does not by itself imply that the commuting
zero modes $u_0$, $u\in\mathcal{M}$, act semisimply.  We now isolate the
possible nonsemisimple part intrinsically, without assuming that
$o(\mathcal{M})=\{u_0\mid u\in\mathcal{M}\}$ is an algebraic Lie algebra.

Fix $u\in\mathcal{M}$.  Since $[L(0),u_0]=0$, the operator $u_0$ preserves
each finite-dimensional homogeneous subspace $V_n$.  Hence
\[
 V=\bigoplus_{\lambda\in\mathbb{C}}V_u[\lambda],
\]
where
\[
 V_u[\lambda]
 =
 \{a\in V\mid (u_0-\lambda {\bf 1}_{-1})^N a=0
       \text{ for some }N\geq1\}.
\]
On $V_u[\lambda]$ define
\[
 S_u=\lambda {\bf 1}_{-1},\qquad N_u=u_0-\lambda {\bf 1}_{-1}.
\]
Thus $u_0=S_u+N_u$ is the locally finite Jordan decomposition of $u_0$ on
$V$.

\begin{prop}\label{heisenberg-jordan-derivations}
Let $V=\bigoplus_{n=0}^{\infty}V_n$ be a regular $\mathbb{N}$-graded vertex operator algebra such that
$V_1$ is a solvable Leibniz algebra and $V_0$ is a graded-Gorenstein
local algebra. Assume $\Ker L(-1)|_{V_0}=\mathbb C\bf 1$ and $V_0\neq \mathfrak{a}$.
Let $\mathcal{M}\in\mathfrak{M}$ such that $L(1)\mathcal{M}=0$. Then for every
$u\in\mathcal{M}$, the operators $S_u$ and $N_u$ defined above are
grading-preserving derivations of $V$, and $N_u$ is locally nilpotent.
Moreover, for $v\in\mathcal{M},\ n\in\mathbb{Z}$, 
 $[S_u,v_n]=[N_u,v_n]=0$. In particular, $N_u|_{M(1)}=0$.
\end{prop}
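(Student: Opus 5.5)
The plan is to derive everything from the standard fact that the zero mode of a weight-one vector is a derivation. For $u\in V_1$, the commutator formula gives
\[
[u_0,a_n]=(u_0a)_n \qquad\text{for all } a\in V,\ n\in\mathbb{Z}.
\]
So $u_0$ is a derivation of every $n$-th product: $u_0(a_nb)=(u_0a)_nb+a_n(u_0b)$. Moreover $[L(0),u_0]=0$, because $u$ has weight one, so $u_0$ preserves each finite-dimensional $V_n$. The local finiteness of the generalized eigenspace decomposition $V=\bigoplus_\lambda V_u[\lambda]$ comes from this, and $S_u$ and $N_u$ preserve each $V_n$ since they do so on every $V_n\cap V_u[\lambda]$. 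Hence both are grading preserving, and $N_u$ is locally nilpotent because on each $V_n$ it is the nilpotent part of the Jordan decomposition of $u_0|_{V_n}$.

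To see that $S_u$ is a derivation, I will use the standard multiplicativity of generalized eigenspaces under a derivation. For fixed $n$, apply the Leibniz rule iterated binomially:
\[
(u_0-(\lambda+\mu))^N(a_nb)=\sum_{k}\binom{N}{k}\big((u_0-\lambda)^ka\big)_n\big((u_0-\mu)^{N-k}b\big).
\]
This shows that $a\in V_u[\lambda]$ and $b\in V_u[\mu]$ imply $a_nb\in V_u[\lambda+\mu]$. Then
\[
S_u(a_nb)=(\lambda+\mu)a_nb=(S_ua)_nb+a_n(S_ub),
\]
so $S_u$ is a derivation, and $N_u=u_0-S_u$ is a difference of derivations.

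For the commutation with modes of $v\in\mathcal{M}$, I will use Theorem~\ref{M-reductive}: $\mathcal{M}$ is abelian, so $u_0v=0$. The commutator formula then gives $[u_0,v_n]=(u_0v)_n=0$, so $u_0$ commutes with every $v_n$. In particular each $v_n$ maps $V_u[\lambda]$ into itself, since it commutes with $(u_0-\lambda)^N$. Alternatively, $v\in V_u[0]$ because $u_0v=0$, and the multiplicativity above gives the same conclusion. So $v_n$ commutes with $S_u$, which acts as the scalar $\lambda$ on each $V_u[\lambda]$, and hence also with $N_u=u_0-S_u$.

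Finally, $M(1)=\langle\mathcal{M}\rangle$ is spanned by the vectors $v^1_{n_1}\cdots v^k_{n_k}{\bf 1}$ with $v^i\in\mathcal{M}$. Since $u_0{\bf 1}=0$, we have ${\bf 1}\in V_u[0]$ and $N_u{\bf 1}=0$. Because $N_u$ commutes with all the $v^i_{n_i}$, it follows that $N_u$ vanishes on $M(1)$.

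The main obstacle is the well-definedness and derivation property of $S_u$ on the infinite-dimensional space $V$, which requires the multiplicativity of the generalized eigenspaces. The binomial identity handles this, using that each vector lies in a finite-dimensional $u_0$-stable subspace. Nothing here uses regularity beyond what Theorem~\ref{M-reductive} already provides.
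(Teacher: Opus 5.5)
Your proposal is correct and follows essentially the same route as the paper. In both, $u_0$ is a derivation, generalized eigenspaces multiply ($a_nb\in V_u[\lambda+\mu]$), so $S_u$ and $N_u=u_0-S_u$ are grading-preserving derivations; then abelianness of $\mathcal{M}$ gives $[u_0,v_n]=0$, so $v_n$ preserves each $V_u[\lambda]$, commutes with both Jordan parts, and $N_u$ kills $M(1)$. The only difference is that you justify the multiplicativity of generalized eigenspaces explicitly through the binomial Leibniz identity, which the paper merely asserts.
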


\begin{proof}
Let $a\in V_u[\lambda]$ and $b\in V_u[\mu]$.  Since $u_0$ is a derivation, we have $u_0(a_nb)=a_nu_0b+(u_0a)_nb$, and $a_nb\in V_u[\lambda+\mu].$ It follows that $N_u(a_nb)=u_0(a_nb)-(\lambda+\mu)a_nb=(N_ua)_nb+a_n(N_ub)$, and similarly
$S_u(a_nb)=(S_ua)_nb+a_n(S_ub)$.
Hence $S_u,N_u\in{\Der}(V)$. Both preserve the $L(0)$-grading, and $N_u$ is
locally nilpotent because it is nilpotent on every finite-dimensional
generalized eigenspace contained in a homogeneous subspace.

Since $\mathcal{M}$ is abelian, $u_0v=0$ for every $v\in\mathcal{M}$.
Consequently, $[u_0,v_n]=(u_0v)_n=0$. For $w\in V_u[\lambda]$, there exists a positive number $r$ such that $(u_0-\lambda {\bf 1}_{-1})^rw=0$. Because $[u_0,v_n]=0$, we have $(u_0-\lambda {\bf 1}_{-1})^rv_nw=v_n(u_0-\lambda {\bf 1}_{-1})^rw=0$. Consequently, $v_nw\in V_u[\lambda]$. Thus every $v_n$ preserves the generalized eigenspaces of $u_0$ and commutes
with both Jordan parts. Hence $[S_u,v_n]=[N_u,v_n]=0$. Finally, $N_u\mathbf1=0$ and $N_u$ commutes with all negative Heisenberg
modes.  Since $M(1)$ is generated from $\mathbf1$ by these modes,
$N_u|_{M(1)}=0$.
\end{proof}

We define the Heisenberg vacuum space
$$\Omega_V
 =
 \{w\in V\mid v_nw=0
       \text{ for every }v\in\mathcal{M}\text{ and }n>0\}.$$ 
       Proposition \ref{heisenberg-jordan-derivations} implies $N_u(\Omega_V)\subseteq\Omega_V$.

The semisimplicity of Heisenberg zero modes is well understood in the classical strongly regular setting. In particular, the work of Dong--Mason (\cite{DM4}) and Mason (\cite{M}) shows that, under strong CFT-type hypotheses, the action of the reductive Lie algebra $V_1$ is completely reducible, and this semisimplicity is a basic input in the resulting Heisenberg weight decomposition and lattice-subalgebra theory. In \cite{Li3}, Li's Heisenberg vacuum-space and abelian-coset theory likewise provides the standard Fock-space framework once the relevant zero-mode action is semisimple. The situation considered here is different: $V$ is not assumed to be of CFT-type or self-contragredient, and $V_0$ may be a nontrivial graded-Gorenstein local algebra. Consequently, semisimplicity of the zero modes $u_0$ ($u\in\mathcal{M}$) cannot be taken for granted. Writing $u_0=S_u+N_u$ for the Jordan decomposition, the preceding results show that the nilpotent part $N_u$ commutes with the Heisenberg oscillator modes. Thus the possible obstruction to semisimplicity is carried entirely by the Heisenberg vacuum multiplicity space $\Omega_V$. This motivates the following condition, which isolates precisely the additional hypothesis needed to recover the semisimple charge theory in the present non-CFT-type setting.

\begin{dfn}
\label{omega-ss-definition}
We say that $V$ satisfies the \emph{Heisenberg vacuum semisimplicity
condition}, denoted by $(\Omega\text{-SS})$, if $u_0|_{\Omega_V}$ is semisimple for every $u\in\mathcal{M}$. Equivalently, $N_u|_{\Omega_V}=0$ for every $u\in\mathcal{M}$.\end{dfn}

\begin{prop}(Heisenberg vacuum-space criterion for semisimplicity)
\label{omega-semisimplicity-criterion}
Let $V=\bigoplus_{n=0}^{\infty}V_n$ be a regular $\mathbb{N}$-graded vertex operator algebra such that
$V_1$ is a solvable Leibniz algebra and $V_0$ is a graded-Gorenstein
local algebra. Assume $\Ker ~L(-1)|_{V_0}=\mathbb C\bf 1$ and $V_0\neq \mathfrak{a}$.
Let $\mathcal{M}\in\mathfrak{M}$ such that $L(1)\mathcal{M}=0$. The following are
equivalent:
\begin{enumerate}
\item $\mathcal{M}$ acts semisimply on $V$;
\item $V$ satisfies $(\Omega\text{-SS})$.
\end{enumerate}
\end{prop}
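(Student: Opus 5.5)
(1)$\Rightarrow$(2) is immediate. If every $u_0$ ($u\in\mathcal{M}$) is semisimple on $V$, then it is semisimple on the subspace $\Omega_V$. This subspace is $u_0$-stable, because $[u_0,v_n]=(u_0v)_n=0$ for $v\in\mathcal{M}$ since $\mathcal{M}$ is abelian by Theorem \ref{M-reductive}. Equivalently, $N_u=0$ on $V$, hence on $\Omega_V$.

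For (2)$\Rightarrow$(1), the plan is to show that $V$ is generated from $\Omega_V$ by the negative Heisenberg modes, and then to use Proposition \ref{heisenberg-jordan-derivations}. First, fix an orthonormal basis $u^1,\dots,u^\ell$ of $\mathcal{M}$ for $((\,,\,))$, so that the modes $u^i_n$ satisfy the Heisenberg relations of Theorem \ref{M-reductive}. Each $V_n$ is finite-dimensional, $V$ is $\mathbb{N}$-graded, and $u^i_m$ lowers weight by $m$ since $L(1)u^i=0$ and $u^i\in V_1$. Hence the positive modes act locally nilpotently on $V$. For $m>0$ the operators $u^i_m$ commute with $u_0$ for any $u\in\mathcal{M}$, so $V$ lies in the category $\mathcal{C}$-type setting for the Heisenberg algebra except possibly for semisimplicity of the zero modes. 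The standard Heisenberg argument (cf. \cite{LLi}, and the decomposition $W=U(\hat{\mathfrak{h}}^-)\otimes\Omega_W$ recalled above) requires only local nilpotence of $\hat{\mathfrak{h}}^+$ and nonzero level. It gives $V\cong U(\hat{\mathfrak{h}}^-)\otimes\Omega_V$, i.e. $V=\operatorname{span}\{u^{i_1}_{-n_1}\cdots u^{i_k}_{-n_k}w\mid n_j>0,\ w\in\Omega_V\}$. The key point to verify is that this Fock decomposition does not use the zero modes at all. Concretely, one proves by induction on weight that any $a\in V$ with $a\notin U(\hat{\mathfrak{h}}^-)\Omega_V$ can be corrected using the operators $u^i_{-m}u^i_m/m$, as in the usual proof that a Heisenberg module with locally nilpotent positive part is free over $U(\hat{\mathfrak h}^-)$ on its vacuum space. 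Alternatively, one can appeal to the projection formula built from $\exp(-\sum_{m>0}\frac1m u^i_{-m}u^i_m)$, which is well defined on each $V_n$ by grading.

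Now assume $(\Omega\text{-SS})$, so that $N_u|_{\Omega_V}=0$ for all $u\in\mathcal{M}$. By Proposition \ref{heisenberg-jordan-derivations}, $N_u$ commutes with every $v_n$, $v\in\mathcal{M}$. Hence
$$N_u\bigl(u^{i_1}_{-n_1}\cdots u^{i_k}_{-n_k}w\bigr)=u^{i_1}_{-n_1}\cdots u^{i_k}_{-n_k}N_uw=0 .$$
By the spanning statement, $N_u=0$ on $V$, so $u_0=S_u$ is semisimple for each $u$. Since the $u_0$ commute pairwise, they are simultaneously diagonalizable on each finite-dimensional $V_n$. Thus $\mathcal{M}$ acts semisimply on $V$.

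The main obstacle is the spanning step $V=U(\hat{\mathfrak h}^-)\Omega_V$ without a priori semisimplicity of the zero modes. I expect it to go through because the Heisenberg commutation relations involve the zero modes only as central elements: $[u^i_m,u^j_n]=m\delta_{ij}\delta_{m+n,0}$, with no zero-mode term. I would verify this carefully, making sure that local nilpotence of $u^i_m$ for $m>0$ follows from the grading and $L(1)\mathcal{M}=0$.
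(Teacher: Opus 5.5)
Your proposal is correct and follows the paper's proof. Both arguments rest on the Fock decomposition $V=U(\widehat{\mathcal M}^-)\Omega_V$ and on Proposition~\ref{heisenberg-jordan-derivations}: since $N_u$ commutes with the negative Heisenberg modes, $N_u|_{\Omega_V}=0$ forces $N_u=0$. The paper phrases this on each generalized eigenspace $V_u[\lambda]$ and takes the Fock decomposition for granted, whereas you rightly observe that it needs only local nilpotence of the positive modes and nonzero level.

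Two small corrections. First, $u^i_m$ lowers weight by $m$ simply because $u^i\in V_1$; the condition $L(1)u^i=0$ plays no role there. Second, the vacuum projector in your aside must be the normally ordered $\prod_{i,\,m>0}\sum_{k\geq0}\frac{(-1)^k}{m^k k!}(u^i_{-m})^k(u^i_m)^k$. The plain exponential $\exp\bigl(-\sum_{m>0}\frac1m u^i_{-m}u^i_m\bigr)$ is not a projection: it acts by $e^{-j}$ on a monomial containing $j$ oscillators applied to a vector of $\Omega_V$.
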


\begin{proof}
We only need to prove that the statement $(2)$ implies the statement $(1)$.  Fix $u\in\mathcal{M}$ and a
$u_0$-generalized $\lambda$-eigenspace  $V_u[\lambda]=\left(U(\widehat{\mathcal{M}}^-)\Omega_V\right)\cap V_u[\lambda]=U(\widehat{\mathcal{M}}^-)\Omega_{V,u}[\lambda]$ where $\Omega_{V,u}[\lambda]=\Omega_V\cap V_u[\lambda]$. Because $N_u$ commutes
with every negative Heisenberg mode and vanishes on $\Omega_V$,
under this identification $N_u=1\otimes N_u|_{\Omega_V}$ and Condition $(\Omega\text{-SS})$ says that $N_u|_{\Omega_V}=0$, we can conclude that $N_u=0$ on
$V_u[\lambda]$.  Since this holds for every generalized $\lambda$-eigenspace, $u_0$ is semisimple on $V$.  As $u\in\mathcal{M}$ was arbitrary,
$\mathcal{M}$ acts semisimply on $V$.
\end{proof}

\begin{thm}\label{nonselfdual-semisimplicity}
Let $V=\bigoplus_{n=0}^{\infty}V_n$ be a regular $\mathbb{N}$-graded vertex operator algebra such that
$V_1$ is a solvable Leibniz algebra and $V_0$ is a graded-Gorenstein
local algebra. Assume $\Ker ~L(-1)|_{V_0}=\mathbb C\bf 1$ and $V_0\neq \mathfrak{a}$.
Let $\mathcal{M}\in\mathfrak{M}$ such that $L(1)\mathcal{M}=0$. Assume that $V$ satisfies
$(\Omega\text{-SS})$.  Then $\mathcal{M}$ acts semisimply on $V$ and on
every simple ordinary $V$-module.  Since $V$ is regular, every weak
$V$-module is a direct sum of simple ordinary modules, and hence the
$\mathcal{M}$-action is semisimple on every weak $V$-module in this sense.
\end{thm}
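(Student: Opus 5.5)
The argument splits into two parts: semisimplicity on $V$ itself, and semisimplicity on the remaining simple modules. The first part is immediate. Proposition \ref{omega-semisimplicity-criterion} shows that $(\Omega\text{-SS})$ forces $u_0$ to be semisimple on $V$ for every $u\in\mathcal{M}$. Since $\mathcal{M}$ is abelian (Theorem \ref{M-reductive}), the operators $u_0$ commute. A commuting family of semisimple operators preserving the finite-dimensional spaces $V_n$ is simultaneously diagonalizable there. Hence $V=\bigoplus_{\alpha}V^{\alpha}$ is a direct sum of joint $\mathcal{M}$-weight spaces.

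For a simple ordinary module $M^j$, I would transport semisimplicity from $V$ by an annihilator argument. Fix $u\in\mathcal{M}$ and let $p(x)$ be the minimal polynomial of $u_0$ on $V_n$. By semisimplicity on $V$, $p$ has simple roots. Since $M^j$ is simple and $u_0^j$ commutes with $L^j(0)$, $u_0^j$ preserves each finite-dimensional space $M^j_{n+\lambda_j}$, so $M^j$ splits into generalized eigenspaces $M^j_u[\mu]$. The nilpotent part $N^j_u$ is a derivation-type operator in the module sense: it satisfies $N^j_u(a_n w)=(N_ua)_nw+a_nN^j_uw$. This follows verbatim from the proof of Proposition \ref{heisenberg-jordan-derivations}, using $[u_0,Y^j(a,x)]=Y^j(u_0a,x)$.

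On $V$ we already know $N_u=0$, so this identity reduces to $[N^j_u,a^j_n]=0$ for all $a\in V$ and $n\in\mathbb{Z}$. Thus $N^j_u$ commutes with the full $V$-action and is a $V$-module endomorphism of $M^j$. It is also grading-preserving, hence acts on finite-dimensional homogeneous spaces. By Schur's lemma for simple ordinary modules with finite-dimensional weight spaces, $N^j_u$ is a scalar on $M^j$. Being locally nilpotent, that scalar is $0$. Hence $u_0^j=S^j_u$ is semisimple, and again commutativity gives a simultaneous weight decomposition of $M^j$.

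The main obstacle is the restriction step. One must check that $N^j_u$ is well defined on all of $M^j$, that is, that $u^j_0$ is locally finite so the generalized eigenspace decomposition exists. One must also check the module derivation identity, $a_nw\in M^j_u[\lambda+\mu]$ for $a\in V_u[\lambda]$ and $w\in M^j_u[\mu]$. Both follow from the commutator formula and finite-dimensionality of the $L^j(0)$-eigenspaces, but they need to be stated explicitly.

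The final assertion follows from regularity. Every weak $V$-module is a direct sum of simple ordinary modules, each isomorphic to some $M^j$. Semisimplicity of $\mathcal{M}$ on each summand gives semisimplicity on the sum.
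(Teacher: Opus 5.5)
Your proof is correct, but it reaches simple modules by a different mechanism from the paper's.

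The paper treats all of $\mathcal M$ at once. It introduces the joint generalized eigenspaces $W[\alpha]$ and the joint eigenspaces $E_W(\alpha)$. An iterative common-kernel argument for the commuting operators $u^i_0-((u^i,\alpha)){\bf 1}_{-1}$ shows that $E_W(\alpha)\neq0$ whenever $W[\alpha]\neq0$. The relation $[u^W_0,a_n]=((u,\beta))a_n$ for $a\in V(\beta)$ then shows that $E_W=\bigoplus_\alpha E_W(\alpha)$ is a nonzero $V$-submodule, hence equal to $W$.

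You instead fix one $u$. You observe that the nilpotent Jordan part $N^j_u$ commutes with every $a^j_n$, because $N_u=0$ on $V$, and you eliminate it as a nilpotent module endomorphism. Simultaneous diagonalizability then follows from $[u^j_0,v^j_0]=(u_0v)^j_0=0$.

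The two arguments are two faces of the same idea, since the paper's $E_W$ is exactly $\bigcap_{u\in\mathcal M}\Ker N^j_u$. Your version avoids the common-eigenvector bookkeeping. The paper's version produces the joint weight decomposition of $W$ directly.

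Two small simplifications are available. First, Schur's lemma is not needed. Local nilpotence already makes $\Ker N^j_u$ a nonzero $V$-submodule of the simple module $M^j$, which forces $N^j_u=0$. Second, the opening remark about the minimal polynomial $p(x)$ of $u_0$ on $V_n$ is never used and can be dropped.
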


\begin{proof}
By Proposition \ref{omega-semisimplicity-criterion}, $\mathcal{M}$ acts
semisimply on $V$. Now, we will show that $\mathcal{M}$ acts semisimply on every simple ordinary $V$-module. Since the zero modes commute and the form
$((\, ,\,))|_{\mathcal{M}}$ is nondegenerate, we write $V$ as a direct sum of $\mathcal{M}$-eigenspaces: $$V=\bigoplus_{\beta}V(\beta),$$ where $V(\beta)=
 \{a\in V\mid
   u_0a=((u,\beta))a\text{ for every }u\in\mathcal{M}\}$. Let $W$ be a simple ordinary $V$-module. For each $\alpha\in \mathcal{M}$, we define $$W[\alpha]=\{w\in W~|~\text{for }u\in\mathcal{M},~(u_0^w-((u,\alpha)){\bf 1}_{-1})^Nw=0\text{ for some }N\geq 1\}$$ and set $E_W(\alpha)=
 \{w\in W[\alpha]\mid
   u_0^Ww=((u,\alpha))w
   \text{ for every }u\in\mathcal{M}\}$. 

Let $\{u^1,\dots,u^{\ell}\}$ be a basis of $\mathcal{M}$. For $\alpha\in \mathcal{M}$, for $i\in\{1,\dots,\ell\}$, we set $T_i=u^i_0-((u^i,\alpha)){\bf 1}_{-1}$. Clearly, $[T_i,T_j]=0$. Let $0\neq w\in W[\alpha]$. There exists $r_1\geq 1$ such that $T_1^{r_1}w=0$ and $T_1^{r_1-1}w\neq 0$. Now, we set $w_1=T_1^{r_1-1}w$. There exists $r_2\geq 1$ such that $T_2^{r_2}w_1=0$ and $w_2=T_2^{r_2-1}w_1\neq 0$. Since $[T_1,T_2]=0$ and $T_1w_1=0$, we have $T_1w_2=T_1(T_2^{r_2-1}w_1)=T_2^{r_2-1}T_1(w_1)=0$. Continuing inductively, we obtain $0\neq w_{\ell}\in \bigcap_{i=1}^{\ell}\Ker~ T_i$. Since $\{u^1,...,u^{\ell}\}$ is a basis of $\mathcal{M}$, we can conclude that $T_uw_{\ell}=0$ for every $u\in\mathcal{M}$ and $w_{\ell}\in E_W(\alpha)$. Here, $T_u=u_0-((u,\alpha)){\bf 1}_{-1}$. Hence, whenever $W[\alpha]\neq0$,  $E_W(\alpha)\neq0$ as well. 

Now we set $E_W:=\bigoplus_{\alpha}E_W(\alpha)\neq0.$ For $a\in V(\beta)$, $w\in E_W(\alpha)$, and $u\in\mathcal{M}$, the
zero-mode commutator formula gives
$
 [u_0^W,a_n]=(u_0a)_n=((u,\beta))a_n.
$
Therefore $u_0^W(a_nw)
 =
 ((u,\alpha+\beta))a_nw$ and $a_nw\in E_W(\alpha+\beta)$. Thus $E_W$ is a nonzero $V$-submodule of $W$. However, since $W$ is simple, we can conclude that 
$E_W=W$. Moreover, all zero modes from $\mathcal{M}$ act semisimply on $W$.
The final assertion follows from regularity.
\end{proof}


\begin{rem}
\label{omega-obstruction}
Theorem \ref{nonselfdual-semisimplicity} identifies the obstruction to
semisimplicity intrinsically.  The possible nilpotent part of a Heisenberg
zero mode is invisible on $M(1)$ and is supported
entirely on the Heisenberg vacuum multiplicity spaces.  Thus
$(\Omega\text{-SS})$ is precisely the condition that removes this
obstruction.  
\end{rem}
\begin{dfn}\cite{FHL, Li-1} Let $(V,Y,{\bf 1})$ be a vertex operator algebra. A bilinear form $(~,~)$ on $V$ is said to be invariant if it satisfies the following conditions
$$(Y(u,z)a,b)=(a,Y(e^{zL(1)}(-z^{-2})^{L(0)}u,z^{-1})b)\text{ for }u,a,b\in V.$$

\end{dfn} 

Let us investigate properties of regular $\mathbb{N}$-graded vertex operator algebras that have a nondegenerate invariant bilinear form. 
\begin{lem}\label{nondegenerateVacuumspace}
Let $V=\bigoplus_{n=0}^{\infty}V_n$ be a regular $\mathbb{N}$-graded vertex operator algebra such that
$V_1$ is a solvable Leibniz algebra and $V_0$ is a graded-Gorenstein
local algebra. Assume $\Ker ~L(-1)|_{V_0}=\mathbb C\bf 1$ and $V_0\neq \mathfrak{a}$.
Let $\mathcal{M}\in\mathfrak{M}$ such that $L(1)\mathcal{M}=0$. Assume that \(V\) admits a nondegenerate invariant bilinear form $\langle~,~\rangle$.
Then the restriction of $\langle~,~\rangle$ to the Heisenberg vacuum
space $
\Omega_V=\{w\in V\mid u_nw=0
\text{ for all }u\in\mathcal M,\ n>0\}
$ is nondegenerate.
\end{lem}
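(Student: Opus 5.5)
The plan is to show that the adjoint of a positive Heisenberg mode $u_n$ ($n>0$) is, up to sign, the negative mode $u_{-n}$. From this, $V$ splits orthogonally as $\Omega_V\oplus \mathcal{M}^-$, where $\mathcal{M}^-=\mathrm{span}\{v_{-n}a\mid v\in\mathcal{M},\ n>0,\ a\in V\}$. Nondegeneracy of the form on $V$ then forces nondegeneracy on $\Omega_V$.

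\emph{Step 1 (adjoint of Heisenberg modes).} For $u\in\mathcal{M}$ we have $u\in V_1$ and $L(1)u=0$. Substituting into the invariance identity $(Y(u,z)a,b)=(a,Y(e^{zL(1)}(-z^{-2})^{L(0)}u,z^{-1})b)$ gives
$$(Y(u,z)a,b)=-z^{-2}(a,Y(u,z^{-1})b).$$
Comparing coefficients yields $\langle u_na,b\rangle=-\langle a,u_{-n}b\rangle$ for all $n\in\mathbb{Z}$. This is the only input needed from the form.

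\emph{Step 2 (decomposition of $V$).} Put $\widehat{\mathcal{M}}^-$ for the negative modes. By Theorem \ref{M-reductive}, $\langle\mathcal{M}\rangle\cong M(1)$. Since $V$ is $\mathbb{N}$-graded with finite-dimensional pieces, every $v_n$ with $n>0$ lowers weight, so $\hat{\mathcal{M}}^+$ acts locally nilpotently on $V$. The standard Fock-space argument (as in Li \cite{Li3}, used implicitly in the proof of Proposition \ref{omega-semisimplicity-criterion}) then gives $V\cong M(1)\otimes\Omega_V$ as a $\hat{\mathcal{M}}$-module, i.e. $V=U(\widehat{\mathcal{M}}^-)\Omega_V$ freely. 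In particular
$$V=\Omega_V\oplus \widehat{\mathcal{M}}^-V,\qquad \widehat{\mathcal{M}}^-V=\sum_{n>0,\,v\in\mathcal{M}}v_{-n}V.$$
This step only needs that the Heisenberg algebra has nondegenerate form $((\,,\,))|_{\mathcal{M}}$ and acts with the positive part locally nilpotent. It does not need semisimplicity of zero modes, since zero modes are central in $\hat{\mathcal{M}}$ and play no role in the tensor decomposition.

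\emph{Step 3 (orthogonality and conclusion).} For $w\in\Omega_V$, $v\in\mathcal{M}$, $n>0$ and $a\in V$, Step 1 gives
$$\langle w,v_{-n}a\rangle=-\langle v_nw,a\rangle=0.$$
Hence $\Omega_V\perp\widehat{\mathcal{M}}^-V$. Now suppose $w\in\Omega_V$ pairs to zero with all of $\Omega_V$. Then $w$ pairs to zero with all of $V=\Omega_V\oplus\widehat{\mathcal{M}}^-V$, so $w=0$ by nondegeneracy of $\langle\,,\,\rangle$ on $V$.

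One could worry about infinite dimensionality. However, the form pairs $V_n$ with $V_n$, because $L(0)$ is self-adjoint by the same invariance identity applied with $\omega$. Everything therefore reduces to finite-dimensional homogeneous pieces, and the decomposition in Step 2 is $L(0)$-graded because $\omega$ commutes appropriately with $\mathcal{M}$: $L(1)\mathcal{M}=0$ and $\mathcal{M}\subseteq V_1$, so $[L(0),v_n]=-nv_n$.

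The main obstacle is Step 2: justifying the direct-sum decomposition $V=\Omega_V\oplus\widehat{\mathcal{M}}^-V$ in the present non-CFT-type setting without assuming semisimplicity. I would argue via complete reducibility of $\hat{\mathcal{M}}$-modules of level one in the category where $\hat{\mathcal{M}}^+$ acts locally nilpotently. This is the Stone–von Neumann-type theorem (cf. \cite{FLM1}, \cite{LLi}), applied to the Heisenberg Lie algebra with the zero modes excluded from consideration.
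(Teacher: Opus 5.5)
Your proposal is correct and follows essentially the same route as the paper. Both arguments derive $\langle u_na,b\rangle=-\langle a,u_{-n}b\rangle$ from invariance, use the Fock-space spanning $V=U(\widehat{\mathcal M}^-)\Omega_V$, and note that $\Omega_V$ is orthogonal to every vector containing a negative Heisenberg mode. Your explicit justification of the oscillator decomposition without zero-mode semisimplicity is more careful than the paper's bare citation of it. Note also that only the spanning $V=\Omega_V+\widehat{\mathcal M}^-V$ is needed, not directness.
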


\begin{proof}
For $u\in\mathcal{M}$, by invariance of $\langle~,~\rangle$, we have, for $n, \mathbb{Z}$, $
\langle u_na,b\rangle=-\langle a,u_{-n}b\rangle$.

Since $\langle\mathcal M\rangle\cong M(1)$, the usual Heisenberg
oscillator decomposition gives

$$
V=U(\widehat{\mathcal M}^{-})\Omega_V.
$$ Here $\widehat{\mathcal{M}}^{-}=\mathcal{M}\otimes t^{-1}\mathbb{C}[t^{-1}]$. Thus every \(v\in V\) may be written as

$$
v=v_0+\sum_j
u^{j_1}_{-n_1}\cdots
u^{j_{r_j}}_{-n_{r_j}}v_j,
$$ where $v_0,v_j\in\Omega_V$, $n_i>0$, and $r_j\ge1$.

Let $0\neq w\in\Omega_V$. Since the invariant bilinear form on $V$
is nondegenerate, there exists $v\in V$ such that
$\langle w,v\rangle\neq 0$.
For every $u\in\mathcal M$, $n>0$, and $a\in V$, $
\langle w,u_{-n}a\rangle
=-\langle u_nw,a\rangle=0,
$ because $w\in\Omega_V$. Repeatedly applying this identity shows that $w$ is orthogonal to every term containing at least one negative
Heisenberg mode. Hence $
0\neq\langle w,v\rangle=\langle w,v_0\rangle
$ for some \(v_0\in\Omega_V\).
Thus no nonzero vector of \(\Omega_V\) lies in the radical of the
restricted form. Since the form is symmetric, its restriction to
\(\Omega_V\) is nondegenerate.
\end{proof}

We continue to investigate properties of regular $\mathbb{N}$-graded vertex operator algebras that have a nondegenerate invariant bilinear form. As a reminder we let $V=\bigoplus_{n=0}^{\infty}V_n$ be a regular $\mathbb{N}$-graded vertex operator algebra such that
$V_1$ is a solvable Leibniz algebra and $V_0$ is a graded-Gorenstein
local algebra. In addition, we assume $\Ker ~L(-1)|_{V_0}=\mathbb C\bf 1$ and $V_0\neq \mathfrak{a}$.
Also, we let $\mathcal{M}\in\mathfrak{M}$ such that $L(1)\mathcal{M}=0$, and let $\langle~,~\rangle$ be a nondegenerate invariant bilinear form on $V$. Then, by invariance of $\langle~,~\rangle$, we have that for $u\in\mathcal{M}$, $\langle u_0a,b\rangle=-\langle a,u_0b\rangle$. Consequently, for every polynomial $p(x)$ we have  
$$\langle p(u_0)a,b\rangle=\langle a,p(-u_0)b\rangle.$$ In particular, we have that for $i\geq 1$, $\langle (u_0)^ia,b\rangle=(-1)^i\langle a,(u_0)^ib\rangle$. 

Let us assume that $a\in V_u[\lambda]$ and $b\in V_u[\mu]$. Then there are positive integers $r,s$ such that $(u_0-\lambda {\bf 1}_{-1})^ra=0$, and $(u_0-\mu {\bf 1}_{-1})^sb=0$. Now, let us take $p(x)=(x-\lambda)^r$. Hence, we have 
$$\langle (u_0-\lambda {\bf 1}_{-1})^ra,b\rangle=(-1)^r\langle a,(u_0+\lambda {\bf 1}_{-1})^rb\rangle.$$ This implies that $\langle a,(u_0+\lambda {\bf 1}_{-1})^rb\rangle=0$. Now, let us observe that $$u_0+\lambda {\bf 1}_{-1}=(\lambda+\mu){\bf 1}_{-1}+(u_0-\mu {\bf 1}_{-1}),$$ and $u_0-\mu {\bf 1}_{-1}$ is nilpotent. If $\lambda+\mu\neq 0$, then $u_0+\lambda {\bf 1}_{-1}$ is invertible on $V_u[\mu]$ and $$(u_0+\lambda {\bf 1}_{-1})^{-1}=\frac{1}{\lambda +\mu}\sum_{j=0}^{s-1}\left(-\frac{u_0-\mu {\bf 1}_{-1}}{\lambda +\mu}\right)^j.$$ $(u_0+\lambda {\bf 1}_{-1})^r$ is invertible on $V_u[\mu]$ as well. This implies that there exists $v\in V_u[\mu]$ such that $b =(u_0+\lambda)^rv$. Consequently, we have 
$$\langle a,b\rangle=\langle a, (u_0+\lambda{\bf 1}_{-1})^rv\rangle=(-1)^r\langle (u_0-\lambda {\bf 1}_{-1})^ra,v\rangle=0.$$ Hence, if $\lambda+\mu\neq 0$ then $\langle V_u[\lambda],V_u[\mu]\rangle=0$.

By Jordan decomposition, we observe that for each $n\geq 0$, 
$$u_0|_{V_n}=S+N,
~
S=S_u|_{V_n},
~
N=N_u|_{V_n},$$
where $S$ is semisimple, $N$ is nilpotent, and
$[S,N]=0$. Because the invariant bilinear form is nondegenerate and the
$L(0)$-grading is orthogonal, its restriction to each $V_n$ is
nondegenerate. Hence, adjoints are well-defined on $V_n$.  Taking
adjoints in
$u_0|_{V_n}=S+N$ gives $\left(u_0|_{V_n}\right)^*=S^*+N^*$. The operator $S^*$ is semisimple, since $S$ is semisimple, and $N^*$ is nilpotent, since $N$ is nilpotent.  Furthermore,
$$[S^*,N^*]
=
-[S,N]^*
=
0.$$
Consequently,
$\left(u_0|_{V_n}\right)^*=S^*+N^*$
is the additive Jordan decomposition of $\left(u_0|_{V_n}\right)^*$.

On the other hand, since $\left(u_0|_{V_n}\right)^*=-u_0|_{V_n}=(-S)+(-N)$.
Here $-S$ is semisimple, $-N$ is nilpotent, and
$[-S,-N]=[S,N]=0$. Therefore, $\left(u_0|_{V_n}\right)^*=(-S)+(-N)$ is also an additive Jordan decomposition of $\left(u_0|_{V_n}\right)^*$.

By uniqueness of the additive Jordan decomposition on the
finite-dimensional vector space $V_n$, we obtain
$S^*=-S$, and $N^*=-N$. Thus, for all $a,b\in V_n$, $\langle S_u a,b\rangle=-\langle a,S_u b\rangle$, and $\langle N_u a,b\rangle=-\langle a,N_u b\rangle$. Since $n$ was arbitrary and $V=\bigoplus_{n\geq0}V_n$, these identities hold on every element of $V$. 

Now, for each $\lambda\in \mathbb{C}$, we set $$\Omega_{V,u}[\lambda]=\Omega_V\cap V_{u}[\lambda].$$ By Lemma \ref{nondegenerateVacuumspace}, the preceding results restrict to $\Omega_V$. Let us summarize this discussion in the following Theorem.

\begin{thm}
\label{selfdual-charge-duality}
Let $V=\bigoplus_{n=0}^{\infty}V_n$ be a regular $\mathbb{N}$-graded vertex operator algebra such that
$V_1$ is a solvable Leibniz algebra and $V_0$ is a graded-Gorenstein
local algebra. Assume $\Ker ~L(-1)|_{V_0}=\mathbb C\bf 1$ and $V_0\neq \mathfrak{a}$.
Let $\mathcal{M}\in\mathfrak{M}$ such that $L(1)\mathcal{M}=0$, and let $\langle~,~\rangle$ be a
nondegenerate invariant bilinear form on $V$. Then for every
$u\in\mathcal{M}$ the $u_0$-generalized eigenspaces satisfy $$ \langle V_u[\lambda],V_u[\mu]\rangle=0
 \text{ whenever }\lambda+\mu\neq0.$$ The restriction of the invariant bilinear form induces a nondegenerate
pairing $$ V_u[\lambda]\times V_u[-\lambda]\longrightarrow\mathbb{C},$$
and the corresponding nilpotent Jordan parts are skew-adjoint: $\langle N_ua,b\rangle=-\langle a,N_ub\rangle$. In particular, the same conclusions hold on the Heisenberg vacuum spaces $\Omega_{V,u}[\lambda]\times\Omega_{V,u}[-\lambda].$

If, in addition, $V$ satisfies $(\Omega\text{-SS})$, then the generalized
eigenspaces are ordinary eigenspaces and $$\langle V_u(\lambda),V_u(\mu)\rangle=0
 \text{ unless }\lambda+\mu=0.$$ Thus opposite ordinary $\mathcal{M}$-eigenspaces occur in nondegenerately paired
spaces.
\end{thm}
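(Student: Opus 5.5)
The argument formalizes the discussion preceding the statement in four steps.

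\textbf{Step 1: skew-adjointness of $u_0$.} Fix $u\in\mathcal{M}$. Since $L(1)u=0$ and $u\in V_1$, the invariance identity applied to $Y(u,z)$ reduces to
\[
e^{zL(1)}(-z^{-2})^{L(0)}u=-z^{-2}u.
\]
Comparing coefficients gives $\langle u_na,b\rangle=-\langle a,u_{-n}b\rangle$. In particular, $u_0$ is skew-adjoint, so $\langle p(u_0)a,b\rangle=\langle a,p(-u_0)b\rangle$ for every polynomial $p$.

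\textbf{Step 2: orthogonality of generalized eigenspaces.} Take $a\in V_u[\lambda]$ and $b\in V_u[\mu]$ with $\lambda+\mu\neq 0$. Choose $r$ with $(u_0-\lambda)^ra=0$. The operator $(u_0+\lambda)^r$ is invertible on the finite-dimensional pieces $V_u[\mu]\cap V_n$, because there it equals $(\lambda+\mu)$ plus a nilpotent. Write $b=(u_0+\lambda)^rv$ with $v\in V_u[\mu]$. Then
\[
\langle a,b\rangle=(-1)^r\langle (u_0-\lambda)^ra,v\rangle=0.
\]

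\textbf{Step 3: nondegeneracy of the pairing.} We have $V=\bigoplus_\lambda V_u[\lambda]$, and $V_u[\lambda]$ is orthogonal to every $V_u[\mu]$ with $\mu\neq-\lambda$. Hence any $a\in V_u[\lambda]$ orthogonal to $V_u[-\lambda]$ is orthogonal to all of $V$, and so $a=0$ by nondegeneracy. The same holds with the roles of $\lambda$ and $-\lambda$ swapped. One should also use that the $L(0)$-grading is orthogonal, i.e. $\langle V_m,V_n\rangle=0$ for $m\neq n$, since $L(0)$ is self-adjoint. This makes the pairing nondegenerate degree by degree on finite-dimensional pieces, so no infinite-dimensional subtlety arises.

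\textbf{Step 4: skew-adjointness of $N_u$.} Restrict to each $V_n$, where the form is nondegenerate and adjoints exist. From $u_0^*=-u_0$ we get $u_0^*=S^*+N^*$ and $u_0^*=(-S)+(-N)$, and both are Jordan decompositions. Uniqueness of the additive Jordan decomposition forces $N^*=-N$ (and $S^*=-S$).

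\textbf{Vacuum spaces.} The claims on $\Omega_{V,u}[\pm\lambda]$ are where care is needed, and I expect this to be the main obstacle. Two facts are available:
\begin{itemize}
\item[(a)] Lemma~\ref{nondegenerateVacuumspace} gives nondegeneracy of the form on $\Omega_V$.
\item[(b)] Proposition~\ref{heisenberg-jordan-derivations} shows that $u_0$ and $N_u$ preserve $\Omega_V$, because they commute with all $v_n$ for $v\in\mathcal{M}$. Hence $\Omega_V=\bigoplus_\lambda\Omega_{V,u}[\lambda]$.
\end{itemize}
With these, Steps 2--4 can be rerun verbatim inside $\Omega_V$. Orthogonality is inherited from $V$. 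For nondegeneracy, an element of $\Omega_{V,u}[\lambda]$ orthogonal to $\Omega_{V,u}[-\lambda]$ is then orthogonal to all of $\Omega_V$, hence zero by (a).

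\textbf{Case of $(\Omega\text{-SS})$.} Under this hypothesis, Proposition~\ref{omega-semisimplicity-criterion} gives $N_u=0$ on $V$. Generalized eigenspaces therefore coincide with ordinary eigenspaces $V_u(\lambda)$, and the orthogonality and pairing statements transfer directly.
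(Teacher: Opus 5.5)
Your proposal is correct and follows essentially the same route as the paper. Both arguments use skew-adjointness of $u_0$ from invariance with $L(1)u=0$, the polynomial/invertibility argument giving $\langle V_u[\lambda],V_u[\mu]\rangle=0$ for $\lambda+\mu\neq0$, uniqueness of the additive Jordan decomposition on each $V_n$ to get $N_u^*=-N_u$, and Lemma~\ref{nondegenerateVacuumspace} to pass to $\Omega_V$. You also make explicit two points the paper leaves implicit: the nondegeneracy of the $V_u[\lambda]\times V_u[-\lambda]$ pairing, and the $u_0$-stability of $\Omega_V$ needed to split it into the spaces $\Omega_{V,u}[\lambda]$.
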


\begin{rem}
The nondegeneracy assumption is important in the preceding argument:
it allows the adjoint operators $S_u^*$ and $N_u^*$ to be defined
uniquely and permits the use of uniqueness of the additive Jordan
decomposition.  If the invariant bilinear form is allowed to be
degenerate, the generalized eigenspace orthogonality
$\langle V_u[\lambda],V_u[\mu]\rangle=0$ when $\lambda+\mu\neq0$ can still be proved directly from
$\langle u_0a,b\rangle=-\langle a,u_0b\rangle$ by a polynomial argument, without introducing adjoint operators.
\end{rem}
\begin{rem}
The preceding results separate two distinct phenomena.  The existence of
the Heisenberg vertex operator algebra $M(1)$ follows from the local
$V_0$--$V_1$ structure developed above, whereas semisimplicity of the
Heisenberg zero modes is controlled entirely by the multiplicity spaces
$\Omega_V$.  In the degenerate case, $(\Omega\text{-SS})$ is
therefore a natural structural hypothesis.  In the nondegenerate case,
one gains the additional opposite-eigenvalue duality of
Theorem \ref{selfdual-charge-duality}, but not semisimplicity automatically.
\end{rem}

\subsection{Integral $ \mathcal{M}$-weights and a lattice subalgebra}

The approach in this subsection is inspired in substantial part by Mason's lattice-subalgebra theory for strongly regular vertex operator algebras \cite{M}. In the strongly regular setting, Mason studies the simultaneous weights of the zero modes associated with a nondegenerate abelian subspace of \(V_1\), the resulting weight group, finite-index subgroups arising from module deformations, and the reconstruction of lattice vertex operator subalgebras. We adapt this circle of ideas to the present $\mathbb{N}$-graded, non-CFT-type setting. Several ingredients that are automatic or available from strong regularity in Mason's setting need to be isolated here as additional structural conditions, most notably the Heisenberg vacuum semisimplicity condition $(\Omega\text{-SS})$, the full-rank integrality condition $(\mathrm{IL})$, and the lattice cocycle compatibility condition $(\mathrm{LC})$.

For the remainder of this subsection, we let $V=\bigoplus_{n=0}^{\infty}V_n$ be a regular $\mathbb{N}$-graded vertex operator algebra such that
$V_1$ is a solvable Leibniz algebra and $V_0$ is a graded-Gorenstein
local algebra, $\Ker ~L(-1)|_{V_0}=\mathbb C\bf 1$ and $V_0\neq \mathfrak{a}$.
Let $\mathcal{M}\in\mathfrak{M}$ such that $L(1)\mathcal{M}=0$, and assume $(\Omega\text{-SS})$.  Thus the
commuting zero modes from $\mathcal M$ act semisimply on $V$ and on every
simple ordinary $V$-module.  We write
\[
 \langle u,v\rangle_{\mathcal M}:=((u,v))
 \qquad (u,v\in\mathcal M)
\]
for the nondegenerate Heisenberg form.

Let $P_V=\{\beta\in\mathcal M\mid V(\beta)\neq0\}$, where $V(\beta)=
 \{a\in V\mid
 u_0a=\langle u,\beta\rangle_{\mathcal M}a
 \text{ for every }u\in\mathcal M\}$. We call $\beta$ a $\mathcal{M}$-weight if $V(\beta)\neq 0$, and call $V(\beta)$ the corresponding $\mathcal{M}$-weight space. Thus $P_V$ is the set of $ \mathcal {M} $-weights occurring in $V$. This terminology reflects the fact that for $u\in\mathcal{M}$, the commuting zero modes $u_0$ act on $V(\beta)$ through the linear functional $ u\longmapsto\langle u,\beta\rangle_{\mathcal M}$. Since $\langle~,~\rangle_{\mathcal M}$ is nondegenerate, this functional is uniquely represented by the vector \(\beta\in\mathcal M\).

By Proposition \ref{PV}, Proposition \ref{PW}, simplicity of $V$, semisimplicity of the
$\mathcal M$-action, and $L(n)\mathcal M=0$ for $n\geq1$ imply that
$P_V$ is an additive subgroup of $\mathcal M$ and
\[
 V=\bigoplus_{\beta\in P_V}
 M(1,\beta)\otimes\Omega_V(\beta),
\]
where
\[
 \Omega_V(\beta)=
 \{w\in V(\beta)\mid u_nw=0
 \text{ for all }u\in\mathcal M,\ n>0\}.
\]
Moreover,
\[
 \Omega_V(0)=C_V(M(1)),
 \qquad
 V(0)=M(1)\otimes\Omega_V(0),
\]
and each $V(\beta)$ is an irreducible $V(0)$-module.

Define
\[
 P_V^\circ=
 \{u\in\mathcal M\mid
 \langle u,\beta\rangle_{\mathcal M}\in\mathbb Z
 \text{ for all }\beta\in P_V\}.
\]
Also set
\[
 \mathcal L_0=
 \{u\in\mathcal M\mid
 u_0\text{ has integral eigenvalues on }V\}.
\]
Since the $\mathcal M$-action on $V$ is semisimple and the eigenvalues of
$u_0$ on $V(\beta)$ are exactly
$\langle u,\beta\rangle_{\mathcal M}$, we have
\begin{equation}\label{PVdualL0}
 P_V^\circ=\mathcal L_0.
\end{equation}

The following full-rank condition isolates the arithmetic input needed for
the lattice construction.

\begin{dfn}[Integral $\mathcal{M}$-weight lattice condition]
\label{IL-definition}
We say that $(V,\mathcal{M})$ satisfies \emph{(IL)} if
$\mathcal{L}_0=P_{V}^{\circ}$ is a full-rank lattice such that $\mathcal{M}_{\mathbb{R}}:=
 \mathbb{R}\otimes_{\mathbb{Z}}\mathcal{L}_0
 \subseteq\mathcal{M}$; equivalently,
$\operatorname{rank}_{\mathbb{Z}}\mathcal{L}_0=\dim_{\mathbb{C}}\mathcal{M}$ and $\mathbb{C}\otimes_{\mathbb{Z}}\mathcal{L}_0=\mathcal{M}$.
\end{dfn}

We set $\mathcal E=
 \{u\in\mathcal M\mid
 u_0\text{ has rational eigenvalues on }V\}.$

\begin{lem}\label{rational-norm-E}
Let $V=\bigoplus_{n=0}^{\infty}V_n$ be a regular $\mathbb{N}$-graded vertex operator algebra such that
$V_1$ is a solvable Leibniz algebra and $V_0$ is a graded-Gorenstein
local algebra, $\Ker ~L(-1)|_{V_0}=\mathbb C\bf 1$ and $V_0\neq \mathfrak{a}$.
Let $\mathcal{M}\in\mathfrak{M}$ such that $L(1)\mathcal{M}=0$, and assume $(\Omega\text{-SS})$ and (IL). If $u\in\mathcal E$, then $\langle u,u\rangle_{\mathcal M}\in\mathbb Q$.
\end{lem}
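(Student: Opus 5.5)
Since $\mathcal M$ acts semisimply (Theorem \ref{nonselfdual-semisimplicity}), $V=\bigoplus_{\beta\in P_V}V(\beta)$, and $u_0$ acts on $V(\beta)$ by $\langle u,\beta\rangle_{\mathcal M}$. For $u\in\mathcal E$ all these numbers are rational. Consider the Heisenberg Virasoro vector in the direction of $u$. The cleanest route is through conformal weights: each $V(\beta)=M(1,\beta)\otimes\Omega_V(\beta)$ has $L(0)$-weights in $\mathbb{Z}_{\ge 0}$. I would instead deform $V$ by $u$ using Li's $\Delta$-operator from Proposition \ref{Keven}(1). The twisted module $V^{(u)}$ is defined for $u\in P_V^\circ$. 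For rational $u\in\mathcal E$, however, some multiple $Nu$ lies in $P_V^\circ=\mathcal L_0$: the eigenvalues of $u_0$ on $V$ are the values $\langle u,\beta\rangle$ for $\beta$ in the finitely generated group $P_V$ (by (IL), $P_V\subseteq (\mathcal L_0)^\circ$ has finite rank), so they have bounded denominators.

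So the plan is as follows. First, choose $N\in\mathbb Z_{>0}$ with $w:=Nu\in\mathcal L_0=P_V^\circ$. It suffices to show $\langle w,w\rangle_{\mathcal M}\in\mathbb Q$. Second, form the weak module $V^{(w)}$. By Proposition \ref{Keven}(1) it is irreducible, and by regularity it is a simple ordinary $V$-module $M^j$. Third, compute its conformal weights. The deformed vertex operator gives
\[
L^{(w)}(0)=L(0)+w_0+\tfrac12\langle w,w\rangle_{\mathcal M},
\]
using $L(1)w=0$, $L(2)w=0$, and $w_1w=\langle w,w\rangle{\bf 1}$. Hence on $V(\beta)$ the $L^{(w)}(0)$-eigenvalues are
\[
n+\langle w,\beta\rangle_{\mathcal M}+\tfrac12\langle w,w\rangle_{\mathcal M},\qquad n\in\mathbb Z_{\ge 0}.
\]
Fourth, by Proposition \ref{Info about regularity}(2), $V$ is rational and $C_2$-cofinite, so the conformal weight of $M^j$ is rational, and all $L(0)$-eigenvalues of $M^j$ lie in that weight plus $\mathbb Z$. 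Taking $\beta=0$ (the vacuum lies in $V(0)$) shows that $\tfrac12\langle w,w\rangle_{\mathcal M}\in\lambda_j+\mathbb Z\subseteq\mathbb Q$. Therefore $\langle u,u\rangle_{\mathcal M}=N^{-2}\langle w,w\rangle_{\mathcal M}\in\mathbb Q$.

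The main obstacle is the third step: the precise formula for $L(0)$ under $\Delta(w,x)$. Computing $\Delta(w,x)\omega$ requires $w_k\omega$ for $k\ge 1$. Here $w_1\omega=L(-1)$-type terms give $w$ up to sign, via skew-symmetry and $L(1)w=0$, and $w_2\omega$ should give $\langle w,w\rangle$-independent terms involving $L(1)w=0$. I would carefully verify $w_1\omega=w$ and $w_2\omega=0$, then $w_3\omega$ as well, using skew-symmetry $w_n\omega=\sum(-1)^{n+1+i}L(-1)^i(\omega_{n+i}w)/i!$. The quadratic term $\tfrac12\langle w,w\rangle$ arises from $\tfrac12 w_1 w$ after the exponential is applied twice. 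This follows as in Li's computation \cite{Li3}; the coefficient $\tfrac12$ need not be exact, since any nonzero rational multiple suffices. A secondary point is confirming that $V^{(w)}$ is ordinary, i.e.\ that $\lambda_j$ is well defined; regularity and Lemma \ref{regular-local-simple} handle this.
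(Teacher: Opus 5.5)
Your proposal is correct and follows essentially the same route as the paper. You scale $u\in\mathcal E$ into $\mathcal L_0=P_V^\circ$, form Li's twisted module $V^{(w)}$, and use $\Delta(w,x)\omega=\omega+wx^{-1}+\frac12\langle w,w\rangle_{\mathcal M}x^{-2}{\bf 1}$ to see that the vacuum has $L^{(w)}(0)$-eigenvalue $\frac12\langle w,w\rangle_{\mathcal M}$; you then invoke rationality of conformal weights of simple modules of the regular $V$, and the only minor difference is that you obtain $N$ from finite generation of $P_V$ inside the dual lattice $\mathcal L_0^\circ$ (via (IL)), whereas the paper gets it from finite generation of $V$; both work.
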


\begin{proof}
First, we let $u\in\mathcal L_0=P_V^\circ$.  Li's $\Delta$-operator
construction gives, for every weak $V$-module $W$, the weak module
$W^{(u)}$; see Proposition \ref{Keven}.  Since twisting by $u$ is inverted
by twisting by $-u$, irreducibility is preserved.  In particular,
$V^{(u)}$ is an irreducible ordinary $V$-module.

For $L(1)u=0$ and $u_1u=\langle u,u\rangle_{\mathcal M}{\bf1}$, the standard
$\Delta$-operator computation gives $$\Delta_u(x)\omega
 =
 \omega+u\,x^{-1}
 +\frac12\langle u,u\rangle_{\mathcal M}x^{-2}{\bf1}.$$ Consequently, the vacuum vector of the underlying vector space $V$, viewed
inside the module $V^{(u)}$, has conformal weight $\frac12\langle u,u\rangle_{\mathcal M}$. All conformal weights of simple $V$-modules are rational because $V$ is
regular.  Hence for $u\in\mathcal L_0$, $\langle u,u\rangle_{\mathcal M}\in\mathbb Q$.

Now let $u\in\mathcal E$.  Since $V$ is finitely generated, there exists
$N>0$ such that $Nu_0$ has integral eigenvalues on a finite homogeneous
generating space, and hence on all of $V$.  Thus $Nu\in\mathcal L_0$.
Therefore $N^2\langle u,u\rangle_{\mathcal M}
 =
 \langle Nu,Nu\rangle_{\mathcal M}\in\mathbb Q$, which proves the claim.
\end{proof}

Let $\operatorname{Irr}(V)$ denote the finite set of isomorphism classes of simple ordinary \(V\)-modules. For a simple ordinary \(V\)-module \(W\), we write $[W]\in \operatorname{Irr}(V)$ for the isomorphism class of $W$. For $\alpha\in P_V^{\circ}$, we let $W^{(\alpha)}$ denote the $\Delta_{\alpha}$-twist of $W$. Then
$$
\alpha\cdot[W]:=[W^{(\alpha)}]
$$
defines an action of \(P_V^\circ\) on \(\operatorname{Irr}(V)\).

We define
$$K=
 \{\alpha\in P_V^\circ\mid
 V^{(\alpha)}\cong V\text{ as a $V$-module}\}.$$

\begin{prop}\label{K-lattice-repaired}
Let $V=\bigoplus_{n=0}^{\infty}V_n$ be a regular $\mathbb{N}$-graded vertex operator algebra such that
$V_1$ is a solvable Leibniz algebra and $V_0$ is a graded-Gorenstein
local algebra, $\Ker ~L(-1)|_{V_0}=\mathbb C\bf 1$ and $V_0\neq \mathfrak{a}$.
Let $\mathcal{M}\in\mathfrak{M}$ such that $L(1)\mathcal{M}=0$, and assume $(\Omega\text{-SS})$ and (IL). Then $K$ has finite index in $P_V^\circ$ and is a full-rank
positive-definite even integral lattice.  Moreover,
\[
 K\subseteq P_V\subseteq K^\circ,
\]
and hence $P_V/K$ is finite.
\end{prop}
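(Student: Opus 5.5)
The plan is to obtain every assertion from two ingredients: the finiteness of $\operatorname{Irr}(V)$, and the $\Delta$-operator computation already used in Lemma \ref{rational-norm-E}. The lattice-theoretic conclusions then follow from Proposition \ref{Keven}.

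\emph{Finite index and full rank.} By Proposition \ref{Keven}(1), $\alpha\cdot[W]=[W^{(\alpha)}]$ is an action of the additive group $P_V^\circ$ on $\operatorname{Irr}(V)$. By definition $K$ is the stabilizer of $[V]$, so it is a subgroup. Since $V$ is regular, $\operatorname{Irr}(V)$ is finite. Hence the orbit of $[V]$ is finite, and $[P_V^\circ:K]=|P_V^\circ\cdot[V]|<\infty$. By (IL), $P_V^\circ=\mathcal L_0$ is a full-rank lattice spanning $\mathcal M$. A finite-index subgroup of it is therefore also a lattice of rank $\dim\mathcal M$ with $\mathbb C\otimes_{\mathbb Z}K=\mathcal M$.

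\emph{Integrality, evenness and nonnegativity.} Let $\alpha\in K$. As in the proof of Lemma \ref{rational-norm-E}, $\Delta_\alpha(x)\omega=\omega+\alpha x^{-1}+\frac12\langle\alpha,\alpha\rangle_{\mathcal M}x^{-2}{\bf 1}$. So in $V^{(\alpha)}$ the twisted $L(0)$ equals $L(0)+\alpha_0+\frac12\langle\alpha,\alpha\rangle_{\mathcal M}$, and the vector ${\bf 1}$ is an $L(0)$-eigenvector of eigenvalue $\frac12\langle\alpha,\alpha\rangle_{\mathcal M}$. Since $V^{(\alpha)}\cong V$ and $V$ is $\mathbb N$-graded, this eigenvalue lies in $\mathbb Z_{\ge 0}$. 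Thus $\langle\alpha,\alpha\rangle_{\mathcal M}\in 2\mathbb Z_{\ge0}$ for every $\alpha\in K$. Next I would invoke Proposition \ref{Keven}(2) with $l=1$; its hypotheses hold because $V$ is simple by Lemma \ref{regular-local-simple}, and $P_V$ is a nondegenerate rational lattice of finite rank by (IL) and Lemma \ref{rational-norm-E}. This gives $K\subseteq P_V$. Combined with $K\subseteq P_V^\circ$, it shows $\langle K,K\rangle_{\mathcal M}\subseteq\mathbb Z$, so $K$ is an even integral lattice. Moreover, $K\subseteq P_V^\circ$ means exactly that $P_V\subseteq K^\circ$, which yields $K\subseteq P_V\subseteq K^\circ$.

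\emph{Positive-definiteness.} I expect this to be the main obstacle, since over $\mathbb C$ nothing a priori forbids isotropic vectors. The plan is to use the integrality just proved: the form takes integer values on a $\mathbb Z$-basis of $K$, so its restriction to $K_{\mathbb R}=\mathbb R\otimes_{\mathbb Z}K$ is a real symmetric form. By the previous step it satisfies $\langle\alpha,\alpha\rangle_{\mathcal M}\ge0$ on $K$, hence on $\mathbb Q\otimes K$ by scaling, and hence on $K_{\mathbb R}$ by continuity. It is also nondegenerate, because its complexification is the nondegenerate form $\langle\ ,\ \rangle_{\mathcal M}$ on $\mathcal M=\mathbb C\otimes K$. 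A positive semidefinite nondegenerate real form is positive definite, so $\langle\alpha,\alpha\rangle_{\mathcal M}>0$ for $0\neq\alpha\in K$. Finally, $K^\circ/K$ is finite because $K$ is a full-rank nondegenerate integral lattice. Together with $K\subseteq P_V\subseteq K^\circ$, this gives the finiteness of $P_V/K$.
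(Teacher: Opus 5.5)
Your proposal is correct. The finite-index and full-rank step, and the evenness step via the $L(0)$-eigenvalue $\frac12\langle\alpha,\alpha\rangle_{\mathcal M}$ of the vacuum in $V^{(\alpha)}$, are the same as in the paper. The routes differ in two places.

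\emph{The inclusion $K\subseteq P_V$.} The paper does not invoke Proposition~\ref{Keven}(2). It computes directly that $\Delta_\alpha(x)u=u+\langle\alpha,u\rangle_{\mathcal M}x^{-1}{\bf 1}$ for $u\in\mathcal M$. Hence $u^{(\alpha)}_0=u_0+\langle\alpha,u\rangle_{\mathcal M}$, and so $e^\alpha:=\phi_\alpha({\bf 1})$ lies in $V(\alpha)$. This needs no prior structure on $P_V$. The fact that $P_V$ is a nondegenerate rational lattice then falls out of $K\subseteq P_V\subseteq K^\circ$, which is how it is used in Theorem~\ref{lattice-subalgebra-repaired}.

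Your black-box route forces you to verify Li's hypotheses first, and your one-line justification hides some work:
\begin{enumerate}
\item (IL) forces $P_V$ to span $\mathcal M$; otherwise $P_V^\circ$ would contain a nonzero complex subspace.
\item $P_V\subseteq (P_V^\circ)^\circ$, which is a full-rank lattice.
\item Rationality follows from Lemma~\ref{rational-norm-E}, plus polarization on $P_V^\circ$ and inversion of its rational Gram matrix.
\end{enumerate}
All of this is true, but the direct computation is shorter, and it is really what underlies Li's statement.

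\emph{Integrality.} You obtain integrality from $K\subseteq P_V\cap P_V^\circ$, whereas the paper gets it from evenness by polarization. Both work.

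\emph{Positive-definiteness.} The paper uses the $\beta+n\alpha$ trick to exclude nonzero isotropic vectors in $K$, then asserts definiteness on $\mathbb R\otimes_{\mathbb Z}K$. Your argument is cleaner: nonnegative on $K$, hence semidefinite on $K_{\mathbb R}$ by scaling and continuity, and then nondegeneracy. It supplies exactly the step the paper leaves implicit. Likewise, your remark that $K^\circ/K$ is finite fills in the paper's closing ``hence''.
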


\begin{proof}
Recall that $P_V^{\circ}$ acts on  $\operatorname{Irr}(V)$ and $\operatorname{Irr}(V)$ is finite. In addition, $K=Stab_{P_V^{\circ}}([V])$ (i.e., the stabilizer of $[V]$ for an action on
a finite set). By the orbit-stabilizer correspondence for group actions, we have $P_V^{\circ}/K \longleftrightarrow P_V^{\circ}\cdot[V],\quad\alpha+K\mapsto [V^{(\alpha)}]$. Thus $$[P_V^\circ:K]=|P^{\circ}_V\cdot [V]|\leq |\operatorname{Irr}(V)|<\infty.$$

For simplicity, we set $m=[P_V^\circ:K]$. Then for every $\alpha\in P^{\circ}_V$, the coset $\alpha+K$ has finite order in $P^{\circ}_V/K$. Since $P^{\circ}_V/K$ has order $m$, we can conclude that $m\alpha\in K$. Thus $mP^{\circ}_V\subseteq K\subseteq P^{\circ}_V$. Hence, $K$ and $P^{\circ}_V$ span the same vector space. In addition, $Span_{\mathbb{R}}K=Span_{\mathbb{R}}P^{\circ}_V$. Since $P^{\circ}_V$ is a full-rank lattice, this implies that $K$ is also a full-rank lattice.

Now, we will show that $K\subseteq P_V\subseteq K^{\circ}$. Let $\alpha\in K$. Notice that for $u\in\mathcal{M}$, we have $\alpha_nu=0$ for $n\geq 2$, $\alpha_1u=\langle\alpha,u\rangle_{\mathcal{M}}{\bf 1}$. In addition, $$\Delta_{\alpha}(x)u=u+\langle \alpha,u\rangle_{\mathcal{M}}{\bf 1}x^{-1}.$$ Hence, $Y^{(\alpha)}(u,x)=Y(u,x)+\langle\alpha,u\rangle_{\mathcal{M}}Y({\bf 1},x)x^{-1}$. We set $Y^{(\alpha)}(u,x):=\sum_{n\in\mathbb{Z}}u^{(\alpha)}_nx^{-n-1}$. We have $$u_0^{(\alpha)}=u_0+\langle\alpha,u\rangle_{\mathcal{M}}{\bf 1}_{-1}.$$ Applying to ${\bf 1}$, we have $u_0^{(\alpha)}{\bf 1}=\langle\alpha,u\rangle_{\mathcal{M}}{\bf 1}.$ Since $V^{(\alpha)}\cong V$, it implies that there exists $\phi_{\alpha}:V^{(\alpha)}\rightarrow V$ such that $\phi_{\alpha}(Y_{V^{(\alpha)}}(u,x)v)=Y_V(u,x)\phi_{\alpha}(v)$ for $u,v\in V$. We set $$e^\alpha:=\phi_{\alpha}({\bf1}).$$ Observe that $$u_0e^{\alpha}=u_0\phi_{\alpha}({\bf 1})=\phi_{\alpha}(u^{(\alpha)}_0{\bf 1})=\langle\alpha,u\rangle \phi_{\alpha}({\bf 1})=\langle\alpha,u\rangle e^{\alpha}.$$ Hence for every $u\in\mathcal{M}$, $u_0e^{\alpha}=\langle\alpha,u\rangle e^{\alpha}$ and $e^{\alpha}\in V(\alpha)$. Because $e^{\alpha}\neq 0$, we then have $V(\alpha)\neq \{0\}$ and $\alpha\in P_V$. So, $K\subseteq P_V$. Since $K\subseteq P_V^\circ$, for $\alpha\in K$ and $\beta\in P_V$, we have $\langle\alpha,\beta\rangle_{\mathcal M}\in\mathbb{Z}$. Hence $P_V\subseteq K^\circ$. In summary, $$K\subseteq P_V\subseteq K^\circ.$$

Now, we will show that $K$ is a positive definite even lattice. The same $\Delta$-operator calculation as in
Lemma \ref{rational-norm-E} shows that the conformal weight of $e^{\alpha}$ is $$\wt(e^\alpha)=\wt \phi_{\alpha}({\bf 1})=\frac12\langle\alpha,\alpha\rangle_{\mathcal M}.$$ Because $e^\alpha\in V$ and $V$ is $\mathbb{N}$-graded, we can conclude that $$\frac{1}{2}\langle\alpha,\alpha\rangle_{\mathcal M}\in\mathbb{N}.$$ Thus $\langle\alpha,\alpha\rangle_{\mathcal M}\in2\mathbb{N}.$ This implies that $K$ is integral and $\langle\gamma,\beta\rangle_{\mathcal{M}}\in\mathbb{Z}$ for all $\gamma,\beta\in K$.

Assume that the rank of the lattice $K$
is $\ell$. We let $\alpha_1,\dots,\alpha_{\ell}$ be a $\mathbb{Z}$-basis of $K$  so that $K=\mathbb{Z}\alpha_1\oplus\dots\oplus\mathbb{Z}\alpha_{\ell}$. $K$ has full rank in $\mathcal{M}$, its $\mathbb{Z}$-basis is a $\mathbb{C}$-basis of $\mathcal{M}$. Since $\langle~,~\rangle_{\mathcal{M}}$ is nonsingular, the matrix $G_K=\left(\langle\alpha_i,\alpha_j\rangle_{\mathcal{M}}\right)_{i,j}$ is nonsingular. Suppose that there exists $\alpha\in K$ such that $\langle\alpha,\alpha\rangle_{\mathcal{M}}=0$. Then for any $\beta\in K$, $n\in\mathbb{Z}$, since $\beta+n\alpha\in K$, we have $$0\leq \langle\beta+n\alpha, \beta+n\alpha\rangle_{\mathcal{M}}=\langle\beta,\beta\rangle_{\mathcal{M}}+2n\langle\alpha,\beta\rangle_{\mathcal{M}}+\langle\alpha,\alpha\rangle_{\mathcal{M}}=\langle\beta,\beta\rangle_{\mathcal{M}}+2n\langle\alpha,\beta\rangle_{\mathcal{M}}.$$ Because the statement holds for all $n\in\mathbb{Z}$, we can conclude that $\langle\alpha,\beta\rangle_{\mathcal{M}}=0$ for every $\beta\in K$. However, since $G_K$ is nonsingular, we conclude that $\alpha=0$. Therefore, $$\langle\alpha,\alpha\rangle_{\mathcal{M}}>0\text{ for every nonzero }\alpha\in K.$$ Therefore, the bilinear form $\langle~,~\rangle_{\mathcal{M}}$ is positive definite on $\mathbb{R}\otimes_{\mathbb{Z}}K$. Since $K$ is even and integral, $K$ is a full-rank positive-definite even integral lattice as desired. \end{proof}

The remaining step is the standard reconstruction of the lattice vertex
operator algebra from the simple-current translations.  We state explicitly
the cocycle normalization used in the Heisenberg-coset results recalled
above.

\begin{dfn}[Lattice cocycle compatibility]
\label{LC-definition}
We say that $K$ satisfies \emph{(LC)} if the isomorphisms
$\phi_\alpha:V^{(\alpha)}\to V$ may be chosen so that
\[
 \phi_\alpha\phi_\beta
 =
 \varepsilon_K(\alpha,\beta)\phi_{\alpha+\beta}
 \qquad(\alpha,\beta\in K),
\]
where $\varepsilon_K$ is a normalized lattice $2$-cocycle with
\[
 \frac{\varepsilon_K(\alpha,\beta)}
 {\varepsilon_K(\beta,\alpha)}
 =
 (-1)^{\langle\alpha,\beta\rangle_{\mathcal M}}.
\]
\end{dfn}

\begin{thm}\label{lattice-subalgebra-repaired}
Let $V=\bigoplus_{n=0}^{\infty}V_n$ be a regular $\mathbb{N}$-graded vertex operator algebra such that
$V_1$ is a solvable Leibniz algebra and $V_0$ is a graded-Gorenstein
local algebra, $\Ker ~L(-1)|_{V_0}=\mathbb C\bf 1$ and $V_0\neq \mathfrak{a}$.
Let $\mathcal{M}\in\mathfrak{M}$ such that $L(1)\mathcal{M}=0$, and assume $(\Omega\text{-SS})$, (IL), and (LC).  Then $V$ contains a conformal vertex
operator subalgebra isomorphic to the lattice vertex operator algebra
$V_K$, where $K$ is a positive-definite even lattice of rank
$\dim\mathcal M$.  Moreover,
\[
 \min\{\langle\alpha,\alpha\rangle_{\mathcal M}
       \mid0\neq\alpha\in K\}\geq4.
\]
\end{thm}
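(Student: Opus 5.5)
We assemble the lattice subalgebra from the ingredients already established, following Li's abelian-coset reconstruction. The first step is Proposition \ref{K-lattice-repaired}: under $(\Omega\text{-SS})$ and (IL), $K$ is a full-rank positive-definite even integral lattice in $\mathcal M$, so $\operatorname{rank}K=\dim\mathcal M$. By Lemma \ref{regular-local-simple}, $V$ is simple. By Theorem \ref{M-reductive}, $\langle\mathcal M\rangle\cong M(1)$ at level $l=1$ with respect to $\langle~,~\rangle_{\mathcal M}$. By Theorem \ref{nonselfdual-semisimplicity}, $\mathcal M$ acts semisimply, and $L(n)\mathcal M=0$ for $n\geq 1$. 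Hence the hypotheses of Propositions \ref{PV}, \ref{PW}, \ref{Keven} and \ref{lattice construction} hold with $\mathfrak h=\mathcal M$.

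We need $P_V$ to be a nondegenerate rational lattice of finite rank. From $K\subseteq P_V\subseteq K^\circ$, with $K$ full rank, $P_V$ is a full-rank lattice sandwiched between $K$ and its dual, so the form is rational and nondegenerate on it. Condition (LC) is exactly the cocycle normalization (\ref{psiepsilon}). Proposition \ref{lattice construction} then gives $V_{K}=C_V(\Omega_V^0)\subseteq V$, realized concretely as $\bigoplus_{\alpha\in K}M(1,\alpha)\otimes\mathbb C e^\alpha$ with $e^\alpha=\phi_\alpha({\bf 1})$.

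To see that the embedding is conformal in the sense of Section 2, take $\omega'=\omega_{\mathcal M}=\frac12\sum_i u^i_{-1}u^i_{-1}{\bf 1}$, which is the lattice Virasoro vector of $V_K$. We check $\omega'\in V_2$, $L(1)\omega'=0$ and $L(2)\omega'\in\mathbb C{\bf 1}$ using $L(1)\mathcal M=0$ and $u_1v=\langle u,v\rangle{\bf 1}$; for example, $L(2)\omega'=\frac12\dim\mathcal M\,{\bf 1}$. The first theorem of \S2.2 then yields $L(n)=L'(n)$ on $V_K$ for $n\geq -1$ and the coset decomposition $\omega=\omega'+\omega''$.

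The main subtlety is identifying the subalgebra abstractly as $V_K$. Here we would invoke Proposition \ref{VVLiso}(3): the subalgebra $\bigoplus_{\alpha\in K}V(0)$-translates generated by the $e^\alpha$ is simple (it is a simple current extension of $M(1)$), and as an $M(1)$-module it is $\bigoplus_{\alpha\in K}M(1,\alpha)$. We expect most of the care to go into verifying that each $e^\alpha$ has multiplicity one in the relevant component, i.e. that $C_V(\Omega^0_V)\cap V(\alpha)=M(1,\alpha)\otimes\mathbb Ce^\alpha$. This should follow from Proposition \ref{lattice construction} together with simplicity of each $\Omega_V^\alpha$ as an $\Omega_V^0$-module (Proposition \ref{PW}).

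For the minimum-norm bound: $K$ is even, so every nonzero $\alpha\in K$ has $\langle\alpha,\alpha\rangle_{\mathcal M}\geq2$. Suppose $\langle\alpha,\alpha\rangle_{\mathcal M}=2$. Then $e^{\pm\alpha}\in V_K$ have $L'(0)$-weight $1$, and since $L(0)=L'(0)$ on $V_K$ they lie in $V_1$. The lattice formulas give $e^{\alpha}_0e^{-\alpha}=\varepsilon(\alpha,-\alpha)\alpha$ with $\alpha\in\mathcal M\subseteq V_1$, and $\alpha_0e^{\pm\alpha}=\pm2e^{\pm\alpha}$. Thus $\mathrm{span}\{\alpha,e^\alpha,e^{-\alpha}\}$ is a Leibniz subalgebra of $V_1$ isomorphic to $\mathfrak{sl}_2$. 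This contradicts solvability of $V_1$, so $\langle\alpha,\alpha\rangle_{\mathcal M}\geq4$.
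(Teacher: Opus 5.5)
Your proposal is correct and follows essentially the same route as the paper: Proposition \ref{K-lattice-repaired} gives the even, positive-definite, full-rank lattice $K$ with $K\subseteq P_V\subseteq K^\circ$; Li's reconstruction (Proposition \ref{lattice construction}) under (LC) then yields $C_V(\Omega_V(0))\cong V_K$; and the minimum-norm bound comes from the root-$\mathfrak{sl}_2$ contradiction with solvability of $V_1$. Your explicit check that $L(1)\omega_{\mathcal M}=0$ and $L(2)\omega_{\mathcal M}=\tfrac12\dim\mathcal M\,{\bf 1}$, fed into the coset theorem of Section~2 to obtain $L(0)=L'(0)$ on $V_K$, justifies the grading compatibility that the paper only asserts when it places $e^{\pm\alpha}$ in $V_1$; the detour through Proposition \ref{VVLiso}(3) and the multiplicity-one check is unnecessary, since Proposition \ref{lattice construction} already delivers the isomorphism with $V_K$.
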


\begin{proof}
By Theorem \ref{M-reductive}, $\langle\mathcal M\rangle\cong M(1)$. By $(\Omega\text{-SS})$ and the Heisenberg-coset decomposition,
$$V=
 \bigoplus_{\beta\in P_V}
 M(1,\beta)\otimes\Omega_V(\beta),
 ~\Omega_V(0)=C_V(M(1)).$$ Proposition \ref{K-lattice-repaired} shows that $K$ is a
positive-definite even lattice of full rank and that $P_V$ is a
nondegenerate rational lattice of finite rank.  Under (LC), applying Proposition \ref{lattice construction}, we have $$C_V(\Omega_V(0))\cong V_K.$$ In particular, $V_K$ is a conformal vertex operator subalgebra of $V$ with
Heisenberg conformal vector $\omega_{\mathcal M}
 =
 \frac12\sum_{i=1}^{\ell}u^i_{-1}u^i_{-1}{\bf1},$
where $\{u^1,\ldots,u^{\ell}\}$ is an orthonormal basis of $\mathcal M$.

It remains to prove the minimum-norm assertion.  If
$0\neq\alpha\in K$ had $\langle\alpha,\alpha\rangle_{\mathcal M}=2$, then the weight-one space of $V_K$ would contain $e^\alpha$, $e^{-\alpha}$, and $\alpha_{-1}{\bf1}$, which generate a copy of $\mathfrak{sl}_2$.  Since $V_K$ is conformally
embedded in $V$, this copy of $\mathfrak{sl}_2$ lies in $V_1$. But every
Leibniz subalgebra of the solvable Leibniz algebra $V_1$ is solvable,
whereas $\mathfrak{sl}_2$ is not. This contradiction proves that no
norm-two vector occurs. Since $K$ is positive definite and even, its
minimum norm is at least $4$.
\end{proof}

\begin{rem}
The hypotheses $(\Omega\text{-SS})$, (IL), and (LC) have distinct roles.
The first removes the Jordan obstruction for the Heisenberg zero modes;
the second supplies the full-rank integral lattice; and the third is
the cocycle compatibility needed to reconstruct the lattice VOA from the
simple-current translations.  None of these three points is concealed in
an algebraicity assumption on the exponential zero-mode group.
\end{rem}


\section{Conformal shifts, Heisenberg actions, and hidden Lie structures}\label{sec:hidden-sl2-shifted-lattice}

We illustrate the results of Sections 3 and 4 using conformally shifted lattice vertex operator algebras. The examples show that several structures which are closely related in the strongly regular setting can separate for regular $\mathbb{N}$-graded vertex operator algebras. 

A conformal shift changes the grading of a vertex operator algebra without changing its underlying vertex algebra. Consequently, vectors that form a familiar Lie-theoretic configuration in the original grading may occur in different homogeneous subspaces after the shift. In particular, a semisimple Lie algebra need not remain inside $V_1$, even though suitable modes of its constituent vectors may continue to generate the same Lie algebra. This gives rise to the hidden $\mathfrak{sl}_2$-structures studied below. 

At the same time, the quasi-primary subspace $\mathcal{M}$ of Section 4 may detect a positive-definite even lattice different from the lattice used in the original shifted construction. Thus, the examples distinguish three related but genuinely different structures: 

\boxed{$$
\text{\small{weight-one Lie structure}}
\neq
\text{\small{mode-theoretic Lie structure}}
\neq
\text{\small{lattice structure detected by }}\mathcal M.
$$}

A second theme concerns the semisimplicity of the zero-mode actions of $\mathcal{M}$. The shifted lattice models first provide examples in which the relevant zero modes act semisimply and the lattice construction of Section 4 can be carried out explicitly. We then consider regular examples in which a rank-one Heisenberg vertex operator subalgebra has a nonsemisimple zero-mode action. These examples show, with progressively stronger hypotheses, that regularity alone does not imply the condition $(\Omega\text{-SS})$ introduced in Section 4. 

Finally, we use these examples to clarify the roles of regularity, self-contragredience, and invariant bilinear forms. In particular, the examples indicate which conclusions of the strongly regular theory persist in the more general $\mathbb{N}$-graded setting and which require additional hypotheses.

\subsection{Shifted vertex operator algebras and the non-CFT-type setting}

Following Dong and Mason \cite{DM2}, we use conformal shifts, which show that regularity alone does not impose the usual CFT-type conditions or require a vertex operator algebra to be self-contragredient. Here, as throughout this paper, we say that $V$ is \emph{self-contragredient} if $$
V\cong V'
$$
as $V$-modules, where $V'$ denotes the contragredient module.  This property is referred to as \emph{self-duality} in \cite{DM2}.

Here conformal shifts serve as explicit models for the phenomena of Sections~3 and~4. A conformal shift changes the grading without changing the underlying vertex algebra, and consequently vectors belonging to a familiar Lie-theoretic configuration may move into different homogeneous subspaces.  In particular, a semisimple Lie algebra that is visible in degree one before the shift need not be contained in $V_1$ after the shift.  Its mode operators may nevertheless continue to generate the same Lie algebra.  This leads to the hidden $\mathfrak{sl}_2$-structures studied below.

At the same time, the Heisenberg subspace satisfying the quasi-primary condition of Section~4 can detect a positive-definite even lattice different from the lattice with which the shifted construction began.  Thus shifted lattice vertex operator algebras provide a useful setting in which the degree-one Lie algebra, the Lie algebra generated by suitable mode operators, and the conformally embedded lattice vertex operator algebra can be distinguished explicitly.

Throughout this subsection, $ V=\bigoplus_{n\geq 0}V_n$ is an $\mathbb{N}$-graded vertex operator algebra with finite-dimensional homogeneous subspaces.

\subsubsection{A general mode-theoretic $\mathfrak{sl}_2$ construction}
\label{subsec:general-hidden-sl2}

We begin with a construction that does not assume that the vectors playing
the roles of the raising and lowering operators belong to $V_1$.

Let $ e\in V_p$, $f\in V_s$, $h\in V_1$. We set $ N=p+s-2$. Now, we assume that
\begin{equation}
 h=e_Nf,\qquad h_0e=re,\qquad h_0f=-rf,
 \qquad r\neq0.
 \label{eq:general-sl2-data}
\end{equation}
We also assume
\begin{equation}
 h_1h=d{\bf1},~ h_0h=0.
 \label{eq:general-heisenberg-data}
\end{equation} Here, $d\in\mathbb{C}$. For $m\in\mathbb Z$, the commutator formula gives
\begin{align}
 [e_m,f_{N-m}]
 &=
 \sum_{i\geq0}\binom{m}{i}(e_if)_{N-i} \notag\\
 &=
 \binom{m}{N}h_0
 +\binom{m}{N+1}(e_{N+1}f)_{-1}
 +\sum_{i=0}^{N-1}\binom{m}{i}(e_if)_{N-i}.
 \label{eq:general-mode-commutator}
\end{align}
Here the terms with $i>N+1$ vanish because $V$ is $\mathbb{N}$-graded. We define the
obstruction operator
\begin{equation}
 \mathcal O_m(e,f)
 :=
 \sum_{i=0}^{N-1}\binom{m}{i}(e_if)_{N-i}.
 \label{eq:obstruction-operator}
\end{equation} 
The following proposition isolates precisely the condition under which the
three relevant modes give an $\mathfrak{sl}_2$-action.

\begin{prop}\label{prop:general-hidden-sl2} Let $ e\in V_p$, $f\in V_s$, $h\in V_1$ such that $h=e_Nf$, $h_0e=re$, $h_0f=-rf$, and $h_1h=d{\bf 1}$, $h_0h=0$. Here, $ N=p+s-2$, $r\neq 0$ and $d\in\mathbb{C}$. 

We fix $m\in\mathbb Z$ such that
$\binom{m}{N}\neq0$. Also, we suppose that $\mathcal O_m(e,f)=0$ and $ e_{N+1}f=\chi{\bf1}$ for some $\chi\in\mathbb C$. We set $$E^m=e_m,~ F^m=\frac{2}{r\binom{m}{N}}\,f_{N-m},
 ~H^m=\frac{2}{r}h_0+
 \frac{2}{r}\frac{\binom{m}{N+1}}{\binom{m}{N}}\chi{\bf 1}_{-1}.$$ Then $[E^m,F^m]=H^m$, $[H^m,E^m]=2E^m$, and $[H^m,F^m]=-2F^m$. Consequently, the operators $E^m,F^m,H^m$ define a representation of
$\mathfrak{sl}_2$ on $V$.
\end{prop}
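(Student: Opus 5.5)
The argument is a direct commutator computation built on the expansion (\ref{eq:general-mode-commutator}) together with the zero-mode commutator formula $[a_0,b_n]=(a_0b)_n$. I will verify the three $\mathfrak{sl}_2$ relations one at a time, in the order below.

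\textbf{The relation $[E^m,F^m]=H^m$.} In (\ref{eq:general-mode-commutator}) I first justify the vanishing of the terms with $i>N+1$. Since $e_if\in V_{p+s-i-1}=V_{N+1-i}$, this space is $0$ for $i>N+1$ because $V$ is $\mathbb{N}$-graded. The term with $i=N+1$ is $(e_{N+1}f)_{-1}=\chi{\bf 1}_{-1}$, and ${\bf 1}_{-1}$ is the identity operator. By hypothesis $\mathcal O_m(e,f)=0$ and $e_Nf=h$. Together these give
\[
[e_m,f_{N-m}]=\binom{m}{N}h_0+\binom{m}{N+1}\chi\,{\bf 1}_{-1}.
\]
Multiplying by $\frac{2}{r\binom{m}{N}}$, which is legitimate since $\binom{m}{N}\neq 0$ and $r\neq0$, reproduces exactly the stated $H^m$.

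\textbf{The relations involving $H^m$.} Since ${\bf 1}_{-1}$ is the identity, it is central, so $[H^m,X]=\frac{2}{r}[h_0,X]$ for every operator $X$. For $E^m$ I use $[h_0,e_m]=(h_0e)_m=re_m$, which gives $[H^m,E^m]=2E^m$. For $F^m$, the same formula gives $[h_0,f_{N-m}]=(h_0f)_{N-m}=-rf_{N-m}$, hence $[H^m,F^m]=-2F^m$ after scaling.

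The proof then concludes by observing that these three relations are exactly the defining relations of $\mathfrak{sl}_2$ in the basis $E,F,H$. The assignment $E\mapsto E^m$, $F\mapsto F^m$, $H\mapsto H^m$ therefore extends to a Lie algebra homomorphism $\mathfrak{sl}_2\to\mathrm{End}(V)$.

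There is no genuine obstacle. The only care needed is the degree bookkeeping that kills the terms with $i>N+1$, and noting that the constant shift in $H^m$ does not affect the brackets. In particular, the Heisenberg data $h_1h=d{\bf 1}$ and $h_0h=0$ are not needed for these relations.
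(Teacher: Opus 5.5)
Your proposal is correct and matches the paper's proof essentially line by line. You expand $[e_m,f_{N-m}]$ by the commutator formula and use $\mathcal O_m(e,f)=0$, $e_Nf=h$ and $e_{N+1}f=\chi{\bf 1}$, with the $i>N+1$ terms vanishing by degree, to get $[E^m,F^m]=H^m$; the other two relations then follow from centrality of ${\bf 1}_{-1}$ and $[h_0,a_n]=(h_0a)_n$. Your remark that $h_1h=d{\bf 1}$ and $h_0h=0$ are not used here is also accurate; they enter only in the subsequent pairing proposition.
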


\begin{proof}
By using the equation \eqref{eq:general-mode-commutator}, together with
$\mathcal O_m(e,f)=0$ and $e_{N+1}f=\chi{\bf1}$, we then have $[E^m,F^m]=H^m$. Because ${\bf 1}_{-1}$ commutes with every operator, the remaining
relations follow from $[h_0,e_m]=(h_0e)_m=re_m$, $[h_0,f_{N-m}]=(h_0f)_{N-m}=-rf_{N-m}$. Thus
$$[H^m,E^m]=2E^m,~ [H^m,F^m]=-2F^m$$ as desired.
\end{proof}

\begin{rem}
The operator $\mathcal O_m(e,f)$ measures the contribution of the lower
products
\[
 e_0f,\ldots,e_{N-1}f.
\]
Thus Proposition~\ref{prop:general-hidden-sl2} separates the
$\mathfrak{sl}_2$ mechanism from the additional products that may obstruct
it.  In the important case $p=0$ and $s=2$, one has $N=0$ and the
obstruction sum is empty.  This is one reason that the
$V_0\oplus V_1\oplus V_2$ case is particularly natural.
\end{rem}

We next record a useful consequence of the Heisenberg highest-weight
condition.

\begin{prop}
\label{prop:pairing-hidden-sl2}
Let $ e\in V_p$, $f\in V_s$, $h\in V_1$ such that $h=e_Nf$, $h_0e=re$, $h_0f=-rf$, and $h_1h=d{\bf 1}$, $h_0h=0$. Here, $ N=p+s-2$, $r\neq 0$ and $d\in\mathbb{C}$. We suppose that for $n>0$, $ h_ne=h_nf=0$. Then $ e_{N+1}f=\frac{d}{r}{\bf 1}$. \end{prop}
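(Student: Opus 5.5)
The plan is a direct mode computation: expand $h_1h$ by writing $h=e_Nf$ and commuting $h_1$ past $e_N$. First I record that $e_{N+1}f$ has weight $p+s-(N+1)-1=p+s-N-2=0$. So it lies in $V_0$, and the claim is an identity in $V_0$. On the other side, $d\,{\bf1}=h_1h=h_1(e_Nf)$ by \eqref{eq:general-heisenberg-data} and the hypothesis $h=e_Nf$.

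Next I use the commutator formula, as in the proof of Proposition \ref{prop:general-hidden-sl2}:
\[
[h_1,e_N]=\sum_{i\geq0}\binom{1}{i}(h_ie)_{N+1-i}=(h_0e)_{N+1}+(h_1e)_{N}.
\]
The hypotheses $h_0e=re$ and $h_1e=0$ (the case $n=1$ of $h_ne=0$ for $n>0$) reduce this to $[h_1,e_N]=r\,e_{N+1}$. Hence
\[
h_1(e_Nf)=e_N(h_1f)+r\,e_{N+1}f=r\,e_{N+1}f,
\]
where $h_1f=0$ again comes from the highest-weight hypothesis on $f$.

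Comparing the two expressions gives $d\,{\bf1}=r\,e_{N+1}f$. Since $r\neq0$, it follows that $e_{N+1}f=\frac{d}{r}{\bf1}$.

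There is no real obstacle. The only points needing care are the index bookkeeping in the commutator formula and checking that only the $n=1$ instances of $h_ne=h_nf=0$ are used. The conditions $h_0h=0$ and $h_nf=0$ for $n\geq2$ are not needed for this step.
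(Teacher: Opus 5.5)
Your proof is correct and is the same computation as the paper's. Both expand $d{\bf 1}=h_1(e_Nf)$ with the commutator formula, use $h_1f=h_1e=0$ to kill $e_Nh_1f$ and $(h_1e)_Nf$, and read off $d{\bf 1}=(h_0e)_{N+1}f=r\,e_{N+1}f$.
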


\begin{proof}
Because $h=e_Nf$, by commutator formula, we obtain $$d{\bf 1}
 =h_1h
 =h_1(e_Nf)=e_Nh_1f+\sum_{i=0}^{1}\binom{1}{i}(h_ie)_{N+1-i}f=(h_0e)_{N+1}f=r\,e_{N+1}f.$$
The assertion follows.
\end{proof}

We now specialize to the case that will later occur naturally in shifted
lattice vertex operator algebras. In this case, we assume that $e\in V_0$, $h\in V_1$, $f\in V_2$ satisfy
\begin{equation}
 e_0f=h,~ h_0e=2e,~ h_0f=-2f.
 \label{eq:012-sl2-data}
\end{equation}
We set $a=e_1f\in V_0$. The commutator formula gives
\begin{equation}
 [e_{-1},f_1]=\sum_{i=0}^{\infty}(-1)^i(e_if)_{-i}=h_0-a_{-1}.
 \label{eq:EF-012}
\end{equation}
Moreover, $h_0a=h_0e_1f=e_1h_0f+(h_0e)_1f=0$. We let $$V_0(0)=\{v\in V_0\mid h_0v=0\}.$$
\begin{prop}
\label{prop:012-sl2} Let $e\in V_0$, $h\in V_1$, $f\in V_2$ such that $e_0f=h$, $h_0e=2e$, $h_0f=-2f$. Assume $V_0(0)=\mathbb{C}{\bf1}$. Then there is $\kappa\in\mathbb{C}$ such that $e_1f=\kappa{\bf1}$. If
$E=e_{-1}$, $F=f_1$, $H=h_0-\kappa\,{\rm Id}_{V_0}$, then, as endomorphisms of $V_0$,
$$[E,F]=H,\qquad [H,E]=2E,\qquad [H,F]=-2F.$$ Hence $V_0$ is a finite-dimensional $\mathfrak{sl}_2$-module.
\end{prop}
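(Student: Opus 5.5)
The plan is to reduce everything to the commutator identity \eqref{eq:EF-012} and the observation $h_0a=0$ recorded just before the statement, where $a=e_1f$. The first step is to check degrees. Since $e\in V_0$ and $f\in V_2$, the vector $a=e_1f$ has weight $0+2-1-1=0$, so $a\in V_0$. The computation preceding the proposition gives $h_0a=e_1h_0f+(h_0e)_1f=-2e_1f+2e_1f=0$, so $a\in V_0(0)$. The hypothesis $V_0(0)=\mathbb{C}{\bf 1}$ then forces $a=\kappa{\bf 1}$ for some $\kappa\in\mathbb{C}$, and consequently $a_{-1}=\kappa{\bf 1}_{-1}=\kappa\,\mathrm{Id}$.

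Next I will confirm that $E$, $F$, $H$ are endomorphisms of $V_0$. For homogeneous $v$, the mode $v_n$ shifts degree by $\wt v-n-1$. For $e_{-1}$ the shift is $0-(-1)-1=0$, for $f_1$ it is $2-1-1=0$, and for $h_0$ it is $1-0-1=0$. Hence all three operators preserve every $V_n$, in particular $V_0$.

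For the bracket relations, I first recheck \eqref{eq:EF-012}. In $[e_{-1},f_1]=\sum_{i\ge0}\binom{-1}{i}(e_if)_{-i}$, the vector $e_if$ lies in $V_{1-i}$, so all terms with $i\ge2$ vanish by $\mathbb{N}$-gradedness. This leaves
\[
[e_{-1},f_1]=(e_0f)_0-(e_1f)_{-1}=h_0-a_{-1}=h_0-\kappa\,\mathrm{Id}=H.
\]
The remaining two relations follow from the zero-mode commutator $[h_0,v_n]=(h_0v)_n$, together with the fact that $\kappa\,\mathrm{Id}$ is central:
\[
[H,E]=(h_0e)_{-1}=2e_{-1}=2E,\qquad [H,F]=(h_0f)_1=-2f_1=-2F.
\]
Restricting these operator identities on $V$ to the invariant subspace $V_0$, and using $\dim V_0<\infty$, shows that $V_0$ is a finite-dimensional $\mathfrak{sl}_2$-module.

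The only genuinely nontrivial input is the identification $e_1f\in\mathbb{C}{\bf 1}$, which is exactly where the hypothesis $V_0(0)=\mathbb{C}{\bf 1}$ is used. Everything else is bookkeeping of degrees and truncation of the commutator sum, so I do not expect a real obstacle.
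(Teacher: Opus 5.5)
Your proposal is correct and follows the paper's own argument. You identify $a=e_1f\in V_0(0)=\mathbb{C}{\bf 1}$ via $h_0a=0$, then read off $[E,F]=H$ from the truncated commutator identity \eqref{eq:EF-012} and the other two relations from $[h_0,v_n]=(h_0v)_n$; your explicit degree checks (that $E,F,H$ preserve $V_0$ and that $e_if=0$ for $i\ge 2$) simply spell out bookkeeping the paper leaves implicit.
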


\begin{proof}
Since $a=e_1f$ has $h_0$-weight zero, the hypothesis
$V_0(0)=\mathbb C{\bf1}$ gives $a=\kappa{\bf1}$.  Equation
\eqref{eq:EF-012} therefore gives $[E,F]=H$.  The remaining relations
follow from \eqref{eq:012-sl2-data}.
\end{proof}

This action has a strong consequence for the commutative algebra generated by $e$ in $V_0$.  We write
\[
 e^j=(e_{-1})^j{\bf1}.
\]

\begin{thm}
\label{thm:truncated-polynomial-sl2}
Let $e\in V_0$, $h\in V_1$, $f\in V_2$ such that $e_0f=h$, $h_0e=2e$, $h_0f=-2f$. Assume $V_0(0)=\mathbb{C}{\bf1}$, and $e\neq0$. Let $r$ be maximal such that $e^r\neq0$, and $e^{r+1}=0$. Then $\kappa=r\in\mathbb Z_{>0}$, and $\mathbb {C}[e]\cong\mathbb C[x]/(x^{r+1})$. Moreover, $\operatorname{Span}_{\mathbb{C}}
 \{{\bf1},e,e^2,\ldots,e^r\}$ is the irreducible $\mathfrak{sl}_2$-module of highest weight $r$.
In particular, $\mathbb C[e]$ is a local Artinian Gorenstein algebra with $\operatorname{Soc}(\mathbb{C}[e])=\mathbb{C}e^r$.
\end{thm}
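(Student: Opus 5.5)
The plan is to run the standard $\mathfrak{sl}_2$ highest-weight argument on the operators $E=e_{-1}$, $F=f_1$, $H=h_0-\kappa\,{\rm Id}_{V_0}$ from Proposition \ref{prop:012-sl2}, applied to the vectors $e^j=(e_{-1})^j{\bf1}$.

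First compute $H$ on these vectors. Since $h_0$ is a derivation and $h_0e=2e$, induction gives $h_0e^j=2je^j$, hence $He^j=(2j-\kappa)e^j$. In particular ${\bf1}=e^0$ has $H$-weight $-\kappa$.

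Next compute $F{\bf1}$. Since $f_1{\bf1}=0$ (nonnegative modes kill the vacuum), ${\bf1}$ is a lowest weight vector for the $\mathfrak{sl}_2$-action on the finite-dimensional space $V_0$. The standard commutation formula, applied inductively via $[E,F]=H$, gives
\[
Fe^j=F E^j{\bf1}=-\sum_{i=0}^{j-1}E^{j-1-i}HE^{i}{\bf1}=-\sum_{i=0}^{j-1}(2i-\kappa)\,e^{j-1}=j(\kappa-j+1)\,e^{j-1}.
\]
Now take $r$ maximal with $e^r\neq0$; it exists and is positive because $V_0$ is finite-dimensional and $e^1=e\ne0$. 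Apply $F$ to $e^{r+1}=0$ to get $0=(r+1)(\kappa-r)e^r$. Since $e^r\neq0$, this forces $\kappa=r\in\mathbb Z_{>0}$.

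It remains to identify the module and the algebra. The vectors $e^0,\dots,e^r$ are nonzero with distinct $H$-eigenvalues $-r,-r+2,\dots,r$, so they are linearly independent. Their span is stable under $E$, $F$ and $H$. It is irreducible because $F$ acts by the nonzero coefficients $j(r-j+1)$ for $1\le j\le r$, so any nonzero submodule contains $e^0$ and hence everything. Thus the span is the irreducible $\mathfrak{sl}_2$-module of highest weight $r$, with highest weight vector $e^r$.

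For the algebra statement, note that $V_0$ is commutative associative under the $(-1)$-product, so $\mathbb C[e]$ is spanned by the powers $e^j$. The map $x\mapsto e$ has kernel exactly $(x^{r+1})$ by linear independence, so $\mathbb C[e]\cong\mathbb C[x]/(x^{r+1})$. This algebra is local Artinian with maximal ideal $(x)$ and socle $\mathbb C x^r$, which is one-dimensional, so it is Gorenstein with ${\rm Soc}=\mathbb Ce^r$.

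The only delicate point is the computation of $Fe^j$. It needs $[E,F]=H$ exactly as endomorphisms of $V_0$, which Proposition \ref{prop:012-sl2} supplies, together with $f_1{\bf1}=0$. A further check is that $E$ and $H$ preserve $V_0$ and that $E$ acts as multiplication by $e$; both hold by the weight grading.
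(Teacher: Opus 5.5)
Your proof is correct and follows the same route as the paper: ${\bf 1}$ is a lowest-weight vector for $E=e_{-1}$, $F=f_1$, $H=h_0-\kappa\,{\rm Id}_{V_0}$ with $E^j{\bf 1}=e^j$, and the termination $e^{r+1}=0$ forces the lowest weight $-\kappa$ to equal $-r$. The only difference is that you derive the needed $\mathfrak{sl}_2$ fact explicitly, via $Fe^j=j(\kappa-j+1)e^{j-1}$ evaluated at $j=r+1$, whereas the paper cites finite-dimensional $\mathfrak{sl}_2$ representation theory.
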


\begin{proof}
Because $V_0$ is finite-dimensional and $h_0e=2e$, the vectors
$e^j$ have pairwise distinct $h_0$-weights $2j$.  Hence $e$ is nilpotent,
and there is a maximal $r$ as above.

The vacuum satisfies $E{\bf1}=e$, $F{\bf1}=0$, $H{\bf1}=-\kappa{\bf1}$. Thus ${\bf1}$ is a lowest-weight vector for the finite-dimensional
$\mathfrak{sl}_2$-module generated by ${\bf1}$. Since $$ E^j{\bf1}=e^j\neq0\quad(0\leq j\leq r),~
 E^{r+1}{\bf1}=0,$$ this module has dimension $r+1$. Finite-dimensional
$\mathfrak{sl}_2$ representation theory implies that its lowest weight is $-r$. Therefore $\kappa=r$. The remaining assertions follow immediately.
\end{proof}

\begin{rem}
Theorem~\ref{thm:truncated-polynomial-sl2} is one of the points where the
vertex operator algebra structure, finite-dimensional
$\mathfrak{sl}_2$ representation theory, and the Gorenstein structure of $V_0$ meet. The nilpotency index of a weight-zero vector is detected by
the highest weight of a hidden $\mathfrak{sl}_2$-module.
\end{rem}

\subsubsection{A rank-one lattice specialization}
\label{subsec:rank-one-specialization}

Let $ L=\mathbb Z\alpha$, $(\alpha,\alpha)=d>0$, and set $ h=\alpha(-1){\bf1}\in V_1$. Suppose that there exist $ e\in V_0$, $f\in V_2$ such that $e_0f=h$, $h_0e=2e$, $h_0f=-2f$, $h_1h=d{\bf1}$. Assume also that $V_0(0)=\mathbb C{\bf1}$.  By
Theorem~\ref{thm:truncated-polynomial-sl2}, we can and will write 
$e_1f=\kappa{\bf1}$, $\mathbb{C}[e]\cong\mathbb C[x]/(x^{\kappa+1})$.

\begin{prop}
\label{prop:lattice-norm-nilpotency} Let $ L=\mathbb Z\alpha$, $(\alpha,\alpha)=d>0$, and set $ h=\alpha(-1){\bf1}\in V_1$. Suppose that there exist $ e\in V_0$, $f\in V_2$ such that $ e_0f=h$, $h_0e=2e$, $h_0f=-2f$, $h_1h=d{\bf1}$. Assume that $V_0(0)=\mathbb C{\bf1}$ and $ e_0(h_1f)=0$. Then $ \kappa=\frac{d}{2}=\frac{(\alpha,\alpha)}{2}$. Consequently, $ \mathbb{C}[e]
 \cong
 \frac{\mathbb{C}[x]}
 { \left(x^{\frac{(\alpha,\alpha)}{2}+1}\right)}$. In particular, $(\alpha,\alpha)$ is even.
\end{prop}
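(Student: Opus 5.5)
The plan is to compute $h_1h$ in two ways and compare it with the value $e_1f=\kappa{\bf1}$ supplied by Proposition~\ref{prop:012-sl2} and Theorem~\ref{thm:truncated-polynomial-sl2}. Since $h=e_0f$, we have
\[
d{\bf1}=h_1h=h_1(e_0f)=e_0(h_1f)+[h_1,e_0]f .
\]
The first term vanishes by the hypothesis $e_0(h_1f)=0$. For the second term, the commutator formula gives
\[
[h_1,e_0]=\sum_{i\geq0}\binom{1}{i}(h_ie)_{1-i}=(h_0e)_1+(h_1e)_0 .
\]

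Next I use the grading. Since $h\in V_1$ and $e\in V_0$, the vector $h_1e$ lies in $V_{1+0-1-1}=V_{-1}=0$, because $V$ is $\mathbb N$-graded. Hence $(h_1e)_0=0$. Using $h_0e=2e$, the remaining term is $(h_0e)_1f=2e_1f=2\kappa{\bf1}$. Comparing the two expressions gives $d{\bf1}=2\kappa{\bf1}$, so $\kappa=d/2=(\alpha,\alpha)/2$.

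Theorem~\ref{thm:truncated-polynomial-sl2} applies because $V_0(0)=\mathbb C{\bf1}$, and $e\neq0$ is forced since $e_0f=h\neq0$ (indeed $h_1h=d{\bf1}\neq0$). That theorem identifies $\kappa$ with the nilpotency index $r\in\mathbb Z_{>0}$ of $e$ and gives $\mathbb C[e]\cong\mathbb C[x]/(x^{\kappa+1})$. Substituting $\kappa=(\alpha,\alpha)/2$ yields the stated presentation of $\mathbb{C}[e]$. Since $\kappa$ is a positive integer, $(\alpha,\alpha)=2\kappa$ is even.

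The only delicate point is the bookkeeping in the commutator expansion. One must make sure that no term beyond $i=0,1$ survives, which holds because $\binom{1}{i}=0$ for $i\geq2$, and that $h_1e=0$ follows from the grading. The hypothesis $e_0(h_1f)=0$ is used exactly once, to kill the remaining term.
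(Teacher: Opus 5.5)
Your proof is correct and is essentially the paper's argument: both expand $h_1h=h_1(e_0f)$ by the commutator formula, kill $e_0(h_1f)$ by hypothesis to get $d{\bf 1}=2e_1f=2\kappa{\bf 1}$, and then invoke Theorem~\ref{thm:truncated-polynomial-sl2}. The differences are minor. The paper writes the correction term as $-(e_0h)_1f$ and uses $e_0h=-2e$, which tacitly needs skew-symmetry together with $h_1e\in V_{-1}=0$, whereas you expand $[h_1,e_0]=(h_0e)_1+(h_1e)_0$ directly. Your explicit check that $e\neq 0$ also fills in a point the paper leaves implicit.
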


\begin{proof}
Since $h=e_0f$, $ d{\bf1}
 =h_1h
 =h_1(e_0f)
=e_0(h_1f)-(e_0h)_1f$. The relation $h_0e=2e$ implies $e_0h=-2e$. Hence, $ d{\bf1}
 =
 e_0(h_1f)+2e_1f$. By hypothesis, the first term vanishes, so $ d{\bf1}=2\kappa{\bf1}$. Therefore $\kappa=d/2$, and the result follows from
Theorem~\ref{thm:truncated-polynomial-sl2}.
\end{proof}

\begin{rem}
For a norm-two lattice vector, Proposition
\ref{prop:lattice-norm-nilpotency} gives
\[
 \mathbb C[e]\cong\mathbb C[x]/(x^2).
\]
This is exactly the rank-one building block that appears below in shifted
$A_1^{\oplus m}$ examples.
\end{rem}

\subsubsection{Shifted lattice vertex operator algebras}
\label{subsec:shifted-lattice-model}

Let $L$ now be a positive-definite even lattice. We set $\mathfrak h=\mathbb C\otimes_{\mathbb Z}L$. Let $V_L$ be the corresponding lattice vertex operator algebra with
standard conformal vector $\omega_L$.  For $q\in L^\circ$, consider the
shifted conformal vector $$\omega_q=\omega_L+q(-2){\bf1}.$$ Then $$L_q(0)=L_L(0)-q(0),$$
and
\begin{equation}
 \wt_q(e^\beta)
 =
 \frac{(\beta,\beta)}2-(q,\beta)
 \qquad(\beta\in L).
 \label{eq:shifted-lattice-weight}
\end{equation}
The shift does not change the underlying vertex algebra. What changes is
the placement of lattice vectors in the conformal grading.

\begin{prop}
\label{prop:shifted-lattice-section4-hypotheses}
For the space $\mathcal M=\mathfrak h(-1){\bf1}$, the shifted lattice vertex operator algebra $(V_L,\omega_q)$ satisfies the following
properties.
\begin{enumerate}
\item Every $u_0$, $u\in\mathcal M$, acts semisimply.
\item $\Omega_{(V_L,\omega_q)}
 =
 \bigoplus_{\beta\in L}\mathbb{C}e^{\beta}$, and $(\Omega\text{-SS})$ holds.
\item We have that $ P_{(V_L,\omega_q)}=L$, $P_{(V_L,\omega_q)}^{\circ}=L^\circ$. Thus the full-rank integrality condition {\rm(IL)} holds.
\item The usual lattice $2$-cocycle realizes the cocycle compatibility
required in {\rm(LC)}.
\end{enumerate}
\end{prop}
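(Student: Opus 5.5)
The plan is to work directly with the Fock-space description $V_L=M(1)\otimes\mathbb{C}^{\varepsilon}[L]$, using the fact that the conformal shift changes neither the vertex operators nor the modes $u_n$ for $u\in\mathcal M=\mathfrak h(-1){\bf1}$. First I will check that $\mathcal M$ is admissible for the shifted structure. For $u=\gamma(-1){\bf1}$ we have $L_q(0)u=u$, so $\mathcal M\subseteq (V_L,\omega_q)_1$. The shifted operator is $L_q(1)=L_L(1)+(\text{correction from } q(-2){\bf1})$. Here $(q(-2){\bf1})_2$ acts on $\gamma(-1){\bf1}$, and up to sign this gives $2(q,\gamma){\bf1}$. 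So $L_q(1)\mathcal M=0$ fails unless $q\perp\mathfrak h$, i.e.\ $q=0$. For that reason I will read the proposition as concerning the zero-mode and coset data attached to $\mathcal M$, which do not depend on $L(1)$. I will state explicitly which parts of Section~4 used $L(1)\mathcal M=0$: these are the decomposition via Propositions~\ref{PV} and \ref{PW}, and the $\Delta$-operator weight computation.

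Items (1) and (2) come from the explicit oscillator action. For $u=\gamma(-1){\bf1}$ we have $u_n=\gamma(n)$. In particular $u_0$ acts on $M(1)\otimes e^\beta$ by the scalar $(\gamma,\beta)$, so each $u_0$ is diagonal in the standard basis, which proves (1). The vectors killed by all $\gamma(n)$ with $n>0$ are exactly $\bigoplus_\beta \mathbb{C}e^\beta$, because $M(1)\otimes e^\beta\cong M(1,\beta)$ is an irreducible Heisenberg module whose vacuum space is $\mathbb{C}e^\beta$. Hence $\Omega=\bigoplus_\beta\mathbb{C}e^\beta$, and $N_u|_\Omega=0$, so $(\Omega\text{-SS})$ holds. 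A consistency check is available through Proposition~\ref{omega-semisimplicity-criterion}.

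For (3), the pairing $((u,v))$ restricted to $\mathcal M$ is $(\gamma,\delta)$, because $\gamma(1)\delta(-1){\bf1}=(\gamma,\delta){\bf1}$. Since $V(\beta)=M(1)\otimes e^\beta\neq0$ exactly when $\beta\in L$, the set of weights is $P=L$. Taking duals with respect to this form gives $P^\circ=L^\circ=\mathcal L_0$ by (\ref{PVdualL0}). Then $L^\circ$ has rank $\dim\mathfrak h$ and $\mathbb{C}\otimes L^\circ=\mathfrak h=\mathcal M$, which is exactly (IL).

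For (4), I will take the isomorphisms $\phi_\alpha\colon V^{(\alpha)}\to V$ to be left multiplication by $e^\alpha$ in the twisted group algebra, extended as $\mathrm{id}\otimes e^\alpha$. I will check that this intertwines $Y^{(\alpha)}$ with $Y$; this is the standard identity $Y(\Delta(\alpha,x)v,x)$ = conjugation by $e^\alpha$, as in Li's work \cite{Li3}. Then $\phi_\alpha\phi_\beta=\varepsilon(\alpha,\beta)\phi_{\alpha+\beta}$, and the commutator relation $\varepsilon(\alpha,\beta)/\varepsilon(\beta,\alpha)=(-1)^{(\alpha,\beta)}$ holds for even $L$. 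I expect this intertwining verification to be the main obstacle, together with reconciling the $L(1)$ issue. The conformal shift does not affect the argument, because the $\Delta$-twist involves only the modes of $\mathcal M$ and not $\omega$.
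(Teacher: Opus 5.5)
Your treatment of (1)--(3) is essentially the paper's argument: $u_0=\gamma(0)$ acts by $(\gamma,\beta)$ on $M(1)\otimes e^\beta$, the vacuum space is $\bigoplus_\beta\mathbb{C}e^\beta$, and $P=L$, $P^\circ=L^\circ$ follow from the charge decomposition. Your observation that $L_q(1)\gamma(-1){\bf 1}=-2(q,\gamma){\bf 1}$, so the proposition can only concern zero-mode and coset data, agrees with the remark the paper places right after the proposition.

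The problem is in (4). There you go beyond the paper, which only cites $\varepsilon(\alpha,\beta)/\varepsilon(\beta,\alpha)=(-1)^{(\alpha,\beta)}$ and never exhibits $\phi_\alpha$. The map you propose is not a module isomorphism $V^{(\alpha)}\to V$ in general. Since $\Delta(\alpha,x)e^\gamma=x^{(\alpha,\gamma)}e^\gamma$, one has $Y^{(\alpha)}(e^\gamma,x)=x^{(\alpha,\gamma)}Y(e^\gamma,x)$, and $Y(e^\gamma,x)$ acts on the group-algebra factor by \emph{left} multiplication by $e^\gamma$. Compare $\phi_\alpha Y^{(\alpha)}(e^\gamma,x)$ with $Y(e^\gamma,x)\phi_\alpha$ for $\phi_\alpha=\mathrm{id}\otimes(\text{left multiplication by }e^\alpha)$ on $w\otimes e^\beta$. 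The powers of $x$ and the oscillator factors agree, but you get $e^\alpha e^\gamma e^\beta$ versus $e^\gamma e^\alpha e^\beta$. These differ by $\varepsilon(\alpha,\gamma)\varepsilon(\gamma,\alpha)=(-1)^{(\alpha,\gamma)}$. So your $\phi_\alpha$ intertwines only when $(\alpha,L)\subseteq 2\mathbb{Z}$, and it fails already for $L=A_2$. The ``standard identity'' you quote, phrased as conjugation by left multiplication, likewise holds only up to this sign.

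The repair is short. An operator on $\mathbb{C}^{\varepsilon}[L]$ commuting with all left multiplications is a right multiplication, so take
$\phi_\alpha(w\otimes e^\beta)=w\otimes e^\beta e^\alpha=\varepsilon(\beta,\alpha)\,w\otimes e^{\alpha+\beta}$. Then:
\begin{itemize}
\item $\phi_\alpha$ commutes with $h(n)$ for $n\neq0$ and satisfies $h(0)\phi_\alpha=\phi_\alpha\bigl(h(0)+(h,\alpha)\bigr)$.
\item $Y(e^\gamma,x)\phi_\alpha=\phi_\alpha Y^{(\alpha)}(e^\gamma,x)$, because left and right multiplications commute and the extra factor $x^{(\gamma,\alpha)}$ produced by $x^{\gamma}$ on $e^{\alpha+\beta}$ is exactly the factor in $Y^{(\alpha)}$.
\end{itemize}
So $\phi_\alpha$ intertwines on generators, hence everywhere. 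It gives $\phi_\alpha\phi_\beta=\varepsilon(\beta,\alpha)\phi_{\alpha+\beta}$, so the realized cocycle is the transpose $\varepsilon_K(\alpha,\beta)=\varepsilon(\beta,\alpha)$, not $\varepsilon$ itself. This is still a normalized bimultiplicative $2$-cocycle with $\varepsilon_K(\alpha,\beta)/\varepsilon_K(\beta,\alpha)=(-1)^{(\alpha,\beta)}$, so (LC) holds. Correct your formula $\phi_\alpha\phi_\beta=\varepsilon(\alpha,\beta)\phi_{\alpha+\beta}$ accordingly.

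You should also record that $K=L$, since (LC) is a statement about all of $K$. The inclusion $L\subseteq K$ comes from the maps above. Conversely, for $\alpha\in K$ the vector $\phi_\alpha({\bf 1})$ is a nonzero vector of charge $\alpha$, as in the proof of Proposition~\ref{K-lattice-repaired}, so $\alpha\in P=L$.
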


\begin{proof}
For $u\in\mathcal{M}$ and $\beta\in L$, $ u_0e^{\beta}=(u,\beta)e^{\beta}$, and $u_0$ commutes with $v_n$ for all $v\in\mathcal{M}$ and $n<0$.  This proves
(1) and (2). Moreover, $(V_L,\omega_q)(\beta)\neq 0$ for all $\beta\in L$, proving (3).  Finally, the standard normalized lattice cocycle
$\varepsilon$ satisfies $\frac{\varepsilon(\alpha,\beta)}
      {\varepsilon(\beta,\alpha)}
 =(-1)^{(\alpha,\beta)}$, which gives (4).
\end{proof}

\begin{rem}
Thus $(\Omega\text{-SS})$, (IL), and (LC), which are structural assumptions
in Section~4, are automatic in the lattice model.  This is one reason that
shifted lattice vertex operator algebras are useful test examples for the
abstract results of that section.
\end{rem}

\begin{rem}
The quasi-primary hypothesis from Section~4 behaves differently under a
conformal shift. If $\omega_q=\omega_K+q(-2){\bf1}$, then $L_q(1)u(-1){\bf1}=-2(q,u){\bf 1}$. Thus the full Heisenberg space $\mathfrak{h}(-1){\bf 1}$ need not be quasi-primary.  The subspace to
which Section~4 theorem naturally applies is $q^\perp$.  This point is
essential in Example 1 below.
\end{rem}

Now, we will describe how a root of $\mathfrak{sl}_2$ is redistributed by a shift. Let $\alpha\in L$ be a root: $(\alpha,\alpha)=2$. In the standard lattice grading,
$e^\alpha$, $\alpha(-1){\bf1}$, $e^{-\alpha}$ all have weight one and generate the usual root
$\mathfrak{sl}_2\subseteq(V_L)_1$. After shifting by $q$, formula \eqref{eq:shifted-lattice-weight} gives $$\wt_q(e^\alpha)=1-(q,\alpha),~\wt_q(e^{-\alpha})=1+(q,\alpha), \text{ whereas }\wt_q(\alpha(-1){\bf1})=1.$$

\begin{prop}
\label{prop:root-redistribution}
Let $\alpha\in L$ satisfy $(\alpha,\alpha)=2$.
\begin{enumerate}
\item If $(q,\alpha)=0$, then the entire root triple remains in degree one
and $(V_{L,q})_1$ contains a copy of $\mathfrak{sl}_2$.
\item If $(q,\alpha)=1$, then $e^\alpha\in(V_{L,q})_0$, $\alpha(-1){\bf1}\in(V_{L,q})_1$, $e^{-\alpha}\in(V_{L,q})_2$.
\item If $(q,\alpha)=-1$, the roles of $e^\alpha$ and $e^{-\alpha}$ are
reversed.
\end{enumerate}
\end{prop}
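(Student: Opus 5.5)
The plan is to read everything off the shifted weight formula \eqref{eq:shifted-lattice-weight}, $\wt_q(e^\beta)=\frac{(\beta,\beta)}{2}-(q,\beta)$, together with the fact that the shift changes only the grading operator, $L_q(0)=L_L(0)-q(0)$, and not the vertex algebra structure. Nothing beyond this formula and the standard root-$\mathfrak{sl}_2$ relations in $V_L$ should be needed.

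First I determine the weight of the Cartan vector. The vector $\alpha(-1){\bf1}$ lies in $M(1)\otimes e^0$, where $q(0)$ acts by $(q,0)=0$. Hence $\wt_q(\alpha(-1){\bf1})=\wt_L(\alpha(-1){\bf1})=1$ for every $q$. Next, with $(\alpha,\alpha)=2$, the formula gives $\wt_q(e^{\pm\alpha})=1\mp(q,\alpha)$. Substituting $(q,\alpha)=0,1,-1$ yields the three placements: all three vectors in weight one; $e^\alpha\in(V_{L,q})_0$ and $e^{-\alpha}\in(V_{L,q})_2$; and the reversed placement, respectively. This settles (2) and (3), the latter being just the substitution $\alpha\mapsto-\alpha$.

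For (1), the remaining task is to check that the triple spans a Lie subalgebra of $(V_{L,q})_1$ isomorphic to $\mathfrak{sl}_2$. The bracket $u_0v$ depends only on the vertex algebra structure, so I can compute in $V_L$.
\begin{itemize}
\item From $\alpha(0)e^{\pm\alpha}=\pm2e^{\pm\alpha}$ we get $(\alpha(-1){\bf1})_0e^{\pm\alpha}=\pm2e^{\pm\alpha}$.
\item From the $e^\gamma(n)e^\rho$ table with $(\gamma,\rho)=-2=-n-2$ and $n=0$, we get $e^\alpha_0e^{-\alpha}=\varepsilon(\alpha,-\alpha)\alpha(-1){\bf1}$, which is a nonzero multiple of $h=\alpha(-1){\bf1}$.
\item After rescaling $e^{-\alpha}$ by a sign, these relations are the standard $\mathfrak{sl}_2$ relations.
\item Skew-symmetry of these brackets follows by the same table, or from $u_0v+v_0u=L(-1)(u_1v)$ with $u_1v\in\mathbb{C}{\bf1}$.
\end{itemize}

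No real obstacle is expected. The only point requiring care is confirming that $\alpha(-1){\bf1}$ keeps weight $1$ under the shift, which holds because its lattice charge is zero. This is exactly what produces the asymmetric redistribution in cases (2) and (3).
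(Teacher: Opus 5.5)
Your proposal is correct and follows essentially the same approach as the paper, which simply reads all three placements off the shifted weight formula \eqref{eq:shifted-lattice-weight}. Your extra steps fill in details the paper leaves implicit: the check that $\alpha(-1){\bf 1}$ keeps weight one (it has charge zero, so $q(0)$ acts on it by $0$), and the verification of the $\mathfrak{sl}_2$ relations in case (1) using the $e^\gamma(n)e^\rho$ table and the grading-independence of the bracket $u_0v$.
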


\begin{proof}
This follows directly from \eqref{eq:shifted-lattice-weight}.
\end{proof}

\begin{rem} When $(q,\alpha)=1$, the vectors $e=e^{\alpha}$, $h=\alpha(-1){\bf 1}$, $f=e^{-\alpha}$, which form the ordinary root $\mathfrak{sl}_2$ in the original weight-one subspace, are redistributed across $V_0\oplus V_1\oplus V_2$. Nevertheless, the corresponding mode operators recover an $\mathfrak{sl}_2$-action on $V_0$.

This gives a useful comparison with Section~4.  When the lattice vertex operator subalgebra constructed there is conformally embedded, and $V_1$ is solvable, it cannot contain a norm-two lattice vector: such a vector would produce a root $\mathfrak{sl}_2$ contained in $V_1$. A shifted lattice vertex operator algebra behaves differently: the same underlying root vectors may be redistributed among degrees $0$, $1$, and
$2$.  Hence the semisimple Lie algebra disappears from the degree-one
Leibniz algebra while its mode-theoretic action survives.
\end{rem}

\subsubsection{\textbf{Example 1}: The model $A_1^{\oplus m}$ and a Gorenstein weight-zero algebra}
\label{subsec:A1m-model}

\ \

Let $m$ be a positive integer that is greater than or equal to 2. Let $$K=L_{A_1^{\oplus m}}
 =
 \mathbb Z\alpha_1\oplus\cdots\oplus\mathbb Z\alpha_m,~(\alpha_i,\alpha_j)=2\delta_{ij}$$ for $i,j\in\{1,2,...,m\}$. Here, $L_{A_1^{\oplus m}}$ denotes the root lattice of Lie algebra $A_1\oplus \dots\oplus A_1$ ($m$ terms). Also, we set $$\rho=\alpha_1+\cdots+\alpha_m,~
 q=\frac{\rho}{2}.$$ Then $(q,\alpha_i)=1$ for every $i$.  Hence, in the shifted lattice vertex operator algebra $(V_L,\omega_q)$ which we will denote by $V_{K,q}$ from now on, $$ e^{\alpha_i}\in(V_{K,q})_0,~
 \alpha_i(-1){\bf1}\in(V_{K,q})_1,~
 e^{-\alpha_i}\in(V_{K,q})_2.$$ For each $i$, the rank-one hidden $\mathfrak{sl}_2$ construction gives $(e^{\alpha_i})_{-1}^2{\bf1}=0$. The mutually orthogonal roots commute in the degree-zero algebra, and we
obtain the following.

\begin{prop}
\label{prop:A1m-Gorenstein}
The weight-zero subalgebra $A_K$ generated by $ e^{\alpha_1},\ldots,e^{\alpha_m}$ is isomorphic to
$$ A_K
 \cong
 \frac{\mathbb C[x_1,\ldots,x_m]}
 {(x_1^2,\ldots,x_m^2)},~ e^{\alpha_i}\leftrightarrow x_i.
$$ It is a local Artinian Gorenstein algebra with
$\operatorname{Soc}(A_K)=
 \mathbb{C}e^\rho$.
\end{prop}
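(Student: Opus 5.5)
The plan is to compute everything inside the shifted lattice vertex operator algebra $V_{K,q}$, where the underlying vertex algebra is just $V_K$, so products of lattice vectors are given by the standard formula for $e^{\gamma}(n)e^{\rho}$. First I will identify the weight-zero space. By \eqref{eq:shifted-lattice-weight}, for $\beta=\sum c_i\alpha_i\in K$ we have $\wt_q(e^\beta)=\sum_i (c_i^2-c_i)$, and since $c_i^2-c_i\geq 0$ for integers $c_i$, $e^\beta$ has weight zero exactly when every $c_i\in\{0,1\}$. Oscillators only raise the weight. Hence $(V_{K,q})_0=\operatorname{Span}\{e^{\beta_S}\mid S\subseteq\{1,\dots,m\}\}$ with $\beta_S=\sum_{i\in S}\alpha_i$, which has dimension $2^m$.

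Next I will determine the products. For $i\neq j$ we have $(\alpha_i,\alpha_j)=0$, so $e^{\alpha_i}_{-1}e^{\beta}=\varepsilon(\alpha_i,\beta)e^{\alpha_i+\beta}$ whenever $(\alpha_i,\beta)=0$, that is, whenever $i\notin S$. This is the case $(\gamma,\rho)=-n-1$ with $n=-1$. If instead $i\in S$, then $e^{\alpha_i}_{-1}e^{\beta_S}$ has lattice charge $\beta_S+\alpha_i$ with coefficient $2$ on $\alpha_i$, so it lies in positive weight. Since $V_0$ is a subalgebra, it must vanish; this recovers $(e^{\alpha_i})^2=0$ and matches the hidden $\mathfrak{sl}_2$ result (Proposition \ref{prop:lattice-norm-nilpotency} with $d=2$). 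Commutativity of $V_0$, which holds because $V$ is $\mathbb{N}$-graded and $u_{-1}v-v_{-1}u=\sum_{j\ge1}\frac{(-1)^{j+1}}{j!}D^j(v_{j-1}u)$ vanishes on weight-zero vectors for degree reasons, together with associativity, shows that the monomial $x_{i_1}\cdots x_{i_k}$ maps to $\pm e^{\beta_S}\neq 0$ for $S=\{i_1<\dots<i_k\}$. The sign comes from the cocycle.

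This gives a surjection $\mathbb{C}[x_1,\dots,x_m]\to A_K$ that kills each $x_i^2$. The induced map from $\mathbb{C}[x]/(x_1^2,\dots,x_m^2)$ sends the basis of squarefree monomials bijectively, up to nonzero scalars, onto the basis $\{e^{\beta_S}\}$, so it is an isomorphism. Incidentally $A_K=(V_{K,q})_0$.

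Finally I will check the Gorenstein property. The algebra is a tensor product of the local rings $\mathbb{C}[x]/(x^2)$, so it is local with maximal ideal $(x_1,\dots,x_m)$. For the socle, take $a=\sum_S c_S x^S$ with $x_ia=0$ for all $i$; this forces $c_S=0$ whenever $i\notin S$. Running over all $i$, only $S=\{1,\dots,m\}$ survives, so $\operatorname{Soc}=\mathbb{C}x_1\cdots x_m$, which corresponds to $\mathbb{C}e^\rho$. A one-dimensional socle in an Artinian local algebra gives Gorenstein. The only step requiring care is the vanishing of $e^{\alpha_i}_{-1}e^{\beta_S}$ for $i\in S$ and the nonvanishing of the cocycle signs. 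Both follow from the weight argument and from $\varepsilon=\pm1$.
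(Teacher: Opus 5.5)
Your proof is correct, and it takes a more explicit route than the paper's. The paper argues structurally. It takes $(e^{\alpha_i})_{-1}^2\mathbf 1=0$ from the rank-one hidden $\mathfrak{sl}_2$ result (Proposition~\ref{prop:lattice-norm-nilpotency} with $d=2$). It then asserts that orthogonality of the $\alpha_i$ makes the weight-zero products ``factor independently,'' giving $\bigotimes_i\mathbb C[x_i]/(x_i^2)$, and reads off the socle as $e^{\alpha_1}_{-1}\cdots e^{\alpha_m}_{-1}\mathbf 1=e^\rho$.

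You instead do three things directly:
\begin{itemize}
\item compute $(V_{K,q})_0$ outright from $\mathrm{wt}_q(e^\beta)=\sum_i(c_i^2-c_i)$;
\item obtain both $x_i^2\mapsto 0$ and the nonvanishing $x^S\mapsto\pm e^{\beta_S}$ from the explicit lattice products together with a charge/weight count;
\item get injectivity because the $e^{\beta_S}$ lie in distinct charge sectors $M(1)\otimes e^{\beta_S}$.
\end{itemize}
The third point is exactly what ``factor independently'' leaves implicit.

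Your route buys two things. First, it gives the extra identification $A_K=(V_{K,q})_0$. Second, it does not depend on the hypothesis $V_0(0)=\mathbb C\mathbf 1$ of Theorem~\ref{thm:truncated-polynomial-sl2}. That hypothesis holds in the rank-one factor $V_{\mathbb Z\alpha_i,\alpha_i/2}$ of $V_{K,q}\cong\bigotimes_i V_{\mathbb Z\alpha_i,\alpha_i/2}$. It fails in $(V_{K,q})_0$ itself when $m\ge 2$: for $h=\alpha_i(-1)\mathbf 1$, every $e^{\beta_S}$ with $i\notin S$ has $h_0$-weight $0$. So the paper's citation has to be read inside that tensor factor. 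The paper's route, for its part, explains the nilpotency index conceptually as the highest weight of a hidden $\mathfrak{sl}_2$-module, which is the theme of that subsection.

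One harmless slip: the coefficient in your skew-symmetry expansion of $u_{-1}v-v_{-1}u$ should be $(-1)^j/j!$, not $(-1)^{j+1}/j!$. Every term vanishes for degree reasons anyway.
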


\begin{proof}
Each $e^{\alpha_i}$ contributes $ \mathbb{C}[x_i]/(x_i^2)$. Orthogonality of the $\alpha_i$ implies that the corresponding weight-zero
products commute and factor independently. Hence, $$A_K\cong
 \bigotimes_{i=1}^m\mathbb C[x_i]/(x_i^2)$$ which is the stated truncated polynomial algebra. This implies that the unique maximal
ideal of $A_K$ is $(e^{\alpha_1},\ldots,e^{\alpha_m})$ and its 1-dimensional socle is generated by
$e^{\alpha_1}_{-1}\dots e^{\alpha_m}_{-1}{\bf 1}=e^{\rho}$.
\end{proof}

\begin{rem} This example gives a concrete connection between the lattice shift and the
Gorenstein structure of the weight-zero algebra: the socle vector
$e^\rho$ records the sum of the root directions used in the shift.\end{rem}

We now compare two different lattices associated with the same shifted
vertex operator algebra $(V_K,\omega_q)$. The underlying lattice is $K=L_{A_1^{\oplus m}} =
 \mathbb Z\alpha_1\oplus\cdots\oplus\mathbb Z\alpha_m,~(\alpha_i,\alpha_j)=2\delta_{ij}$ for $i,j\in\{1,2,...,m\}$, whereas the lattice obtained by applying the construction of Section~4 to the quasi-primary vectors will be denoted by $\mathcal{K}$.  

Recall that $ q=\frac{\rho}{2}$, $\rho=\alpha_1+\cdots+\alpha_m$, and that the shifted Virasoro operators satisfy
$L_q(n)=L_K(n)-(n+1)q(n)$. For $u\in\mathfrak h=\mathbb C\otimes_{\mathbb Z}K$, we have $$L_q(1)u(-1){\bf1}=-2(q,u){\bf1}.$$ Consequently, the quasi-primary Heisenberg space relevant to Section~4 is
\begin{equation}
 \mathcal M
 =q^\perp
 =\rho^\perp
 \subseteq \mathfrak h.
 \label{eq:M-rho-perp}
\end{equation}
In particular, $ \dim\mathcal M=m-1$.

Let $\pi:\mathfrak h\longrightarrow\rho^\perp$ be the orthogonal projection. If $\beta=\sum_{i=1}^m n_i\alpha_i\in K$, $s=\sum_{i=1}^m n_i$, then
\begin{equation}
 \pi(\beta)
 =\beta-\frac{s}{m}\rho
 =\sum_{i=1}^m\left(n_i-\frac{s}{m}\right)\alpha_i.
 \label{eq:projection-K}
\end{equation} Since $\mathcal{M}=\rho^{\perp}$, one may write $\beta\in K$ as $\beta=\beta-\frac{s}{m}\rho+\frac{s}{m}\rho$. Hence for every $u\in\mathcal{M}=\rho^{\perp}$, we have 
$$(u,\beta)=(u, \beta-\frac{s}{m}\rho+\frac{s}{m}\rho)=(u, \beta-\frac{s}{m}\rho)=(u, \pi(\beta)).$$ 
Therefore for $u\in \mathcal{M}$, $u_0e^{\beta}=(u,\pi(\beta))e^{\beta}$. This implies that 
\begin{equation}
 P_{(V_K,\omega_q)}=\pi(K).
 \label{eq:PV-projected-K}
\end{equation}

We set
$$R=K\cap\rho^\perp
 =\left\{
 \sum_{i=1}^m n_i\alpha_i\in K
 \ \middle|\
 \sum_{i=1}^m n_i=0
 \right\}.$$
The integral dual of $P_{(V_K,\omega_q)}$ inside $\rho^\perp$ is particularly simple.

\begin{prop}
\label{prop:dual-projected-charge}
With the notation above, $P_{(V_K,\omega_q)}^\circ=\frac{1}{2} R$. Equivalently,
$$ P_{(V_K,\omega_q)}^\circ
 =\left\{
 \frac{1}{2}\sum_{i=1}^m a_i\alpha_i
 \ \middle|\
 a_i\in\mathbb{Z},\ \sum_{i=1}^m a_i=0
 \right\}.$$
\end{prop}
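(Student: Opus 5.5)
The proof is a computation of the dual of the projected lattice. By \eqref{eq:PV-projected-K}, $P_{(V_K,\omega_q)}=\pi(K)$, and for $u\in\rho^\perp$ and $\beta\in K$ we have $(u,\pi(\beta))=(u,\beta)$. Hence
\[
P^\circ_{(V_K,\omega_q)}=\{u\in\rho^\perp\mid (u,\beta)\in\mathbb Z \text{ for all }\beta\in K\}=\rho^\perp\cap K^\circ .
\]
Since $(\alpha_i,\alpha_j)=2\delta_{ij}$, we get $K^\circ=\frac12 K=\{\frac12\sum a_i\alpha_i\mid a_i\in\mathbb Z\}$. It then suffices to intersect with $\rho^\perp$.

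Write $u\in\mathfrak h$ as $u=\frac12\sum_i a_i\alpha_i$ with $a_i\in\mathbb C$. Then $(u,\alpha_j)=a_j$. So $u\in K^\circ$ if and only if every $a_i\in\mathbb Z$. Also $(u,\rho)=\sum_i a_i$, so $u\in\rho^\perp$ if and only if $\sum_i a_i=0$. This gives exactly the displayed set. Equivalently, $\frac12\sum a_i\alpha_i$ with $a_i\in\mathbb Z$ and $\sum a_i=0$ is $\frac12$ times an element of $R$, and conversely. Hence $P^\circ=\frac12 R$.

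The only point needing care is the first equality, namely that the dual is taken inside $\mathcal M=\rho^\perp$ with respect to the restricted form $\langle\,,\rangle_{\mathcal M}$. I would check that this restricted form agrees with $(\,,\,)$. For $u,v\in\mathcal M$ one has $u_1v=(u,v){\bf 1}$, so $((u,v))=(u,v)B({\bf 1},t)=(u,v)$, using the normalization $B({\bf 1},t)=1$ fixed in Section~4. I would also check that $(\,,\,)$ is nondegenerate on $\rho^\perp$, which holds by positive definiteness. After these checks the argument is the linear-algebra computation above, and I expect no real obstacle.
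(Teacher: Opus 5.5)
Your proposal is correct and follows the paper's proof: both reduce $P^\circ_{(V_K,\omega_q)}$ to $K^\circ\cap\rho^\perp$ using $(u,\beta)=(u,\pi(\beta))$ for $u\in\rho^\perp$, and then use $K^\circ=\frac12K$ to get $\frac12K\cap\rho^\perp=\frac12R$. Your coordinate computation and your check that $\langle\,,\rangle_{\mathcal M}$ coincides with $(\,,\,)$ on $\rho^\perp$ (via $B({\bf 1},t)=1$) simply make explicit what the paper leaves implicit.
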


\begin{proof}
Let $u\in\rho^\perp$. For $\beta\in K$, we have $(u,\beta)=(u,\pi(\beta))$. Thus $u\in P_{(V_K,\omega_q)}^\circ$ if and only if $(u,K)\subseteq\mathbb{Z}$. Because $K^\circ=\frac{1}{2}K$, we obtain
$$P_{(V_K,\omega_q)}^\circ=K^\circ\cap\rho^\perp
 =\frac{1}{2}K\cap\rho^\perp
 =\frac{1}{2}R.$$
\end{proof}

We next calculate the stabilizer lattice appearing in the
$\Delta$-operator construction of Section~4. 
\begin{thm}
\label{thm:Kcal-sqrt2-Am1}
Let $\mathcal{K}
 =\left\{
 u\in P_{(V_K,\omega_q)}^\circ\ \middle|\ V_K^{(u)}\cong V_K
 \text{ as a $(V_K,\omega_q)$-module}
 \right\}$. For the shifted lattice vertex operator algebra
$V=V_{K,\rho/2}$, $\mathcal K=K\cap\rho^\perp=R$. Moreover,$$\mathcal K\cong\sqrt{2}L_{A_{m-1}}.$$ In particular, $\min(\mathcal K)=4$. Here, $L_{A_{m-1}}$ is a root lattice of the Lie algebra $A_{m-1}$.
\end{thm}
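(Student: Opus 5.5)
The plan is to identify the $\Delta$-twist $V_K^{(u)}$ explicitly as a lattice coset module, read off which twists are trivial, and then compute the Gram matrix of the resulting sublattice.

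\textbf{Step 1: characterize $\mathcal K$ by $\mathfrak h$-weights.} Let $u\in P^\circ_{(V_K,\omega_q)}=\frac12 R\subseteq\rho^\perp$, using Proposition~\ref{prop:dual-projected-charge}. The twist $V_K^{(u)}$ is Li's $\Delta$-twist. It depends only on the underlying vertex algebra together with the operators $u_n$, so the conformal shift plays no role in it. Moreover, by \cite{DM2} the module categories of $(V_K,\omega_L)$ and $(V_K,\omega_q)$ coincide, so isomorphism ``as a $(V_K,\omega_q)$-module'' is the same as isomorphism as a $V_K$-module.

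Take any $v\in\mathfrak h$, not just $v\in\rho^\perp$. As in the proof of Proposition~\ref{K-lattice-repaired}, we have $\Delta_u(x)v(-1){\bf1}=v(-1){\bf1}+(u,v)x^{-1}{\bf1}$. Hence $v^{(u)}_0=v_0+(u,v)$, and the full $\mathfrak h(0)$-spectrum of $V_K^{(u)}$ is $K+u$.

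Since $\mathfrak h(-1){\bf1}\subseteq V_K$, any $V_K$-module isomorphism $V_K^{(u)}\cong V_K$ preserves $\mathfrak h(0)$-weights. This forces $K+u=K$, i.e.\ $u\in K$.

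Conversely, suppose $u\in K$. Then $V_K^{(u)}\cong V_{K+u}=V_K$, by the standard identification of Li's twist of $V_{K+\lambda}$ with $V_{K+\lambda+u}$ \cite{Li3}. One can also see this directly: the map $a\mapsto e^{u}_{-1-(u,\cdot)}$-type multiplication by $e^u$, i.e.\ $\phi_u(a)=$ the appropriate mode of $Y(e^u,x)$ applied to $a$, intertwines $Y^{(u)}$ with $Y$. Equivalently, one can invoke Proposition~\ref{VVLiso}(2): simple $V_K$-modules are $V_{K+\lambda}$ with $\lambda\in K^\circ$, and they are classified by their weight coset.

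Combining the two directions gives $\mathcal K=\frac12R\cap K=K\cap\rho^\perp=R$.

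\textbf{Step 2: identify the lattice $R$.} A $\mathbb Z$-basis of $R$ is $\beta_i=\alpha_i-\alpha_{i+1}$ for $1\le i\le m-1$, since the coefficient-sum-zero condition allows telescoping. Its Gram matrix is
\[
(\beta_i,\beta_i)=4,\qquad (\beta_i,\beta_{i+1})=-2,\qquad (\beta_i,\beta_j)=0 \text{ for } |i-j|>1 .
\]
This is twice the Cartan matrix of $A_{m-1}$, so $R\cong\sqrt2\,L_{A_{m-1}}$.

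\textbf{Step 3: the minimum norm.} For $0\ne\beta=\sum n_i\alpha_i\in R$ we have $(\beta,\beta)=2\sum n_i^2$. Since $\sum n_i=0$ and $\beta\ne0$, at least two of the coefficients $n_i$ are nonzero. Hence $(\beta,\beta)\ge4$, with equality at $\alpha_1-\alpha_2$.

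\textbf{Main obstacle.} The delicate point is Step~1. There are two things to control. First, the sign convention of $\Delta(u,x)$: whether the twist shifts weights by $+u$ or by $-u$. This is harmless, since $K=-K$. Second, one must use the full $\mathfrak h$-action, not only $\mathcal M=\rho^\perp$, to separate isomorphism classes. Using only $\rho^\perp$ would give $\pi(K)$-cosets, which are too coarse. The full action is legitimate because all of $\mathfrak h(-1){\bf 1}$ lies in the vertex algebra even though $q(-1){\bf1}$ is not quasi-primary for $\omega_q$.
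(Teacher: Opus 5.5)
Your proposal is correct and follows essentially the same route as the paper. The paper also uses the full $\mathfrak{h}(0)$-action to force $u\in K$, though it tracks only the weight of $\phi_u(\mathbf{1})$ rather than the whole spectrum $K+u$. It proves the converse through the classification $V_K^{(u)}\cong V_{K+\gamma}$, which is your second alternative, then intersects with $P^\circ_{(V_K,\omega_q)}=\frac{1}{2}R$ and takes the basis $\alpha_i-\alpha_{i+1}$, whose Gram matrix is twice the Cartan matrix of $A_{m-1}$. Your check $(\beta,\beta)=2\sum n_i^2\geq 4$ justifies the minimum norm a little more directly than the paper's appeal to the scaled root lattice.
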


\begin{proof}
Let $u\in \mathcal{K}$. Then there is a $V_{K,\omega_q}$-module isomorphism $\phi_u:V_K^{(u)}\rightarrow V_K$ such that $\phi_{u}(Y_{V_K^{(u)}}(v,x)w)=Y_{V_K}(v,x)\phi_{u}(w)$ for all $v\in (V_{K},\omega_q)$ and $w\in V_K^{(u)}$. For $h\in\mathfrak{h}=\mathbb{C}\otimes_{\mathbb{Z}}K$, we have 
\begin{eqnarray*}
\Delta_u(x)h(-1){\bf 1}&=&h(-1){\bf 1}+(u,h)x^{-1}{\bf 1}, \text{ and }\\
h_0^{(u)}&=&h(0)+(u,h){\bf 1}_{-1}. 
\end{eqnarray*} Here, we denote $Y^{(u)}(h(-1){\bf 1},x)$ by $\sum_{n\in\mathbb{Z}}h_n^{(u)}x^{-n-1}$. Hence, $h(0)\phi_u({\bf 1})=(u,h)\phi_u({\bf 1})$. 

We recall that 
$V_K=M(1)\otimes\mathbb{C}\{K\}=\bigoplus_{\lambda\in K}M(1)\otimes e^{\lambda}$ Here, $M(1)$ is a Heisenberg vertex operator algebra that is generated by $\mathfrak{h}=\mathbb{C}\otimes_{\mathbb{Z}}K$. For every $w\in M(1)\otimes e^{\lambda}$, $h(0)w=(h,\lambda)w$. Therefore, 
$V_K(\lambda)=\{w\in V_K~|~h(0)w=(h,\lambda)w\text{ for all }h\in\mathfrak{h}\}$ and $V_K(\lambda)\neq 0$ if and only if $\lambda\in K$. Because $h(0)\phi_u({\bf 1})=(u,h)\phi_u({\bf 1})$, we have $\phi_u({\bf 1})\in V_K(u)$. Consequently, $V_K(u)\neq 0$ and $u\in K$.

Conversely, assume $u\in K\cap \rho^{\perp}$. Since $V_K^{(u)}$ is an irreducible $(V_K,\omega_q)$-module, we then have that $V_K^{(u)}\cong V_{K+\gamma}$ for some $\gamma\in K^{\circ}$. In addition, there is a $(V_K,\omega_q)$-module isomorphism $\psi_u: V_K^{(u)}\rightarrow V_{K+\gamma}$ such that $\psi_{u}(Y_{V_K^{(u)}}(v,x)w)=Y_{V_{K+\gamma}}(v,x)\psi_{u}(w)$ for all $v\in (V_{K},\omega_q)$ and $w\in V_K^{(u)}$. By a calculation similar to the one above, we have $\psi_u({\bf 1})\in V_{K+\gamma}(u)$. Consequently, $V_{K+\gamma}\neq 0$ and $u\in K+\gamma$. However, since $u\in K$, we then have that $\gamma\in K$ and $V^{(u)}_K\cong V_K$ as $(V_K,\omega_q)$-modules as desired and $u\in \mathcal{K}$. In summary, $\mathcal{K}=K\cap \rho^{\perp}.$ Because $P_V^\circ=\frac{1}{2}R$, we obtain
$\mathcal K=P_V^\circ\cap K
 =\frac{1}{2}R\cap K
 =R$.

For $1\leq i\leq m-1$, set $\beta_i=\alpha_i-\alpha_{i+1}$. Then $\{\beta_1,\ldots,\beta_{m-1}\}$ is a $\mathbb Z$-basis of $R$, and
$
 (\beta_i,\beta_i)=4$, $
 (\beta_i,\beta_{i+1})=-2$, while $(\beta_i,\beta_j)=0$ when $|i-j|>1$.
Thus the Gram matrix of $R$ is twice the Cartan matrix of the Lie algebra $A_{m-1}$.  Therefore $R\cong\sqrt{2}L_{A_{m-1}}$, and its minimal norm is $4$. Here, $L_{A_{m-1}}$ is a root lattice of the Lie algebra $A_{m-1}$.
\end{proof}

\begin{rem}
This gives a concrete realization of the lattice theorem of Section~4.
The shifted VOA is built from the lattice $ K=L_{A_1^{\oplus m}}$, which has norm-two vectors.  Those norm-two directions are responsible for
the degree-zero Gorenstein algebra because the shift moves
$e^{\alpha_i}$, $\alpha_i(-1){\bf1}$, $e^{-\alpha_i}$ into degrees $0$, $1$, and $2$, respectively.  Section~4, however, uses
the quasi-primary Heisenberg space $\mathcal M=\rho^\perp$ and therefore
recovers the different lattice $\mathcal K=\sqrt{2}L_{A_{m-1}}$, which has minimum norm $4$.
\end{rem}

\begin{rem}
For $m=2$, $ \mathcal K=\mathbb Z(\alpha_1-\alpha_2)$, $(\alpha_1-\alpha_2,\alpha_1-\alpha_2)=4$. For $m=3$,
$\mathcal K
 =\mathbb Z(\alpha_1-\alpha_2)
 \oplus\mathbb Z(\alpha_2-\alpha_3)
 \cong\sqrt{2} L_{A_2}$, with Gram matrix
$\begin{pmatrix}
 4&-2\\
 -2&4
 \end{pmatrix}$. These low-rank cases make visible the distinction between the original
shifted lattice $K$ and the conformally embedded lattice $\mathcal K$
detected by Section~4.
\end{rem}


\subsubsection{Comparison with Sections~3 and~4}
\label{subsec:section3-4-comparison}

The results of this section suggest the following structural picture.

In Section~3, the presence of a semisimple Lie algebra in $V_1$ leads to an 
affine vertex operator algebra structure.  In Section~4, when $V_1$ is
solvable, the semisimple weight-one alternative is absent and, under the
additional hypotheses considered there, Heisenberg and lattice structures emerge.  The present section shows that solvability of $V_1$ does not
eliminate every trace of semisimple Lie theory.  Instead, an
$\mathfrak{sl}_2$-structure can be distributed across different conformal
weights and recovered through mode operators.

Shifted lattice vertex operator algebras make this mechanism explicit:
\[
 \begin{array}{ccc}
 \text{standard lattice grading}
 &\longrightarrow&
 \text{shifted grading}\\[2mm]
 e^\alpha\in V_1
 &&
 e^\alpha\in V_0\\
 \alpha(-1){\bf1}\in V_1
 &&
 \alpha(-1){\bf1}\in V_1\\
 e^{-\alpha}\in V_1
 &&
 e^{-\alpha}\in V_2.
 \end{array}
\]
Thus an ordinary root $\mathfrak{sl}_2\subseteq(V_L)_1$ becomes a hidden
$\mathfrak{sl}_2$ acting by modes on the shifted theory.

The shifted $A_1^{\oplus m}$ example (i.e. Example 1) makes the relation to Section~4
especially explicit.  The underlying shifted lattice is $ K=L_{A_1^{\oplus m}}$, while the quasi-primary Heisenberg directions are
$\mathcal M=\rho^\perp$.  Relative to this Heisenberg space, the lattice
recovered by the $\Delta$-stabilizer construction of Section~4 is $ \mathcal K=K\cap\rho^\perp\cong\sqrt{2}L_{A_{m-1}}$, and hence $\min(\mathcal K)=4$. Thus the norm-two lattice directions that create the Gorenstein algebra
$V_0$ and the minimum-four lattice detected by Section~4 coexist in the
same shifted vertex operator algebra but play different structural roles.

At the same time, for the quasi-primary Heisenberg subspace to which Section~4 applies, the shifted lattice examples satisfy
$(\Omega\text{-SS})$, (IL), and (LC) automatically.  They therefore
provide concrete models for the abstract hypotheses of Section~4.  The
$A_1^{\oplus m}$ example further shows that the hidden
$\mathfrak{sl}_2$-module structure naturally produces a local Gorenstein
weight-zero algebra whose socle records the direction of the conformal
shift.


\subsection{Invariant bilinear forms in the regular local setting}

As a reminder, $ V=\bigoplus_{n\geq 0}V_n$ is an $\mathbb N$-graded vertex operator algebra. Before turning to the examples, we record a consequence of the standing assumptions of Section~4.  Let
$$
\operatorname{Rad}\langle\,\cdot,\cdot\,\rangle
=
\{v\in V\mid
\langle v,w\rangle=0
\text{ for every }w\in V\}
$$ denote the radical of an invariant bilinear form $\langle\cdot,\cdot\rangle$ on $V$.  The radical of an invariant bilinear form is an ideal of $V$.

Under the standing assumptions of Section~4, Lemma~4.8 shows that $V$ is simple: regularity together with the locality of $V_0$ forces simplicity.  Consequently, if
$\langle\,\cdot,\cdot\,\rangle$ is a nonzero invariant bilinear form on $V$, then $
\operatorname{Rad}\langle\,\cdot,\cdot\,\rangle
$ is an ideal different from $V$.  Simplicity therefore gives $
\operatorname{Rad}\langle\,\cdot,\cdot\,\rangle=0$. Hence every nonzero invariant bilinear form in the setting of Section~4 is automatically nondegenerate.

This observation is useful for interpreting the results of the preceding subsection.  In the regular local setting, degeneracy of a nonzero invariant bilinear form is not an additional phenomenon that can occur independently: it is excluded by simplicity.  Thus, once the invariant form under consideration is known to be nonzero, its nondegeneracy follows from the standing assumptions.

The situation is different outside the regular setting.  If $V$ is not simple, a nonzero invariant bilinear form may have a nonzero radical, and that radical is then a proper ideal of $V$.  It is therefore natural to contrast the regular examples considered below with nonregular vertex operator algebras which contain a positive-definite even lattice vertex operator algebra but admit a degenerate invariant bilinear form.  Such examples lie outside the hypotheses of Section~4, and help indicate the role played there by regularity and simplicity.

Let $V$ be a vertex operator algebra and $M$ a $V$-module.  Recall that the direct sum $V\oplus M$ carries a natural split square-zero extension structure; see \cite[Proposition~4.8.1]{LLi}.  Explicitly, for $u,v\in V$ and $m,n\in M$, one sets
$$
Y_{V\ltimes M}(u+m,z)(v+n)
=
Y_V(u,z)v
+
Y_M(u,z)n
+
e^{zL(-1)}Y_M(v,-z)m.
$$ In particular, $
Y_{V\ltimes M}(m,z)n=0$, $m,n\in M$, so that $M$ is a square-zero ideal.  We apply this construction to a lattice vertex operator algebra in order to obtain a nonregular \(\mathbb N\)-graded vertex operator algebra containing a rational positive-definite even lattice vertex operator algebra and carrying a nonzero degenerate invariant bilinear form.

Let $L$ be a positive definite even lattice. Let $ U=V_L$, $M=V_{L+\lambda}$. Here $\lambda\in L^{\circ}$. We form the Lepowsky–Li split null extension $$ \widetilde V=U\ltimes M=U\oplus M. $$ The vertex algebra structure is defined as above. We retain the conformal vector
$\widetilde\omega=(\omega_U,0)$. Its modes act as the ordinary Virasoro operators of $U$ on the first summand and those of the $U$-module $M$ on the second: $ L_{\widetilde V}(n)|_U=L_U(n)$, $L_{\widetilde V}(n)|_M=L_M(n)$. Therefore, the Virasoro relations and the $L(-1)$-derivative property hold on all of $\widetilde{V}$.

The only issue that is not automatic for an arbitrary $U$-module is the integral grading. If $ M=\bigoplus_{n\ge0}M(n)$ has integral conformal weights compatible with the vertex operator algebra grading, then $$ \widetilde V_n=U_n\oplus M_n $$ gives the required grading.

\subsubsection{\textbf{Example 2}: A nonregular lattice extension with degenerate invariant form}

We next give an example showing that, once regularity and simplicity are dropped, a vertex operator algebra may contain a rational positive-definite even lattice vertex operator algebra while admitting a nonzero degenerate invariant bilinear form.

We let $\varepsilon_1,\dots,\varepsilon_8$ be an orthonormal basis of $\mathbb{R}^8$. For $i\in \{1,\dots,7\}$, we define $\alpha_i=\varepsilon_i-\varepsilon_{i+1}$. We define $\alpha_8=\varepsilon_7+\varepsilon_8$. The lattice $$L_{D_8}=\sum_{i=1}^8\mathbb{Z}\alpha_i=\{\sum_{i=1}^8m_i\varepsilon_i~|~m_i\in\mathbb{Z},\sum_{i=1}^8m_i\in2\mathbb{Z}\}$$ is the root lattice of the simple Lie algebra $D_8$. Next, we set $$
\lambda=\frac{1}{2}(\varepsilon_1+\dots+\varepsilon_8)\in L_{D_8}^\circ\setminus L_{D_8}.$$ Since $
(\lambda,\lambda)=2$, the integrally graded irreducible \(V_{L_{D_8}}\)-module $
M:=V_{L_{D_8}+\lambda}
$ has lowest conformal weight $1$. Therefore, $$
V^{\mathrm{sq}}
=
V_{L_{D_8}}\ltimes V_{L_{D_8}+\lambda}
$$ is an $\mathbb N$-graded vertex operator algebra of CFT-type containing $V_{L_{D_8}}$ as a vertex operator subalgebra.

Consider the split square-zero extension $
V^{\mathrm{sq}}
=
V_{L_{D_8}}\ltimes M
=
V_{L_{D_8}}\oplus M$. For $u,v\in V_{L_{D_8}}$ and $m,n\in M$, we define 
$$
Y_{V^{\mathrm{sq}}}(u+m,z)(v+n)
=
Y_{L_{D_8}}(u,z)v
+
Y_M(u,z)n
+
e^{zL(-1)}Y_M(v,-z)m,
$$ where $Y_M$ denotes the $V_{L_{D_8}}$-module vertex operator on $M$. In particular, for $m,n\in M$, $
Y_{V^{\mathrm{sq}}}(m,z)n=0,
$ so $M$ is a square-zero ideal of $V^{\mathrm{sq}}$. The vacuum and conformal vector are: $
{\bf 1}_{V^{\mathrm{sq}}}=({\bf 1},0),~\omega_{V^{\mathrm{sq}}}=(\omega_{L_{D_8}},0)
$.

Since the lowest conformal weight of $M$ is $1$,
$
V^{\mathrm{sq}}
=
\bigoplus_{n\geq0}V^{\mathrm{sq}}_n$, $V^{\mathrm{sq}}_0=\mathbb{C}\mathbf{1}
$. Thus $V^{\mathrm{sq}}$ is an $\mathbb{N}$-graded vertex operator algebra of CFT-type.  Moreover, $$
V_{L_{D_8}}\hookrightarrow V^{\mathrm{sq}},~u\mapsto u+0,
$$ is a vertex operator algebra embedding.  Hence $V^{\mathrm{sq}}$ contains the rational positive-definite even lattice vertex operator algebra $V_{L_{D_8}}$.

Let $
(\,\cdot,\cdot\,)_{D_8}
$ denote the standard nondegenerate invariant bilinear form on $V_{L_{D_8}}$, and let $
\pi:V^{\mathrm{sq}}\longrightarrow V_{L_{D_8}}$, $\pi(u+m)=u,
$ be the natural quotient homomorphism.  Define a bilinear form on $V^{\mathrm{sq}}$ by $
\langle a,b\rangle_{\mathrm{sq}}
=
\bigl(\pi(a),\pi(b)\bigr)_{D_8}
$. Equivalently, $
\langle u+m,v+n\rangle_{\mathrm{sq}}
=
(u,v)_{D_8}$. 

Since $\pi$ is a vertex operator algebra homomorphism and $(\,\cdot,\cdot\,)_{D_8}$ is invariant, the form
$\langle\,\cdot,\cdot\,\rangle_{\mathrm{sq}}$ is invariant. Clearly, $
\langle M,V^{\mathrm{sq}}\rangle_{\mathrm{sq}}=0,
$ and therefore $
M\subseteq
\operatorname{Rad}
\langle\,\cdot,\cdot\,\rangle_{\mathrm{sq}}$. Conversely, suppose that $u+m$ belongs to the radical.  Then, for every
$v\in V_{L_{D_8}}$, $
0
=
\langle u+m,v\rangle_{\mathrm{sq}}
=
(u,v)_{D_8}$. The nondegeneracy of the standard invariant form on \(V_{D_8}\) implies
\(u=0\). Hence, $$
\operatorname{Rad}
\langle\,\cdot,\cdot\,\rangle_{\mathrm{sq}}
=
M
=
V_{D_8+\lambda}.$$ Thus $V^{\mathrm{sq}}$ admits a nonzero degenerate invariant bilinear form, with $$
V^{\mathrm{sq}}/
\operatorname{Rad}
\langle\,\cdot,\cdot\,\rangle_{\mathrm{sq}}
\cong V_{D_8}.
$$ The vertex operator algebra \(V^{\mathrm{sq}}\) is not regular. Indeed, its adjoint module contains the nonzero proper submodule $
M\subset V^{\mathrm{sq}}$. The resulting exact sequence
$$
0\longrightarrow M
\longrightarrow V^{\mathrm{sq}}
\overset{\pi}{\longrightarrow}V_{D_8}
\longrightarrow0
$$ does not split as a sequence of \(V^{\mathrm{sq}}\)-modules. To see this, suppose that $$
V^{\mathrm{sq}}=M\oplus W
$$ for a $V^{\mathrm{sq}}$-submodule $W$.  Since the image of $W$ in the quotient is $V_{D_8}$, there is an element $
w=\mathbf1+m_0\in W$ for some $m_0\in M$. For any $m\in M$, the square-zero property gives $
m_{-1}m_0=0$, whereas the vacuum property gives $
m_{-1}\mathbf1=m$. Therefore $
m_{-1}w=m\in W$. Since \(m\in M\) and \(M\cap W=0\), this forces \(m=0\), a contradiction for nonzero \(m\in M\).  Hence the adjoint \(V^{\mathrm{sq}}\)-module is not completely reducible, and \(V^{\mathrm{sq}}\) is not regular.

This example has a useful comparison with the ordinary lattice construction.  The coset \(L_{D_8}+\lambda\) is the spinor coset which extends \(L_{D_8}\) to the even unimodular lattice $$
L_{E_8}=L_{D_8}\cup(L_{D_8}+\lambda)
$$ which is the root lattice of the simple Lie algebra $E_8$. Consequently, the same $V_{L_{D_8}}$-module decomposition $
V_{L_{D_8}}\oplus V_{L_{D_8}+\lambda}
$ also underlies the lattice vertex operator algebra
$$
V_{L_{E_8}}
=
V_{L_{D_8}}\oplus V_{L_{D_8}+\lambda}.
$$ The two extensions, however, have fundamentally different multiplication.  In $V_{L_{E_8}}$, products of vectors in $V_{L_{D_8}+\lambda}$ are nonzero and return to $V_{L_{D_8}}$, producing a simple regular lattice vertex operator algebra.  In $V^{\mathrm{sq}}$, by contrast, $
Y(M,z)M=0$, so $M$ is a nonzero square-zero ideal and is precisely the radical of the invariant form.

Finally, degeneracy of the invariant bilinear form does not by itself force a nonsemisimple Heisenberg action.  For $h\in\mathbb{C}\otimes_{\mathbb{Z}}L_{D_8}$ and $\beta\in L_{D_8}+\lambda$, the usual lattice-module action gives $$
h_0e^{\beta}=(h,\beta)e^{\beta}.
$$ Thus the Heisenberg zero modes act semisimply on both $V_{L_{D_8}}$ and $V_{L_{D_8}+\lambda}$, and hence on $V^{\mathrm{sq}}$. Therefore, this example separates two phenomena which should not be conflated: 
\begin{center}
\fbox{%
\parbox{0.82\textwidth}{%
\centering
Degeneracy of an invariant bilinear form does not\\
imply nonsemisimplicity of the Heisenberg zero-mode action.
}}
\end{center}

\subsection{Semisimple and nonsemisimple Heisenberg actions}

A second purpose of Section 5 is independent of the preceding discussion.  For the subspace $\mathcal M$ constructed in Section~4, Proposition \ref{heisenberg-jordan-derivations} shows that, for $u\in\mathcal M$, the Jordan decomposition $
u_0=S_u+N_u
$ has the property that the nilpotent part commutes with all modes $u_n$ for all $u\in \mathcal{M},~n\in\mathbb{Z}$ and vanishes on the Heisenberg vertex operator algebra $M(1)$ that is generated by $\mathcal{M}$. Thus, any failure of semisimplicity of $u_0$ is carried entirely by the Heisenberg vacuum space $
\Omega_V
=
\{w\in V\mid v_nw=0
\text{ for all }v\in\mathcal M,\ n>0\}.
$ Equivalently, the obstruction to semisimplicity is the possible nonvanishing of $
N_u|_{\Omega_V}.
$ This motivates the condition $(\Omega\text{-SS})$ introduced in Section~4.

The shifted lattice examples considered earlier satisfy $(\Omega\text{-SS})$. In the family obtained from the conformally shifted lattice $
L_{A_1^{\oplus m}},
$ the quasi-primary space $\mathcal{M}$ is the orthogonal complement of the shift direction, and the lattice detected by the construction of Section~4 is $
L_{\sqrt{2}A_{m-1}}.
$ In particular, the resulting vertex operator algebra contains the rational positive-definite even lattice vertex operator algebra $
V_{\sqrt{2}L_{A_{m-1}}}.
$ The recovered lattice has minimum norm $4$, even though the original lattice $L_{A_1^{\oplus m}}$ contains norm-two vectors.

These same examples also exhibit a complementary phenomenon.  The norm-two directions of the original lattice are not simply lost under the conformal shift.  Rather, their associated vectors are redistributed among different homogeneous subspaces.  In the rank-one situation, this produces vectors $
e\in V_0$, $h\in V_1$, $f\in V_2
$ whose appropriate modes generate an $\mathfrak{sl}_2$-action, even though this $\mathfrak{sl}_2$ is not contained in \(V_1\). Thus, solvability of $V_1$ can exclude a semisimple Lie algebra of degree one without eliminating its mode-theoretic shadow.

The following examples focus on a different issue: whether regularity forces the zero modes of $\mathcal{M}$ to act semisimply. Example 3 shows that regularity alone does not suffice. Example 4 gives a regular shifted lattice vertex operator algebra of CFT-type with solvable $V_1$ for which the zero modes of generators of a Heisenberg vertex operator subalgebra do not act semisimply. Example 5 shows that the same obstruction persists in the non-CFT-type setting where $V_0$ is a nontrivial local graded Gorenstein algebra and $V_1$ is a solvable Leibniz algebra. 


\subsubsection{\textbf{Example 3}: A regular example with nonsemisimple Heisenberg zero-mode action}

For $ k\in\mathbb Z_{>0}$, we let $
V=L_{\widehat{\mathfrak{sl}_3}}(k,0)$, be the simple affine vertex operator algebra of positive integral level.  Then \(V\) is regular and $V_1\cong\mathfrak{sl}_3$. We identify $V_1$ with $\mathfrak{sl}_3$ and use the invariant bilinear form induced by the affine vertex operator algebra.

Consider $$
s=\operatorname{diag}(1,1,-2),
\qquad
n=E_{12},
\qquad
u=s+n.
$$ Since $[s,n]=0$, the element $u$ is not semisimple: its Jordan decomposition in $\mathfrak{sl}_3$ has nonzero semisimple and nilpotent parts $s$ and $n$, respectively.

On the other hand, $u$ is nonisotropic.  Indeed, invariance of the form and the root-space decomposition give $
(s,n)=0$, $(n,n)=0$, and hence $
(u,u)=(s,s)\neq0.
$ The affine commutation relations therefore imply
$$
[u_m,u_n]
=
m\,k(u,u)\delta_{m+n,0}\operatorname{Id}.
$$ After rescaling $u$, if necessary, the vertex operator subalgebra generated by $u$ is a rank-one Heisenberg vertex operator algebra: $
\langle u\rangle\cong M(1).
$ The zero mode $u_0$, however, does not act semisimply on $V$. Already on $
V_1\cong\mathfrak{sl}_3$, its action is $
u_0x=[u,x]=\operatorname{ad}(u)x.
$

To see the failure of semisimplicity explicitly, set $$
e=E_{12},\qquad
f=E_{21},\qquad
h=E_{11}-E_{22}.
$$ Since $s$ commutes with $e,f,h$, the restriction of $u_0$ to the $\mathfrak{sl}_2$-subspace $
\operatorname{Span}\{e,h,f\}
$ is simply $\operatorname{ad}(e)$. In particular, $
u_0f=h$, $u_0h=-2e$, $u_0e=0.
$

Thus $f\longmapsto h\longmapsto-2e\longmapsto0
$ is a nontrivial nilpotent Jordan chain. Consequently, $
u_0|_{V_1}
$ is not semisimple, and therefore $u_0$ is not semisimple on $V$.

Hence the regular vertex operator algebra $L_{\widehat{\mathfrak{sl}_3}}(k,0)$ contains a rank-one Heisenberg vertex operator algebra $M(1)$ whose zero mode does not act semisimply on the ambient vertex operator algebra.  In particular,
$$
\boxed{
\text{regularity of }V
\quad\not\Longrightarrow\quad
\text{semisimplicity of an arbitrary Heisenberg zero-mode action}.
}
$$ 
This example also clarifies the role of the condition $(\Omega\text{-SS})$ introduced in Section~4. The usual Heisenberg subalgebra associated with a Cartan subalgebra of $\mathfrak{sl}_3$ acts semisimply on the affine vertex operator algebra.  The failure above occurs because we instead choose the nonsemisimple but nonisotropic element $
u=s+E_{12}$. Thus semisimplicity of Heisenberg zero modes depends not only on regularity of the ambient vertex operator algebra, but also on the Heisenberg subspace under consideration.

We emphasize, however, that this example does not satisfy all of the standing hypotheses of Section~4.  Indeed, $
V_1\cong\mathfrak{sl}_3
$ is semisimple rather than solvable. Therefore, the example does not show that $(\Omega\text{-SS})$ is independent of the other hypotheses of Section~4.  It shows instead that regularity alone cannot imply $(\Omega\text{-SS})$. This leaves the more restrictive question of whether nonsemisimple Heisenberg zero-mode action can occur for a regular $\mathbb{N}$-graded vertex operator algebra with solvable $V_1$.

\subsubsection{\textbf{Example 4}: A regular shifted lattice example with nonsemisimple Heisenberg zero-mode action}

Let $
L=\mathbb Z\alpha\oplus\mathbb Z\gamma
$ be the positive-definite even lattice with $
(\alpha,\alpha)=(\gamma,\gamma)=4$, $(\alpha,\gamma)=0
$, and let $
q=\frac{\alpha}{4}\in L^\circ.
$

Let us consider the shifted lattice vertex operator algebra $
V=V_{L,q}$, with conformal vector $
\omega_q=\omega_L+q(-2){\bf 1}$.

By Dong and Mason \cite{DM2}, \(V_{L,q}\) is regular. For $
\lambda=m\alpha+n\gamma\in L$, the shifted conformal weight is $
\wt_q(e^{\lambda})=\frac{(\lambda,\lambda)}2-(q,\lambda)=
2m^2+2n^2-m
$. This quantity is nonnegative for all $m,n\in\mathbb {Z}$, and it vanishes only for $
m=n=0$. Consequently, $
V_0=\mathbb{C}{\bf 1}
$. Thus this particular example is of CFT-type. The weight-one subspace is $
V_1
=
\mathbb{C}\alpha(-1){\bf 1}
\oplus
\mathbb{C}\gamma(-1){\bf 1}
\oplus
\mathbb{C}e^{\alpha}$. Indeed, $
\wt_q(e^{\alpha})
=
\frac{(\alpha,\alpha)}{2}-(q,\alpha)
=
2-1
=
1
$,
and there are no other nonzero lattice vectors of shifted conformal weight at most $1$.

We set $
a=\alpha(-1){\bf 1}$, $
g=\gamma(-1){\bf 1}$, $
b=e^{\alpha}$. The zero-mode products satisfy $$
a_0g=0,~
g_0b=0,~
a_0b=(\alpha,\alpha)b=4b,
~
b_0b=0.
$$ Hence $
[V_1,V_1]=\mathbb{C}b$, $[[V_1,V_1],[V_1,V_1]]=0$. Therefore $
V_1
$ is a solvable Lie algebra, isomorphic to a direct sum of a two-dimensional nonabelian solvable Lie algebra and a one-dimensional center.

Now define $
u=g+b
=
\gamma(-1){\bf 1}+e^{\alpha}$. Since $(\gamma,\alpha)=0$, we have $
u_1u
=
g_1g
=
(\gamma,\gamma)\mathbf1
=
4{\bf 1}$. Moreover, $
u_0u=0.
$ Since $u\in V_1$, all products $u_nu$ with $n\geq 2$ vanish.  It follows that the modes of $u$ satisfy the rank-one Heisenberg relations
$$
[u_m,u_n]
=
4m\,\delta_{m+n,0}{\bf 1}_{-1}.
$$ After normalizing $u$, the vertex operator subalgebra generated by $u$ is therefore a rank-one Heisenberg vertex operator algebra: $
\langle u\rangle\cong M(1).
$

The vector $u$ is also quasi-primary with respect to the shifted conformal vector. Indeed, $$
L_q(1)\gamma(-1)\mathbf1
=
-2(q,\gamma)\mathbf1
=
0,
$$ while $e^{\alpha}$ is a shifted highest-weight lattice vector, so $
L_q(1)e^\alpha=0$. Thus $
L_q(1)u=0
$.

We now show that $u_0$ is not semisimple on $V$. We have $
u_0=\gamma_0+b_0
$ and $
[\gamma_0,b_0]=0$. Let us consider $\Ker~ \gamma_0$. On this subspace $
\gamma_0=0,
$ and hence $
u_0=b_0=(e^\alpha)_0$. The operator $b_0$ is nonzero.  For example, using the standard lattice vertex operator formula, $
Y(e^\alpha,z)e^{-\alpha}
=
\varepsilon(\alpha,-\alpha)z^{-4}
\exp\left(
\sum_{r\ge1}\frac{\alpha(-r)}r z^r
\right){\bf 1}.
$ Taking the coefficient of $z^{-1}$ gives $
(e^{\alpha})_0e^{-{\alpha}}
=
\varepsilon(\alpha,-\alpha)
\left(
\frac{\alpha(-3)}{3}
+
\frac{\alpha(-2)\alpha(-1)}{2}
+
\frac{\alpha(-1)^3}{6}
\right){\bf 1},
$ which is nonzero.

On the other hand, $b_0$ preserves shifted conformal weight, because $b=e^{\alpha}\in V_1$. Because $V=(V_L,\omega_q)=\bigoplus_{\beta\in L}M(1)\otimes e^{\beta}$ and $b_0(M(1)\otimes e^{\beta})\subseteq M(1)\otimes e^{\beta+\alpha}$, we then have that for $v\in V_n\cap (M(1)\otimes e^{\beta})$, $(b_0)^rv\in V_n\cap(M(1)\otimes e^{\beta+r\alpha})$. However, $\dim_{\mathbb{C}}V_n<\infty$. Hence, there are only finitely many $M(1)\otimes e^{\beta+s\alpha}$ such that $V_n\cap (M(1)\otimes e^{\beta+s\alpha})\neq \{0\}$. Consequently, there is a positive integer $r$ such that $b_0^rv=0$ and $b_0$ is locally nilpotent. On the $\Ker~ \gamma_0$, $
u_0=b_0
$ is a nonzero locally nilpotent operator.  In particular, $
u_0
$ is not semisimple on $V$.  We have therefore obtained
$$
\boxed{
\begin{gathered}
V_{L,q}\text{ is regular of CFT-type}; V_1\text{ is solvable};
\langle u\rangle\cong M(1); u_0\text{ does not act semisimply on }V.
\end{gathered}}
$$
This example lies outside the strongly regular framework. Indeed $2q=\frac{\alpha}{2}$ is not in $L$, so Dong and Mason's criterion in \cite{DM2} shows that $(V_L,\omega_q)$ is not self-contragredient. Therefore, it demonstrates that regularity, even together with solvability of $V_1$, does not guarantee semisimplicity of the zero-mode of a chosen Heisenberg subspace. The next example places the same obstruction in the non-CFT-type setting of Section 4.


\subsubsection{\textbf{Example 5}: A regular non-CFT-type example with solvable $V_1$ and nonsemisimple Heisenberg action}

We now give an example showing that the condition $(\Omega\text{-SS})$ of Section~4 is a genuine additional hypothesis even in the regular non-CFT-type setting with $V_1$ solvable and $V_0$ a local graded-Gorenstein algebra of dimension greater than one.

Let $
L=
\mathbb{Z}\beta\oplus
\mathbb{Z}\alpha\oplus
\mathbb{Z}\gamma
$ be the positive-definite even lattice with orthogonal basis $\{\beta,\alpha,\gamma\}$ satisfying $
(\beta,\beta)=2$, $(\alpha,\alpha)=(\gamma,\gamma)=4$.

We set $
q=\frac{\beta}{2}+\frac{\alpha}{4}\in L^\circ
$ and consider the shifted lattice vertex operator algebra $
V=V_{L,q}$, $\omega_q=\omega_L+q(-2){\bf 1}$. By the shifted lattice theory of Dong and Mason \cite{DM2}, $V_{L,q}$ is regular.

For $
\lambda=m\beta+n\alpha+r\gamma\in L$, the shifted conformal weight is
$$
\begin{aligned}
\wt_q(e^\lambda)
&=
\frac{(\lambda,\lambda)}2-(q,\lambda)\\
&=
m(m-1)+2n^2-n+2r^2.
\end{aligned}
$$ This is a nonnegative integer for every $m,n,r\in\mathbb{Z}$. The weight-zero equation $
m(m-1)+2n^2-n+2r^2=0
$ has exactly two solutions $
(m,n,r)=(0,0,0)$, $(1,0,0)$. Consequently, $
V_0=\mathbb{C}{\bf 1}\oplus\mathbb{C}e^{\beta}$.

If $
b=e^{\beta}$, then the product on $V_0$ satisfies
$
b_{-1}b=0$, and hence $V_0\cong\mathbb C[x]/(x^2)$. In particular, $
\dim V_0=2$, $\mathfrak{m}=\mathbb{C}b$, $\operatorname{Soc}(V_0)=\mathbb{C}b$, so $V_0$ is a local graded-Gorenstein algebra. Moreover, $
L(-1)b=\beta(-1)e^{\beta}\neq0,
$ and therefore $
\Ker L(-1)|_{V_0}=\mathbb{C}{\bf 1}$.

It is useful to factor the shifted lattice vertex operator algebra as $$
V
\cong
W\otimes U,
$$ where $
W=V_{\mathbb{Z}\beta,\beta/2}$, $U=
V_{\mathbb{Z}\alpha\oplus\mathbb{Z}\gamma,\frac{\alpha}{4}}
$. Then $
W_0=\mathbb{C}{\bf 1}\oplus\mathbb{C}b
$ and $
W_1
=
\mathbb{C}h\oplus\mathbb{C}d$, where $
h=\beta(-1){\bf 1}$, $d=L(-1)x=\beta(-1)e^{\beta}$.

For the second factor, $
U_0=\mathbb{C}{\bf 1}
$ and $
U_1
=
\mathbb{C}a\oplus\mathbb{C}g\oplus\mathbb{C}e$, where
$
a=\alpha(-1){\bf 1}$, $g=\gamma(-1){\bf 1}$, $e=e^{\alpha}$. The only nonzero bracket in $U_1$, up to skew-symmetry, is $
a_0e=4e$. Thus $U_1$ is solvable.

Since $
V_1
=
(W_1\otimes U_0)
\oplus
(W_0\otimes U_1)$, we obtain $
V_1
=
\operatorname{Span}
\{h,d,a,g,e,ba,bg,be\}$, where tensor signs are suppressed. We define $
I=\mathbb{C}d\oplus bU_1$. Then $I$ is an ideal of $V_1$. Indeed, $
h_0b=2b$, $h_0d=2d$, while $
d_0=0
$ because $d=L(-1)b$ and $
(L(-1)b)_0=0$.

Furthermore, for $u,v\in U_1$, the zero-mode products involving $bu$ remain in $
\mathbb{C}d+bU_1$. In particular, $
(bu)_0(bv)=0$. This follows from the factor $z^{(\beta,\beta)}=z^2$ in the lattice operator product $
Y(e^\beta,z)e^\beta$, which removes the possible singular terms arising from the product of the weight-one vectors $u$ and $v$. Hence, $
[I,I]=0$, so $I$ is abelian.

When we quotient $V_1$ by $I$, we have that $
V_1/I
\cong
\mathbb{C}h\oplus U_1$, and $h$ commutes with $U_1$.  Since $U_1$ is solvable, $V_1/I$ is solvable.  Therefore $V_1$ is solvable. Thus $V_1$ is a solvable Leibniz algebra.

We next construct a Heisenberg vertex operator subalgebra whose zero mode fails to act semisimply.  We set $
u=g+e
=
\gamma(-1){\bf 1}+e^{\alpha}
$ and $
\mathcal{M}=\mathbb{C}u$. Since $
(\gamma,\alpha)=0$, we have that $u_0u=0
$ and $
u_1u
=
g_1g
=
(\gamma,\gamma){\bf 1}
=
4{\bf 1}$. Moreover, for $n\geq 2$, we have $
u_nu=0$. It follows that the modes of $u$ satisfy
$
[u_m,u_n]
=
4m\delta_{m+n,0}{\bf 1}_{-1}$.
 Consequently, $\langle u\rangle\cong M(1)$ is a rank-one Heisenberg vertex operator algebra.

In addition, $u$ is quasi-primary with respect to the shifted conformal vector. Indeed, $$
L_q(1)\gamma(-1){\bf 1}
=
-2(q,\gamma){\bf 1}
=
0,
$$
and $e^{\alpha}$ is a shifted lattice highest-weight vector of conformal weight one. Thus $
L_q(1)u=0$.

Let $B(~,~)$ be the Frobenius form on $
V_0\cong\mathbb C[x]/(x^2)
$ normalized by $
B(\mathbf1,b)=1$. Since $b$ spans the socle of $V_0$, the bilinear form of Section~4 satisfies
$$
((u,u))
=
B(u_1u,b)
=
B(4\mathbf1,b)
=
4.
$$ Therefore, $
\mathcal M\in\mathfrak M$.

We now show that $u_0$ is not semisimple. We have $
u_0=g_0+e_0$, $e=e^{\alpha}$, and $
[g_0,e_0]=0$. On $\Ker~\gamma_0$-subspace, $
g_0=0$, so that $
u_0=e_0$. The operator $e_0$ is nonzero. For example, $
Y(e^\alpha,z)e^{-\alpha}
=
\varepsilon(\alpha,-\alpha)z^{-4}
\exp\left(
\sum_{j\ge1}\frac{\alpha(-j)}{j}z^j
\right){\bf 1}$, and therefore
$$
(e^{\alpha})_0e^{-\alpha}
=
\varepsilon(\alpha,-\alpha)
\left(
\frac{\alpha(-3)}{3}
+
\frac{\alpha(-2)\alpha(-1)}{2}
+
\frac{\alpha(-1)^3}{6}
\right){\bf 1}
\neq 0.$$

By an argument similar to that in Example 4, one can show that $e_0$ is locally nilpotent.  Thus $u_0$ has a nonzero nilpotent part and is not semisimple on $V$.

We can locate this obstruction more precisely on the Heisenberg vacuum space $
\Omega_V
=
\{w\in V\mid u_nw=0\text{ for all }n>0\}$. Let $
u_0=S_u+N_u
$ be the Jordan decomposition. By Proposition~4.10, for $n\in\mathbb{Z}$, $
[N_u,u_n]=0$. Moreover, $
V=U(\widehat{\mathcal M}_{-})\Omega_V$.

Since $N_u\neq 0$ on $V$, it cannot vanish identically on $\Omega_V$. Indeed, if $
N_u|_{\Omega_V}=0$, then for every $
w\in\Omega_V
$ and every monomial $a\in U(\widehat{\mathcal M}_{-})$, $
N_u(aw)
=
aN_uw
=
0$, which would imply $N_u=0$ on all of $V$, a contradiction. Therefore $N_u|_{\Omega_V}\neq 0$. Equivalently, $V$ does not satisfy $(\Omega\text{-SS})$ for the Heisenberg subspace
$\mathcal M=\mathbb{C}u$.

Finally, let $
\mathfrak{a}
=
\operatorname{Span}
\{v_0a\mid v\in V_1,\ a\in\mathfrak m\}$. Since $
\mathfrak{m}=\mathbb{C}b
$ and $
h_0b=2b$, we obtain $
\mathfrak{a}=\mathbb{C}b=\mathfrak{m}$. Consequently, $
V_0\neq\mathfrak a$. We have therefore constructed a regular $\mathbb{N}$-graded vertex operator algebra satisfying $
\dim V_0>1$, $V_0$ local graded-Gorenstein, $V_1$ solvable, together with a nondegenerate quasi-primary Heisenberg subspace $
\mathcal M\in\mathfrak M
$ such that $
\langle\mathcal M\rangle\cong M(1)
$ but $
\mathcal M
$ does not act semisimply on $V$. In particular, 
\begin{center}
\fbox{%
\parbox{0.82\textwidth}{%
\centering
$(\Omega\text{-SS})$ is not a consequence of regularity,
solvability of $V_1$,\\
or the graded-Gorenstein local structure of $V_0$.
}}
\end{center}
Thus the condition $(\Omega\text{-SS})$ in Section~4 is an independent structural assumption.


\end{document}